\newcommand{\Rl}{\mathbb{R}}
\newcommand{\Cplx}{\mathbb{C}}
\newcommand{\Itgr}{\mathbb{Z}}
\newcommand{\Ntrl}{\mathbb{N}}
\newcommand{\Circ}{\mathbb{T}}
\newcommand{\Bc}{\mathcal{B}}
\newcommand{\Lc}{\mathcal{L}}
\newcommand{\Mv}{\mathcal{M}}
\newcommand{\Sc}{\mathcal{S}}
\newcommand{\Tr}{\mathrm{Tr}}
\newcommand{\res}{\mathrm{R}}
\newcommand{\pilo}{\Pi_{\ell}}
\newcommand{\pihi}{\Pi_{h}}
\newcommand{\supp}{\mathrm{supp}}
\newcommand{\BS}{\mathfrak{BS}}
\newcommand{\Ti}{\mathcal{T}}
\newcommand{\lt}{\lambda_\theta}
\newcommand{\ri}{{\mathrm{i}}}
\newcommand{\wl}{W_\theta}
\newcommand{\rank}{\mathrm{rank}}
\newcommand{\Pl}{\mathbb{P}}
\newcommand{\at}{\ast_\theta}
\newcommand{\rl}{\mathrm{Re}}
\def\XXint#1#2#3{{\setbox0=\hbox{$#1{#2#3}{\int}$ }
\vcenter{\hbox{$#2#3$ }}\kern-.6\wd0}}
\numberwithin{equation}{section}
\newtheorem{theorem}{Theorem}[section]
\newtheorem{proposition}[theorem]{Proposition}
\newtheorem{corollary}[theorem]{Corollary}
\newtheorem{definition}[theorem]{Definition}
\newtheorem{lemma}[theorem]{Lemma}
\newtheorem*{theorem*}{Theorem}
\theoremstyle{remark}
\newtheorem{remark}[theorem]{Remark}
\title{Nonlinear partial differential equations on noncommutative Euclidean spaces}
\author{E. McDonald}
\address{Edward McDonald, Department of Mathematics, Penn State University, University Park, PA 16801, USA}
\email{eam6282@psu.edu}
\date{\today}
\begin{document}
\maketitle{}
\begin{abstract}
Noncommutative Euclidean spaces -- otherwise known as Moyal spaces or quantum Euclidean spaces -- are a standard example
of a non-compact noncommutative geometry. Recent progress in the harmonic analysis of these spaces
gives us the opportunity to highlight some of their peculiar features. For example, the theory of nonlinear partial differential equations
has unexpected properties in this noncommutative setting. We develop elementary aspects of paradifferential calculus for noncommutative Euclidean spaces
and give some applications to nonlinear evolution equations. We demonstrate how the analysis of some equations radically simplifies
in the strictly noncommutative setting.
\end{abstract}

\section{Introduction}
    Noncommutative Euclidean $d$-spaces $\Rl^d_\theta$ are deformation quantizations of Euclidean space $\Rl^d$ in the sense of Rieffel \cite{Rieffel-memoirs}. 
    This family of spaces is one of the oldest and best studied in noncommutative geometry, and has received particular attention due to its relevance to the phase space picture of quantum mechanics.
    Many authors, including Moyal \cite{Moyal} and Groenwald \cite{Groen}, have studied these spaces from diverse perspectives. The main idea is to deform the algebra of smooth functions
    on Euclidean space $\Rl^d$ by replacing the pointwise product of functions with the twisted Moyal product. 
    
    In noncommutative geometry, noncommutative Euclidean spaces are a noteworthy example of ``noncompact" (or nonunital) spaces \cite{gayral-moyal,CGRS2}.
    In the mathematical physics literature, the Moyal product is studied for its relevance to quantum phase space \cite[Chapter 13]{Hall2013}, \cite[Chapter 2, Section 3.4]{Takhtajan2008}, \cite[Section 5.2.2.2]{Bratteli-Robinson2}, \cite{GV1988}. Besides this, noncommutative Euclidean spaces are studied in physics as a prototypical setting with noncommuting spatial coordinates, we note in particular \cite{DouglasNekrasov,NekrasovSchwarz}. 
    
    Several equivalent constructions relating to noncommutative Euclidean space exist in the literature. One method to describe $\Rl^d_\theta$ begins by defining the von Neumann algebra $L_\infty(\Rl^d_\theta)$ as a twisted left-regular representation of $\Rl^d$ on $L_2(\Rl^d)$; we will review this definition of $L_\infty(\Rl^d_\theta)$ in Subsection \ref{definition_subsection} below. 
    As a von Neumann algebra, $L_\infty(\Rl^d_\theta)$ is generated by a $d$-parameter strongly continuous unitary
    family $\{\lt(t)\}_{t \in \Rl^d}$ satisfying the relation
    $$
        \lt(t)\lt(s) = \exp(\frac12 \ri(t,\theta s))\lt(t+s),\quad t,s\in \Rl^d.
    $$
    With $\theta=0$, this reduces to the description of $L_\infty(\Rl^d)$ as being a von Neumann algebra generated by the family of trigonometric functions $\lambda_0(t)(\xi) = \exp(\ri(t,\xi))$, $t,\xi\in \Rl^d$. This is the Weyl form of the canonical commutation relations and the structure of $L_{\infty}(\Rl^d_\theta)$ is determined by the Stone-von Neumann theorem.
    A related object is the heavily studied noncommutative torus $\Circ^d_\theta,$ as $L_{\infty}(\Circ^d_\theta)$ is isomorphic to the subalgebra of $L_\infty(\Rl^d_\theta)$ generated by $\{\lt(n)\}_{n\in \Itgr^d}$.
    
    Associated to the algebra $L_\infty(\Rl^d_\theta)$ is a semifinite normal trace functional $\tau$, which reduces to the Lebesgue integral when $\theta=0$ and gives us a notion
    of integration for $\Rl^d_\theta.$ As a special case of the theory of $L_p$-spaces associated to semifinite traces on von Neumann algebras, we can form the spaces $L_p(L_\infty(\Rl^d_\theta),\tau)$, which we abbreviate as $L_p(\Rl^d_\theta)$. These spaces reduce to classical $L_p$-spaces on Euclidean space when $\theta=0$, and when
    $\theta\neq 0$ are spaces of operators affiliated to $L_\infty(\Rl^d_\theta).$ 
    
    A feature of $L_\infty(\Rl^d_\theta)$ is that the nature of the trace $\tau$ changes dramatically depending on the dimension of the kernel of $\theta.$ 
    Due to the Stone-von Neumann theorem, when the matrix $\theta$ is non-degenerate (i.e., $\det(\theta)\neq 0$), the algebra $L_\infty(\Rl^d_\theta)$
    is isomorphic to the algebra of all bounded linear operators on $L_2(\Rl^{d/2})$. Under this isomorphism, $\tau$ is precisely a scalar multiple of the classical operator trace $\Tr$
    and the $L_p$-spaces $L_p(\Rl^d_\theta)$ coincide with the Schatten-von Neumann $\Lc_p$-spaces of compact operators. It follows that when $\det(\theta)\neq 0$ there is a continuous inclusion
    \begin{equation*}
        L_p(\Rl^d_\theta) \subseteq L_q(\Rl^d_\theta),\quad 1 \leq p \leq q\leq \infty.
    \end{equation*}
    In particular, and in marked contrast to the familiar situation on $\Rl^d$, we have a norm inequality
    \begin{equation}\label{automatic_boundedness}
        \det(2\pi\theta)^{-1/4}\|u\|_{L_\infty(\Rl^d_\theta)} \leq \|u\|_{L_2(\Rl^d_\theta)},\quad u \in L_2(\Rl^d_\theta).
    \end{equation}
    The factor $\det(2\pi \theta)^{-1/4}$ depends on the choice of normalisation for the trace $\tau$, but it necessarily diverges as $\theta$ becomes singular.
    In many ways, the measure theory of $L_\infty(\Rl^d_\theta)$ in the non-degenerate case bears more resemblance
    to a discrete set than $\Rl^d$. This point of view is further emphasised by the observation that the range of the trace $\tau$ on 
    projections is discrete, and in our normalisation consists of non-negative integer multiples of $\det(2\pi\theta)^{1/2}$.
    
    There is a canonical notion of differential calculus for $\Rl^d_\theta$, as well as Sobolev spaces and differential operators.    
    In this language, one can formulate partial differential equations associated to $\Rl^d_\theta$ and study their well-posedness.
    For example, Gonzales-Perez, Junge and Parcet \cite{GJP2017} studied the $L_p$-regularity of linear elliptic pseudodifferential equations on $\Rl^d_\theta$,
    and Chakraborty and Sinha have analysed diffusion equations \cite{Chakraborty-Sinha}. The related noncommutative torus $\Circ^d_\theta$ also received attention
    by those authors. The doctoral dissertation of Mingyu Zhao discussed Schauder and Strichartz estimates for $\Rl^d_\theta$ \cite{ZhaoThesis}, a topic
    which is relevant to the present paper. Concerning nonlinear equations on noncommutative spaces, progress has been made by Rosenberg \cite{Rosenberg2008}, who studied
    nonlinear partial differential equations on the noncommutative torus. The related topic of quantum field theory on noncommutative Euclidean spaces
    has received some interest, see e.g. \cite{DouglasNekrasov}.

    Besides \eqref{automatic_boundedness}, there are many counterintuitive features of noncommutative Euclidean spaces. Not all of these features play a role in the present article,
    but nonetheless they serve as an indication for why the exploration of partial differential equations on $\Rl^d_\theta$ may be interesting:
    \begin{enumerate}[{\rm (i)}]
        \item{} If $f$ and $g$ are two real valued functions on $\Rl^d$, we can be certain that their pointwise product $fg$ is real valued. By contrast, the product of two self-adjoint operators is self-adjoint if and only if they commute. This has implications for nonlinear operator-valued differential equations, where nonlinear terms might cause a self-adjoint initial condition to become non-self-adjoint.
        \item{} If $\det(\theta)\neq 0$, then the subspaces $\Sc(\Rl^d_\theta)$ and $C_0(\Rl^d_\theta)$ (defined below in Section \ref{preliminaries_section}) are ideals of $L_\infty(\Rl^d_\theta)$. That is, multiplication of a smooth ``function" by an arbitrary bounded ``function" yields a smooth function \cite{PSVZ2018}.
        \item{} If $\det(\theta)\neq 0$, then $L_\infty(\Rl^d_\theta)$ contains nontrivial infinitely differentiable idempotents. Specifically, the Schwartz space $\Sc(\Rl^d_\theta)$ is generated in a topological sense by idempotent self-adjoint elements \cite[Section 2.2]{gayral-moyal}. This is in contrast to $\Sc(\Rl^d)$, which contains only the zero idempotent.
        \item{} If $1 < p < \infty$, then the commutative $L_p$-space $L_p(\Rl^d)$ admits unconditional Schauder bases, for example orthonormal wavelet bases are unconditional in $L_p(\Rl^d)$ for $1 < p < \infty$. By contrast, when $\det(\theta)\neq 0$ and $p\neq 2$, the space $L_p(\Rl^d_\theta)$ admits no unconditional Schauder basis \cite[Theorem 5.1]{GordonLewis1974}.
        \item{} Classical Euclidean space $\Rl^d$ admits a dilation symmetry. That is, the multiplicative group $(0,\infty)$ acts by automorphisms on $L_\infty(\Rl^d)$ by dilation. There does not appear to be a natural notion of automorphisms by dilations on $L_\infty(\Rl^d_\theta)$. Instead, a dilation of the coordinate variables changes the value of $\theta$ \cite[Section 3.1]{MSX2}.
    \end{enumerate}
    
\subsection{Motivation: Noncommutative fluid mechanics}
    The original motivation for this paper arose from the specific example of the incompressible Navier-Stokes equations.
    The incompressible Navier-Stokes equations for $\Rl^d$ with viscosity $\nu > 0$ are conventionally stated in terms of an unknown velocity field $u:\Rl^d\times [0,T)\to \Rl^d$
    and pressure field $p:\Rl^d\times [0,T)\to \Rl$ depending on space-time variables $(x_1,\ldots,x_d,t)$ as follows:
    \begin{equation*}
        \partial_t u + u\cdot\nabla u = \nu\Delta u+\nabla p,\quad \nabla\cdot u = 0.
    \end{equation*} 
    Here, we write $\partial_t$ for differentiation in the time variable $t$, $\nabla$ denotes the gradient operator in the spatial variables, $\nabla = (\partial_{x_1},\ldots,\partial_{x_d})$,
    $\Delta u$ is the vector $(\Delta u_j)_{j=1}^d$ formed by applying the Laplace operator $\Delta = \partial_1^2+\cdots+\partial_d^2$ to each component of $u$, and $u\cdot \nabla u$ 
    denotes the nonlinear advective term
    \begin{equation}\label{advective_term}
        u\cdot \nabla u = (\sum_{k=1}^d u_k\partial_{x_k} u_j)_{j=1}^d.
    \end{equation}
    The local well-posedness theory of the incompressible viscous Navier-Stokes equations on $\Rl^d$ is a classical topic which is discussed in many references \cite{FujitaKato1964}, \cite[Chapter 5]{Bahouri-Chemin-Danchin-2011}, \cite[Section 17.4]{Taylor-pde-3-2011}, \cite[Chapter 7]{Lemarie-Riesset-Gilles-2016}. Conventionally, one eliminates the pressure term $\nabla p$
    by applying the Leray projection $\Pl$, defined as the $L_2$-orthogonal projection onto the closed subspace of divergence-free vector fields. The Leray projection annihilates $\nabla p$, and after some rearrangement and setting $\nu=1$ we are
    left with a single equation,
    \begin{equation}\label{leray_form}
        \partial_t u = -\Pl(\nabla\cdot (u\otimes u))+ \Delta u.
    \end{equation}
    where 
    $\nabla\cdot (u\otimes u)$ denotes the vector $(\sum_{j=1}^d \partial_j (u_ju_k))_{k=1}^d$.
    Formally, solutions to \eqref{leray_form} can be written in mild form
    \begin{equation}\label{mild_form}
        u(t) = e^{t\Delta}u(0)-\int_0^t e^{(t-s)\Delta}\Pl(\nabla\cdot(u(s)\otimes u(s)))\,ds,\quad 0\leq t < T
    \end{equation}
    where $s\mapsto e^{s\Delta}$ denotes the heat semigroup, and $u(s)$ is the value of the velocity field $u$ at time $s$. The classical local well-posedness theory for the Navier-Stokes equations is based on finding solutions to \eqref{mild_form} as a fixed-point problem.
    

    Well-established arguments (already known to Leray as early as 1934 \cite{Leray1934}) give the local well-posedness for \eqref{leray_form} when the initial data $u(0)$ is sufficiently regular (see also \cite{FujitaKato1964} or \cite[Theorem 5.2]{Bahouri-Chemin-Danchin-2011}, \cite[Chapter 17]{Taylor-pde-3-2011} and \cite[Chapter 7]{Lemarie-Riesset-Gilles-2016} for a modern exposition).
       
    The solution $u(t)$ can then be extended to a maximal time interval $t \in [0,T_{u(0)})$, and can be proved to satisfy the inequality
    \begin{equation}\label{energy_inequality}
        \|u(t)\|_{L_2(\Rl^d)\otimes \Rl^d} \leq \|u(0)\|_{L_2(\Rl^d)\otimes \Rl^d},\quad 0\leq t < T_{u_0}
    \end{equation}    
    whenever the initial data $u(0)$ is such that the right hand side is finite.
    The global well-posedness problem for given sufficiently regular initial data $u(0) \in L_2(\Rl^d)$ asks if $T_{u(0)} < \infty$ or $T_{u(0)} = \infty$. Leray
    established a {\it blow-up criterion}, which states that if the maximal time $T_{u(0)}$ is finite, then
    \begin{equation*}
        \sup_{0\leq t < T_{u(0)}} \|u(t)\|_{L_\infty(\Rl^d)\otimes \Rl^d} = \infty.
    \end{equation*}    
    Contrapositively, if $\|u(t)\|_{L_\infty(\Rl^d)\otimes \Rl^d}$ remains bounded for $t \in [0,T_{u(0)})$ then $T_{u(0)} = \infty$.
    Since the $L_2$ norm does not bound the $L_\infty$ norm, the $L_2$ norm inequality \eqref{energy_inequality} is not strong enough to prove global existence of solutions to \eqref{mild_form}. 
        
    The series of questions which motivated this paper are the following:
    \begin{enumerate}
        \item{} Can one formulate a suitable analogy of the incompressible Navier-Stokes equations in noncommutative Euclidean spaces?
        \item{} If so, can the local solution theory of these incompressible Navier-Stokes equations on $\Rl^d_\theta$ be developed in parallel to
                the classical case, and do we have the same result that solutions exist globally in time if their $L_\infty$ norm remains bounded?
        \item{} Finally, does \eqref{automatic_boundedness} imply that when $\det(\theta)\neq 0$ an analogy of the incompressible Navier-Stokes equations for $\Rl^d_\theta$ is globally well-posed in time?
    \end{enumerate}
    The answer to all three questions is affirmative, leading us to the potentially surprising result that one can prove global well-posedness for something analogous to the incompressible Navier-Stokes equations in the setting of $\Rl^d_\theta$ when $\det(\theta)\neq 0.$ Given the infamous difficulty of the corresponding problem on $\Rl^d$, this is a particularly striking
    example of the difference between commutative and noncommutative Euclidean spaces.
    
    Similar reasoning applies to some other evolution equations which conserve the $L_2$-norm, for example we can also consider analogies of the nonlinear Schr\"odinger equation, given
    in the commutative case as
    \[
        \ri\partial_t u = \Delta u + \mu u|u|^{p-1},\quad \mu\in \Rl.
    \]
    
%
    
\subsection{Motivation: The Meyer and L\"owner decompositions}
    A secondary motivation for this paper comes from a superficial similarity between the Meyer decomposition in paradifferential calculus and the L\"owner decomposition
    in the theory of operator-Lipschitz functions.
    Let $X$ and $Y$ be Banach spaces, with norms $\|\cdot\|_X$ and $\|\cdot\|_Y$ respectively. A function $F:X\to Y$ is
    said to be locally Lipschitz continuous if for every $R>0$ there exists a constant $C_R>0$ such that
    \[
        \|F(x_1)-F(x_2)\|_{Y} \leq C_R\|x_1-x_2\|_X,\quad \|x_1\|_X\leq R,\, \|x_2\|_X\leq R.
    \]
    That is, $F$ is Lipschitz continuous on bounded subsets of $X.$

    Let $F \in C^\infty(\Rl).$ In the classical setting of function spaces on $\Rl^d$, the nonlinear operation of applying a function composition
    \begin{equation*}
        u\mapsto F(u)
    \end{equation*}
    where $u$ belongs to some function space is sometimes called a Nemytskij operator \cite{Runst-Sickel-1996}. 
    It is an important problem to determine if a Nemytskij operator is locally Lipschitz between a given pair of function
    spaces on $\Rl^d.$ This is useful, for example, if one wishes to prove the existence of solutions to a nonlinear partial differential equation by fixed point iteration
    on some Banach function space.
    
    A well-known technique for studying Nemytskij operators is the so-called Meyer decomposition \cite[Chapter 13, Section 10]{Taylor-pde-3-2011}, \cite[Section 5.5.4]{Runst-Sickel-1996}. In a wide degree of generality, it is possible to
    find a pseudodifferential operator $m(F,u)$ and a smooth function $r(u)$ such that
    \begin{equation}\label{meyer_decomposition_intro}
        F(u) = m(F,u)u+r(u).
    \end{equation}
    Therefore, the function space mapping properties of the nonlinear operator $u\mapsto F(u)$ are reduced
    to the mapping properties of the linear pseudodifferential operator $m(F,u)$, up to a smooth error term. 
    The Meyer decomposition can also be applied to functions of several variables, $(u,v)\mapsto G(u,v).$ This is important in the study of differences of Nemytskij operators,
    as if $F$ is smooth then there exists a smooth function $G$ on $\Rl^2$ such that
    \begin{equation*}
        F(u)-F(v) = G(u,v)(u-v).
    \end{equation*}
    Applying the Meyer decomposition to $G$ and applying product estimates gives a method to prove local Lipschitz estimates for Nemytskij operators \cite[Chapter 2, Section 7]{Taylor-tools-for-pde-2000}.
    
    In a completely different setting, Lipschitz estimates are important in operator theory. If $A$ and $B$ are two bounded self-adjoint linear operators
    on a Hilbert space $H$, then a function $F$ on $\Rl$ is said to be operator Lipschitz if there exists a constant $C_F>0$ such that
    \[
        \|F(A)-F(B)\|_{\infty} \leq C_F\|A-B\|_{\infty}.
    \]
    Here, $\|\cdot\|_{\infty}$ is the operator norm and $F(A)$ and $F(B)$ are defined via functional calculus. 
    Not all Lipschitz functions are operator Lipschitz, even the absolute value function $F(t)=|t|$ is not operator Lipschitz \cite{Kato1973}.
    Similarly, one can also consider other Banach spaces of operators such as Schatten $\Lc_p$-spaces.
    
    One method to prove operator Lipschitz estimates comes from the theory of double operator integrals. This theory was originally invented by Daletskii and Krein \cite{Daletskii-Krein-1956}, and later extended by Birman and Solomyak \cite{Birman-Solomyak-I, Birman-Solomyak-II, Birman-Solomyak-III}. Some recent surveys on this topic are \cite{Aleksandrov-Peller-survey,Birman-Solomyak-2003,Peller-survey-2010,Peller-doi-2016}.
    The idea is to find a linear operator $\Ti^{A,B}_{F^{[1]}}$ on a space of operators such that
    \begin{equation}\label{lowner_formula_intro}
        F(A)-F(B) = \Ti^{A,B}_{F^{[1]}}(A-B).
    \end{equation}
    This is sometimes called a L\"owner identity, especially in the finite dimensional case.
    The intent is to study the properties of the nonlinear relationship between $A-B$ and $F(A)-F(B)$ by reducing them to the properties of the linear operator $\Ti^{A,B}_{F^{[1]}}$.
    
    Conceptually, the Meyer decomposition \eqref{meyer_decomposition_intro} and the L\"{o}wner identity \eqref{lowner_formula_intro} are similar since
    they both involve studying the mapping properties of a nonlinear operation on a Banach space by finding an equivalent linear operator. Despite these methods differing substantially in their technical details and domains of application, it is natural to ask if there is a mutual generalisation.
    
    Our goal is to combine the Meyer decomposition with double operator integration to develop a hybrid technique suitable for studying functional composition operators on noncommutative function spaces. This hybrid technique gives one possible shared generalisation of \eqref{meyer_decomposition_intro} and \eqref{lowner_formula_intro}.

\subsection{The difficulties with noncommutativity}
    We have already indicated that when $\det(\theta)\neq 0$, the ``measure theory" of $\Rl^d_\theta$ simplifies considerably.
    We wish to briefly explain why, despite this, there remain substantial mathematical difficulties in the analysis of $\Rl^d_\theta$. 
    
    The principal obstacle we have is that for $x,y \in L_\infty(\Rl^d_\theta)$ there is no bound
    \begin{equation*}
        |xy|^2 \leq \|y\|_\infty^2|x|^2.
    \end{equation*}
    Instead, one only has
    \begin{equation*}
        |yx|^2 \leq \|y\|_\infty^2|x|^2.
    \end{equation*}
    Here, $\leq$ is meant in the sense of operators, i.e. $A\leq B$ means that $\langle \xi,A\xi\rangle \leq \langle \xi,B\xi\rangle$ for all $\xi$ in the Hilbert space.
    This asymmetry between $|xy|$ and $|yx|$ is an obstacle to the study of the mapping properties of pseudodifferential operators
    on $L_p$-spaces for $\Rl^d_\theta$. The operators we study are generically of the form
    \begin{equation*}
        T(u) = \sum_{k=0}^\infty a_k\Delta_k(u)b_k,\quad u \in L_\infty(\Rl^d_\theta)
    \end{equation*}
    where $\{a_k\}_{k=0}^\infty$ and $\{b_k\}_{k=0}^\infty$ are sequences of elements of $L_\infty(\Rl^d_\theta)$, and $\{\Delta_k\}_{k=0}^\infty$
    is a Littlewood-Paley decomposition (to be defined below in Section \ref{lp_section}). Because there are no pointwise bounds $|xy|^2\leq \|y\|_\infty^2|x|^2$, even under restrictive conditions on $\{a_k\}$ and $\{b_k\}$ we have been unable to determine if operators of this form are bounded on $L_p$-Sobolev spaces. 
    Besov spaces are easier to treat in this regard, and this is why we work primarily with Besov spaces rather than Sobolev spaces.

\subsection{Main results}
    We summarise here the main results of this paper. Notation for Littlewood-Paley theory and Besov spaces will be introduced below in Section \ref{preliminaries_section}.
    
    The first result concerns boundedness on Besov spaces of a kind of pseudodifferential operator specified in terms of a Littlewood-Paley decomposition $\{\Delta_j\}_{j=0}^\infty$, to be defined below in Section \ref{lp_section}. This theorem will be proved in Section \ref{psdo_section}.
    \begin{theorem*}
        Define a linear operator of the form
        \begin{equation}\label{elementary_psdo_intro}
            T(u) = \sum_{j=0}^\infty a_j\Delta_j(u)b_j
        \end{equation}
        where $\{a_j\}$ and $\{b_j\}$ are sequences of smooth elements of $L_\infty(\Rl^d_\theta)$ such that for all multi-indices $\alpha\in \Ntrl^d$ we have
        \begin{equation*}
            \sup_{j\geq 0} 2^{-j|\alpha|}\|\partial^{\alpha} a_j\|_\infty < \infty,\quad \sup_{j\geq 0} 2^{-j|\alpha|}\|\partial^{\alpha}b_j\|_{\infty}.
        \end{equation*}
        The linear operator $T$ admits a bounded extension
        $$
            T:B^s_{p,q}(\Rl^d_\theta)\to B^s_{p,q}(\Rl^d_\theta)
        $$
        for all $s > 0$ and $p,q \in [1,\infty]$.
    \end{theorem*}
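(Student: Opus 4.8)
The plan is to estimate each Littlewood--Paley block $\Delta_m T(u)$ in $L_p(\Rl^d_\theta)$ and then reassemble the Besov norm. Fix $m \geq 0$ and write $\Delta_m T(u) = \sum_{j\geq 0}\Delta_m\bigl(a_j\Delta_j(u)b_j\bigr)$. To bound the summands I would decompose $a_j$ and $b_j$ into their own Littlewood--Paley pieces, $a_j = \sum_k \Delta_k a_j$ and $b_j = \sum_l \Delta_l b_j$, and split the triple sum over $(k,l)$ according to which of $k$, $l$, $j$ is largest. Writing $P_{\leq N} = \sum_{k\leq N}\Delta_k$ for the low-frequency cutoffs, this disjointly partitions $a_j\Delta_j(u)b_j$ into
\[
(P_{\leq j+2}a_j)\,\Delta_j(u)\,(P_{\leq j+2}b_j) \;+\; \sum_{k>j+2}(\Delta_k a_j)\,\Delta_j(u)\,(P_{\leq k}b_j) \;+\; \sum_{l>j+2}(P_{\leq l-1}a_j)\,\Delta_j(u)\,(\Delta_l b_j),
\]
a ``balanced'' term and two ``paraproduct-type'' terms.

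The ingredients are all elementary and, crucially, none is sensitive to the noncommutativity of $\Rl^d_\theta$: (a) the H\"older inequality $\|xyz\|_{L_p(\Rl^d_\theta)} \leq \|x\|_\infty\|y\|_{L_p(\Rl^d_\theta)}\|z\|_\infty$ for $x,z \in L_\infty(\Rl^d_\theta)$; (b) the uniform boundedness of the operators $\Delta_m$ and $P_{\leq N}$ on $L_\infty(\Rl^d_\theta)$ and on $L_p(\Rl^d_\theta)$, with constants independent of $m$ and $N$ (Section \ref{lp_section}); (c) the Bernstein-type bound $\|\Delta_k a_j\|_\infty \lesssim_N 2^{-kN}\sup_{|\alpha|=N}\|\partial^\alpha a_j\|_\infty \lesssim_N 2^{(j-k)N}$ for $k \geq 1$, and the same bound for $b_j$, both immediate from the hypotheses on $\{a_j\}$, $\{b_j\}$; and (d) the fact that the frequency support (the support of the noncommutative Fourier transform) of a product in $L_\infty(\Rl^d_\theta)$ is contained in the sum of the frequency supports of the factors, which holds because the Moyal product differs from ordinary convolution only by a unimodular phase. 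Using (a)--(c) one checks that each of the three terms above has $L_p$-norm $\lesssim_N \|\Delta_j(u)\|_{L_p}$ --- the geometric series in $k$ (resp.\ $l$) in the last two terms converging because of the decay in (c), and, decisively, the low-frequency contributions always being resummed into a single $P_{\leq N}$ before (a) is applied, so that no logarithmic factor in $j$ is produced. Using (d) one checks that the balanced term has frequency support in $\{|\xi|\lesssim 2^j\}$, while the $k$-th (resp.\ $l$-th) summand of the second (resp.\ third) term has frequency support in $\{|\xi|\lesssim 2^k\}$ (resp.\ $\{|\xi|\lesssim 2^l\}$). Discarding $\Delta_m$ by (b), this yields, for a fixed constant $C_0$ and every $N$,
\[
\|\Delta_m(a_j\Delta_j(u)b_j)\|_{L_p} \lesssim_N \|\Delta_j(u)\|_{L_p} \quad (j \geq m-C_0), \qquad \|\Delta_m(a_j\Delta_j(u)b_j)\|_{L_p} \lesssim_N 2^{(j-m)N}\|\Delta_j(u)\|_{L_p} \quad (j < m-C_0).
\]

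Summing over $j$ and multiplying by $2^{ms}$ gives
\[
2^{ms}\|\Delta_m T(u)\|_{L_p} \lesssim_N \sum_{j\geq m-C_0}2^{(m-j)s}\bigl(2^{js}\|\Delta_j(u)\|_{L_p}\bigr) + \sum_{j<m-C_0}2^{(m-j)(s-N)}\bigl(2^{js}\|\Delta_j(u)\|_{L_p}\bigr).
\]
This is where the hypothesis $s>0$ enters: in the variable $i = m-j$, the sequence $i \mapsto 2^{is}\mathbf{1}_{\{i\leq C_0\}}$ lies in $\ell_1(\Itgr)$ precisely because $s>0$, and choosing $N>s$ puts $i\mapsto 2^{i(s-N)}\mathbf{1}_{\{i>C_0\}}$ in $\ell_1(\Itgr)$ as well. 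Hence the sequence $\bigl(2^{ms}\|\Delta_m T(u)\|_{L_p}\bigr)_{m\geq 0}$ is dominated by the discrete convolution of a fixed $\ell_1(\Itgr)$ sequence with the Besov sequence $\bigl(2^{js}\|\Delta_j(u)\|_{L_p}\bigr)_{j\geq 0}$, and Young's inequality for sequences then gives $\|T(u)\|_{B^s_{p,q}(\Rl^d_\theta)} \lesssim \|u\|_{B^s_{p,q}(\Rl^d_\theta)}$ for all $p,q\in[1,\infty]$.

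I expect the only delicate point to be the bookkeeping in the middle step: the naive estimate $\sum_{k\leq j}\|\Delta_k a_j\|_\infty \lesssim j$ would be fatal, and one must always resum the low-frequency pieces of $a_j$ and $b_j$ into a single $P_{\leq N}$ before invoking H\"older, resorting to the geometric decay in (c) only for the genuinely high-frequency pieces with $k,l>j$. The restriction to Besov spaces --- rather than $L_p$-Sobolev spaces --- is exactly what makes the plain H\"older inequality (a) sufficient, sidestepping the asymmetry between $|xy|$ and $|yx|$ discussed above.
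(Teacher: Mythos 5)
Your proof is correct, and it takes a genuinely different route from the paper's. The paper (Theorem \ref{besov_mapping_theorem}, via Lemmas \ref{dyadic_block_lemma} and \ref{key_lemma_for_mapping}) does not decompose the symbols $a_j, b_j$ at all: instead it inserts the partition of unity $1 = \sum_{|\gamma|\le l} P_{\gamma,l}D^\gamma$ between $\Delta_m$ and $T_{a,b}\Delta_j u$, estimates $\|\Delta_m P_{\gamma,l}\|_{L_p\to L_p}\lesssim 2^{-m|\gamma|}$ via the Mikhlin-type bound (Corollary \ref{nc_mikhlin}), and distributes $D^\gamma$ across the triple product $a_j\Delta_j(u)b_j$ by the Leibniz rule, each derivative costing $2^j$. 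That gives the same cross-block gain $\bigl(2^{j-m}\bigr)^{l}$ that you obtain by paying a Bernstein factor $\|\Delta_k a_j\|_\infty\lesssim 2^{(j-k)N}$ for the high-frequency pieces of the symbols. The two mechanisms are morally dual --- the paper differentiates and inverts derivatives on the outside, you project the symbol to high frequency and exploit its smoothness --- and both ultimately rest on the same hypothesis $\|\partial^\alpha a_j\|_\infty\lesssim 2^{j|\alpha|}$ together with the fact that the $L_\infty$-$L_p$-$L_\infty$ H\"older inequality for the von Neumann algebra puts the symbols on the outside of the $L_p$-norm. Your approach does rely on one additional ingredient the paper invokes only later (in Section \ref{multiplication_section}), namely the support property $\supp(f\st g)\subseteq\supp(f)+\supp(g)$, in order to see which output blocks $\Delta_m$ survive. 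At the assembly stage you are actually a little slicker: the paper throws away the geometric gain in Lemma \ref{key_lemma_for_mapping}, proves the endpoints $B^0_{p,1}\to L_p$ and $B^{s+1}_{p,1}\to B^{s+1}_{p,\infty}$, and then real-interpolates (Theorem \ref{besov_interpolation}) to recover all $q$, whereas you keep the gain and close directly with Young's inequality for sequences, getting all $p,q\in[1,\infty]$ in one stroke with no interpolation theorem needed. Your comment on the necessity of resumming the low-frequency pieces of $a_j$ and $b_j$ into a single $P_{\le N}$ before H\"older is exactly the right diagnosis of where a naive term-by-term estimate would fail.
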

    This theorem is essentially a noncommutative version of the boundedness of pseudodifferential operators in the ``forbidden" symbol class $S^0_{1,1}$ on Besov spaces with a positive degree of smoothness. Compare \cite[Theorem 5.15]{Sawano2018}.
%
%
    
    The main purpose of the preceding theorem is to study non-linear composition operators 
    $$
    u\mapsto F(u),
    $$ 
    where $F \in C^\infty(\Rl)$ and $u\in L_\infty(\Rl^d_\theta)$ is self-adjoint. The main idea is to develop a kind of ``Meyer decomposition" based on the theory of double operator integration
    In brief, there exists a probability space $(\Omega,\mu)$ and functions $a_j:\Omega\to L_\infty(\Rl^d_\theta)$ and $b_j:\Omega\to L_\infty(\Rl^d_\theta)$ (depending on $u$)
    and a smooth remainder term $r(u)$ such that
    $$
        F(u) = r(u)+\int_{\Omega} \sum_{j=0}^\infty a_j(\omega)\Delta_j(u)b_j(\omega)\,d\mu(\omega).
    $$   
    We can treat the integrand as an operator of the form \eqref{elementary_psdo_intro} in order to apply the preceding theorem.
    
    In Section \ref{nemytskij_section} we obtain the following result that smooth functions on $\Rl$ are locally Lipschitz on $B^s_{p,q}(\Rl^d_\theta)\cap L_\infty(\Rl^d_\theta)$ for all $s > 0$.
    \begin{theorem*}
        Let $s > 0$ and $p,q\in [1,\infty]$ and let $u\in B^s_{p,q}(\Rl^d_\theta)\cap L_\infty(\Rl^d_\theta)$ be self-adjoint. If $F \in C^\infty(\Rl)$
        and $F(0)=0$, then
        $$
            F(u) \in B^s_{p,q}(\Rl^d_\theta).
        $$
    \end{theorem*}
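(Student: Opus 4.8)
The plan is to realise a noncommutative analogue of Meyer's paralinearization and feed it into the boundedness theorem for operators of the form \eqref{elementary_psdo_intro}. Since $u$ is self-adjoint with $\|u\|_\infty<\infty$, the Littlewood--Paley partial sums $S_k u:=\sum_{j<k}\Delta_j(u)$ satisfy $\|S_k u\|_\infty\leq C_0\|u\|_\infty$ with $C_0$ independent of $k$, so the spectra of $u$ and of every $S_k u$ lie in $[-R,R]$ with $R=C_0\|u\|_\infty$. Choosing $\chi\in C_c^\infty(\Rl)$ equal to $1$ on $[-R,R]$ we have $F(v)=(\chi F)(v)$ for each such $v$, so after replacing $F$ by $\chi F$ we may assume $F\in C_c^\infty(\Rl)$; then $\widehat F$ is Schwartz and $M_F:=\int_\Rl|\xi\widehat F(\xi)|\,d\xi<\infty$.

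Because $s>0$ we have $\sum_j\|\Delta_j(u)\|_p\leq\|u\|_{B^s_{p,\infty}}\sum_j2^{-js}<\infty$, so $u\in L_p(\Rl^d_\theta)$ and $S_ku\to u$ in $L_p(\Rl^d_\theta)$; moreover the Duhamel identity below shows $F$ is Lipschitz from the ball of radius $R$ in $L_\infty(\Rl^d_\theta)$ into $L_p(\Rl^d_\theta)$, so $F(S_ku)\to F(u)$ in $L_p(\Rl^d_\theta)$, and since $F(0)=0$ the telescoping identity $F(u)=\sum_{k\geq0}\big(F(S_{k+1}u)-F(S_ku)\big)$ holds in $L_p(\Rl^d_\theta)$. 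For each summand I would combine Fourier inversion, $(\chi F)(v)=\int_\Rl\widehat F(\xi)e^{\ri\xi v}\,d\xi$, with the Duhamel identity $e^{\ri\xi v}-e^{\ri\xi w}=\ri\xi\int_0^1e^{\ri r\xi v}(v-w)e^{\ri(1-r)\xi w}\,dr$ applied to $v=S_{k+1}u$, $w=S_ku$, $v-w=\Delta_k(u)$. With $\Omega=\Rl\times[0,1]$, the finite measure $d\mu(\xi,r)=|\xi\widehat F(\xi)|\,d\xi\,dr$ (a probability measure after rescaling by $M_F$), and the elements $a_k(\xi,r):=\sigma(\xi)e^{\ri r\xi S_{k+1}u}$ and $b_k(\xi,r):=e^{\ri(1-r)\xi S_ku}$ --- each of operator norm $1$, where $\sigma$ is the unimodular function $\ri\xi\widehat F(\xi)/|\xi\widehat F(\xi)|$ --- this yields
\[
    F(u)=\int_\Omega\sum_{k=0}^\infty a_k(\omega)\Delta_k(u)b_k(\omega)\,d\mu(\omega),
\]
the series and integral converging in $L_p(\Rl^d_\theta)$ since $\|a_k(\omega)\|_\infty=\|b_k(\omega)\|_\infty=1$ and $\sum_k\|\Delta_k(u)\|_p<\infty$. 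Under the hypothesis $F(0)=0$ no separate remainder term is needed.

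Next I would verify that, for each fixed $\omega=(\xi,r)$, the sequences $\{a_k(\omega)\}$, $\{b_k(\omega)\}$ meet the hypotheses of the theorem on \eqref{elementary_psdo_intro}, with constants polynomial in $\xi$. As $S_{k+1}u$ is band-limited at scale $2^{k+1}$, Bernstein's inequality on $\Rl^d_\theta$ gives $\|\partial^\gamma S_{k+1}u\|_\infty\lesssim_\gamma2^{k|\gamma|}\|u\|_\infty$ for every multi-index $\gamma$; expanding $\partial^\alpha\big(e^{\ri r\xi S_{k+1}u}\big)$ by iterated Duhamel into a finite sum --- of cardinality depending only on $\alpha$ --- of terms
\[
    (\ri r\xi)^m\!\!\int_{\sigma_0+\cdots+\sigma_m=1,\ \sigma_i\geq0}\!\!\! e^{\ri\sigma_0r\xi S_{k+1}u}\big(\partial^{\gamma_1}S_{k+1}u\big)e^{\ri\sigma_1r\xi S_{k+1}u}\cdots\big(\partial^{\gamma_m}S_{k+1}u\big)e^{\ri\sigma_mr\xi S_{k+1}u}\,d\sigma
\]
with $1\leq m\leq|\alpha|$ and $\gamma_1+\cdots+\gamma_m=\alpha$, and estimating (the exponentials are unitaries, the simplex has volume $1/m!$), one gets
\[
    \sup_{k\geq0}\,2^{-k|\alpha|}\|\partial^\alpha a_k(\xi,r)\|_\infty\lesssim_\alpha\big(1+|\xi|\,\|u\|_\infty\big)^{|\alpha|},
\]
and the same for $b_k(\xi,r)$; in particular each $a_k(\omega),b_k(\omega)$ is a smooth element of $L_\infty(\Rl^d_\theta)$. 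By the boundedness theorem, $T_\omega(w):=\sum_ka_k(\omega)\Delta_k(w)b_k(\omega)$ is bounded on $B^s_{p,q}(\Rl^d_\theta)$ for all $s>0$; inspecting its proof, whose constant depends only on finitely many of the numbers $\sup_k2^{-k|\alpha|}\|\partial^\alpha a_k(\omega)\|_\infty$ and $\sup_k2^{-k|\alpha|}\|\partial^\alpha b_k(\omega)\|_\infty$ with $|\alpha|\leq N$ for some $N=N(s,d)$, gives $\|T_\omega\|_{B^s_{p,q}\to B^s_{p,q}}\lesssim_{s,d,p,q}(1+|\xi|\,\|u\|_\infty)^{N}$. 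Then, applying $\Delta_j$ to the representation of $F(u)$, using Minkowski's integral inequality in the $\ell_q(L_p)$-norm defining $B^s_{p,q}$, and the rapid decay of $\widehat F$,
\[
    \|F(u)\|_{B^s_{p,q}}\leq\int_\Omega\|T_\omega(u)\|_{B^s_{p,q}}\,d\mu(\omega)\leq\|u\|_{B^s_{p,q}}\!\int_\Omega\!\|T_\omega\|_{B^s_{p,q}\to B^s_{p,q}}\,d\mu(\omega)\lesssim\|u\|_{B^s_{p,q}}\!\int_\Rl\!(1+|\xi|\,\|u\|_\infty)^N|\xi\widehat F(\xi)|\,d\xi<\infty.
\]

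I expect the main difficulty to be twofold. First, one must arrange that the paralinearization delivers operators of \emph{exactly} the form \eqref{elementary_psdo_intro}: noncommutativity prevents collapsing $a_k(\omega)\Delta_k(u)b_k(\omega)$ into a single Fourier multiplier, which is precisely why the forbidden-class ($S^0_{1,1}$-type) boundedness theorem is indispensable and why elementary product estimates do not suffice --- and one needs that theorem in the quantitative form in which the operator norm is controlled by finitely many of the seminorms $\sup_k2^{-k|\alpha|}\|\partial^\alpha a_k\|_\infty$. Second, the unitaries $a_k(\omega),b_k(\omega)$ and all their noncommutative derivatives must be controlled uniformly in $k$ with only polynomial dependence on $\xi$, so that the $\omega$-integral against $|\xi\widehat F(\xi)|$ converges; this rests on Bernstein's inequality for band-limited elements of $\Rl^d_\theta$ and on the bookkeeping of the iterated Duhamel expansion. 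The remaining issues --- the $L_p$-convergence of the telescoping series, the interchange of $\Delta_j$ with the $\omega$-integral, and the endpoint cases $p=\infty$ or $q=\infty$ --- are routine, and the same argument applied to the two-variable function $G$ satisfying $F(u)-F(v)=G(u,v)(u-v)$ yields the local Lipschitz estimate.
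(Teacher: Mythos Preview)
Your proposal is correct and follows essentially the same route as the paper: a telescoping Meyer decomposition $F(u)=\sum_k\big(F(S_{k+1}u)-F(S_ku)\big)$, the Fourier--Duhamel integral representation of each difference (which is exactly the paper's explicit $\BS^\infty$ decomposition of $F^{[1]}$ in Proposition~\ref{Cc_is_BS} combined with the L\"owner identity of Lemma~\ref{lowner_formula}), symbol bounds for $e^{\ri r\xi S_k u}$ via iterated Duhamel plus Bernstein (the paper's Theorem~\ref{smooth_coefficient}), and then Theorem~\ref{besov_mapping_theorem} with its explicit $M_{s+2}(a)M_{s+2}(b)$ control. Your only deviations are cosmetic---you bypass the abstract Birman--Solomyak language by writing down the specific kernel directly, and your shifted convention $S_0u=0$ absorbs the paper's separate remainder term $G(S_0u)\Delta_0u$ into the $k=0$ summand---so nothing substantive differs.
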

    The assumption that $F$ be smooth can certainly be weakened, however for the moment the main goal is to demonstrate the technique.
    
    A related and somewhat simpler result is the following product estimate, established in Section \ref{multiplication_section}.
    \begin{theorem*}
        Let $s> 0$ and $p,q\in [1,\infty]$. Let $u,v \in B^s_{p,q}(\Rl^d_\theta)\cap L_\infty(\Rl^d_\theta)$. Then $uv \in B^s_{p,q}(\Rl^d_\theta)$, and
        $$
            \|uv\|_{B^s_{p,q}} \lesssim_{s,p,q} \|u\|_{L_\infty}\|v\|_{B^s_{p,q}}+\|u\|_{B^s_{p,q}}\|v\|_{L_\infty}.
        $$
    \end{theorem*}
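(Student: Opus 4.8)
The plan is to run the classical Bony paraproduct argument, adapted to $\Rl^d_\theta$. The key conceptual point is that for Besov spaces it suffices to control the $L_p(\Rl^d_\theta)$-norm of each Littlewood--Paley block $\Delta_\ell(uv)$ separately and then take the $\ell^q$-norm of $(2^{\ell s}\|\Delta_\ell(uv)\|_{L_p})_\ell$. In particular, no ``square function'' estimate is needed, and hence — in contrast to the difficulties with $L_p$-Sobolev spaces flagged above — we never need a pointwise operator inequality of the type $|xy|^2\leq\|y\|_{L_\infty}^2|x|^2$. This is precisely why the noncommutativity is essentially harmless here: the only inputs about multiplication are the two-sided H\"older inequalities $\|xy\|_{L_p}\leq\|x\|_{L_\infty}\|y\|_{L_p}$ and $\|xy\|_{L_p}\leq\|x\|_{L_p}\|y\|_{L_\infty}$, both of which hold on $L_p(\Rl^d_\theta)$ provided one is careful to keep the bounded factor on the side where it belongs.

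First I would decompose the product. Writing $S_j u=\sum_{k\leq j}\Delta_k u$ for the low-frequency truncation (which makes sense in $L_\infty(\Rl^d_\theta)$ since $u,v\in L_\infty(\Rl^d_\theta)$), Bony's decomposition reads
\begin{equation*}
    uv \;=\; \sum_{k}(S_{k-2}u)(\Delta_k v)\;+\;\sum_{|j-k|\leq 1}(\Delta_j u)(\Delta_k v)\;+\;\sum_{j}(\Delta_j u)(S_{j-2}v).
\end{equation*}
The structural input I need from the harmonic analysis of Section~\ref{preliminaries_section} is the behaviour of noncommutative Fourier supports under multiplication: if $a$ has Fourier support in $A\subseteq\Rl^d$ and $b$ in $B\subseteq\Rl^d$, then $ab$ has Fourier support in $A+B$. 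Consequently the $k$-th summand of the first (low--high) sum is spectrally localised in an annulus $\{|\xi|\sim 2^k\}$, the $j$-th summand of the third (high--low) sum at scale $2^j$, and each summand of the middle (resonant) sum is localised in a ball $\{|\xi|\lesssim 2^{\max(j,k)}\}$.

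Next I would estimate the three pieces. For the low--high term, annular localisation gives $\Delta_\ell\big(\sum_k(S_{k-2}u)(\Delta_k v)\big)=\sum_{|k-\ell|\leq N_0}\Delta_\ell\big((S_{k-2}u)(\Delta_k v)\big)$ for a fixed $N_0$; combining the uniform bound $\|S_{k-2}u\|_{L_\infty}\lesssim\|u\|_{L_\infty}$ (Fourier multipliers by bump functions are uniformly bounded on $L_\infty(\Rl^d_\theta)$), the H\"older inequality $\|xy\|_{L_p}\leq\|x\|_{L_\infty}\|y\|_{L_p}$, and the uniform boundedness of $\Delta_\ell$ on $L_p(\Rl^d_\theta)$ yields $\|\Delta_\ell(\cdots)\|_{L_p}\lesssim\|u\|_{L_\infty}\sum_{|k-\ell|\leq N_0}\|\Delta_k v\|_{L_p}$, so that $(2^{\ell s}\|\Delta_\ell(\cdots)\|_{L_p})_\ell$ has $\ell^q$-norm $\lesssim\|u\|_{L_\infty}\|v\|_{B^s_{p,q}}$. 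The high--low term is handled identically, but now the bounded factor $S_{j-2}v$ sits on the right, so one uses $\|xy\|_{L_p}\leq\|x\|_{L_p}\|y\|_{L_\infty}$, producing $\|u\|_{B^s_{p,q}}\|v\|_{L_\infty}$. For the resonant term, ball localisation means $\Delta_\ell$ only sees summands with $\max(j,k)\geq\ell-N_1$; estimating $\|(\Delta_j u)(\Delta_k v)\|_{L_p}\leq\|\Delta_j u\|_{L_\infty}\|\Delta_k v\|_{L_p}\lesssim\|u\|_{L_\infty}\|\Delta_k v\|_{L_p}$ and summing over $|j-k|\leq 1$, $j\geq\ell-N_1$ gives $2^{\ell s}\|\Delta_\ell(\cdots)\|_{L_p}\lesssim\|u\|_{L_\infty}\sum_{j\geq\ell-N_1}2^{(\ell-j)s}\,2^{js}\|\Delta_j v\|_{L_p}$, which is a convolution of $(2^{js}\|\Delta_j v\|_{L_p})_j\in\ell^q$ with the sequence $m\mapsto 2^{ms}\mathbf{1}_{\{m\leq N_1\}}$; since $s>0$ the latter is summable, and Young's inequality in $\ell^q$ bounds the resonant contribution by $\|u\|_{L_\infty}\|v\|_{B^s_{p,q}}$. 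Adding the three estimates gives the claim, and in particular $uv\in B^s_{p,q}(\Rl^d_\theta)$.

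I expect the only place requiring genuine thought to be the resonant term, where positivity of $s$ is essential to make the geometric series converge, exactly as in the commutative theory; and the only concession to the noncommutative setting is the bookkeeping of left/right placement of factors in the two-sided H\"older inequalities. Modulo that, the argument is routine once the Littlewood--Paley calculus of Section~\ref{preliminaries_section} — Bernstein inequalities, the $A+B$ rule for Fourier supports of products, and uniform $L_p$- and $L_\infty$-boundedness of the blocks $\Delta_\ell$ and the truncations $S_j$ — is in place.
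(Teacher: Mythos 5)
Your proposal is correct and gives a genuinely different route from the paper's. The paper also begins from the Bony decomposition $uv = \pilo(u,v)+\res(u,v)+\pihi(u,v)$, but it treats the two paraproducts $v\mapsto\pilo(u,v)$ and $u\mapsto\pihi(u,v)$ as \emph{elementary pseudodifferential operators} and then simply quotes the heavy Theorem \ref{besov_mapping_theorem}, verifying its hypotheses via the bounds $\sup_{j,\alpha}2^{-j|\alpha|}\|D^\alpha S_{j-3}u\|_\infty\lesssim\|u\|_\infty$ from Proposition \ref{approximation_facts}; the resonant term is then handled by a separate bilinear estimate (the paper actually proves the more general form $\|\res(u,v)\|_{B^{s_0+s_1}_{p,q}}\lesssim\|u\|_{B^{s_0}_{p_0,q_0}}\|v\|_{B^{s_1}_{p_1,q_1}}$ for $s_0+s_1>0$ and H\"older-dual exponents, then specialises to $s_0=0$, $p_0=q_0=\infty$ and the embedding $L_\infty\subset B^0_{\infty,\infty}$). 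Your argument instead proves the paraproduct bounds directly, using only the $A+B$ rule for noncommutative Fourier supports, the uniform $L_\infty$-bound on $S_j$, the two-sided H\"older inequalities with the bounded factor kept on the correct side, and the uniform $L_p$-boundedness of $\Delta_\ell$. This is more elementary: you never need the derivative estimates on $S_j u$, nor interpolation, nor the machinery of Theorem \ref{besov_mapping_theorem} (which the paper develops chiefly for the Nemytskij-operator bounds in Section \ref{nemytskij_section}, where the coefficients $a_j,b_j$ are genuinely more complicated than $S_j u$). What the paper's route buys is uniformity of method across Sections \ref{psdo_section}--\ref{nemytskij_section}; what yours buys is a shorter self-contained proof of the product estimate, and it crisply isolates — as you note at the outset — why noncommutativity is harmless at the Besov level, since no square-function or pointwise $|xy|^2\leq\|y\|_\infty^2|x|^2$ bound is ever invoked.
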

    This result is proved by splitting the pointwise product $uv$ into three terms, in a manner similar to the well-known Bony decomposition. Similar results have appeared in unpublished work by G.~Hong. The corresponding commutative results are well known, see e.g. \cite[Theorem 4.36]{Sawano2018}, \cite[Chapter 4]{Runst-Sickel-1996}. 
    
    Combining the above nonlinear composition operator and product mapping estimates, we also obtain the following local Lipschitz estimate:
    \begin{theorem*}
        Let $s > 0$, $p,q \in [1,\infty]$ and $F \in C^\infty(\mathbb{R}).$
        If $u,v \in L_\infty(\Rl^d_\theta)\cap B^s_{p,q}(\Rl^d_\theta)$ are self-adjoint then there exists a constant $C_{F,\|u\|_\infty,\|u\|_{B^{s}_{p,q}},\|v\|_\infty,\|v\|_{B^s_{p,q}}}$
        such that
        $$
            \|F(u)-F(v)\|_{B^s_{p,q}} \leq C_{F,\|u\|_\infty,\|u\|_{B^{s}_{p,q}},\|v\|_\infty,\|v\|_{B^s_{p,q}}}\|u-v\|_{B^s_{p,q}}.
        $$
        In other words, the Nemytskij operator $u\mapsto F(u)$ is locally Lipschitz on the self-adjoint subspace of $B^s_{p,q}(\Rl^d_\theta)\cap L_{\infty}(\Rl^d_\theta).$
    \end{theorem*}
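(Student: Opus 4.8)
The plan is to reduce the difference $F(u)-F(v)$ to a single operator of the form \eqref{elementary_psdo_intro} applied to $u-v$, plus genuinely lower-order terms, and then invoke the three preceding theorems; this is the operator-theoretic counterpart of the real-variable argument of Taylor indicated after \eqref{meyer_decomposition_intro}. Since $u$ and $v$ are self-adjoint with norms at most $R:=\max\{\|u\|_{L_\infty},\|v\|_{L_\infty}\}$, only the restriction of $F$ to $[-R,R]$ matters, so after multiplying $F$ by a fixed smooth cutoff I may assume $F\in C_c^\infty(\Rl)$ with all derivatives bounded by constants depending on $F$ and $R$. The starting point is the L\"owner-type identity
\begin{equation*}
F(u)-F(v)=\Ti^{u,v}_{F^{[1]}}(u-v),\qquad F^{[1]}(x,y)=\int_0^1 F'(tx+(1-t)y)\,dt,
\end{equation*}
where $\Ti^{u,v}_{F^{[1]}}$ is the double operator integral built from the (now compactly supported and smooth) divided difference $F^{[1]}$ on $\Rl^2$; equivalently one writes $F(u)-F(v)=\int_0^1\frac{d}{dt}F(v+t(u-v))\,dt$ and uses the Daletskii--Krein formula for the derivative. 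Taking $v=0$ recovers the input of the Nemytskij theorem above.

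Next I would run the same paradifferential bookkeeping that underlies that theorem, but now tracking two operators. Writing $F^{[1]}(x,y)=\int_\Omega\alpha_\omega(x)\beta_\omega(y)\,d\mu(\omega)$ for a separation of variables over a probability space with smooth, uniformly controlled $\alpha_\omega,\beta_\omega$, inserting a Littlewood--Paley decomposition of the middle factor $u-v$, and performing a Bony-type paraproduct splitting, one reaches
\begin{equation*}
F(u)-F(v)=r(u,v)+\int_\Omega\sum_{j=0}^\infty a_j(\omega)\,\Delta_j(u-v)\,b_j(\omega)\,d\mu(\omega),
\end{equation*}
in which $a_j(\omega),b_j(\omega)$ are obtained by feeding the partial sums $S_j u=\sum_{k<j}\Delta_k u$ and $S_j v$ into $\alpha_\omega,\beta_\omega$, while $r(u,v)$ collects the resonant interactions, the paraproduct remainders, and the low-frequency block. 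By Fa\`a di Bruno's formula together with the Bernstein inequality for $\Rl^d_\theta$ applied to $S_j u$ and $S_j v$, the coefficients satisfy $\sup_j 2^{-j|\gamma|}\|\partial^\gamma a_j(\omega)\|_{L_\infty}<\infty$ and $\sup_j 2^{-j|\gamma|}\|\partial^\gamma b_j(\omega)\|_{L_\infty}<\infty$ for every $\gamma\in\Ntrl^d$, with bounds that are $\mu$-integrable in $\omega$ and depend only on $F$ and $R$. The first theorem of the paper then applies to the inner sum for each $\omega$, so the integral term has $B^s_{p,q}(\Rl^d_\theta)$-norm $\lesssim_{s,p,q} C(F,R)\,\|u-v\|_{B^s_{p,q}(\Rl^d_\theta)}$. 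The remainder $r(u,v)$ is of lower order: its resonant and para-remainder pieces are estimated by the product theorem, with the Besov norms of $\alpha_\omega(S_j u)$ and $\beta_\omega(S_j v)$ controlled via the Nemytskij theorem by a constant depending on $F,R$ and $\|u\|_{B^s_{p,q}},\|v\|_{B^s_{p,q}}$, and the low-frequency block involves only a bounded number of Littlewood--Paley blocks of $u$ and $v$ and is handled the same way; in every such estimate the factor carrying $u-v$ enters through an $L_p(\Rl^d_\theta)$-type norm, which for $s>0$ is dominated by $\|u-v\|_{B^s_{p,q}(\Rl^d_\theta)}$. Summing the contributions gives the asserted inequality with $C=C_{F,\|u\|_{L_\infty},\|v\|_{L_\infty},\|u\|_{B^s_{p,q}},\|v\|_{B^s_{p,q}}}$.

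The hard part is setting up the two-operator decomposition so that the highest-frequency factor is always routed onto $u-v$: this is what preserves the cancellation making the estimate scale like $\|u-v\|_{B^s_{p,q}}$ rather than merely returning separate bounds for $F(u)$ and $F(v)$, and it forces one to verify that each component of $r(u,v)$ is genuinely lower order. A secondary technical point is that the crudest paraproduct interactions produce a factor $\|u-v\|_{L_\infty}$; this is controlled by $\|u\|_{L_\infty}+\|v\|_{L_\infty}$, and when $\det(\theta)\neq0$ the inclusions $B^s_{p,q}(\Rl^d_\theta)\hookrightarrow L_p(\Rl^d_\theta)\subseteq L_\infty(\Rl^d_\theta)$ (valid here since $s>0$) upgrade this to $\|u-v\|_{L_\infty}\lesssim_\theta\|u-v\|_{B^s_{p,q}(\Rl^d_\theta)}$, so that no loss is incurred in the strictly noncommutative case.
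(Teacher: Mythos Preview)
Your overall strategy is sound and lands in the same place as the paper, but the paper's route after the L\"owner formula is considerably shorter. Once you have
\[
F(u)-F(v)=\int_{\Omega}\alpha(u,\omega)\,(u-v)\,\beta(v,\omega)\,d\mu(\omega),
\]
the paper does \emph{not} re-open the paradifferential machinery. It simply applies the product estimate (Corollary~\ref{product_estimate}) twice to the integrand, obtaining
\[
\|\alpha(u,\omega)(u-v)\beta(v,\omega)\|_{B^s_{p,q}\cap L_\infty}\lesssim \|\alpha(u,\omega)\|_{B^s_{p,q}\cap L_\infty}\,\|u-v\|_{B^s_{p,q}\cap L_\infty}\,\|\beta(v,\omega)\|_{B^s_{p,q}\cap L_\infty},
\]
and then invokes the Nemytskij mapping theorem (Theorem~\ref{nemytskij_mapping}) to bound $\|\alpha(u,\omega)\|_{B^s_{p,q}}$ and $\|\beta(v,\omega)\|_{B^s_{p,q}}$ in terms of $\|u\|_{B^s_{p,q}\cap L_\infty}$, $\|v\|_{B^s_{p,q}\cap L_\infty}$, with $\mu$-integrable dependence on $\omega$ coming from $F^{[1]}\in\BS^\infty$. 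Integrating over $\Omega$ finishes the proof. The Bony decomposition is already hidden inside the product estimate, so there is no need to unpack it again.

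Your version works too, but note a conflation in the description: a Bony paraproduct of $\alpha_\omega(u)\cdot(u-v)\cdot\beta_\omega(v)$ produces coefficients $S_{j-3}(\alpha_\omega(u))$ and $S_{j-3}(\beta_\omega(v))$, \emph{not} $\alpha_\omega(S_j u)$ and $\beta_\omega(S_j v)$. The latter arise from a Meyer-type telescope $\sum_j\bigl(F(S_j w)-F(S_{j-1}w)\bigr)$, which is a different construction. Both sets of coefficients satisfy the $S^0_{1,1}$ bounds (via Proposition~\ref{homogeneous_bound} in the first case and Theorem~\ref{smooth_coefficient} in the second), so either reading can be made rigorous, but they should not be identified. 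The paper's argument sidesteps this entirely by treating the product estimate as a black box.
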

    
    Given local Lipschitz estimates for Nemytskij operators and some easily proved details about the heat semigroup $e^{t\Delta}$ on $L_p$-spaces, standard fixed point arguments render it possible to deduce the following well-posedness result:
    \begin{theorem*}
        Let $F \in C^\infty(\Rl)$ be real-valued, and let $u_0 \in B^s_{\infty,\infty}(\Rl^d_\theta)$ for some $s > 0$ be self-adjoint. Then there exists $T_{u_0} > 0$
        and a unique maximal solution
        \begin{equation*}
            u \in C([0,T_{u_0}),B^s_{\infty,\infty}(\Rl^d_\theta))\cap C^\infty((0,T_{u_0}),\cap_{r > 0} B^r_{\infty,\infty}(\Rl^d_\theta))
        \end{equation*}
        to the equation
        $$
            \frac{\partial u}{\partial t} = \Delta u+F(u),\quad 0 < t < T_{u_0}
        $$
        with initial condition $u_0$.
    \end{theorem*}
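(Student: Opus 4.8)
\emph{Proof strategy.} The plan is to recast the equation in mild (Duhamel) form and solve it by the Banach fixed point theorem, taking as the essential nonlinear input the local Lipschitz estimate for $u \mapsto F(u)$ on the self-adjoint part of $B^s_{\infty,\infty}(\Rl^d_\theta)$ proved above, and then to upgrade the solution to a smooth one for positive times by a parabolic bootstrap. Two observations let the earlier theorems be applied directly. First, although the composition theorems are stated with $F(0) = 0$, writing $F = \tilde F + F(0)\cdot 1$ with $\tilde F(0) = 0$ and noting that the unit of $L_\infty(\Rl^d_\theta)$ lies in $B^s_{\infty,\infty}(\Rl^d_\theta)$ -- its only nonzero Littlewood--Paley block is $\Delta_0(1) = 1$ -- shows that both the mapping statement and the local Lipschitz estimate continue to hold for $F$ itself; indeed, applying the Lipschitz estimate with $v = 0$ already bounds $\|F(u)\|_{B^s_{\infty,\infty}}$ by a constant depending only on $F$, $\|u\|_\infty$ and $\|u\|_{B^s_{\infty,\infty}}$. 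Second, since $s > 0$ we have $B^s_{\infty,\infty}(\Rl^d_\theta) \subseteq L_\infty(\Rl^d_\theta)$, so the condition ``$u \in B^s_{p,q} \cap L_\infty$ self-adjoint'' reduces here to ``$u \in B^s_{\infty,\infty}$ self-adjoint'' with $\|u\|_\infty \lesssim \|u\|_{B^s_{\infty,\infty}}$. Finally, because $F$ is real-valued, $F(u)$ is self-adjoint whenever $u$ is, and $e^{t\Delta}$, being the Fourier multiplier with the real symbol $e^{-t|\xi|^2}$, preserves self-adjointness; so the whole scheme may be run inside the \emph{closed} self-adjoint subspace.

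The heat-semigroup facts required are the elementary ones. Since a Besov norm only registers the frequency localisation of the Littlewood--Paley blocks, the multiplier $e^{-t|\xi|^2}$ acts on $B^r_{\infty,\infty}(\Rl^d_\theta)$ exactly as on $B^r_{\infty,\infty}(\Rl^d)$, giving
\[
\|e^{t\Delta}f\|_{B^r_{\infty,\infty}} \lesssim \|f\|_{B^r_{\infty,\infty}},\qquad \|e^{t\Delta}f\|_{B^{r+2\sigma}_{\infty,\infty}} \lesssim t^{-\sigma}\|f\|_{B^r_{\infty,\infty}}\quad(\sigma > 0),
\]
with constants uniform for $t$ in bounded subsets of $(0,\infty)$. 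On the closed, self-adjoint ball $\{u : \sup_{0 \leq t \leq T}\|u(t) - e^{t\Delta}u_0\|_{B^s_{\infty,\infty}} \leq R\}$ inside $L_\infty([0,T],B^s_{\infty,\infty}(\Rl^d_\theta))$ I would consider
\[
\Phi(u)(t) = e^{t\Delta}u_0 + \int_0^t e^{(t-s)\Delta}F(u(s))\,ds.
\]
The uniform boundedness of $e^{t\Delta}$ together with the bound $\|F(u(s))\|_{B^s_{\infty,\infty}} \leq K$ on this ball (with $K$ depending on $F$ and on $\|u_0\|_{B^s_{\infty,\infty}} + R$) shows that $\Phi$ maps the ball into itself once $T$ is small, and the local Lipschitz estimate for $F$ gives $\sup_t\|\Phi(u)(t) - \Phi(v)(t)\|_{B^s_{\infty,\infty}} \leq C'T\sup_t\|u(t) - v(t)\|_{B^s_{\infty,\infty}}$, a contraction for $T$ small. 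The fixed point is the mild solution; it is continuous in $t$ on $(0,T]$ (the Duhamel term is, and $t \mapsto e^{t\Delta}u_0$ is strongly continuous away from $0$ because $e^{t_0\Delta}u_0$ is smooth), while continuity at $t = 0$ in the $B^s_{\infty,\infty}$-norm is the one genuinely delicate point, since $e^{t\Delta}$ is not strongly continuous there; this is handled in the usual way, either by passing to the closure of $\Sc(\Rl^d_\theta)$ in $B^s_{\infty,\infty}$ or by reading the continuity in the weak-$\ast$ sense. Uniqueness of mild solutions on a common interval follows from the Lipschitz estimate and Grönwall's inequality, and the union of all local solutions is the maximal solution, defined on a maximal interval $[0,T_{u_0})$.

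To obtain $u \in C^\infty\big((0,T_{u_0}), \cap_{r>0}B^r_{\infty,\infty}(\Rl^d_\theta)\big)$ I would run the standard parabolic bootstrap. For $0 < t < T_{u_0}$ write $u(t) = e^{(t/2)\Delta}u(t/2) + \int_{t/2}^t e^{(t-s)\Delta}F(u(s))\,ds$; the smoothing estimate places $e^{(t/2)\Delta}u(t/2)$ in $\cap_{r>0}B^r_{\infty,\infty}$, and since $\|F(u(s))\|_{B^s_{\infty,\infty}}$ is bounded on $[t/2,t]$ while $\int_{t/2}^t (t-s)^{-\sigma}\,ds < \infty$ for $\sigma < 1$, the integral lies in $B^{s+2-\varepsilon}_{\infty,\infty}$ for every $\varepsilon > 0$. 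Thus $u(t) \in B^{s+2-\varepsilon}_{\infty,\infty}$ for all $t > 0$; re-applying the composition theorem at this higher smoothness and repeating raises the regularity by almost two derivatives each time, so $u(t) \in \cap_{r>0}B^r_{\infty,\infty}$ for all $t > 0$, locally uniformly. Smoothness in $t$ then follows from the equation: for $t > 0$ the right-hand side $\Delta u + F(u)$ is continuous into every $B^r_{\infty,\infty}$, so $u$ is $C^1$ in $t$, and differentiating repeatedly, using the differentiability of the functional calculus $u \mapsto F(u)$ -- the double operator integral identity $\partial_t F(u(t)) = \Ti^{u(t),u(t)}_{F^{[1]}}(\partial_t u(t))$ underlying the Nemytskij estimate, together with the analogous identities for $F', F'', \dots$ -- propagates regularity to all orders. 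I expect this bootstrap to be the most laborious portion of the argument, but it introduces no essential new difficulty in the noncommutative setting: heat smoothing on $B^r_{\infty,\infty}(\Rl^d_\theta)$, boundedness and Lipschitz continuity of $u \mapsto F(u)$ on $B^r_{\infty,\infty}(\Rl^d_\theta)$ for every $r > 0$, and differentiability of the functional calculus are all already in hand, and the only feature truly peculiar to $\Rl^d_\theta$ that needs watching is the low-regularity behaviour of $e^{t\Delta}u_0$ as $t \to 0$.
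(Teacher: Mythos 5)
Your proposal takes essentially the same approach as the paper: Duhamel reformulation, Banach fixed point on $C([0,T],B^s_{\infty,\infty}(\Rl^d_\theta))$ enabled by the local Lipschitz estimate for Nemytskij operators, and a parabolic bootstrap using the smoothing $\|e^{t\Delta}\|_{B^{\alpha}_{\infty,\infty}\to B^{\alpha+\delta}_{\infty,\infty}}\lesssim 1+t^{-\delta/2}$ to obtain $u(t)\in\bigcap_{r>0}B^r_{\infty,\infty}$ for $t>0$, finishing with the differentiability of the functional calculus to get smoothness in time. The paper packages the iteration and blow-up alternative in an abstract existence theorem, gains $1/2$ derivative per bootstrap step rather than $2-\varepsilon$, and cites Cazenave--Haraux to upgrade mild to classical solutions rather than differentiating the equation directly -- all cosmetic differences; and your flag that strong continuity of $e^{t\Delta}$ at $t=0$ on $B^s_{\infty,\infty}$ is the genuinely delicate point is well taken, as the paper's corollary asserting the $C_0$-property there is argued rather briskly.
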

    
    These theorems so far represent noncommutative analogies of known theorems in the commutative case, covered in e.g. \cite{Bahouri-Chemin-Danchin-2011, Taylor-pde-3-2011,Runst-Sickel-1996}.
    The most interesting applications result from restricting attention to the strictly noncommutative $\det(\theta)\neq 0$ case. Recall
    that one form of nonlinear Schr\"odinger equation is stated as
    $$
        \ri \frac{\partial u}{\partial t} = \Delta u+\mu u|u|^{p-1}
    $$
    where $\mu\in \Rl$ and $p\geq 1$ \cite{GinibreVelo1979}. This equation is called \emph{focusing} or \emph{defocusing} depending on $\mu < 0$
    or $\mu > 0$ respectively. It is known that the focusing nonlinear Schr\"odinger equation is ill-posed \cite[Theorem 6.5.10]{Cazenave-semilinear-2003}.
    
    For the strictly noncommutative case of $\Rl^d_\theta$, we have the following:
    \begin{theorem*}
        Assume that $\det(\theta)\neq 0$. Let $\mu\in \Rl$ and let $p>1$ be an odd integer. For any $u_0 \in L_2(\Rl^d_\theta)$, there exists a unique 
        $$
            u \in C([0,\infty),L_2(\Rl^d_\theta))
        $$
        solving the nonlinear Schr\"odinger equation
        \begin{equation}\label{nonlinear_schrodinger_intro}
            \ri \frac{\partial u}{\partial t} = \Delta u + \mu u|u|^{p-1}
        \end{equation} 
        with initial condition $u_0$, in the mild sense.
    \end{theorem*}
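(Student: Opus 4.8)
The plan is to treat \eqref{nonlinear_schrodinger_intro}, at the level of mild solutions, as a semilinear ODE in the Banach space $L_2(\Rl^d_\theta)$, exploiting that the hypothesis $\det(\theta)\neq 0$ removes every analytic difficulty. The first step is to observe that \eqref{automatic_boundedness}, together with the $L_\infty$-bimodule inequality $\|axb\|_{L_2}\leq\|a\|_{L_\infty}\|x\|_{L_2}\|b\|_{L_\infty}$ for noncommutative $L_2$-spaces, makes $L_2(\Rl^d_\theta)$ a Banach $\ast$-algebra: $\|xy\|_{L_2}\leq\|x\|_{L_\infty}\|y\|_{L_2}\leq\det(2\pi\theta)^{1/4}\|x\|_{L_2}\|y\|_{L_2}$. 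Since $p$ is odd, $m:=(p-1)/2\in\Ntrl$ and the nonlinearity is the degree-$p$ noncommutative monomial $N(u):=\mu\,u|u|^{p-1}=\mu\,u(u^{\ast}u)^{m}$; hence $N$ maps $L_2(\Rl^d_\theta)$ into itself, and expanding $N(u)-N(v)$ as a telescoping sum of monomials each containing a single factor $u-v$ or $u^{\ast}-v^{\ast}$ and estimating all remaining factors in $L_\infty$ via \eqref{automatic_boundedness} yields the local Lipschitz bound $\|N(u)-N(v)\|_{L_2}\lesssim_{\mu,p,\theta}(\|u\|_{L_2}+\|v\|_{L_2})^{p-1}\|u-v\|_{L_2}$. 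I would record separately the elementary fact, analogous to the properties of the heat semigroup used earlier, that $\Delta$ is a nonpositive self-adjoint operator on $L_2(\Rl^d_\theta)$, realised on the Fourier side as multiplication by $-|\xi|^2$, so that $\{e^{-\ri t\Delta}\}_{t\in\Rl}$ is a strongly continuous one-parameter group of unitaries on $L_2(\Rl^d_\theta)$.

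With these two ingredients, local well-posedness becomes a routine contraction-mapping argument: on a ball of radius $2\|u_0\|_{L_2}$ in $C([0,T],L_2(\Rl^d_\theta))$ the Duhamel map $\Phi(u)(t)=e^{-\ri t\Delta}u_0-\ri\int_0^te^{-\ri(t-s)\Delta}N(u(s))\,ds$ is a self-map and a contraction once $T=T(\|u_0\|_{L_2})$ is small enough, depending only on $\|u_0\|_{L_2}$. This produces a unique mild solution on a short interval, and local uniqueness spreads to uniqueness on any interval of existence by the usual connectedness argument.

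The global statement reduces to conservation of the $L_2$-norm, which I would establish directly for mild solutions by conjugating away the free evolution: setting $v(t):=e^{\ri t\Delta}u(t)$, the Duhamel formula gives $v(t)=u_0-\ri\int_0^te^{\ri s\Delta}N(u(s))\,ds$, whose integrand is norm-continuous in $t$ because $s\mapsto u(s)$, the map $N$, and the group $e^{\ri s\Delta}$ are all continuous; hence $v\in C^1([0,T],L_2(\Rl^d_\theta))$ with $v'(t)=-\ri e^{\ri t\Delta}N(u(t))$, and no reference to the domain of $\Delta$ is needed. Writing $\langle a,b\rangle=\tau(b^{\ast}a)$ and using that $e^{\ri t\Delta}$ is unitary,
\[
\frac{d}{dt}\|v(t)\|_{L_2}^{2}=2\,\re\langle v'(t),v(t)\rangle=2\,\re\bigl(-\ri\langle N(u(t)),u(t)\rangle\bigr)=2\mu\,\re\bigl(-\ri\,\tau(|u(t)|^{p+1})\bigr)=0,
\]
because $|u(t)|^{p+1}=(u(t)^{\ast}u(t))^{(p+1)/2}$ is a nonnegative element of $L_1(\Rl^d_\theta)$, so $\tau(|u(t)|^{p+1})\in[0,\infty)$. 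Consequently $\|u(t)\|_{L_2}=\|v(t)\|_{L_2}=\|u_0\|_{L_2}$ throughout the interval of existence. Finiteness of the maximal forward time $T_{\max}$ is then excluded by a standard continuation step: restarting the contraction argument from a time $t_0\in(T_{\max}-T(\|u_0\|_{L_2}),T_{\max})$ with data $u(t_0)$, whose $L_2$-norm is again $\|u_0\|_{L_2}$, produces a solution on an interval of the fixed length $T(\|u_0\|_{L_2})$ and, by uniqueness on the overlap, extends $u$ beyond $T_{\max}$ -- a contradiction. Hence $T_{\max}=\infty$ and $u\in C([0,\infty),L_2(\Rl^d_\theta))$.

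The honest assessment is that this theorem has no hard core: this is precisely the ``radical simplification'' the paper advertises, since $\det(\theta)\neq 0$ turns a problem whose classical counterpart on $\Rl^d$ genuinely requires dispersive (Strichartz) estimates for rough data into a subcritical Banach-space ODE. The two points that deserve some care are (i) establishing cleanly that $(L_2(\Rl^d_\theta),\cdot)$ is a Banach algebra with the stated constant, which is immediate from \eqref{automatic_boundedness} and the bimodule property; and (ii) making the conservation law rigorous for genuinely mild (rather than strong) solutions, which is why the argument above passes to $v(t)=e^{\ri t\Delta}u(t)$ instead of differentiating $u$ directly. I expect (ii) to be the only step where one must be at all careful; everything else is the classical Kato-type Picard and continuation scheme for semilinear evolution equations transcribed to the noncommutative setting.
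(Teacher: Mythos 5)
Your proof is correct, and it takes a genuinely different and in one respect cleaner route than the paper's. The paper's detailed argument (the lemma in Subsection~6.2) first establishes local well-posedness via the abstract Cauchy-problem machinery (Theorems~\ref{abstract_existence_theorem} and \ref{schr_local}, Lemma~\ref{polynomials_are_good}), and then, to justify differentiating $\|u(t)\|_2^2$ in $t$, it assumes $u_0\in W^2_2(\Rl^d_\theta)$ so that the mild solution is a classical solution with $u(t)\in\dom(\Delta)$ and hence genuinely differentiable -- note this means the body proves the theorem only for $W^2_2$ initial data, whereas the theorem as stated in the introduction claims $L_2$ data. Your argument closes that gap: you set up the contraction directly in $L_2(\Rl^d_\theta)$, which is legitimate because the embedding $L_2\subset L_\infty$ (from \eqref{automatic_boundedness}) together with the H\"older bimodule inequality makes the product continuous bilinear on $L_2\times L_2$, so the monomial $N(u)=\mu u(u^{\ast}u)^{(p-1)/2}$ is locally Lipschitz $L_2\to L_2$; and crucially you prove conservation of the $L_2$-norm \emph{for mild solutions} by conjugating away the free evolution, differentiating $v(t)=e^{\ri t\Delta}u(t)$ rather than $u(t)$. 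Since $v(t)=u_0-\ri\int_0^t e^{\ri s\Delta}N(u(s))\,ds$ has a continuous $L_2$-valued integrand, $v\in C^1$ with $v'(t)=-\ri e^{\ri t\Delta}N(u(t))$, and unitarity of $e^{\ri t\Delta}$ gives $\langle v'(t),v(t)\rangle=-\ri\mu\,\tau(|u(t)|^{p+1})$, whose real part times $-\ri$ vanishes. No reference to $\dom(\Delta)$ is required, so the argument runs for arbitrary $u_0\in L_2$. The continuation step is then standard. The trade-off: the paper's route fits the general parabolic/Schr\"odinger narrative it is developing and reuses its abstract framework, while yours is elementary and self-contained and matches the theorem as actually stated in the introduction.

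One small point worth being explicit about in a write-up: the constant in $\|xy\|_{L_2}\le\det(2\pi\theta)^{1/4}\|x\|_{L_2}\|y\|_{L_2}$ may exceed $1$, so $L_2(\Rl^d_\theta)$ with this norm is not literally a Banach algebra unless one rescales; but all that the contraction and Lipschitz estimates require is continuity of the bilinear product, which you do have.
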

    The proof is very straightforward. For similar reasons to the classical case, the $L_2$-norm is conserved by \eqref{nonlinear_schrodinger_intro}
    and due to \eqref{automatic_boundedness}, the $L_2$-norm is submultiplicative for $\det(\theta)\neq 0$.
    It follows almost immediately that local-in-time solutions can be extended indefinitely. The same proof applies for a slightly wider class of nonlinearities. 
    
    Finally, we can give a further example of the simplifications that occur when $\det(\theta)\neq 0$ through the study of the incompressible
    Navier-Stokes equations. The situation is similar to the nonlinear Schr\"odinger equation above, where the submultiplicativity
    of the $L_2$-norm implies that local-in-time solutions can be extended to global solutions. We provide more details below in Section \ref{fluids_section}.

\subsection{Outline of this paper}  
    This paper is organised as follows:
    \begin{itemize}
        \item{} In Section \ref{preliminaries_section} we recall facts about noncommutative Euclidean space.
        \item{} Section \ref{lp_section} discusses Littlewood-Paley theory for $\Rl^d_\theta$. To the best of our knowledge, this material is novel in the noncommutative setting although it is parallel to the classical case. In Subsection \ref{function_spaces_subsection} we define Sobolev and Besov spaces for $\Rl^d_\theta$. The definition of Sobolev spaces is standard, but the definition of Besov spaces is new.
        \item{} Section \ref{psdo_section} is concerned with ``elementary" pseudodifferential operators on $\Rl^d_\theta$. We prove operators of this form are bounded on Besov spaces of positive regularity.
        \item{} In Section \ref{multiplication_section}, we discuss the problem of multiplication on Besov spaces, based on a noncommutative version of the Bony decomposition.
                This material is novel, although similar in methods and results to the classical theory. We indicate the simplifications that arise when $\det(\theta)\neq 0$.
        \item{} It is in Section \ref{nemytskij_section} that we develop the main novelty of this paper. We discuss the theory of Nemytskij operators. That is,
                we study the operation of function composition $u\mapsto F(u)$ for self-adjoint $u\in L_\infty(\Rl^d_\theta)$, where $F\in C^\infty(\Rl)$. This is achieved with a noncommutative analogy of the Meyer decomposition which is based on the theory of double operator integrals. The most important result of this section implies that smooth functions $F \in C^\infty(\Rl)$ are locally Lipschitz on the self-adjoint subspace of $L_\infty(\Rl^d_\theta)\cap B^s_{p,q}(\Rl^d_\theta)$.
        \item{} Finally, in Section \ref{pde_section} we discuss nonlinear partial differential equations using the machinery developed in the preceding subsections.
    \end{itemize}
    Sections \ref{preliminaries_section} to \ref{nemytskij_section} remain agnostic about the value of $\theta$ and apply equally well in the commutative $\theta=0$ case, although
    the results are only novel when $\theta\neq 0$. In Section \ref{pde_section}, we explain how even though the preceding theory seemed to not depend on $\theta$, the theory of partial differential equations drastically simplifies when $\det(\theta)\neq 0$. 
    

\subsection{Acknowledgements}
    Thank you to  G.~Hong, M.~Junge, P.~Portal, H.~Sharma, X.~Xiong and D.~Zanin for helpful discussions.

\section{Preliminaries}\label{preliminaries_section}
    For a Hilbert space $H$, we denote by $\Bc(H)$ the algebra of all bounded linear endomorphisms of $H$. We denote by $L_p(\Rl^d)$ the $L_p$-spaces
    of pointwise almost-everywhere equivalence classes of $p$-integrable functions on Euclidean space $\Rl^d$.
    
    We use the notion of a weak$^*$ or Gel'fand integral \cite[pg. 53]{Diestel-Uhl}, in the setting described in \cite{DDSZ}. Given a $\sigma$-finite measure space $(X,\Sigma,\mu)$
    and a semifinite von Neumann algebra $(\Mv,\tau)$, a function $f:X\to \Mv$ is said to be weak$^*$-measurable if for all $x \in L_1(\Mv,\tau)$
    the map $\omega\mapsto \tau(xf(\omega))$ is measurable. A weak$^*$-integral $\int_X f\,d\mu$ is an element of $\Mv$ such that
    \begin{equation*}
        \tau\left(x\int_X f\,d\mu\right) = \int_X \tau(xf)\,d\mu,\quad x \in L_1(\Mv).
    \end{equation*} 
    If $f$ is weak$^*$-measurable, then the function $\omega\mapsto \|f(\omega)\|_{\Mv}$ is measurable\footnote{It is essential here that $\Mv$ admits a faithful representation on a \emph{separable} Hilbert space}, and if
    \begin{equation*}
        \int_{X} \|f\|_{\Mv} \,d\mu < \infty
    \end{equation*}
    then a unique weak$^*$-integral of $f$ exists.
    
    In the following subsections we introduce notation, terminology and basic results concerning $\Rl^d_\theta$. Most of the terminology is standard, and
    follows \cite{gayral-moyal}, \cite{GJP2017}, \cite[Section 6]{LeSZ-cwikel} and \cite{GJM}. Some related work is \cite{Werner1984,KeylKiukasWerner2016}.

\subsection{Definition of $\Rl^d_\theta$}\label{definition_subsection}
    As mentioned in the introduction, we will define $L_\infty(\Rl^d_\theta)$ as a von Neumann algebra generated by a unitary family $\{\lt(t)\}_{t\in \Rl^d}$
    satisfying the relation
    $$
        \lt(t)\lt(s) = \exp(\frac12\ri(t,\theta s))\lt(t+s),\quad t,s\in \Rl^d.
    $$
    While it is possible to define $L_\infty(\Rl^d_\theta)$ in an abstract operator-theoretic manner (this was the approach taken in \cite{GJP2017}), for simplicity we define the algebra as being generated by a concrete family of operators defined on the Hilbert space $L_2(\Rl^d)$.
    \begin{definition}
        For $t \in \Rl^d$, denote by $\lt(t)$ the operator on $L_2(\Rl^d)$ given by
        \begin{equation*}
            (\lt(t)\xi)(s) = \exp(\ri(t,s))\xi(s-\frac12\theta t),\quad \xi\in L_2(\Rl^d),\, t,s\in \Rl^d.
        \end{equation*}
        We define $L_\infty(\Rl^d_\theta)$ to be the weak operator topology closed subalgebra of $\Bc(L_2(\Rl^d))$ generated by the family $\{\lt(t)\}_{t\in \Rl^d}$.
    \end{definition}
    Observe that when $\theta=0$, the above definition reduces to the description of $L_\infty(\Rl^d)$ as the algebra of bounded pointwise multipliers on $L_2(\Rl^d)$.
    
    The structure of $L_\infty(\Rl^d_\theta)$ is determined by the Stone-von Neumann theorem \cite[Theorem 14.8]{Hall2013}, \cite[Theorem 5.2.2.2]{Bratteli-Robinson2}.
    \begin{theorem}\label{stone_von_neumann}
        The von Neumann algebra $L_\infty(\Rl^d_\theta)$ has type $\mathrm{I}$, and there is a canonical isomorphism
        \begin{equation*}
            \iota:L_\infty(\Rl^d_\theta) \to L_\infty(\Rl^{\dim(\ker(\theta))})\otimes \Bc(L_2(\Rl^{\frac{\rank(\theta)}{2}}))
        \end{equation*}
        where $\otimes$ is the von Neumann algebra tensor product.
        When $d=2$ and $\theta = \begin{pmatrix} 0 & -1 \\ 1 & 0\end{pmatrix}$, this isomorphism is given explicitly by
        \begin{equation*}
            \iota(\lt(t))\xi(r) = \exp(2\pi \ri t_2(r+\pi t_1))\xi(r+2\pi t_1),\quad r\in \Rl,\,\xi \in L_2(\Rl),\, t = (t_1,t_2)\in \Rl^2.
        \end{equation*}
    \end{theorem}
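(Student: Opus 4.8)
The plan is to bring $\theta$ to a symplectic normal form by an orthogonal change of coordinates, to factorise the generated von Neumann algebra as a tensor product along the resulting Darboux decomposition of $\Rl^d$, and then to identify each tensor factor: the $\ker(\theta)$-directions contribute a commutative $L_\infty$-algebra, while each $2$-dimensional symplectic block contributes a copy of $\Bc(L_2(\Rl))$ via the Stone--von Neumann uniqueness theorem. To begin, note that $\theta$ is antisymmetric --- this is forced by the relation $\lt(t)\lt(s) = \exp(\tfrac12\ri(t,\theta s))\lt(t+s)$ together with the concrete definition of $\lt$. By the spectral theorem for real antisymmetric matrices there is an orthogonal $O$ with
\[
    O^{T}\theta O = 0_k\oplus\bigoplus_{j=1}^{n}\mu_j J,\qquad J = \begin{pmatrix} 0 & -1 \\ 1 & 0\end{pmatrix},\quad \mu_j > 0,
\]
where $k = \dim(\ker\theta)$ and $2n = \rank(\theta)$, so that $k+2n = d$. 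The unitary $\xi\mapsto\xi(O\,\cdot\,)$ on $L_2(\Rl^d)$ conjugates $\lt(t)$ to $\lambda_{\theta'}(O^{T}t)$ with $\theta' = O^{T}\theta O$, which (since $O^{T}$ permutes $\Rl^d$ bijectively) yields an isomorphism $L_\infty(\Rl^d_\theta)\cong L_\infty(\Rl^d_{\theta'})$; dilating each symplectic pair of coordinates further rescales the $\mu_j$ to $1$ (changing only the normalisation of the trace $\tau$), so I may assume $\theta = 0_k\oplus J^{\oplus n}$.

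\emph{Tensor factorisation.} Write $\Rl^d = \Rl^k\oplus(\Rl^2)^{\oplus n}$ and identify $L_2(\Rl^d)\cong L_2(\Rl^k)\otimes L_2(\Rl^2)^{\otimes n}$ in the obvious way. Since $\theta$ is block diagonal, both the phase $\tfrac12(t,\theta s)$ and the shift $s-\tfrac12\theta t$ in the definition of $\lt$ split across the blocks, so that
\[
    \lt(t_0,t_1,\dots,t_n) = \lambda_0(t_0)\otimes\lambda_J(t_1)\otimes\dots\otimes\lambda_J(t_n).
\]
Each of these families contains the identity ($\lambda(0) = I$), so the von Neumann algebra generated by $\{\lt(t)\}_{t}$ contains each slot-wise generated algebra and therefore equals the von Neumann tensor product $L_\infty(\Rl^k_0)\otimes L_\infty(\Rl^2_J)^{\otimes n}$.

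\emph{Identification of the factors.} For $\theta = 0$ the operator $\lambda_0(t)$ is multiplication by $x\mapsto e^{\ri(t,x)}$ on $L_2(\Rl^k)$, and the linear span of $\{e^{\ri(t,\cdot)}\}_{t}$ is weak-$*$ dense in $L_\infty(\Rl^k)$ (an $L_1$-function annihilating every $e^{\ri(t,\cdot)}$ has vanishing Fourier transform), so $L_\infty(\Rl^k_0)$ is the multiplication MASA, canonically isomorphic to $L_\infty(\Rl^k)$. For $\theta = J$ on $\Rl^2$, a direct computation from the definition gives $\lambda_J((t_1,0)) = e^{\ri t_1 A}$ and $\lambda_J((0,t_2)) = e^{\ri t_2 B}$, where $A\xi = x_1\xi + \tfrac{\ri}{2}\partial_{x_2}\xi$ and $B\xi = x_2\xi - \tfrac{\ri}{2}\partial_{x_1}\xi$ on $L_2(\Rl^2)$, with $[A,B] = \ri$. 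The operators $A'\xi = x_1\xi - \tfrac{\ri}{2}\partial_{x_2}\xi$ and $B'\xi = x_2\xi + \tfrac{\ri}{2}\partial_{x_1}\xi$ form a second Heisenberg pair commuting with $A$ and $B$, and together with $A,B$ they generate an irreducible representation of the canonical commutation relations over $\Rl^4$ on $L_2(\Rl^2)$; simultaneously reducing these two pairs to position--momentum form produces a unitary $L_2(\Rl^2)\cong L_2(\Rl)\otimes L_2(\Rl)$ carrying $\{e^{\ri t_1 A},e^{\ri t_2 B}\}''$ onto $\Bc(L_2(\Rl))\otimes 1$. Hence $L_\infty(\Rl^2_J)\cong\Bc(L_2(\Rl))$, and writing this reduction out in concrete coordinates (absorbing the various $2\pi$-normalisation constants) produces the displayed formula for $\iota$ when $d = 2$, $\theta = J$. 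Assembling the factors, $L_\infty(\Rl^d_\theta)\cong L_\infty(\Rl^k)\otimes\Bc(L_2(\Rl))^{\otimes n} = L_\infty(\Rl^{\dim(\ker\theta)})\otimes\Bc(L_2(\Rl^{\rank(\theta)/2}))$, which is of type $\mathrm{I}$.

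\emph{Main obstacle.} The crux is the symplectic part of the last step: showing that the concrete representation $\lambda_J$ on $L_2(\Rl^2)$ is the amplification $\pi\otimes 1$ of the irreducible Schr\"odinger representation $\pi$ on $L_2(\Rl)$, rather than some other multiple of it. The hands-on route is the explicit simultaneous diagonalisation of the two commuting Heisenberg pairs above; an alternative is to observe that the commutant of $L_\infty(\Rl^2_J)$ is generated by the ``opposite'' (right-regular) Weyl system and is itself a type $\mathrm{I}_\infty$ factor, whence $L_\infty(\Rl^2_J)$ is a factor and must be $\Bc(H)$ with $H$ separable by strong continuity. Either way this is precisely the classical content of the Stone--von Neumann theorem; the remaining ingredients --- the orthogonal reduction of $\theta$, the tensor-product bookkeeping, and the verification of the explicit $d=2$ formula --- are routine.
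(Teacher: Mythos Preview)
Your argument is correct and is the standard route to this result. Note, however, that the paper does not actually give a proof of this theorem: it is stated with a bare citation to the classical Stone--von Neumann theorem (Hall, \emph{Quantum Theory for Mathematicians}, Theorem 14.8, and Bratteli--Robinson, Theorem 5.2.2.2), and the structure of $L_\infty(\Rl^d_\theta)$ is simply asserted to be ``determined'' by that theorem. So there is nothing in the paper to compare your proof against beyond the implicit appeal to the same classical input.

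What you have supplied is precisely the unpacking that the paper omits: the orthogonal normal form for $\theta$, the tensor splitting of $\lambda_\theta$ along the Darboux blocks, the identification of the kernel directions with a commutative $L_\infty$ factor, and the invocation of Stone--von Neumann on each $2\times 2$ symplectic block. The one place where you are still deferring to the same black box as the paper is the ``main obstacle'' paragraph --- showing that the von Neumann algebra generated by an irreducible Weyl system on $L_2(\Rl^2)$ is $\Bc(L_2(\Rl))\otimes 1$ is exactly the content of the Stone--von Neumann uniqueness theorem, and your two-commuting-Heisenberg-pairs description is one concrete way of realising the intertwining unitary. The verification of the explicit $d=2$ formula is only sketched (``absorbing the various $2\pi$-normalisation constants''); if you want this to be self-contained you should write out the unitary explicitly and check it intertwines $\lambda_J(t)$ with the displayed operator, but this is a routine computation.
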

    
    \begin{remark}
        One way that this definition can be motivated is to introduce $\Rl^d_\theta$ as a ``space" with coordinates $\{x_1,\ldots,x_d\}$ obeying the commutation relation
        $$
            x_jx_k-x_kx_j = \ri \theta_{j,k}, \quad 1\leq j,k\leq d.
        $$
        For non-singular $\theta$, these relations define a Weyl algebra. We may then formally define $\lt(t)$ as
        \begin{equation*}
            \lt(t) = \exp(i (t_1x_1+t_2x_2+\cdots+t_dx_d)),\quad t \in \Rl^d.
        \end{equation*}
        A formal application of the Baker-Campbell-Hausdorff formula then leads to the relation
        $$
            \lt(t)\lt(s) = e^{\frac12 \ri (t,\theta s)}\lt(t+s),\quad t,s \in \Rl^d
        $$
        which is generally called the Weyl form of the canonical commutation relations.
    \end{remark}
    
\subsection{Measure theory and function spaces on $\Rl^d_\theta$}
    \begin{definition}
        Let $f \in L_1(\Rl^d)$. Define $\lt(f)\in L_\infty(\Rl^d_\theta)$ as the weak$^*$-integral
        \begin{equation*}
            \lt(f) = \int_{\Rl^d} f(t)\lt(t)\,dt.
        \end{equation*}
        The Weyl transform is defined as the composition of $\lt$ with the Fourier transform. We
        normalise the Fourier transform as
        \begin{equation*}
            \widehat{f}(\xi) = (2\pi)^{-\frac{d}{2}}\int_{\Rl^d} f(t)e^{-\ri (t,\xi)}\,dt,\quad \xi \in \Rl^d.
        \end{equation*}
        If $f$ has integrable Fourier transform, we define the Weyl transform $\wl(f)$ as
        \begin{equation*}
            \wl(f) := (2\pi)^{-\frac{d}{2}}\lt(\widehat{f}).
        \end{equation*}
    \end{definition}
    For $f\in L_1(\Rl^d),$ the integral defining $\lt(f)$ also converges in the $L_\infty(\Rl^d_\theta)$-valued Bochner sense. 
    \begin{remark}
        In terms of our heuristic description of $\Rl^d_\theta$ as a Euclidean space with noncommuting coordinates $\{x_1,\ldots,x_d\}$, we have
        \begin{equation*}
            \wl(f) = (2\pi)^{-\frac{d}{2}}\int_{\Rl^d} \widehat{f}(\xi_1,\ldots,\xi_d)e^{\ri(\xi_1x_1+\xi_2x_2+\cdots+\xi_dx_d)}\,d\xi.
        \end{equation*}
    \end{remark}
    
    The Schwartz space $\Sc(\Rl^d_\theta)$ is defined as the image of the classical Schwartz space under $\lt$. That is
    \begin{equation*}
        \Sc(\Rl^d_\theta) := \lt(\Sc(\Rl^d)).
    \end{equation*}
    Equivalently, $\Sc(\Rl^d_\theta) = \wl(\Sc(\Rl^d))$. We define a topology on $\Sc(\Rl^d_\theta)$ as the image of the canonical Fr\'echet topology on $\Sc(\Rl^d)$ under $\lt$.
    The topological dual of $\Sc(\Rl^d_\theta)$ is denoted $\Sc'(\Rl^d_\theta)$.
       
    Both the Weyl transform and $\lt$ are injective \cite[Subsection 2.2.3]{MSX2}. Given $f \in \Sc(\Rl^d)$, we define
    $$
        \tau(\lt(f)) := (2\pi)^{d}f(0).
    $$
    An important identity is that
    $$
        \tau(\lt(f)\lt(g)) = (2\pi)^{2d}f(0)g(0),\quad f,g \in \Sc(\Rl^d).
    $$
    Equivalently, we have
    \begin{equation*}
        \tau(\wl(f)) = \int_{\Rl^d} f(t)\,dt,\quad f \in \Sc(\Rl^d)
    \end{equation*}
    and
    \begin{equation}\label{unitary_weyl}
        \tau(\wl(f)\wl(g)) = \int_{\Rl^d} f(s)g(s)\,ds,\quad f,g \in \Sc(\Rl^d).
    \end{equation}
    Observe that $\wl(f)^* = \wl(\overline{f}),$ so that for all $f \in \Sc(\Rl^d)$ we have
    \begin{equation*}
        \tau(\lt(f)^*\lt(f)) = \tau(\wl(f)^*\wl(f)) = \int_{\Rl^d} |f(s)|^2\,ds
    \end{equation*}
    
    We can extend the Weyl transform to distributions. If $T \in \Sc'(\Rl^d)$, denote by $\wl(T)\in \Sc'(\Rl^d_\theta)$ the functional defined by
    \begin{equation*}
        (\wl(T),\wl(f)) = (T,f),\quad f \in \Sc(\Rl^d).
    \end{equation*}
    \begin{theorem}
        The functional $\tau:\Sc(\Rl^d_\theta)\to \Cplx$ uniquely extends to a normal semifinite trace on the von Neumann algebra $L_\infty(\Rl^d_\theta).$
        
        If $\theta=0$, then under the isomorphism \eqref{stone_von_neumann}, $\tau$ is exactly the Lebesgue integral. If $\det(\theta)\neq 0$, then $\tau$
        is (up to a normalisation) the operator trace on $\Bc(L_2(\Rl^{d/2}))$.
    \end{theorem}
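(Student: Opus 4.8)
The plan is to identify $(\Sc(\Rl^d_\theta),\tau)$ with an achieved Hilbert algebra whose left von Neumann algebra is $L_\infty(\Rl^d_\theta)$, to invoke the general theory of Hilbert algebras for existence and uniqueness of the trace, and then to read off its concrete form from the Stone--von Neumann decomposition of Theorem~\ref{stone_von_neumann}.

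First I would observe that $\Sc(\Rl^d_\theta)$ is a $*$-subalgebra of $L_\infty(\Rl^d_\theta)$: it is the image under $\wl$ of $\Sc(\Rl^d)$, which is closed under the Moyal product ($\wl(f)\wl(g)=\wl(f\st g)$) and under conjugation ($\wl(f)^*=\wl(\overline f)$). The identity $\tau(\wl(f)^*\wl(f))=\|f\|_{L_2(\Rl^d)}^2$ shows that $\langle x,y\rangle:=\tau(y^*x)$ is a positive definite form on $\Sc(\Rl^d_\theta)$, and together with \eqref{unitary_weyl} and the density of $\Sc(\Rl^d)$ in $L_2(\Rl^d)$ it shows that $\wl$ extends to a unitary from $L_2(\Rl^d)$ onto the Hilbert space completion $L_2(\Rl^d_\theta)$ of $\Sc(\Rl^d_\theta)$. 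The Hilbert algebra axioms then follow from the identities already recorded: the trace property $\tau(\wl(f)\wl(g))=\tau(\wl(g)\wl(f))$ (the symmetry of $(2\pi)^{2d}f(0)g(0)$) gives $\langle xy,z\rangle=\langle y,x^*z\rangle$ and, crucially, shows that the involution is isometric for $\langle\cdot,\cdot\rangle$, so that we have a Hilbert algebra in the strong sense of Dixmier and its canonical weight is automatically a trace; boundedness of the left multiplication operators on $L_2(\Rl^d_\theta)$ is the submultiplicativity $\|ab\|_{L_2}\le\|a\|_\infty\|b\|_{L_2}$; and density of $\Sc(\Rl^d_\theta)\cdot\Sc(\Rl^d_\theta)$ in $L_2(\Rl^d_\theta)$ follows from the existence of bounded approximate units for the Moyal algebra $\Sc(\Rl^d)$ (e.g.\ rescaled Gaussians). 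Finally, since $\wl$ is an algebra homomorphism the left multiplication operator by $\wl(f)$ is, after transport by the unitary $\wl$, exactly left Moyal multiplication by $f$ on $L_2(\Rl^d)$, which coincides with the action of $\wl(f)\in L_\infty(\Rl^d_\theta)$ in its defining representation; as $\{\wl(f):f\in\Sc(\Rl^d)\}$ generates $L_\infty(\Rl^d_\theta)$ (each $\lt(t)$ being a $\sigma$-weak limit of such operators), the left von Neumann algebra of this Hilbert algebra is precisely $L_\infty(\Rl^d_\theta)$.

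Granting this, existence is immediate: Hilbert algebra theory (Dixmier, Takesaki) furnishes a canonical normal semifinite trace on $L_\infty(\Rl^d_\theta)$ whose associated $L_2$-norm is $\|\cdot\|_{L_2(\Rl^d_\theta)}$, and restricting it to $\Sc(\Rl^d_\theta)$ and using $\tau(\wl(h))=\int_{\Rl^d}h$ recovers the given functional. For uniqueness, a normal semifinite trace $\tau'$ with $\tau'|_{\Sc(\Rl^d_\theta)}=\tau$ again satisfies $\tau'(\wl(f)^*\wl(f))=\|f\|_{L_2(\Rl^d)}^2$, so $\Sc(\Rl^d_\theta)$ embeds isometrically with the same inner product into the GNS space of $\tau'$; being $\sigma$-weakly dense and contained in the domain of $\tau'$, it is dense there and dense in the noncommutative $L_1$-space of $\tau'$, whence $\tau'$ and $\tau$ agree on $L_1(L_\infty(\Rl^d_\theta))$ and hence everywhere. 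The concrete description then follows by tracking the trace through Theorem~\ref{stone_von_neumann}: for $\theta=0$ the $\lt(t)$ are multiplication operators, $\wl(f)$ is multiplication by $f$, and $\tau(\wl(f))=\int_{\Rl^d}f$ is the Lebesgue integral; for $\det(\theta)\ne0$ the algebra is the type $\mathrm{I}_\infty$ factor $\Bc(L_2(\Rl^{d/2}))$, on which a normal semifinite trace is a positive multiple of $\Tr$, so $\tau=c\,\Tr$ with $c$ determined by comparing $\|f\|_{L_2(\Rl^d)}^2=\tau(\wl(f)^*\wl(f))$ with the Hilbert--Schmidt norm of the explicit integral operator $\iota(\wl(f))$ from Theorem~\ref{stone_von_neumann} (or its analogue under symplectic reduction), giving the normalisation $c=\det(2\pi\theta)^{1/2}$; and for general $\theta$ one splits $\Rl^d=\ker(\theta)\oplus V$ with $\theta|_V$ nondegenerate so that $L_\infty(\Rl^d_\theta)\cong L_\infty(\Rl^{\dim\ker(\theta)})\otimes\Bc(L_2(\Rl^{\rank(\theta)/2}))$ and $\tau$ factorises as the Lebesgue integral tensored with a scalar multiple of $\Tr$, the scalar being fixed once more by the value on $\Sc(\Rl^d_\theta)$.

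I expect the main obstacle to be twofold. The first is the careful verification that the GNS representation of $(\Sc(\Rl^d_\theta),\tau)$ coincides, via the Weyl transform, with the defining representation of $L_\infty(\Rl^d_\theta)$ on $L_2(\Rl^d)$ --- equivalently, that $\wl(f)$ acts on $L_2(\Rl^d)$ by left Moyal multiplication --- so that the abstract trace produced by Hilbert algebra theory really is a trace on the algebra we started from. The second, subtler, point is uniqueness: a normal semifinite trace is \emph{not} in general determined by its values on a $\sigma$-weakly dense $*$-subalgebra (for instance $L_\infty(\Rl)\otimes\Bc(H)$ carries many inequivalent normal semifinite traces), so one must genuinely exploit that the $L_2$-norms induced by the two traces coincide on $\Sc(\Rl^d_\theta)$ and that $\Sc(\Rl^d_\theta)$ is dense in the relevant noncommutative $L_1$-space. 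Everything else is routine bookkeeping with the Weyl transform.
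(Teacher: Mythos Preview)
Your Hilbert-algebra approach is sound and conceptually clean, but it is not the route taken in the paper. The paper does not prove the theorem directly; it instead cites two external constructions: one via realising $L_\infty(\Rl^d_\theta)$ as an iterated crossed product and using the dual-trace formalism (Gonz\'alez-P\'erez--Junge--Parcet), and one via the Stone--von Neumann isomorphism $\iota$ of Theorem~\ref{stone_von_neumann}, where $\tau$ is \emph{defined} as the tensor product of the Lebesgue integral on $L_\infty(\Rl^{\dim\ker\theta})$ with $\det(2\pi\theta)^{1/2}\Tr$ on $\Bc(L_2(\Rl^{\rank(\theta)/2}))$, and then checked to agree with the formula $\tau(\lt(f))=(2\pi)^d f(0)$ on $\Sc(\Rl^d_\theta)$.

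The practical difference is this: the paper's second approach takes existence and the concrete description for free (they come straight from the explicit isomorphism), and uniqueness then reduces to the elementary fact that on a type~$\mathrm{I}$ factor all normal semifinite traces are proportional, tensored over the commutative part. Your route front-loads the work into verifying the Hilbert-algebra axioms and, crucially, identifying the left von Neumann algebra with the defining $L_\infty(\Rl^d_\theta)$; you correctly flag this as the main obstacle, and it is exactly the statement that the Weyl operator $\wl(f)$ acts on $L_2(\Rl^d)$ by left Moyal multiplication, which is a classical computation. What your approach buys is independence from the Stone--von Neumann structure theorem for the \emph{existence} step, and a uniform argument that does not require splitting into the degenerate and nondegenerate cases; what the paper's approach buys is that uniqueness becomes trivial rather than requiring the $L_1$-density argument you sketch (which is correct but, as you note, genuinely needs care since $\sigma$-weak density alone does not pin down a trace).
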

    This theorem is proved in \cite{GJP2017} by constructing $L_\infty(\Rl^d_\theta)$ as an iterated cross product. It may also be proved by explicitly identifying 
    $\tau$ as the tensor product of the operator trace and the Lebesgue integral in terms of the isomorphism \eqref{stone_von_neumann}, see \cite[Proposition 2.4]{GJM} for details.
    If $\det(\theta)\neq 0$, then $\tau$ is related to the operator trace $\Tr$ by
    \begin{equation}\label{relation_to_classical_trace}
        \tau(u) = \det(2\pi \theta)^{\frac{1}{2}}\Tr(\iota(u)),\quad u \in \Sc(\Rl^d_\theta).
    \end{equation}
    where $\iota$ is the isomorphism \eqref{stone_von_neumann}. In particular, the range of the trace $\tau$ on projections consists of nonnegative integer multiples of
    $\det(2\pi \theta)^{\frac12}.$
    
    With this data, the pair $(L_\infty(\Rl^d_\theta),\tau)$ is a semifinite von Neumann algebra. As a special case of the theory of $L_p$-spaces corresponding
    to a semifinite von Neumann algebra, we have the following definition.
    \begin{definition}  
        Let $1\leq p < \infty.$ Define $L_p(\Rl^d_\theta)$ as the $L_p$-space associated to the semifinite trace $\tau$ on $L_\infty(\Rl^d_\theta)$. That is, let $N_p$
        denote the subspace of $x \in L_\infty(\Rl^d_\theta)$ such that
        $$
            \|x\|_p := \tau(|x|^p)^{1/p} < \infty.
        $$
        Then $L_p(\Rl^d_\theta)$ is defined as the completion of $N_p$ with respect to the norm $\|\cdot\|_p$.
        
        In particular, $L_2(\Rl^d_\theta)$ is the GNS Hilbert space of $L_\infty(\Rl^d_\theta)$ corresponding to the inner product
        $$
            \langle x,y\rangle := \tau(x^*y),\quad x,y \in N_2.
        $$
    \end{definition}
    The fact that $\|\cdot\|_p$ is a norm is a standard result in the theory of semifinite von Neumann algebras \cite[Theorem 4.4]{Fack-Kosaki}. There is a H\"older inequality
    for these $L_p$ spaces. That is, if $u \in L_p(\Rl^d_\theta)$ and $v \in L_q(\Rl^d_\theta),$ then $uv \in L_r(\Rl^d_\theta)$ where $\frac1r = \frac1p+\frac1q$
    and 
    \[
        \|uv\|_{r} \leq \|u\|_p\|v\|_q.
    \]   
    For $1\leq p < \infty$, the Schwartz space $\Sc(\Rl^d_\theta)$ is dense in $L_p(\Rl^d_\theta)$ \cite[Proposition 3.14]{MSX2}.
    Knowing this, it follows from \eqref{unitary_weyl} that $\wl$ extends to a unitary isomorphism
    $$
        \wl:L_2(\Rl^d)\to L_2(\Rl^d_\theta).
    $$
    The closure of $\Sc(\Rl^d_\theta)$ in the $L_\infty$ norm is denoted $C_0(\Rl^d_\theta)$. 
    
    At this point we again emphasise that when $\det(\theta)\neq 0$, Theorem \ref{stone_von_neumann} states that $L_\infty(\Rl^d_\theta)$ is isomorphic to the type $\mathrm{I}_\infty$ von Neumann
    algebra $\Bc(L_2(\Rl^{\frac{d}{2}}))$, and $\tau$ is proportional to the operator trace \cite[Section 6]{LeSZ-cwikel}. In this case, $L_p(\Rl^d_\theta)$ coincides with 
    the Schatten-von Neumann operator $\Lc_p$-space. It follows that if $\det(\theta)\neq 0$ then $L_p(\Rl^d_\theta)\subseteq L_q(\Rl^d_\theta)$ for all $p\leq q$.
    
    Still in the non-degenerate case, given an arbitrary $T \in \Sc'(\Rl^d),$ the operator $\wl(T)$ can be realized as a quadratic form on a dense subspace of $L_2(\Rl^{\frac{d}{2}}),$ for details see \cite[Section II.B]{Daubechies-weyl-quantization-1983}.
    
    \begin{remark}
        Daubechies has given a sufficient condition on a distribution $T$ such that (in our notation) $\wl(T) \in L_{\infty}(\Rl^d_\theta)$ \cite{Daubechies-bounded-operators-1980}. 
        It is also known that there exists $f \in L_{\infty}(\Rl^d)$ such that $\wl(f) \notin L_{\infty}(\Rl^d_\theta),$ and $f \notin L_1(\Rl^d)$ such that $\wl(f) \in L_1(\Rl^d_\theta)$
        \cite[Section III]{Daubechies-weyl-quantization-1983}.
    \end{remark}
    
\subsection{Differential calculus on $\Rl^d_\theta$}
    Differential calculus on $\Rl^d_\theta$ is based on the group of translations $\{T_s\}_{s \in \Rl^d}$, where $T_s$ is defined
    as the unique $*$-automorphism of $L_\infty(\Rl^d_\theta)$ which acts on $\lt(t)$ as
    \begin{equation*}
        T_s(\lt(t)) = \exp(\ri(t,s))\lt(t),\quad t,s \in \Rl^d.
    \end{equation*}
    Equivalently, for $x \in L_\infty(\Rl^d_\theta)\subseteq \Bc(L_2(\Rl^d))$ we may define $T_s(x)$ as the conjugation of $x$ by the unitary operator of translation by $s$ on $L_2(\Rl^d)$.
    \begin{definition}
    An element $x \in L_\infty(\Rl^d_\theta)+L_{1}(\Rl^d_\theta)$ is said to be smooth if for all $y\in L_1(\Rl^d_\theta)\cap L_\infty(\Rl^d_\theta)$ the function $s\mapsto \tau(yT_s(x))$ is smooth.
    \end{definition}
    The partial derivations $\partial_j$, $j=1,\ldots,d$ are defined on smooth elements $x$ by
    \begin{equation*}
        \partial_j x = \frac{d}{ds_j} T_s(x)|_{s=0}.
    \end{equation*}
    In terms of the map $\lt$ and the Weyl transform $\wl$, it is easily verified that
    \begin{equation*}
        \partial_j \lt(f) = \lt(\ri t_jf(t)),\quad \partial_j \wl(f) = \wl(\partial_j f),\; f \in\Sc(\Rl^d),\;j=1,\ldots,d.
    \end{equation*}
    For a multi-index $\alpha \in \Ntrl^d$, we define
    \begin{equation*}
        \partial^\alpha = \partial_1^{\alpha_1}\cdots\partial_d^{\alpha_d}.
    \end{equation*}    
    The Laplace operator $\Delta$ is defined as
    $$
        \Delta = \sum_{j=1}^d\partial_j^2.
    $$
    Equivalently, $\Delta$ is the image of the classical Laplace operator under the Weyl transform. That is,
    $$
        \Delta \wl(x)= \wl(\Delta x),\quad f \in \Sc(\Rl^d_\theta).
    $$
    We can extend $\partial^{\alpha}$ and $\Delta$ to distributions in the natural way. Namely if $T \in \Sc'(\Rl^d_\theta)$, 
    then we define
    $$
        (\partial^{\alpha} T,u) = (-1)^{|\alpha|}(T,\partial^{\alpha} u),\quad u \in \Sc(\Rl^d_\theta).
    $$   
    
\subsection{Convolution and Young's inequality}
    The following notion was used in \cite[Section 3.2]{MSX2}.
    \begin{definition}
        Let $1\leq p\leq\infty$ and $u \in L_p(\Rl^d_\theta).$
        
        For $K\in L_1(\Rl^d),$ we define
        \[
            K\ast u := \int_{\Rl^d} K(t)T_{-t}u\,dt
        \]
        as an $L_p(\Rl^d_\theta)$-valued Bochner integral when $p < \infty,$ and as a weak$^*$ integral when $p=\infty.$
    \end{definition}
    Since $L_p(\Rl^d_\theta)$ is separable when $p<\infty,$ the Bochner integrability of the integrand $K(t)T_{-t}u$ follows 
    from the integrability of $K$ and the $L_p$-norm continuity the function $t\mapsto T_{-t}u.$
    
    From the triangle inequality, we immediately obtain
    \begin{equation}\label{L_1_L_p_case}
        \|K\ast u\|_{L_{p}(\Rl^d_\theta)} \leq \|K\|_{L_1(\Rl^d)}\|u\|_{L_p(\Rl^d_\theta)},\quad K\in L_1(\Rl^d),\; u \in L_p(\Rl^d_\theta).
    \end{equation}
    
    A computation using the definition $T_s\lt(t) = e^{i (s,t)}\lt(t)$ shows that
    \[
        K\ast \lt(f) = (2\pi)^{\frac{d}{2}}\lt(\hat{K}f),\quad f, K \in L_1(\Rl^d).
    \]
    The inequality $\|\lt(f)\|_{L_{\infty}(\Rl^d_\theta)} \leq \|f\|_{L_1(\Rl^d)}$ implies that
    \[
        \|K\ast \lt(f)\|_{L_{\infty}(\Rl^d_\theta)} \leq (2\pi)^{\frac{d}{2}}\|\hat{K}f\|_{L_1(\Rl^d)}.
    \]
    By the Cauchy-Schwartz inequality and $\|\lt(f)\|_2=(2\pi)^{\frac{d}{2}}\|f\|_2,$ it follows that
    \[
        \|K\ast \lt(f)\|_{L_\infty(\Rl^d_\theta)} \leq \|K\|_{L_2(\Rl^d)}\|\lt(f)\|_{L_2(\Rl^d_\theta)}.
    \]
    It follows that the bilinear operator $(K,u)\mapsto K\ast u$ admits a bounded extension to a mapping from $L_2(\Rl^d)\times L_2(\Rl^d_\theta)$ into $L_\infty(\Rl^d_\theta),$
    with norm bound
    \begin{equation}\label{L_2_L_2_case}
        \|K\ast u\|_{L_{\infty}(\Rl^d_\theta)} \leq \|K\|_{L_2(\Rl^d)}\|u\|_{L_2(\Rl^d_\theta)},\quad K \in L_2(\Rl^d),\; u \in L_2(\Rl^d_\theta).
    \end{equation}

    The following is a noncommutative substitute for Young's convolution inequality. The simple proof below only works for $p\geq 2,$ for the general case see the argument in \cite[Section 3.1]{Lafleche2022}.
    \begin{theorem}\label{young_inequality}
        Let $1\leq p,q,r \leq \infty$ obey the relation
        \[
            \frac{1}{r}+1 = \frac{1}{p}+\frac{1}{q}.
        \]
        Then the bilinear map $(K,u)\mapsto K\ast u$
        admits a continuous extension to a mapping from $L_p(\Rl^d)\times L_q(\Rl^d_\theta)$ into $L_r(\Rl^d_\theta)$, and we have the inequality
        \[
            \|K\ast u\|_{L_r(\Rl^d_\theta)} \leq \|K\|_{L_p(\Rl^d)}\|u\|_{L_q(\Rl^d_\theta)},\quad K \in L_p(\Rl^d),\; u \in L_q(\Rl^d_\theta).
        \]
    \end{theorem}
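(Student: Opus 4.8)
The plan is to obtain the inequality by bilinear complex interpolation, taking \eqref{L_1_L_p_case} and \eqref{L_2_L_2_case} as the basic endpoint estimates together with a duality argument which produces a third endpoint; the restriction on the exponents will reflect the fact that these are essentially the only endpoints available by elementary means. First I would record the dual form of \eqref{L_2_L_2_case}. Writing $T_{-t}$ for the $\tau$-preserving $\ast$-automorphism of $L_\infty(\Rl^d_\theta)$ introduced above, and using that $T_{-t}\big(u\,T_t(v)\big)=(T_{-t}u)\,v$ together with $\tau\circ T_{-t}=\tau$, one has for $K\in L_1(\Rl^d)\cap L_2(\Rl^d)$, $u\in L_1(\Rl^d_\theta)\cap L_2(\Rl^d_\theta)$ and $v\in L_1(\Rl^d_\theta)\cap L_\infty(\Rl^d_\theta)$ the Fubini-type identity
\[
    \tau\big((K\ast u)\,v\big)=\int_{\Rl^d}K(t)\,\tau\big((T_{-t}u)\,v\big)\,dt=\int_{\Rl^d}K(t)\,\tau\big(u\,T_t(v)\big)\,dt=\tau\big(u\,(\check K\ast v)\big),
\]
where $\check K(t):=K(-t)$. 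Combining this with \eqref{L_2_L_2_case} and the identity $\|w\|_{L_2(\Rl^d_\theta)}=\sup_{\|x\|_2\le 1}|\tau(xw)|$ yields $\|\check K\ast v\|_{L_2(\Rl^d_\theta)}\le\|\check K\|_{L_2(\Rl^d)}\|v\|_{L_1(\Rl^d_\theta)}$, that is, the dual endpoint
\[
    \|K\ast u\|_{L_2(\Rl^d_\theta)}\le\|K\|_{L_2(\Rl^d)}\,\|u\|_{L_1(\Rl^d_\theta)},\qquad K\in L_2(\Rl^d),\ u\in L_1(\Rl^d_\theta).
\]

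Next I would regard $(K,u)\mapsto K\ast u$ as a bilinear map and apply Calderón's bilinear complex interpolation theorem to the three norm-$\le 1$ estimates supplied by \eqref{L_1_L_p_case} (with a suitable choice of the common exponent on the $u$-side), \eqref{L_2_L_2_case}, and the dual endpoint just obtained. Since complex interpolation of the scale $\{L_p(\Rl^d_\theta)\}$ of $L_p$-spaces of the semifinite trace $\tau$ reproduces the expected $L_p(\Rl^d_\theta)$ (and likewise for $\{L_p(\Rl^d)\}$), the only thing to verify is that the Young relation $\tfrac1r+1=\tfrac1p+\tfrac1q$ is an affine invariant of the three endpoint exponent triples, so that it is preserved along the interpolation; this is a short computation, and tracking the interpolation parameter shows that the resulting triples exhaust the range of the theorem. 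The continuous extension to all of $L_p(\Rl^d)\times L_q(\Rl^d_\theta)$ then follows from the a priori estimate together with the density of $\Sc(\Rl^d)$ in $L_p(\Rl^d)$ and of $\Sc(\Rl^d_\theta)$ in $L_q(\Rl^d_\theta)$ at finite exponents, while an infinite exponent is handled via weak$^\ast$-density and the weak$^\ast$-continuity of $K\ast(\cdot)$ already used to define the integral.

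The step I expect to be the genuine obstruction — as opposed to mere bookkeeping — is not the interpolation itself but the unavailability of a fourth endpoint, namely a ``Minkowski-type'' estimate $\|K\ast u\|_{L_p(\Rl^d_\theta)}\lesssim\|K\|_{L_p(\Rl^d)}\|u\|_{L_1(\Rl^d_\theta)}$ in which the kernel $K$ carries the large exponent. In the commutative case this is immediate from the symmetry $K\ast u=u\ast K$, and it is precisely this extra endpoint which, interpolated against the H\"older-type endpoint, recovers the full Young inequality. Here there is no such symmetry: one has $K\ast u\ne u\ast K$, and, as noted in the discussion of the difficulties with noncommutativity, the two-sided bound $|xy|^2\le\|y\|_\infty^2|x|^2$ fails. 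This is exactly why the elementary argument above only reaches the range stated in the theorem and why the remaining exponents require the more delicate analysis of \cite{Lafleche2022}. A secondary point, which I would handle but do not expect to be serious, is the duality step at the endpoint $r=\infty$: since $(L_\infty(\Rl^d_\theta))^\ast\ne L_1(\Rl^d_\theta)$, the pairing must be phrased through the predual, using the faithful normal trace.
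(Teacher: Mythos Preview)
Your proposal is correct and follows essentially the same route as the paper: both arguments use bilinear complex interpolation from the endpoints \eqref{L_1_L_p_case} and \eqref{L_2_L_2_case}, invoke the duality identity $\tau((K\ast u)v)=\tau(u(\check K\ast v))$ to produce the additional endpoint $\|K\ast u\|_{L_2}\le\|K\|_{L_2}\|u\|_{L_1}$, and then appeal to convexity, obtaining the inequality only in the range $1\le p\le 2$ and deferring the remaining exponents to \cite{Lafleche2022}. One small wording issue: your phrase ``only reaches the range stated in the theorem'' is inaccurate, since the theorem as stated claims the full Young range $1\le p,q,r\le\infty$, whereas your argument (like the paper's) covers only the half with $p\le 2$ --- but you clearly understand this, since you correctly identify the missing Minkowski-type endpoint $(\tfrac1p,\tfrac1p,1)$ as the obstruction.
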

    \begin{proof}
        Initially suppose that $1\leq p\leq 2.$ We obtain the inequality from bilinear interpolation applied to \eqref{L_1_L_p_case} and \eqref{L_2_L_2_case}.
        
        Let $\Sigma$ denote the set of $(\frac1r,\frac1p,\frac1q) \in [0,1]^3$ such that 
        \begin{equation}\label{youngs_inequality_goal}
            \|K\ast u\|_{L_r(\Rl^d_\theta)} \leq \|K\|_{L_p(\Rl^d)}\|u\|_{L_q(\Rl^d_\theta)}
        \end{equation}
        The results \eqref{L_1_L_p_case} with $p=1$ and $p=\infty$ and \eqref{L_2_L_2_case} verify that
        \[
            (1,1,1),(0,1,0),(0,\frac12,\frac12) \in \Delta.
        \]        
        Since the noncommutative $L_p$-spaces are closed under complex interpolation, multilinear interpolation (c.f. \cite[Theorem 4.4.2]{Bergh-Lofstrom-1976}) implies that $\Delta$
        is convex.
        
        Denote $\widetilde{K}(t) := K(-t).$ Given $u,v \in \Sc(\Rl^d_\theta),$ it is readily verified that
        \[
            \tau(v(K\ast u) = \tau((\widetilde{K}\ast v)u).
        \]
        Thus, if $(\frac1r,\frac1p,\frac1q)\in \Delta,$ then H\"older's inequality implies that
        \[
            |\tau(v(K\ast u))| \leq \|u\|_{r'}\|\widetilde{K}\ast v\|_{r'}\leq \|u\|_{r'}\|K\|_{p}\|v\|_q.
        \]
        Here, $r' = \frac{r}{r-1}.$ Denoting $q' = \frac{q}{q-1},$ it follows that
        \[
            \|K\ast u\|_{q'} \leq \|K\|_p\|u\|_{r'}.
        \]
        Thus $(1-\frac1q,\frac1p,1-\frac1r) \in \Delta.$
        
        Since $(0,\frac12,\frac12)\in \Delta,$ it follows that $(\frac12,\frac12,1)\in \Delta.$ By convexity, we have
        \[
            (\frac1r,\frac1p,\frac1q) \in \Delta
        \]
        for all $1+\frac1r=\frac1p+\frac1q,$ with $p\geq 2.$
        
    \end{proof}

\section{Multipliers and Littlewood-Paley theory for $\Rl^d_\theta$}\label{lp_section}
    The Weyl transform defines a linear topological isomorphism
    $$
        \wl:\Sc'(\Rl^d)\to \Sc'(\Rl^d_\theta)
    $$
    which intertwines the partial derivative operators $\{\partial^{\alpha}\}_{\alpha\in \Ntrl^d}$ on $\Rl^d$
    with those on $\Rl^d_\theta$,
    \begin{equation*}
        \wl(\partial^{\alpha}T) = \partial^\alpha\wl(T),\quad T \in \Sc'(\Rl^d).
    \end{equation*}
    For the sake of brevity, let $D_j$ denote the rescaled partial derivation
    $$
        D_j = \frac{1}{\ri}\partial_j,\quad j=1,\ldots,d.
    $$
    For a multi-index $\alpha = (\alpha_1,\ldots,\alpha_d)\in \Ntrl^d$, we denote $D^{\alpha}$ for $D_1^{\alpha_1}\cdots D_d^{\alpha_d}$.
    \begin{definition}
        If $m \in C^\infty(\Rl^d)$ has at most polynomial increase at infinity, with all derivatives having at most polynomial growth, let $m(D)$ denote the corresponding Fourier multiplier. That is, for $f \in \Sc(\Rl^d)$, define
        $$
            (m(D)f)(t) = (2\pi)^{-\frac{d}{2}}\int_{\Rl^d} m(\xi)\widehat{f}(\xi)e^{\ri (t,\xi)}\,d\xi,\quad t \in \Rl^d.
        $$
        
        We define $m(D)$ on $\Sc(\Rl^d_\theta)$ using the Weyl transform. That is,
        $$
            m(D)\wl(u) := \wl(m(D)u),\quad u \in \Sc(\Rl^d_\theta).
        $$
        Equivalently, if $\wl(f) \in \Sc(\Rl^d_\theta)$, then
        $$
            m(D)\wl(f) = \int_{\Rl^d} m(\xi)\widehat{f}(\xi)\lt(\xi)\,d\xi = \wl(m(D)f).
        $$
        The definition of $m(D)$ is extended to $\Sc'(\Rl^d_\theta)$ by the relation
        $$
            (m(D)T,u) = (T,m(D)u),\quad u \in \Sc(\Rl^d_\theta),\,T \in \Sc'(\Rl^d_\theta).
        $$
    \end{definition}
    Denote by $\check{m}$ the inverse Fourier transform of $m$,
    \begin{equation*}
        \check{m}(t) = (2\pi)^{-\frac{d}{2}}\int_{\Rl^d} m(\xi)e^{\ri (\xi,t)}\,d\xi,\quad t \in \Rl^d.
    \end{equation*}
    Observe that if $\check{m} \in L_1(\Rl^d)$, then
    \begin{equation*}
        m(D)u = (2\pi)^{-d}\int_{\Rl^d} \check{m}(\xi)T_{-\xi}(u)\,d\xi,\quad u \in \Sc(\Rl^d_\theta).
    \end{equation*}
    That is,
    \[
        m(D)u = (2\pi)^{-d}\check{m}\ast u,\quad u \in L_p(\Rl^d_\theta),\;\check{m}\in L_1(\mathbb{R}^d).
    \]
    
    It follows immediately from Theorem \ref{young_inequality} that we have the following:
    \begin{theorem}\label{weak_young_inequality}
        Assume $m$ is such that $\check{m} \in L_1(\Rl^d)$. For all $1\leq p < \infty$, $m(D)$ extends to a bounded linear map from $L_p(\Rl^d_\theta)$ to $L_p(\Rl^d_\theta)$, with norm
        $$
            \|m(D)\|_{L_p(\Rl^d_\theta)\to L_p(\Rl^d_\theta)} \leq \|\check{m}\|_{L_1(\Rl^d)}.
        $$
        Moreover, $m(D)$ is bounded in the $L_\infty$ norm, and extends by duality to a linear map
        $$
            \|m(D)\|_{L_\infty(\Rl^d_\theta)\to L_\infty(\Rl^d_\theta)} \leq \|\check{m}\|_{L_1(\Rl^d)}.
        $$
        More generally, for any $1\leq p\leq q \leq\infty,$ we have
        $$
            \|m(D)\|_{L_p(\Rl^d_\theta)\to L_q(\Rl^d_\theta)} \leq \|\check{m}\|_{L_{r}(\Rl^d)}
        $$        
        where $\frac{1}{r} = 1+\frac{1}{q}-\frac{1}{p}.$
    \end{theorem}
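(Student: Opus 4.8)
The plan is to read off Theorem~\ref{weak_young_inequality} from the noncommutative Young inequality (Theorem~\ref{young_inequality}) after rewriting $m(D)$ as a convolution operator. As recorded just above the statement, the hypothesis $\check m\in L_1(\Rl^d)$ gives the identity
\[
    m(D)u = (2\pi)^{-d}\,\check m\ast u,\qquad u\in L_p(\Rl^d_\theta),
\]
where the right-hand side is an $L_p(\Rl^d_\theta)$-valued Bochner integral when $p<\infty$ and a weak$^*$ integral when $p=\infty$. Every asserted mapping property of $m(D)$ is then a mapping property of the bilinear map $(K,u)\mapsto K\ast u$ specialised to $K=\check m$, and the harmless factor $(2\pi)^{-d}\le 1$ only improves the constants.

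First I would dispose of the diagonal estimates. For $1\le p<\infty$, the bound $\|m(D)\|_{L_p(\Rl^d_\theta)\to L_p(\Rl^d_\theta)}\le\|\check m\|_{L_1}$ is immediate from \eqref{L_1_L_p_case} with $K=\check m$. For $p=\infty$ one may either appeal to \eqref{L_1_L_p_case} again --- the triangle inequality for the weak$^*$ integral, tested against $x\in L_1(\Rl^d_\theta)$ with $\|x\|_1\le1$, yields $\|\check m\ast u\|_\infty\le\|\check m\|_{L_1}\|u\|_\infty$ --- or argue by duality, which is the route suggested in the statement: using the identity $\tau\bigl(v(K\ast u)\bigr)=\tau\bigl((\widetilde K\ast v)u\bigr)$ from the proof of Theorem~\ref{young_inequality}, with $\widetilde K(t)=K(-t)$, the operator $m(D)$ on $L_\infty(\Rl^d_\theta)$ is the Banach-space adjoint, for the trace pairing $L_\infty(\Rl^d_\theta)\times L_1(\Rl^d_\theta)\to\Cplx$, of $v\mapsto(2\pi)^{-d}\,\widetilde{\check m}\ast v$ on $L_1(\Rl^d_\theta)$; the latter has norm at most $(2\pi)^{-d}\|\widetilde{\check m}\|_{L_1}=(2\pi)^{-d}\|\check m\|_{L_1}$, and adjoints preserve norm.

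For the general estimate with $1\le p\le q\le\infty$ I would apply Theorem~\ref{young_inequality} directly, taking the kernel to lie in $L_r(\Rl^d)$ with $\tfrac1r=1+\tfrac1q-\tfrac1p$. Since $p\le q$ this exponent satisfies $\tfrac1r\in[0,1]$, and the relation $\tfrac1r=1+\tfrac1q-\tfrac1p$ is exactly $\tfrac1q+1=\tfrac1r+\tfrac1p$, the hypothesis of Theorem~\ref{young_inequality}. Hence $\|\check m\ast u\|_{L_q(\Rl^d_\theta)}\le\|\check m\|_{L_r(\Rl^d)}\|u\|_{L_p(\Rl^d_\theta)}$, and multiplying by $(2\pi)^{-d}$ gives the claimed inequality (which is vacuous if $\check m\notin L_r$). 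The one point worth a sentence is that for $p<\infty$ the element $m(D)u=(2\pi)^{-d}\check m\ast u$, a priori only known to belong to $L_p(\Rl^d_\theta)$, is being asserted to belong to $L_q(\Rl^d_\theta)$ with $q\ge p$; this is precisely the smoothing content of Theorem~\ref{young_inequality}, and there is no ambiguity because the two descriptions of $\check m\ast u$ coincide on the dense subspace $\Sc(\Rl^d_\theta)$, on which the defining integral converges absolutely in every $L_r(\Rl^d_\theta)$-norm.

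I do not expect any genuine obstacle: all analytic content sits in Theorem~\ref{young_inequality}, already proved, and in the convolution representation of $m(D)$, already recorded, so what remains is exponent bookkeeping together with the two routine remarks above (density of $\Sc(\Rl^d_\theta)$ for $p<\infty$, and the $L_\infty$ endpoint where $\Sc(\Rl^d_\theta)$ is not dense). If one preferred to weaken the standing assumption to $\check m\in L_r$ alone, the only extra ingredient would be a truncation argument --- approximate $\check m$ by $\check m\,\chi_{\{|t|\le n\}}\in L_1\cap L_r$, which converges to $\check m$ in $L_r$ when $r<\infty$ --- to define $m(D)$ on $\Sc(\Rl^d_\theta)$ and pass the bound to the limit, the case $r=\infty$ (which forces $p=1$, $q=\infty$) being already covered by the $L_1\times L_\infty\to L_\infty$ instance of Theorem~\ref{young_inequality}.
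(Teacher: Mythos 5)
Your proposal is correct and takes essentially the same route as the paper, which simply reads the theorem off Theorem \ref{young_inequality} after recording $m(D)u=(2\pi)^{-d}\check m\ast u$; the extra remarks you supply (the direct endpoint bound via \eqref{L_1_L_p_case}, the duality argument at $p=\infty$, and the density observation for $p<\infty$) merely flesh out what the paper states as immediate.
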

        
    A simple application of Theorem \ref{weak_young_inequality} is the following Bernstein-type estimate.    
    \begin{corollary}\label{bernstein_inequality}
        Let $f \in L_1(\Rl^d)$ be supported in a ball of radius $\sigma>0$. There exists a constant $C_d$ such that for all $1\leq p\leq q\leq \infty$ we have
        \[
            \|\lt(f)\|_{L_q(\Rl^d)} \leq C_d\sigma^{d\left(\frac{1}{p}-\frac{1}{q}\right)}\|\lt(f)\|_{L_p(\Rl^d_\theta)}.
        \]
    \end{corollary}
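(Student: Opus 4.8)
The plan is to exploit the fact that $\lt(f)$ has frequency support inside the ball of radius $\sigma$, so that it is left unchanged by a Fourier multiplier $m(D)$ with $m\equiv 1$ on that ball, and then to feed this into the $L_p\to L_q$ mapping bound of Theorem \ref{weak_young_inequality}; this is the exact noncommutative transcription of the classical derivation of Bernstein's inequality. We may assume the right-hand side is finite, so that $\lt(f)\in L_p(\Rl^d_\theta)$, since otherwise there is nothing to prove. Suppose $\supp f\subseteq B(\xi_0,\sigma)$. Fix once and for all, depending only on $d$, a function $\chi\in C_c^\infty(\Rl^d)$ with $\chi\equiv 1$ on $B(0,1)$ and $\supp\chi\subseteq B(0,2)$, and set $m(\xi):=\chi\big((\xi-\xi_0)/\sigma\big)$, so that $m\equiv 1$ on a neighbourhood of $\supp f$ and $m\in C_c^\infty(\Rl^d)$.

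Next I would record the elementary identity $\lt(f)=(2\pi)^{d/2}\,\wl(\check f)$, which follows directly from the definition $\wl(g)=(2\pi)^{-d/2}\lt(\widehat g)$ together with $\widehat{\check f}=f$. Because the Weyl transform intertwines the Fourier multiplier $m(D)$ on $\Rl^d$ with $m(D)$ on $\Rl^d_\theta$, and because $m\equiv 1$ on $\supp f=\supp\widehat{\check f}$, we obtain
\[
    m(D)\lt(f)=(2\pi)^{d/2}\,\wl\big(m(D)\check f\big)=(2\pi)^{d/2}\,\wl(\check f)=\lt(f).
\]
Since $m\in C_c^\infty(\Rl^d)$, its inverse Fourier transform $\check m$ lies in $\Sc(\Rl^d)\subseteq L_r(\Rl^d)$ for every $r$, so Theorem \ref{weak_young_inequality}, applied with the exponent $r$ determined by $\frac1r=1+\frac1q-\frac1p$, gives
\[
    \|\lt(f)\|_{L_q(\Rl^d_\theta)}=\|m(D)\lt(f)\|_{L_q(\Rl^d_\theta)}\le\|\check m\|_{L_r(\Rl^d)}\,\|\lt(f)\|_{L_p(\Rl^d_\theta)}.
\]

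It then remains to track the dependence of $\|\check m\|_{L_r(\Rl^d)}$ on $\sigma$. A change of variables in the inverse Fourier integral yields $\check m(t)=\sigma^d e^{\ri(\xi_0,t)}\check\chi(\sigma t)$, hence $|\check m(t)|=\sigma^d|\check\chi(\sigma t)|$, and a further rescaling of the integration variable gives
\[
    \|\check m\|_{L_r(\Rl^d)}=\sigma^{d(1-1/r)}\|\check\chi\|_{L_r(\Rl^d)}=\sigma^{d(1/p-1/q)}\|\check\chi\|_{L_r(\Rl^d)},
\]
where we used $1-\frac1r=\frac1p-\frac1q$. Finally, since $\check\chi\in\Sc(\Rl^d)$, the interpolation inequality $\|\check\chi\|_{L_r(\Rl^d)}\le\|\check\chi\|_{L_1(\Rl^d)}^{1/r}\|\check\chi\|_{L_\infty(\Rl^d)}^{1-1/r}\le\max\big(\|\check\chi\|_{L_1(\Rl^d)},\|\check\chi\|_{L_\infty(\Rl^d)}\big)$ holds for every $r\in[1,\infty]$, so we may take $C_d:=\max\big(\|\check\chi\|_{L_1(\Rl^d)},\|\check\chi\|_{L_\infty(\Rl^d)}\big)$, which depends only on $d$ through the choice of $\chi$.

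There is no genuine obstacle here; the corollary is a direct consequence of Theorem \ref{weak_young_inequality} and the proof is essentially bookkeeping. The two points that need a little care are the Fourier-normalisation constant in $\lt(f)=(2\pi)^{d/2}\wl(\check f)$ and the homogeneity $\check m(t)=\sigma^d e^{\ri(\xi_0,t)}\check\chi(\sigma t)$ of the dilated, translated bump; note in particular that the translation by $\xi_0$ only inserts a unimodular factor and therefore leaves every $L_r$ norm unchanged, which is exactly why $C_d$ does not depend on the centre of the frequency ball.
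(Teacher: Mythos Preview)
Your proof is correct and follows essentially the same route as the paper: write $\lt(f)=m(D)\lt(f)$ for a dilated bump $m$ equal to $1$ on the frequency support, apply the $L_p\to L_q$ bound of Theorem \ref{weak_young_inequality}, and compute $\|\check m\|_{L_r}$ by scaling. The only cosmetic differences are that the paper first reduces to a ball centred at the origin by translation invariance (whereas you carry $\xi_0$ along and observe it contributes only a unimodular phase), and that you add the interpolation step $\|\check\chi\|_{L_r}\le\max(\|\check\chi\|_{L_1},\|\check\chi\|_{L_\infty})$ to make $C_d$ genuinely independent of $p,q$, a point the paper glosses over.
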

    \begin{proof}
        By translation invariance it suffices to assume that $f$ is supported in a ball of radius $\sigma$ centred at the origin.
        Let $\varphi$ be a smooth function on $\Rl^d$ which is identically equal to $1$ on the unit ball of radius $1$
        and vanishes outside a ball of radius $2.$ If we denote
        \[
            \varphi_{\sigma}(t) = \varphi(\sigma^{-1}t)
        \]
        then
        \[
            \lt(f) = \lt(\varphi_{\sigma}f) = \check{\varphi_{\sigma}}\ast \lt(f).
        \]  
        For $1\leq r\leq \infty,$ we have
        \[
            \|\check{\varphi}_{\sigma}\|_{L_r(\Rl^d)} = \sigma^{d\left(1-\frac{1}{r}\right)}\|\check{\varphi}\|_{L_r(\Rl^d)}.
        \]
        Since $p\leq q,$ then there exists $r\geq 1$ such that $\frac{1}{r} = 1+\frac{1}{q}-\frac{1}{p}$ and applying Theorem \ref{weak_young_inequality} yields
        \[
            \|\lt(f)\|_{L_p(\Rl^d_\theta)} \leq \sigma^{d\left(\frac{1}{p}-\frac{1}{q}\right)}\|\check{\varphi}\|_{L_r(\Rl^d)}\|\lt(f)\|_{L_q(\Rl^d_\theta)}.
        \]
        Taking $C_d = \|\check{\varphi}\|_{L_r(\Rl^d)}$ completes the proof.
    \end{proof}
    
    \begin{remark}
        It should be noted that in the commutative case, it suffices to prove Corollary \ref{bernstein_inequality} with $\sigma=1$, 
        since the constant $\sigma^{d(\frac{1}{p}-\frac{1}{q})}$ can be recovered by a rescaling. In the noncommutative case this is not possible
        because as mentioned in the introduction, there is no appropriate ``dilation" action on $L_{\infty}(\Rl^d_\theta).$ 
        
        We also point out that while Corollary \ref{bernstein_inequality} is valid for all $\theta,$ the statement is trivial when $\det(\theta)\neq 0$.
        In that case, for $1\leq p \leq q\leq \infty$ it follows from \eqref{relation_to_classical_trace} that
        \[
            \det(2\pi\theta)^{\frac{1}{2p}-\frac{1}{2q}}\|u\|_{L_q(\Rl^d_\theta)} \leq \|u\|_{L_p(\Rl^d_\theta)},\quad u\in L_p(\Rl^d_\theta).
        \]
        This is far stronger than Corollary \ref{bernstein_inequality}.
    \end{remark}

%
    
    We now construct the homogeneous $\{\dot{\Delta}_j\}_{j\in \Itgr}$ and inhomogeneous $\{\Delta_j\}_{j=0}^\infty$ Littlewood-Paley decompositions. This development is directly
    in line with the commutative case, as in e.g. \cite[Chapter 5]{Grafakos-1}. The essential difference is that we use the Weyl transform $\wl$ in place of the Fourier transform.
    Denote by $B(0,r)$ the ball in $\Rl^d$ of radius $r$ centered at zero. 
    Let $\Psi$ be a smooth radial function on $\Rl^d$ such that
    \begin{equation*}
        \mathrm{supp}(\Psi) \subseteq B(0,2)\setminus B(0,1/2)
    \end{equation*}
    and chosen such that
    \begin{equation*}
        \Psi(\xi)+\Psi\left(\frac{\xi}{2}\right) = 1,\quad \xi \in B(0,2)\setminus B(0,1).
    \end{equation*}
    Let $\Phi$ be a smooth radial function supported in $B(0,1)$ such that
    \begin{equation*}
        \Phi(\xi) + \Psi(\xi) = 1,\quad \xi \in B(0,1).
    \end{equation*}
    (that is, $\Phi = 1-\Psi$ on $B(0,1)$ and zero elsewhere).
    
    For $j\in \Itgr$, let $\Psi_j(\xi) := \Psi(2^{-j}\xi)$. Then
    \begin{equation*}
        \Phi+\sum_{k=0}^\infty \Psi_k = 1
    \end{equation*}
    while for $\xi\neq 0$ we have $\sum_{j\in\Itgr} \Psi_j(\xi)=1.$
    Define the operator $\dot{\Delta}_j$ on $\Sc'(\Rl^d_\theta)$ by
    $$
        \dot{\Delta}_j = \Psi_j(D),\quad j \in \Itgr.
    $$
    Specifically, for $f \in \Sc(\Rl^d_\theta)$ we have,
    \begin{equation*}
        \dot{\Delta}_j\lt(f) = \lt(\Psi_j f),\quad j\in \Itgr.
    \end{equation*}
    Define
    \begin{equation*}
        R = \Phi(D).
    \end{equation*}
    For $j\geq 0$, define
    \begin{equation*}
        S_n = R+\sum_{j=0}^n \dot{\Delta}_j.
    \end{equation*}
    The inhomogeneous Littlewood-Paley decomposition $\{\Delta_j\}_{j=0}^\infty$ is defined as
    \begin{equation*}
        \Delta_j = \begin{cases} 
                        \dot{\Delta}_j,\quad j \geq 1,\\
                        R+\dot{\Delta}_0,\quad j=0.
                    \end{cases}
    \end{equation*}
    With this notation, we have $S_k = \sum_{j=0}^k \Delta_j$ and $\Delta_j = S_{j}-S_{j-1}$ for $j\geq 1$.
    
    An immediate consequence of Corollary \ref{bernstein_inequality} is that for all $1\leq p\leq q\leq\infty$ we have
    \begin{equation}\label{bernstein_for_littlewood_paley}
        \|\Delta_j u\|_{L_q(\Rl^d_\theta)} \leq C_d2^{jd\left(\frac{1}{p}-\frac{1}{q}\right)}\|\Delta_j u\|_{L_p(\Rl^d_\theta)}
    \end{equation}
    
    
    The following fact is purely commutative, and well-known. For a proof see e.g. \cite[Lemma 8.2.4]{ArendtBattyHieberNeubrander2011}.
    \begin{lemma}\label{mikhlin_theorem}
        Suppose that $m \in C^\infty(\Rl^d\setminus \{0\})$ is a function such that for all $\alpha$ we have
        \begin{equation*}
            |\partial_\xi^{\alpha}m(\xi)|\leq C_{\alpha,m}|\xi|^{s-|\alpha|},\quad \xi \in \Rl^d\setminus \{0\}
        \end{equation*}
        where $s \in \Rl$ is fixed. Denote by $m_j$ the function
        \begin{equation*}
            m_j(\xi) = m(\xi)\Psi(2^{-j}\xi).
        \end{equation*}
        Then $\check{m}_j$ is integrable, with norm bounded by
        \begin{equation*}
            \|\check{m}_j\|_{L_1(\Rl^d)} \lesssim_{m} 2^{sj},\quad j\in \Itgr.
        \end{equation*}
        (That is, the implicit constant depends on $m$ but is uniform in $j$.)
    \end{lemma}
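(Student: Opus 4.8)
The plan is to reduce to a dyadic rescaling and then invoke the elementary fact that the $L_1$ norm of the inverse Fourier transform of a smooth compactly supported function is controlled by finitely many sup-norm bounds on its derivatives. First I would observe that $m_j$ is smooth (because $\Psi(2^{-j}\cdot)$ vanishes near the origin, where $m$ may be singular) and compactly supported in the annulus $\{2^{j-1}\leq|\xi|\leq 2^{j+1}\}$, so $\check{m}_j$ is automatically a Schwartz function and the only content of the lemma is the quantitative bound. Set
\[
    n_j(\eta) := m(2^j\eta)\Psi(\eta),
\]
so that $m_j(\xi) = n_j(2^{-j}\xi)$. A change of variables then gives $\check{m}_j(t) = 2^{jd}\check{n}_j(2^j t)$, hence $\|\check{m}_j\|_{L_1(\Rl^d)} = \|\check{n}_j\|_{L_1(\Rl^d)}$, and it suffices to bound $\|\check{n}_j\|_{L_1(\Rl^d)}$ by a constant times $2^{sj}$, uniformly in $j$.

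Next I would establish uniform (in $j$) derivative estimates for the $n_j$. Since $\partial^\beta_\eta\bigl[m(2^j\eta)\bigr] = 2^{j|\beta|}(\partial^\beta m)(2^j\eta)$ and $1/2\leq|\eta|\leq 2$ on $\supp\Psi$, the hypothesis on $m$ gives $|(\partial^\beta m)(2^j\eta)| \lesssim_\beta (2^j)^{s-|\beta|}$, so the powers of $2^j$ cancel and $|\partial^\beta_\eta[m(2^j\eta)]| \lesssim_\beta 2^{js}$. Combining this with the Leibniz rule and the boundedness of $\Psi$ and all of its derivatives, one obtains $\sup_{j\in\Itgr}\sup_{\eta\in\Rl^d} 2^{-sj}|\partial^\alpha n_j(\eta)| \lesssim_\alpha 1$ for every $\alpha\in\Ntrl^d$, and moreover every $n_j$ is supported in the fixed ball $B(0,2)$.

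Finally I would invoke the standard estimate: if $g\in C^\infty(\Rl^d)$ is supported in a fixed ball $B(0,R)$, then $\|\check{g}\|_{L_1(\Rl^d)} \lesssim_{d,R} \sup_{|\alpha|\leq d+1}\|\partial^\alpha g\|_\infty$. This follows by integration by parts: for any multi-index $\beta$, $t^\beta\check{g}(t)$ equals $\pm\,i^{|\beta|}$ times the inverse Fourier transform of $\partial^\beta g$, so $|t^\beta\check{g}(t)|\leq (2\pi)^{-d/2}\Vol(B(0,R))\,\|\partial^\beta g\|_\infty$; summing over $|\beta|\leq d+1$ gives $(1+|t|)^{d+1}|\check{g}(t)|\lesssim_{d,R}\sup_{|\beta|\leq d+1}\|\partial^\beta g\|_\infty$, and $(1+|t|)^{-(d+1)}$ is integrable on $\Rl^d$. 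Applying this with $g = n_j$ and the bounds from the previous step yields $\|\check{n}_j\|_{L_1(\Rl^d)}\lesssim_m 2^{sj}$, and combining with the first step completes the proof.

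There is no deep obstacle here; the argument is entirely classical. The one point needing care is the cancellation of the powers of $2^j$ in the derivative estimate for $n_j$ --- this is exactly where the homogeneity exponent $s$ in the hypothesis and the scaling in $\Psi_j$ conspire to give a clean $2^{js}$ --- together with the observation that the constant in the final Fourier estimate is independent of $j$, which holds precisely because the $n_j$ are all supported in the same ball $B(0,2)$.
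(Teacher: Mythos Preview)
Your proof is correct and entirely standard: the dyadic rescaling to $n_j$, the Leibniz-rule cancellation of the $2^{j|\beta|}$ against the $|2^j\eta|^{s-|\beta|}$ from the hypothesis, and the weighted-$L_1$ bound on $\check{n}_j$ via integration by parts are exactly the right ingredients, and you have identified the key point that the $n_j$ share a common support so the constants are uniform in $j$.

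The paper itself does not give a proof of this lemma; it simply records it as a well-known commutative fact and refers the reader to \cite[Lemma 8.2.4]{ArendtBattyHieberNeubrander2011}. So there is nothing to compare approaches with --- your argument is precisely the kind of elementary proof one would expect to find behind that citation.
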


    Observe that if $m$ is merely a smooth function on $\Rl^d$, then the inverse Fourier transform of $m\Phi$ is Schwartz class, and in particular is integrable.
    
    A combination of Theorem \ref{weak_young_inequality} and Lemma \ref{mikhlin_theorem} yields the following:
    \begin{corollary}\label{nc_mikhlin}
        Let $m$ satisfy the conditions of Lemma \ref{mikhlin_theorem}. Then for all $1\leq p \leq \infty$ we have
        \begin{equation*}
            \|m(D)\dot{\Delta}_j\|_{L_p(\Rl^d_\theta)\to L_p(\Rl^d_\theta)} \lesssim_{d,m} 2^{sj},\quad j \in \Itgr.
        \end{equation*}
        If $m$ is in addition smooth at the origin, then
        \begin{equation*}
            \|m(D)\Delta_j\|_{L_p(\Rl^d_{\theta})\to L_p(\Rl^d_\theta)} \lesssim_{d,m} 2^{sj},\quad j\geq 0.
        \end{equation*}
    \end{corollary}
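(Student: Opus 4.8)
The plan is to recognise that $m(D)\dot{\Delta}_j$ is itself a Fourier multiplier, with symbol precisely the function $m_j = m\Psi_j$ appearing in Lemma \ref{mikhlin_theorem}, and then to concatenate the $L_1$-bound on $\check{m}_j$ from that lemma with the multiplier bound of Theorem \ref{weak_young_inequality}. First I would record the identity $m(D)\dot{\Delta}_j = (m\Psi_j)(D) = m_j(D)$: since $\dot{\Delta}_j = \Psi_j(D)$ and the Weyl transform intertwines the classical Fourier multiplier calculus with the noncommutative one, composing the multipliers $m(D)$ and $\Psi_j(D)$ multiplies the symbols, and on $\Sc(\Rl^d_\theta)$ the two operators agree (hence coincide on $L_p(\Rl^d_\theta)$ for $1\leq p<\infty$ by density, and on $L_\infty(\Rl^d_\theta)$ by duality). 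Note $m_j$ is well defined and smooth even when $m$ is singular at the origin, because $\Psi_j$ vanishes near $0$.

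Lemma \ref{mikhlin_theorem} then gives $\check{m}_j\in L_1(\Rl^d)$ with $\|\check{m}_j\|_{L_1(\Rl^d)}\lesssim_m 2^{sj}$ uniformly in $j\in\Itgr$, so Theorem \ref{weak_young_inequality} applies and yields
\[
    \|m(D)\dot{\Delta}_j\|_{L_p(\Rl^d_\theta)\to L_p(\Rl^d_\theta)} = \|m_j(D)\|_{L_p(\Rl^d_\theta)\to L_p(\Rl^d_\theta)} \leq \|\check{m}_j\|_{L_1(\Rl^d)}\lesssim_m 2^{sj}
\]
for every $1\leq p\leq\infty$, which is the homogeneous statement. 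For the inhomogeneous statement, $\Delta_j=\dot{\Delta}_j$ for $j\geq1$, so those indices are already covered; only $j=0$ remains, where the target bound is the constant $2^{s\cdot0}=1$. Here $\Delta_0 = R+\dot{\Delta}_0$, so $m(D)\Delta_0 = (m\Phi)(D)+m_0(D)$; the second term is bounded by the homogeneous estimate at $j=0$, and for the first I would invoke the observation recorded just before the corollary: if $m$ is smooth at the origin then $m\Phi$ is smooth and compactly supported, so its inverse Fourier transform $g$ is Schwartz class and in particular $g\in L_1(\Rl^d)$, whence Theorem \ref{weak_young_inequality} gives $\|(m\Phi)(D)\|_{L_p(\Rl^d_\theta)\to L_p(\Rl^d_\theta)}\leq\|g\|_{L_1(\Rl^d)}$, a finite constant depending only on $d$ and $m$. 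Adding the two contributions completes the proof.

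This is essentially a routine concatenation of the two cited results, so I do not anticipate a serious obstacle; the only points requiring care are the multiplier identity $m(D)\Psi_j(D)=(m\Psi_j)(D)$ — valid because $\wl$ intertwines the classical and noncommutative multiplier calculi, as developed in Section \ref{lp_section} — and the role of the hypothesis that $m$ be smooth at the origin, which is exactly what permits control of the low-frequency piece $\Phi(D)$ in the inhomogeneous decomposition.
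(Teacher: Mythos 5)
Your proof is correct and takes exactly the route the paper intends: the paper states Corollary \ref{nc_mikhlin} as an immediate combination of Theorem \ref{weak_young_inequality} and Lemma \ref{mikhlin_theorem}, and your argument (identifying $m(D)\dot{\Delta}_j$ with the multiplier $m_j(D)$, applying the $L_1$ kernel bound, and handling the $j=0$ low-frequency piece via the remark that $(m\Phi)^{\vee}$ is Schwartz when $m$ is smooth at the origin) is the same concatenation spelled out in detail.
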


    \begin{remark}
        There is also a Mikhlin multiplier theorem for $\Rl^d_\theta.$ It was proved in \cite{MSX3} that if $m\in L_{\infty}(\Rl^d)$ is a Mihklin multiplier, i.e.
        if $\sup_{|\xi|\neq 0} |\xi|^{\alpha}|\partial_{\xi}^{\alpha}m(\xi)| < \infty$ for $|\alpha|\leq \lfloor \frac{d}{2}\rfloor+1,$
        then $m(D)$ is bounded on $L_p(\Rl^d_\theta)$ for $1<p<\infty.$ This subsumes Corollary \ref{nc_mikhlin} when $1<p<\infty.$
    \end{remark}
       
    We record some simple consequences of Corollary \ref{nc_mikhlin}.
    \begin{proposition}\label{homogeneous_bound}
        Let $1\leq p \leq \infty$ and $j\in \Itgr$. If $u \in \Sc'(\Rl^d_\theta)$, then
        \begin{enumerate}[{\rm (i)}]
            \item{} Let $m$ be a function satisfying the conditions of Corollary \ref{nc_mikhlin}, and which is smooth at $0$. Then for $j\geq 0$ we have
            \begin{equation*}
                \|m(D)\Delta_j u\|_p \lesssim_m 2^{js}\|\Delta_j u\|_p.
            \end{equation*}
            \item{} For all multi-indices $\alpha\in \Ntrl^d$, we have
            \begin{equation*}
                \|D^\alpha \Delta_ju\|_p \lesssim_{\alpha} 2^{j|\alpha|}\|\Delta_j u\|_p.
            \end{equation*}
            \item{}\label{fourier_truncation_bound} For all $\alpha\in \Ntrl^d$, we have the bound
            \begin{equation*}
                \|D^\alpha S_ju\|_p \lesssim 2^{j|\alpha|}\|u\|_p,\quad j\geq 0.
            \end{equation*}
        \end{enumerate}
    \end{proposition}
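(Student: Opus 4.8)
The plan is to obtain all three estimates from Theorem~\ref{weak_young_inequality} and Lemma~\ref{mikhlin_theorem} (that is, essentially from Corollary~\ref{nc_mikhlin}), exploiting that each of the operators $m(D)\Delta_j$, $D^\alpha\Delta_j$ and $D^\alpha S_j$ is a Fourier multiplier whose symbol is concentrated at the single dyadic frequency scale $2^j$. I would first record that (ii) is merely the special case of (i) with $m(\xi)=\xi^\alpha$ and $s=|\alpha|$: the operator $D^\alpha$ is the Fourier multiplier with symbol $\xi^\alpha$, this symbol lies in $C^\infty(\Rl^d)$ (in particular it is smooth at the origin), and each $\partial_\xi^\beta(\xi^\alpha)$ is either $0$ or a constant multiple of $\xi^{\alpha-\beta}$, so that $|\partial_\xi^\beta(\xi^\alpha)|\lesssim_{\alpha,\beta}|\xi|^{|\alpha|-|\beta|}$. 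Hence it suffices to prove (i) and (iii).

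For (i), the point is that Corollary~\ref{nc_mikhlin} bounds $m(D)\dot\Delta_j$ applied to an arbitrary argument, while we want $\|\Delta_j u\|_p$ rather than $\|u\|_p$ on the right-hand side; I would obtain this by ``thickening'' the Littlewood--Paley cutoff. Choose a smooth $\widetilde\Psi$ supported in an annulus bounded away from $0$ and $\infty$ with $\widetilde\Psi\equiv 1$ on $\supp(\Psi)$; concretely $\widetilde\Psi(\xi)=\sum_{|k|\le 2}\Psi(2^{-k}\xi)$ works, since $\sum_{k\in\Itgr}\Psi(2^{-k}\xi)=1$ and only the terms $|k|\le 2$ are nonzero on $\supp(\Psi)$. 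For $j\ge 1$ we have $\Delta_j=\Psi_j(D)$ and $\widetilde\Psi(2^{-j}\xi)\Psi_j(\xi)=\Psi_j(\xi)$, so with $g_j(\xi):=m(\xi)\widetilde\Psi(2^{-j}\xi)=\sum_{|k|\le 2}m_{j+k}(\xi)$ (in the notation of Lemma~\ref{mikhlin_theorem}) we get
\[
    m(D)\Delta_j u=g_j(D)\,\Delta_j u.
\]
By Lemma~\ref{mikhlin_theorem}, $\|\check g_j\|_{L_1(\Rl^d)}\le\sum_{|k|\le 2}\|\check m_{j+k}\|_{L_1(\Rl^d)}\lesssim_m\sum_{|k|\le 2}2^{s(j+k)}\lesssim_m 2^{sj}$, so Theorem~\ref{weak_young_inequality} gives the estimate for $j\ge 1$. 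For $j=0$ I would use the elementary identity $\Phi(\xi)+\Psi(\xi)=\Phi(\xi/2)$, valid for all $\xi$ and checked on the regions $|\xi|<1$, $1\le|\xi|<2$, $|\xi|\ge 2$ from the defining relations for $\Phi$ and $\Psi$; rescaling $\xi\mapsto 2^{-k}\xi$ turns this into $\Phi(2^{-k}\xi)+\Psi_k(\xi)=\Phi(2^{-(k+1)}\xi)$, and the case $k=0$ gives $\Delta_0=R+\dot\Delta_0=\Phi(2^{-1}D)$, whose symbol is supported in $B(0,2)$. Picking $\chi\in C_c^\infty(\Rl^d)$ with $\chi\equiv 1$ on $B(0,3)$ we have $m(D)\Delta_0 u=(m\chi)(D)\Delta_0 u$; here $m\chi$ is smooth and compactly supported --- this is the only place the hypothesis that $m$ be smooth at the origin is used --- so $(m\chi)^\vee\in\Sc(\Rl^d)\subseteq L_1(\Rl^d)$ and Theorem~\ref{weak_young_inequality} completes (i).

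For (iii), telescoping the identity $\Phi(2^{-k}\xi)+\Psi_k(\xi)=\Phi(2^{-(k+1)}\xi)$ over $k=0,\dots,j$ yields $S_j=R+\sum_{k=0}^j\dot\Delta_k=\Phi(2^{-(j+1)}D)$. Putting $\rho(\eta):=\eta^\alpha\Phi(\eta)\in\Sc(\Rl^d)$ we obtain
\[
    D^\alpha S_j u=\bigl(\xi^\alpha\Phi(2^{-(j+1)}\xi)\bigr)(D)u=2^{(j+1)|\alpha|}\,\rho(2^{-(j+1)}D)\,u.
\]
The inverse Fourier transform of $\rho(2^{-(j+1)}\,\cdot\,)$ equals $2^{(j+1)d}\check\rho(2^{j+1}\,\cdot\,)$, whose $L_1(\Rl^d)$-norm is $\|\check\rho\|_{L_1(\Rl^d)}$, independent of $j$; so Theorem~\ref{weak_young_inequality} gives $\|D^\alpha S_j u\|_p\le 2^{(j+1)|\alpha|}\|\check\rho\|_{L_1(\Rl^d)}\|u\|_p\lesssim_\alpha 2^{j|\alpha|}\|u\|_p$.

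I do not anticipate a genuine obstacle: with Theorem~\ref{weak_young_inequality} and Lemma~\ref{mikhlin_theorem} in hand the argument is routine. The two points that need a little care are the thickening step in (i) --- without it one only controls $m(D)\Delta_j u$ by $\|u\|_p$ --- and, in (iii), the identification $S_j=\Phi(2^{-(j+1)}D)$: the naive bound $\|D^\alpha S_j u\|_p\le\sum_{k=0}^j\|D^\alpha\Delta_k u\|_p$ degrades to $\lesssim(j+1)\|u\|_p$ when $\alpha=0$, while realizing $S_j$ as a dyadic dilate of the fixed Schwartz multiplier $\Phi(D)$ makes the estimate uniform in $j$ for every $\alpha$.
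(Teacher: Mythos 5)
The paper states this proposition without proof, remarking only that these are ``simple consequences'' of Corollary~\ref{nc_mikhlin}, so there is no authorial argument to compare against. Your proof is correct and is the natural route: item (ii) as a special case of (i) with the symbol $\xi^\alpha$, the ``thickening'' of the Littlewood--Paley cutoff to pass from $\|u\|_p$ to $\|\Delta_j u\|_p$ on the right-hand side of (i), and the telescoping identity $\Phi(2^{-k}\xi)+\Psi_k(\xi)=\Phi(2^{-(k+1)}\xi)$ to realize $S_j=\Phi(2^{-(j+1)}D)$ as a dyadic dilate of a fixed Schwartz multiplier, which as you rightly note is what makes (iii) uniform in $j$ even when $\alpha=0$. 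Two small remarks: for $j\geq 1$ in (i), one can equivalently write $m(D)\Delta_j=m(D)(\Delta_{j-1}+\Delta_j+\Delta_{j+1})\Delta_j$ and apply Corollary~\ref{nc_mikhlin} directly to the three middle blocks, which is your thickening argument phrased in terms of the operators $\Delta_k$ rather than a new bump $\widetilde\Psi$; and your identification of where the hypothesis ``$m$ smooth at the origin'' enters (only at $j=0$) is exactly right and matches the structure of Corollary~\ref{nc_mikhlin}.
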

    
    The following proposition is not hard to prove (these results other than \eqref{equivalence_of_B_22} also be extracted from \cite[Section 3.2]{MSX2}).
    \begin{proposition}\label{approximation_facts}
        Let $u \in \Sc'(\Rl^d_\theta)$. Then
        \begin{enumerate}[{\rm (i)}]
            \item{} For all $j\geq 0$ and all $p \in [1,\infty]$ we have
                \begin{equation*}
                    \|\Delta_j u\|_p\leq \|u\|_p,\quad \|S_ju\|_p\leq \|u\|_p.
                \end{equation*}
            \item{} For all $p\in [1,\infty)$ if $u \in L_p(\Rl^d_\theta)$, we have
            \begin{equation*}
                \lim_{j\to \infty} \|S_ju-u\|_p = 0.
            \end{equation*}
            If $p =\infty$, then the same holds provided that $u \in C_0(\Rl^d_\theta)$.
            \item{}\label{equivalence_of_B_22} There is an equivalence of norms
            \begin{equation*}
                \|u\|_{2} \cong \left(\sum_{j=0}^\infty \|\Delta_j u\|_{2}^2\right)^{1/2}.
            \end{equation*}
        \end{enumerate}
    \end{proposition}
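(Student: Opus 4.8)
The plan is to transport everything to the commutative side via the Weyl transform $\wl$, which is a unitary $L_2(\Rl^d)\to L_2(\Rl^d_\theta)$ intertwining Fourier multipliers, and then run the standard Littlewood--Paley arguments; I do not anticipate a genuine obstacle, since all three parts are essentially routine once Theorem~\ref{weak_young_inequality} and the unitarity of $\wl$ are available. I would begin by recording some bookkeeping facts about the symbols: writing $s_j$ for the symbol of $\Delta_j$ (so $s_j=\Psi_j$ for $j\ge1$ and $s_0=\Phi+\Psi_0$) and $\chi_n:=\Phi+\sum_{k=0}^n\Psi_k$ (so $S_n=\chi_n(D)$), a short induction on the defining relations $\Phi+\Psi\equiv1$ on $B(0,1)$, $\Psi\equiv0$ near $0$, and $\Psi(\xi)+\Psi(\xi/2)\equiv1$ on $B(0,2)\setminus B(0,1)$ shows $\chi_n(\xi)=\chi_0(2^{-n}\xi)$ and $\Psi_j(\xi)=\Psi_0(2^{-j}\xi)$; one also records $0\le s_j\le1$, $\sum_{j\ge0}s_j\equiv1$, and the finite-overlap bound that at most some absolute number $N$ of the $s_j(\xi)$ are nonzero at any given $\xi\ne0$.

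For part (i) I would use these scaling identities to see that the inverse Fourier transforms of $s_j$ and of $\chi_n$ are fixed Schwartz functions precomposed with $t\mapsto 2^jt$, hence have $L_1(\Rl^d)$-norm independent of $j$; Theorem~\ref{weak_young_inequality} then yields uniform boundedness of $\Delta_j$ and $S_n$ on $L_p(\Rl^d_\theta)$ for every $p\in[1,\infty]$. For $p=2$ the bound with constant $1$ is immediate because through $\wl$ the operators $\Delta_j$, $S_n$ become multiplication by $s_j$, $\chi_n$ on $L_2(\Rl^d)$ and $0\le s_j,\chi_n\le1$; this $p=2$ contraction is all that part (iii) will need.

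For part (ii) I would write $S_ju=\psi_j\ast u$, where $\psi\in\Sc(\Rl^d)$ is the convolution kernel of $S_0=\chi_0(D)$ and $\psi_j(t)=2^{jd}\psi(2^jt)$, noting $\int_{\Rl^d}\psi_j=\chi_j(0)=1$. This gives $S_ju-u=\int_{\Rl^d}\psi_j(t)(T_{-t}u-u)\,dt$ and, after the substitution $t=2^{-j}s$,
\[
    \|S_ju-u\|_p\le\int_{\Rl^d}|\psi(s)|\,\|T_{-2^{-j}s}u-u\|_p\,ds .
\]
The integrand is dominated by $2\|u\|_p\,|\psi(s)|\in L_1(\Rl^d)$, so by dominated convergence the claim reduces to $\|\cdot\|_p$-continuity of $t\mapsto T_{-t}u$ at $t=0$. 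For $p<\infty$ this holds for every $u\in L_p(\Rl^d_\theta)$; for $p=\infty$ it holds for $u=\wl(f)\in\Sc(\Rl^d_\theta)$ via the elementary bound $\|T_s\wl(f)-\wl(f)\|_\infty\le(2\pi)^{-d/2}\|(e^{\ri(\cdot,s)}-1)\widehat f\|_{L_1(\Rl^d)}\to0$, and it passes to the $\|\cdot\|_\infty$-closure $C_0(\Rl^d_\theta)$ because $\|T_t\|_{L_\infty\to L_\infty}=1$. This is exactly, and the only place, where the hypothesis $u\in C_0(\Rl^d_\theta)$ enters, and it is really needed, since translations do not act strongly continuously on all of $L_\infty(\Rl^d_\theta)$.

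For part (iii) I would pass to the commutative side once more: write $u=\wl(f)$ with $f\in L_2(\Rl^d)$; by the intertwining property and Plancherel, $\|\Delta_ju\|_2=\|s_j\widehat f\|_{L_2(\Rl^d)}$, so
\[
    \sum_{j=0}^\infty\|\Delta_ju\|_2^2=\int_{\Rl^d}|\widehat f(\xi)|^2\,w(\xi)\,d\xi,\qquad w:=\sum_{j=0}^\infty s_j^2 .
\]
The pointwise bounds $N^{-1}\le w\le1$ --- lower from $1=\big(\sum_j s_j(\xi)\big)^2\le N\sum_j s_j(\xi)^2$, upper from $\sum_j s_j(\xi)^2\le\big(\max_k s_k(\xi)\big)\sum_j s_j(\xi)\le1$ --- then give $N^{-1}\|u\|_2^2\le\sum_j\|\Delta_ju\|_2^2\le\|u\|_2^2$ for $u\in L_2(\Rl^d_\theta)$. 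For the converse implication (finiteness of the square function forcing $u\in L_2$) I would use $\|S_{n+m}u-S_nu\|_2^2=\int\big|\sum_{n<j\le n+m}s_j\big|^2|\widehat f|^2\le N\sum_{n<j\le n+m}\|\Delta_ju\|_2^2$ to make $(S_nu)_n$ Cauchy in $L_2(\Rl^d_\theta)$; since $S_nu\to u$ in $\Sc'(\Rl^d_\theta)$ (because $\chi_n\to1$ boundedly) and $L_2(\Rl^d_\theta)$ embeds continuously in $\Sc'(\Rl^d_\theta)$, the limit must be $u$. If I had to name the fussiest point in the whole argument, it is the uniformity of constants in (i) --- where constant exactly $1$ is transparent only at $p=2$ --- together with the need, in (ii), to treat $p=\infty$ separately; there is no deeper difficulty.
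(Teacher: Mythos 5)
The paper does not prove this proposition, deferring to \cite{MSX2} for (i)--(ii); your argument is a reasonable independent reconstruction and the overall structure --- transporting through $\wl$ or $\lt$ to commutative Fourier analysis, scaling identities for the kernels, dominated convergence for (ii), Plancherel plus finite overlap for (iii) --- is sound. Two points are worth flagging.

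First, a small phrasing slip in (i): you say the inverse Fourier transforms of $s_j$ and $\chi_n$ are ``fixed Schwartz functions precomposed with $t\mapsto 2^jt$''; that by itself would give $L_1$-norms decaying like $2^{-jd}$. The correct statement is that they are $L_1$-normalized dilates, $\check{\chi}_n(t)=2^{nd}\check{\chi}_0(2^nt)$, which is what you actually use in (ii) when you write $\psi_j(t)=2^{jd}\psi(2^jt)$; you should say the same thing in (i). (Similarly, $\wl$ intertwines $\Delta_j$ with the commutative Fourier multiplier $s_j(D)$ on $L_2(\Rl^d)$, not with pointwise multiplication by $s_j$ --- the latter description is accurate for $\lt$, or for $\wl$ composed with the classical Fourier transform; the conclusion $\|\Delta_j\|_{L_2\to L_2}\le 1$ is the same.)

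Second, and more substantively, you correctly observe that your route via Theorem~\ref{weak_young_inequality} only produces $\sup_j\|\Delta_j\|_{L_p\to L_p}<\infty$ and $\sup_n\|S_n\|_{L_p\to L_p}<\infty$, with constant $1$ transparent only at $p=2$. The literal constant-$1$ assertion in the proposition cannot in fact be extracted from Young's inequality for a general choice of the bump $\Psi$: for $j\ge1$ one has $\int\check{\Psi}_j=0$ with $\check{\Psi}_j\not\equiv0$, so $\check{\Psi}_j$ changes sign and (after the paper's $(2\pi)^{-d/2}$ normalization of the convolution kernel) the Young bound for $\Delta_j$ on $L_p$ exceeds $1$. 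For $S_n$ one can force $\|S_n\|_{L_p\to L_p}\le1$ by choosing $\chi_0$ positive-definite (so $\check{\chi}_0\ge0$, and then the kernel is a probability density), but this is a constraint not imposed in Section~\ref{lp_section}; for $\Delta_j=S_j-S_{j-1}$ even that only yields $\le2$. This does not affect anything downstream --- every later invocation of the proposition (e.g.\ in the paraproduct estimates and in Corollary~\ref{nonlinear_continuity}) only uses a uniform constant, and the statement should be read with an implicit $\lesssim$ --- but you should say plainly that for $p\ne2$ you obtain uniform boundedness rather than literal contractivity, rather than leaving it as a named ``fussy point.'' Everything else, including the closure argument for $p=\infty$ in (ii) and both directions of (iii), is correct.
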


\subsection{Heat and Schr\"odinger semigroups on $\Rl^d_\theta$}
    We will concern ourselves with two semigroups of operators on $L_2(\Rl^d_\theta)$, the heat and Schr\"odinger semigroups
    \begin{equation*}
        t\mapsto e^{t\Delta},\quad t\mapsto e^{\ri t\Delta}.
    \end{equation*}
    These operators can be defined by functional calculus on $L_2(\Rl^d_\theta)$, or equivalently as Fourier multipliers. Given $T \in \Sc'(\Rl^d)$, we may define
    \begin{equation*}
        e^{t\Delta}\lt(T) = \lt(e^{-|\xi|^2t}\cdot T),\quad e^{\ri t\Delta}\lt(T) = \lt(e^{-\ri |\xi|^2t}\cdot T).
    \end{equation*}
    Equivalently, the semigroups $e^{t\Delta}$ and $e^{\ri t\Delta}$ are the images of the classical heat and Schr\"odinger semigroups
    on $\Rl^d$ under the Weyl transform $\wl$.
    
    Recently, a theory of elliptic pseudodifferential operators on $\Rl^d_\theta$ has been developed \cite{GJP2017}. It is plausible that many of the following results
    generalise to semigroups generated by accretive elliptic differential operators, although at present such results are not known. 
    
    \begin{theorem}\label{heat_mapping}
        For all $1\leq p \leq \infty$ and $t > 0$, the operator $e^{t\Delta}$ is bounded on $L_p(\Rl^d_\theta)$, with norm
        $$
            \|e^{t\Delta}\|_{L_p(\Rl^d_\theta)\to L_p(\Rl^d_\theta)} \leq 1.
        $$
        If $p < \infty$, then $t\mapsto e^{t\Delta}$ is strongly continuous on $L_p(\Rl^d_\theta)$, in the sense that the mapping
        $$
            [0,\infty)\times L_p(\Rl^d_\theta)\to L_p(\Rl^d_\theta),\quad (t,u) \mapsto e^{t\Delta} u
        $$
        is continuous.
    \end{theorem}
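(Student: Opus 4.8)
The plan is to realise $e^{t\Delta}$ as convolution with the classical heat kernel and then to read off both assertions from the elementary convolution bound \eqref{L_1_L_p_case} together with the $L_p$-strong continuity of the translation group $\{T_s\}$. Recall that $e^{t\Delta}$ is the Fourier multiplier $m_t(D)$ with $m_t(\xi)=e^{-t|\xi|^2}$ and that $\check m_t$ is a Gaussian, hence integrable; so the convolution representation of Fourier multipliers from Section \ref{lp_section} gives, for $u\in\Sc(\Rl^d_\theta)$,
\[
    e^{t\Delta}u = p_t\ast u := \int_{\Rl^d} p_t(s)\,T_{-s}u\,ds,\qquad p_t(s)=(4\pi t)^{-d/2}e^{-|s|^2/4t},
\]
which one can also see from the fact, noted above, that $e^{t\Delta}$ on $\Rl^d_\theta$ is the $\wl$-image of the classical heat semigroup on $\Rl^d$. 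The only properties of $p_t$ we use are that $p_t\ge0$ and $\|p_t\|_{L_1(\Rl^d)}=1$.

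For the norm estimate, \eqref{L_1_L_p_case} (which is valid for every $1\le p\le\infty$) gives
\[
    \|e^{t\Delta}u\|_{L_p(\Rl^d_\theta)} = \|p_t\ast u\|_{L_p(\Rl^d_\theta)} \le \|p_t\|_{L_1(\Rl^d)}\,\|u\|_{L_p(\Rl^d_\theta)} = \|u\|_{L_p(\Rl^d_\theta)},\qquad u\in\Sc(\Rl^d_\theta).
\]
When $1\le p<\infty$ the Schwartz space $\Sc(\Rl^d_\theta)$ is dense in $L_p(\Rl^d_\theta)$, so $e^{t\Delta}$ extends uniquely to a contraction of $L_p(\Rl^d_\theta)$. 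When $p=\infty$ the weak$^*$-integral $p_t\ast u$ is defined for every $u\in L_\infty(\Rl^d_\theta)$ and satisfies $\|p_t\ast u\|_\infty\le\|u\|_\infty$; using that $\tau$ is tracial and that $p_t$ is even, one checks $\tau\big(v\,(p_t\ast u)\big)=\tau\big((e^{t\Delta}v)\,u\big)$ for $v\in L_1(\Rl^d_\theta)$, so $u\mapsto p_t\ast u$ is the Banach-space adjoint of the contraction $e^{t\Delta}$ on $L_1(\Rl^d_\theta)$ and agrees with the distributional action of $e^{t\Delta}$ on $\Sc'(\Rl^d_\theta)$; hence $\|e^{t\Delta}\|_{L_\infty\to L_\infty}\le1$ as well.

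For strong continuity with $1\le p<\infty$: since the $e^{t\Delta}$ are uniformly bounded by $1$, a standard $\varepsilon/3$ argument reduces joint continuity of $(t,u)\mapsto e^{t\Delta}u$ to continuity of $t\mapsto e^{t\Delta}u$ on $[0,\infty)$ for each fixed $u\in L_p(\Rl^d_\theta)$. For continuity at $t=0$, the rescaling $s=\sqrt t\,\sigma$ gives
\[
    \|e^{t\Delta}u-u\|_{L_p(\Rl^d_\theta)} \le \int_{\Rl^d} p_t(s)\,\|T_{-s}u-u\|_{L_p(\Rl^d_\theta)}\,ds = \int_{\Rl^d} p_1(\sigma)\,\|T_{-\sqrt t\,\sigma}u-u\|_{L_p(\Rl^d_\theta)}\,d\sigma,
\]
and as $t\to0^+$ the integrand tends to $0$ pointwise by the $L_p$-norm continuity of $s\mapsto T_{-s}u$ and is dominated by the integrable function $2\|u\|_{L_p(\Rl^d_\theta)}\,p_1$, so dominated convergence yields $\|e^{t\Delta}u-u\|_{L_p(\Rl^d_\theta)}\to0$. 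For $t_0>0$, the semigroup law $e^{t\Delta}e^{t_0\Delta}=e^{(t+t_0)\Delta}$ (immediate from $m_tm_{t_0}=m_{t+t_0}$) together with the contraction bound gives $\|e^{t\Delta}u-e^{t_0\Delta}u\|_{L_p(\Rl^d_\theta)}\le\|e^{|t-t_0|\Delta}u-u\|_{L_p(\Rl^d_\theta)}$, which tends to $0$ as $t\to t_0$ by the case just treated.

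There is no deep obstacle here; the argument is routine once the heat-kernel representation is secured. The only point that needs a little care is the endpoint $p=\infty$, where $\Sc(\Rl^d_\theta)$ is no longer dense and one has to identify $e^{t\Delta}$ on $L_\infty(\Rl^d_\theta)$ with the adjoint of its action on the predual $L_1(\Rl^d_\theta)$; one should also keep in mind that the $L_p$-strong continuity of $s\mapsto T_{-s}u$ used above is itself a consequence of the density of $\Sc(\Rl^d_\theta)$ in $L_p(\Rl^d_\theta)$ and the fact that each $T_s$ acts isometrically on $L_p(\Rl^d_\theta)$.
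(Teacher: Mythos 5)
Your argument is correct and essentially the paper's: realise $e^{t\Delta}$ as convolution with the Gaussian $p_t\in L_1(\Rl^d)$ of unit mass and invoke the convolution bound \eqref{L_1_L_p_case} (equivalently Theorem~\ref{weak_young_inequality}) for the contraction estimate. The one genuine difference is that, for the strong continuity when $p<\infty$, the paper simply cites \cite[Theorem 3.10]{MSX2}, whereas you give a self-contained proof via the semigroup law plus the scaling $s=\sqrt t\,\sigma$ and dominated convergence against $p_1$; this uses only the $L_p$-isometry of $T_s$, the density of $\Sc(\Rl^d_\theta)$, and the trick of reducing joint continuity to continuity in $t$ by uniform boundedness, so it is a perfectly good elementary alternative to the citation. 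Your handling of $p=\infty$ by duality (using $\widetilde{p_t}=p_t$ and the trace pairing, as in the proof of Theorem~\ref{young_inequality}) is also fine, though the paper's Theorem~\ref{weak_young_inequality} already records the $L_\infty$ bound.
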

    \begin{proof}
        Observe that by definition, $e^{t\Delta}$ is an operator of the form $m(D)$, where
        $$
            m(\xi) = e^{-t|\xi|^2},\quad \xi \in \Rl^d.
        $$
        and the inverse Fourier transform is given by the classical formula
        $$
            \check{m}(s) = (4\pi t)^{-\frac{d}{2}}e^{-\frac{|s|^2}{4t}},\quad s \in \Rl^d.
        $$
        Since $\|\check{m}\|_1 = 1$, the $L_p(\Rl^d_\theta)\to L_p(\Rl^d_\theta)$ norm bound follows from Theorem \ref{weak_young_inequality}.
                
        The strong continuity amounts to proving that
        $$
            \lim_{t\to 0} \|u-e^{t\Delta}u\|_p = 0,\quad u\in L_p(\Rl^d_\theta)\text{ for }p<\infty
        $$
        or $u\in C_0(\Rl^d_\theta)$ for $p=\infty.$ This is established as a special case of \cite[Theorem 3.10]{MSX2}.
    \end{proof}
    
    Immediately from the unitarity of the Weyl transform, we have the following:
    \begin{theorem}\label{schr_mapping}
        On $L_2(\Rl^d_\theta)$, $t\mapsto e^{\ri t\Delta}$ is a strongly continuous unitary group.
    \end{theorem}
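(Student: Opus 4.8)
The plan is to transport the statement through the Weyl transform, since essentially everything reduces to the classical case. Recall from the preliminaries that $\wl$ extends to a unitary isomorphism $\wl : L_2(\Rl^d) \to L_2(\Rl^d_\theta)$, and that the Laplacian $\Delta$ on $L_2(\Rl^d_\theta)$ is, by construction, the image under $\wl$ of the classical self-adjoint Laplacian on $L_2(\Rl^d)$. Consequently the functional calculus transports as well, and on $L_2(\Rl^d_\theta)$ one has the operator identity
\begin{equation*}
    e^{\ri t\Delta} = \wl \circ e^{\ri t\Delta}_{\mathrm{cl}} \circ \wl^{-1},\qquad t \in \Rl,
\end{equation*}
where $e^{\ri t\Delta}_{\mathrm{cl}}$ denotes the classical Schr\"odinger group on $L_2(\Rl^d)$. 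At the level of Schwartz elements this is just the chain $e^{\ri t\Delta}\wl(g) = \wl(e^{\ri t\Delta}_{\mathrm{cl}}g)$ already recorded in the definition of $e^{\ri t\Delta}$ on $\Sc'(\Rl^d_\theta)$; and since $\Sc(\Rl^d)$ is dense in $L_2(\Rl^d)$ and $\wl$ is unitary, the two bounded operators on either side of the displayed identity, agreeing on the dense subspace $\Sc(\Rl^d_\theta)$, must coincide on all of $L_2(\Rl^d_\theta)$. It is worth stressing that, unlike the heat semigroup treated in Theorem~\ref{heat_mapping}, the Schr\"odinger multiplier $\xi\mapsto e^{-\ri t|\xi|^2}$ has non-integrable inverse Fourier transform, so Theorem~\ref{weak_young_inequality} does not apply; it is precisely the unitarity of $\wl$ that is doing the work here.

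Next I would invoke the classical fact that $t\mapsto e^{\ri t\Delta}_{\mathrm{cl}}$ is a strongly continuous unitary group on $L_2(\Rl^d)$. Conjugating by the Euclidean Fourier transform (unitary by Plancherel) turns $e^{\ri t\Delta}_{\mathrm{cl}}$ into multiplication by the unimodular function $\xi\mapsto e^{-\ri t|\xi|^2}$: multiplication by a function of modulus one is unitary, the group law is $e^{-\ri(t+s)|\xi|^2} = e^{-\ri t|\xi|^2}e^{-\ri s|\xi|^2}$ with $t=0$ giving the identity, and strong continuity in $t$ follows from the dominated convergence theorem, since $|e^{-\ri t|\xi|^2}-e^{-\ri t_0|\xi|^2}|^2 \leq 4$ and tends to $0$ pointwise as $t\to t_0$.

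Finally it remains only to note that conjugation by a fixed unitary preserves all three properties. Unitarity: $(\wl U\wl^{-1})^* = \wl U^*\wl^{-1} = \wl U^{-1}\wl^{-1} = (\wl U\wl^{-1})^{-1}$. Group law: $(\wl U_t\wl^{-1})(\wl U_s\wl^{-1}) = \wl U_{t+s}\wl^{-1}$ and $\wl U_0\wl^{-1} = \id$. Strong continuity: for $v\in L_2(\Rl^d_\theta)$ write $v = \wl(f)$ with $f\in L_2(\Rl^d)$, so that $\|e^{\ri t\Delta}v - e^{\ri t_0\Delta}v\|_{L_2(\Rl^d_\theta)} = \|e^{\ri t\Delta}_{\mathrm{cl}}f - e^{\ri t_0\Delta}_{\mathrm{cl}}f\|_{L_2(\Rl^d)}\to 0$ as $t\to t_0$, by the previous paragraph. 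There is no genuine obstacle in this argument; the only point deserving a careful sentence is the identification of the two sides of the displayed operator identity on all of $L_2(\Rl^d_\theta)$, which as noted is immediate from their agreement on the dense Schwartz subspace together with the boundedness of $\wl$ and $\wl^{-1}$.
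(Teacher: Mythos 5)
Your proposal is correct and is essentially the paper's own argument spelled out: the paper disposes of Theorem~\ref{schr_mapping} with the single remark that it is immediate from the unitarity of the Weyl transform, and you have supplied precisely the routine details (transporting the classical Schr\"odinger group through the unitary $\wl$, verifying that conjugation preserves unitarity, the group law, and strong continuity). Nothing in your write-up departs from the paper's approach.
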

    Observe that a combination of Proposition \ref{approximation_facts}, Theorem \ref{heat_mapping} and Theorem \ref{schr_mapping} yields
    \begin{align*}
            \|e^{t\Delta}\Delta_j\|_{L_p(\Rl^d_\theta)\to L_p(\Rl^d_\theta)} &\lesssim 1,\\
        \|e^{\ri t\Delta}\Delta_j\|_{L_2(\Rl^d_\theta)\to L_2(\Rl^d_\theta)} &\lesssim 1
    \end{align*}
    for all $p \in [1,\infty]$, where the constants are uniform in $j\geq 0$.
    
\subsection{Function spaces on $\Rl^d_\theta$}\label{function_spaces_subsection}
    Let $J := (1-\Delta)^{1/2} = (1+D^2)^{1/2}$. This is the Bessel potential operator. 
    
    The definition of $L_p$-Sobolev spaces for $\Rl^d_\theta$ is now standard. 
    We will have little use for $L_p$-Sobolev spaces for $p\neq 2$, but we record the definition here for the sake of completeness. The following is identical to \cite[Section 3.2]{MSX2} and \cite{GJP2017}.   \begin{definition}
        For $1\leq p < \infty$ and $s \in \Rl$, define the Bessel potential Sobolev space $W^s_p(\Rl^d_\theta)$ as the subset of $u \in \Sc'(\Rl^d_\theta)$ such that $J^su \in L_p(\Rl^d_\theta)$, with the norm
        \begin{equation*}
            \|u\|_{W^s_p} = \|J^su\|_{p}.
        \end{equation*}     %
    \end{definition}

    Besov spaces for $\Rl^d_\theta$ can be defined by the Littlewood-Paley decomposition.
    \begin{definition}\label{besov_spaces_definition}
%
        Let $s\in \Rl$ and $p,q \in [1,\infty]$. The inhomogeneous Besov class $B^s_{p,q}(\Rl^d_\theta)$ is defined as the subspace of
        distributions $u\in \Sc'(\Rl^d_\theta)$ such that
        \begin{equation*}
            \|u\|_{B^s_{p,q}} := \left(\sum_{j= 0}^\infty 2^{jsq}\|\Delta_j u\|_p^q\right)^{1/q} < \infty
        \end{equation*}
        for $q < \infty$, and
        \begin{equation*}
            \|u\|_{B^s_{p,\infty}} = \sup_{j\geq 0} 2^{sj}\|\Delta_j u\|_p.
        \end{equation*}
    \end{definition}
    It is not hard to show that the space $B^s_{p,q}(\Rl^d_\theta)$ does not depend on the choice of function $\Psi$ defining the Littlewood-Paley decomposition,
    up to equivalence of norms. This can be proved by an identical method to \cite[Remark 3.2]{XXY2018}.
    
    \begin{remark}
        Very recently, L.~Lafleche \cite{Lafleche2022} has introduced a family of quantum Besov spaces related to Weyl quantization. I conjecture that these are essentially the same as Definition \ref{besov_spaces_definition}.
    \end{remark}


    The following lemma lists some straightforward consequences of the definition of $B^s_{p,q}(\Rl^d_\theta).$
    \begin{lemma}
        The Besov spaces have the following elementary properties:
        \begin{enumerate}[{\rm (i)}]
            \item{}\label{sobolev_inclusion} If $r>s,$ then $\|u\|_{B^s_{p,q}(\Rl^d_\theta)} \lesssim_{s,r} \|u\|_{W^r_p(\Rl^d_\theta)}.$
            \item{}\label{schwartz_dense} If $s\in \Rl,$ $p\in [1,\infty)$ and $q < \infty,$ the space $\Sc(\Rl^d_\theta)$ is dense in $B^s_{p,q}(\Rl^d_\theta)$. 
            \item{} For every $s >0$ and $p,q \in [1,\infty]$, we have $B^s_{p,q}(\Rl^d_\theta)\subseteq L_p(\Rl^d_\theta).$
            \item{} For every $p\in  [1,\infty]$, we have $L_p(\Rl^d_\theta)\subseteq B^0_{p,\infty}(\Rl^d_\theta)$.
            \item{}\label{besov_embedding} We have
        \[
            B^{s_{0}}_{p_0,q_0}(\Rl^d_\theta)\subseteq B^{s_1}_{p_1,q_1}(\Rl^d_\theta)
        \]
        with a corresponding norm inequality independent of $\theta$ whenever $p_0 \leq p_1$ and 
        \[
            s_0 \geq s_1+d\left(\frac{1}{p_0}-\frac{1}{p_1}\right),\quad q_0\leq q_1.
        \]
        \end{enumerate}
    \end{lemma}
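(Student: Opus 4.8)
The plan is to obtain all five items by the same route as in the commutative theory, the only ingredients being the Bernstein inequality \eqref{bernstein_for_littlewood_paley} (equivalently Corollary \ref{bernstein_inequality}), the multiplier bounds of Corollary \ref{nc_mikhlin}, the elementary estimates of Proposition \ref{approximation_facts}, and routine $\ell^q$-bookkeeping. Item (iv) is immediate: for $u\in L_p(\Rl^d_\theta)\subseteq\Sc'(\Rl^d_\theta)$, Proposition \ref{approximation_facts}(i) already gives $\sup_{j\ge 0}\|\Delta_j u\|_p\le\|u\|_p$, which is exactly $\|u\|_{B^0_{p,\infty}}\le\|u\|_p$.

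For item (i) one writes $\Delta_j u = J^{-r}\Delta_j(J^r u)$ and observes that $J^{-r}\Delta_j$ is a Fourier multiplier whose symbol is $(1+|\xi|^2)^{-r/2}$ localized to the support of $\Psi(2^{-j}\,\cdot\,)$ when $j\ge 1$, and to the support of $\Phi+\Psi$ when $j=0$. Since $(1+|\xi|^2)^{-r/2}$ agrees on $\{|\xi|\ge 1/2\}$ with a symbol satisfying the hypothesis of Lemma \ref{mikhlin_theorem} with exponent $-r$, Corollary \ref{nc_mikhlin} yields $\|J^{-r}\Delta_j\|_{L_p\to L_p}\lesssim_r 2^{-jr}$ for $j\ge 1$, while the $j=0$ multiplier has smooth compactly supported symbol, hence integrable inverse Fourier transform, and is $L_p$-bounded by Theorem \ref{weak_young_inequality}. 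Therefore $\|\Delta_j u\|_p\lesssim_r 2^{-jr}\|u\|_{W^r_p}$ for every $j$, and since $s-r<0$ the weighted sum $(\sum_{j\ge 0}2^{j(s-r)q})^{1/q}$ is a finite constant depending only on $s,r,q$; this gives $\|u\|_{B^s_{p,q}}\lesssim_{s,r,q}\|u\|_{W^r_p}$.

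For item (v) one chains three monotonicities. First, $q_0\le q_1$ gives $\ell^{q_0}\hookrightarrow\ell^{q_1}$ with norm $\le 1$, hence $\|u\|_{B^{s_1}_{p_1,q_1}}\le\|u\|_{B^{s_1}_{p_1,q_0}}$. Second, applying \eqref{bernstein_for_littlewood_paley} with the pair $p_0\le p_1$ gives $2^{js_1}\|\Delta_j u\|_{p_1}\le C_d\,2^{j(s_1+d(1/p_0-1/p_1))}\|\Delta_j u\|_{p_0}$, so taking $\ell^{q_0}$-norms yields $\|u\|_{B^{s_1}_{p_1,q_0}}\le C_d\,\|u\|_{B^{s_1+d(1/p_0-1/p_1)}_{p_0,q_0}}$. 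Third, for inhomogeneous Besov spaces greater smoothness is stronger: since $j\ge 0$ and $s_0\ge s_1+d(1/p_0-1/p_1)$, one has $\|u\|_{B^{s_1+d(1/p_0-1/p_1)}_{p_0,q_0}}\le\|u\|_{B^{s_0}_{p_0,q_0}}$. Composing the three inequalities gives the claim, with constant $C_d$ from Corollary \ref{bernstein_inequality}, which is $\theta$-independent as required. For item (iii), given $u\in B^s_{p,q}(\Rl^d_\theta)$ with $s>0$, write $\|\Delta_j u\|_p = 2^{-js}(2^{js}\|\Delta_j u\|_p)$ and apply Hölder in $j$ with exponents $q,q'$ (using $s>0$ so that $\sum_j 2^{-jsq'}<\infty$) to get $\sum_{j\ge 0}\|\Delta_j u\|_p\lesssim_{s,q}\|u\|_{B^s_{p,q}}$; hence the partial sums $S_N u$ are uniformly bounded in $L_p$, are Cauchy in $L_p$ (so convergent there) when $p<\infty$, and have a weak$^*$-convergent subsequence in $L_\infty(\Rl^d_\theta)=L_1(\Rl^d_\theta)^*$ when $p=\infty$. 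Since $S_N u\to u$ in $\Sc'(\Rl^d_\theta)$ in either case, the limit is $u$, so $u\in L_p(\Rl^d_\theta)$ with $\|u\|_p\lesssim_{s,q}\|u\|_{B^s_{p,q}}$.

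The one item requiring genuine care is the density statement (ii), and I would argue it in two steps. First, reduce to spectrally truncated elements: using $\Delta_j\Delta_k=0$ for $|j-k|\ge 2$ one checks that $\Delta_j(u-S_N u)$ vanishes for $j\le N-1$ and equals $\Delta_j u$ for $j\ge N+2$, so $\|u-S_N u\|_{B^s_{p,q}}^q\lesssim\sum_{j\gtrsim N}2^{jsq}\|\Delta_j u\|_p^q$, which tends to $0$ as $N\to\infty$ precisely because $q<\infty$. Second, approximate a fixed $S_N u$, whose spectrum lies in $B(0,2^{N+1})$: for any $w\in L_p(\Rl^d_\theta)$ with spectrum in a fixed ball only finitely many $\Delta_j w$ are nonzero and each obeys $\|\Delta_j w\|_p\le\|w\|_p$, so $\|w\|_{B^s_{p,q}}\lesssim_{s,q,N}\|w\|_p$; now choose $v_n\in\Sc(\Rl^d_\theta)$ with $v_n\to S_N u$ in $L_p$ (density of $\Sc(\Rl^d_\theta)$ in $L_p(\Rl^d_\theta)$ for $p<\infty$), fix a smooth compactly supported $\chi$ equal to $1$ on $B(0,2^{N+1})$, and put $w_n:=\chi(D)v_n\in\Sc(\Rl^d_\theta)$; then $\chi(D)S_N u=S_N u$, $\chi(D)$ is $L_p$-bounded by Theorem \ref{weak_young_inequality}, and $w_n-S_N u=\chi(D)(v_n-S_N u)$ has spectrum in $\mathrm{supp}\,\chi$, whence $\|w_n-S_N u\|_{B^s_{p,q}}\lesssim\|w_n-S_N u\|_p\lesssim\|v_n-S_N u\|_p\to 0$. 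Combining the two steps gives density. The main obstacle is this last item: not any individual estimate, but correctly interlocking the reduction to compact spectrum with the Schwartz-density of $L_p$ and the domination of the Besov norm by the $L_p$ norm on spectrally localized pieces; this is also the only point at which the hypotheses $p<\infty$ and $q<\infty$ genuinely enter.
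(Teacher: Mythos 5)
Your proposal is correct. Items (i), (iv), and (v) follow exactly the route the paper itself takes (the paper dispenses with (iv)--(v) in a single line as ``immediate from the definition and Bernstein''; your write-up simply makes that explicit). For item (iii), which the paper also dismisses as immediate, your argument is right but slightly more elaborate than necessary at $p=\infty$: the Hölder step already gives $\sum_j\|\Delta_j u\|_\infty<\infty$, so the partial sums are Cauchy in $L_\infty$-norm and converge there, with no need to extract a weak$^*$-convergent subsequence.

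The only place your argument genuinely diverges is item (ii). Both you and the paper begin by reducing to the spectrally truncated element $S_N u$, using $q<\infty$ for the tail of the $\ell^q$-series. For the second step the paper observes that $S_N u \in W^r_p(\Rl^d_\theta)$ for all $r>0$, invokes the density of $\Sc(\Rl^d_\theta)$ in $W^r_p(\Rl^d_\theta)$ (for $p<\infty$), chooses $r>s$, and then applies item (i) to transfer the Sobolev approximation to a Besov approximation. You instead approximate $S_N u$ only in $L_p$, and recover control of the Besov norm by composing with a fixed spectral cutoff $\chi(D)$ so that the error stays in a fixed frequency ball, where the Besov norm is dominated by the $L_p$ norm. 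Both routes are valid; yours has the small advantage of invoking only density of $\Sc(\Rl^d_\theta)$ in $L_p(\Rl^d_\theta)$ rather than density in the Sobolev scale, at the cost of introducing the auxiliary multiplier $\chi(D)$ and checking it preserves $\Sc(\Rl^d_\theta)$ (which it does, since $\chi(D)\lt(f)=\lt(\chi f)$ and $\chi f\in\Sc(\Rl^d)$).
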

    \begin{proof}
        From Theorem \ref{nc_mikhlin},
        \[
            \|\Delta_ju\|_{L_p(\Rl^d_\theta)} \lesssim 2^{-jr}\|u\|_{W^r_p(\Rl^d_\theta)}.
        \]
        It follows that
        \[
            2^{js}\|\Delta_j u\|_{L_p(\Rl^d_\theta)} \lesssim 2^{-j(r-s)}\|u\|_{W^{r}_p(\Rl^d_\theta)}.
        \]
        Therefore,
        \[
            \|u\|_{B^s_{p,q}(\Rl^d_\theta)} \lesssim \left(\sum_{j=0}^\infty 2^{-jq(r-s)}\right)^{\frac1q}\|u\|_{W^r_p(\Rl^d_\theta)}.
        \]
        This proves \eqref{sobolev_inclusion}.
    
        Next we prove \eqref{schwartz_dense}. Given $u \in B^s_{p,q}(\Rl^d_\theta),$ define
        \[
            u_N = \sum_{k=0}^N \Delta_ku.
        \]
        Since $q< \infty,$ we have
        \[
            \|u-u_N\|_{B^s_{p,q}} \lesssim \left(\sum_{k=N-1} 2^{ksq}\|\Delta_ku\|_{p}^q\right)^{\frac1q}
        \]
        which vanishes as $N\to \infty.$ Thus it suffices to show that there exists a sequence $\{\psi_j\}_{j=0}^\infty\subset\Sc(\Rl^d_\theta)$
        such that $\|u_N-\psi_j\|_{B^s_{p,q}(\Rl^d_\theta)}\to 0.$ Since the Fourier transform of $u_N$ is finitely supported, we 
        have 
        \[
            u_N \in W^{r}_p(\Rl^d_\theta)
        \]
        for all $r>0.$ Taking $r>s,$ there exists a sequence $\{\psi_j\}_{j=0}^\infty$ such that 
        \[
            \|\psi_j-u_N\|_{W^r_p(\Rl^d_\theta)} \to 0.
        \]
        (see \cite[Proposition 3.14]{MSX2}).
        Applying \eqref{sobolev_inclusion}, it follows that
        \[
            \|\psi_j-u_N\|_{B^s_{p,q}(\Rl^d_\theta)}\to 0.
        \]
        This proves \eqref{schwartz_dense}.
    
        The second and third inclusions are immediate from the Definition \ref{besov_spaces_definition}.
        The embedding and norm inequality in \eqref{besov_embedding} is an immediate consequence of the definition and \eqref{bernstein_for_littlewood_paley}.
    \end{proof}

    The behaviour of Besov spaces on $\Rl^d$ under interpolation is well-known \cite[Theorem 6.4.5]{Bergh-Lofstrom-1976}. These Besov spaces on $\Rl^d_\theta$ behave under real interpolation identically to the classical Besov spaces on $\Rl^d$. The relevant interpolation result is as follows.
    The result, and its proof, is basically identical to \cite[Proposition 5.1]{XXY2018}. 
    \begin{theorem}\label{besov_interpolation}
        The Besov spaces satisfy the following real interpolation relation
        \begin{equation*}
            (B^{\alpha_0}_{p,q_0}(\Rl^d_\theta),B^{\alpha_1}_{p,q_1}(\Rl^d_\theta))_{\eta,q} = B^{(1-\eta)\alpha_0+\eta\alpha_1}_{p,q}(\Rl^d_\theta)
        \end{equation*}
        Here, $1\leq q_0,q_1,q\leq\infty$, $1\leq p \leq\infty$, $\eta \in (0,1)$ and $\alpha_1\neq \alpha_2 \in \Rl$.
    \end{theorem}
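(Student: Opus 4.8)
The plan is to realise each Besov space $B^s_{p,q}(\Rl^d_\theta)$ as a \emph{retract} of a weighted vector-valued sequence space, and then to combine the classical interpolation of such sequence spaces with the fact that real interpolation commutes with retractions. This is the standard route to such identities, and it is insensitive to the value of $\theta$ once one knows that the Littlewood--Paley pieces are uniformly bounded on $L_p(\Rl^d_\theta)$.

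\textbf{Step 1: the retraction/coretraction pair.} For a Banach space $X$, $s\in\Rl$ and $q\in[1,\infty]$, write $\ell^s_q(X)$ for the space of sequences $(a_j)_{j\geq 0}$ in $X$ with $\|(a_j)_j\|_{\ell^s_q(X)} = \big(\sum_{j\geq 0} 2^{jsq}\|a_j\|_X^q\big)^{1/q}$ (usual modification for $q=\infty$). Define $Ju := (\Delta_j u)_{j\geq 0}$; by the very definition of the Besov norm, $J$ is an isometry of $B^s_{p,q}(\Rl^d_\theta)$ into $\ell^s_q(L_p(\Rl^d_\theta))$. For the retraction, fix smooth radial functions $\widetilde\Psi,\widetilde\Phi$ with $\widetilde\Psi$ supported in $B(0,4)\setminus B(0,1/4)$ and equal to $1$ on $\supp\Psi$, and $\widetilde\Phi$ supported in $B(0,4)$ and equal to $1$ on $B(0,2)$; put $\widetilde\Delta_j = \widetilde\Psi(2^{-j}D)$ for $j\geq 1$ and $\widetilde\Delta_0 = \widetilde\Phi(D)$, so that $\widetilde\Delta_j\Delta_j = \Delta_j$ for every $j\geq 0$, and there is $N_0\in\Ntrl$ (depending only on the chosen cutoffs) with $\Delta_k\widetilde\Delta_j = 0$ whenever $|k-j|\geq N_0$. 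Set $P\big((a_j)_j\big) := \sum_{j\geq 0}\widetilde\Delta_j a_j$. The series converges in $\Sc'(\Rl^d_\theta)$ since each $\widetilde\Delta_j a_j$ has Weyl transform supported in a fixed dyadic annulus, so the partial sums are Cauchy against any Schwartz test element; and from $\Delta_k P((a_j)_j) = \sum_{|j-k|<N_0}\Delta_k\widetilde\Delta_j a_j$ together with the uniform bounds $\|\Delta_k\widetilde\Delta_j\|_{L_p(\Rl^d_\theta)\to L_p(\Rl^d_\theta)}\lesssim 1$ furnished by Corollary \ref{nc_mikhlin} (or Proposition \ref{approximation_facts}(i)), one gets $\|P((a_j)_j)\|_{B^s_{p,q}}\lesssim_{s,q}\|(a_j)_j\|_{\ell^s_q(L_p)}$. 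Finally $PJu = \sum_{j\geq 0}\widetilde\Delta_j\Delta_j u = \sum_{j\geq 0}\Delta_j u = u$ in $\Sc'(\Rl^d_\theta)$, the last equality being the partition of unity $\Phi+\sum_{k\geq 0}\Psi_k = 1$ transported by the Weyl transform. Thus $(P,J)$ exhibits $B^s_{p,q}(\Rl^d_\theta)$ as a retract of $\ell^s_q(L_p(\Rl^d_\theta))$, uniformly in $s$ and $q$ since the operators $\Delta_j,\widetilde\Delta_j$ do not depend on $s,q$.

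\textbf{Step 2: abstract interpolation.} I would then invoke the retraction theorem for real interpolation \cite[Theorem 6.4.2]{Bergh-Lofstrom-1976}, which gives
\begin{equation*}
    \big(B^{\alpha_0}_{p,q_0}(\Rl^d_\theta),\, B^{\alpha_1}_{p,q_1}(\Rl^d_\theta)\big)_{\eta,q} = P\Big(\big(\ell^{\alpha_0}_{q_0}(L_p(\Rl^d_\theta)),\, \ell^{\alpha_1}_{q_1}(L_p(\Rl^d_\theta))\big)_{\eta,q}\Big)
\end{equation*}
with equivalent norms. The right-hand couple is interpolated by the classical statement on weighted $\ell_q$-spaces, which is purely about abstract Banach couples: since $\alpha_0\neq\alpha_1$, \cite[Theorem 5.6.1]{Bergh-Lofstrom-1976} yields
\begin{equation*}
    \big(\ell^{\alpha_0}_{q_0}(X),\, \ell^{\alpha_1}_{q_1}(X)\big)_{\eta,q} = \ell^{(1-\eta)\alpha_0+\eta\alpha_1}_q(X)
\end{equation*}
for every Banach space $X$, it being precisely the hypothesis $\alpha_0\neq\alpha_1$ that renders the result independent of $q_0,q_1$. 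Taking $X = L_p(\Rl^d_\theta)$ and $s = (1-\eta)\alpha_0+\eta\alpha_1$, and using that $P$ maps $\ell^s_q(L_p(\Rl^d_\theta))$ onto $B^s_{p,q}(\Rl^d_\theta)$ with quotient norm equivalent to $\|\cdot\|_{B^s_{p,q}}$ — immediate from $PJ=\id$ and the boundedness of $P$ and $J$ — one arrives at the asserted identity.

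\textbf{Main obstacle.} I do not expect a genuine obstacle. The only $\theta$-dependent ingredients are the uniform $L_p(\Rl^d_\theta)$-boundedness of the Littlewood--Paley pieces, already available from Corollary \ref{nc_mikhlin}, and the identity $\sum_j\Delta_j u = u$ in $\Sc'(\Rl^d_\theta)$; everything else is the abstract machinery of \cite{Bergh-Lofstrom-1976}, which does not see noncommutativity. The only slightly delicate bookkeeping is the convergence of $\sum_j\widetilde\Delta_j a_j$ in the topology of $\Sc'(\Rl^d_\theta)$ and the almost-orthogonality bounds for $\Delta_k\widetilde\Delta_j$, and these are handled exactly as in the commutative torus case \cite[Proposition 5.1]{XXY2018}.
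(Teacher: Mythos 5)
Your proof is correct and follows essentially the same route as the paper: both exhibit $B^s_{p,q}(\Rl^d_\theta)$ as a retract of $\ell^s_q(L_p(\Rl^d_\theta))$ via $u\mapsto(\Delta_j u)_{j\geq 0}$ and then transfer the classical interpolation of the weighted sequence spaces \cite[Theorem 5.6.1]{Bergh-Lofstrom-1976} through the retraction, exactly as in \cite[Theorem 6.4.5]{Bergh-Lofstrom-1976}. The only difference is cosmetic and in your favour: you build the retraction $P$ from auxiliary cutoffs $\widetilde\Delta_j$ with slightly enlarged Fourier supports (where the paper instead uses $\Delta_{j-1}+\Delta_j+\Delta_{j+1}$), and you spell out that $P$ is defined and bounded on all of $\ell^s_q(L_p(\Rl^d_\theta))$ rather than merely on the image of the coretraction, a point the paper's phrasing leaves implicit.
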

    \begin{proof}
        For a Banach space $X$, we denote by $\ell_q^{s}(X)$ the 
        space of sequences $\{x_n\}_{n\geq 0}$ in $X$ such that
        \begin{equation*}
            \left(\sum_{n=0}^\infty 2^{ns q}\|x_n\|_X^q\right)^{1/q} < \infty
        \end{equation*}
        with suitable modifications for $q = \infty$. Under real interpolation, the spaces $\ell_q^s(X)$ behave as
        $$
            (\ell_{q_0}^{s_0}(X),\ell_{q_1}^{s_1}(X))_{\eta,q} = \ell^s_q(X)
        $$
        where $s = (1-\eta)s_0+\eta s_1$ and $0< q_1,q_2,q \leq \infty$. See \cite[Theorem 5.6.1]{Bergh-Lofstrom-1976}.    
        
        The mapping
        \begin{equation*}
            T(x) =  \{\Delta_j x\}_{j=0}^\infty
        \end{equation*}
        is an isometry from $B^s_{p,q}(\Rl^d_\theta)$ to $\ell_q^s(L_p(\Rl^d_\theta))$.
        In fact, $T$ is a retract of $\ell^s_q(L_p(\Rl^d_\theta))$. We can see 
        this by noting that on the image $T(B^s_{p,q}(\Rl^d_\theta))$ we can define a mapping
        \begin{equation*}
            P(\{x_n\}_{n\geq 0}) = S_0(x_0)+ \sum_{j=0}^\infty (\Delta_{j-1}+\Delta_j+\Delta_{j+1})(x_j)
        \end{equation*}
        which satisfies $PT(x) = x$ for all $x \in B^s_{p,q}(\Rl^d_\theta)$. 
        
        This allows us to deduce the interpolation theory from
        interpolation properties of $\ell_q^s(L_p)$, identically to \cite[Theorem 6.4.5]{Bergh-Lofstrom-1976}.
    \end{proof}
    
    
    Generalising Proposition \ref{approximation_facts}.\eqref{equivalence_of_B_22}, we have the following Littlewood-Paley characterisation
    of the $L_2$-Sobolev spaces.
    \begin{theorem}
        For $s \in \Rl$, we have $B^s_{2,2}(\Rl^d_\theta) = W^s_2(\Rl^d_\theta)$, with an equivalence of norms,
        \begin{equation*}
            \|u\|_{B^s_{2,2}(\Rl^d_\theta)} \lesssim_{s,d} \|u\|_{W^s_2(\Rl^d_\theta)} \lesssim_{s,d} \|u\|_{B^s_{2,2}(\Rl^d_\theta)}.
        \end{equation*}
    \end{theorem}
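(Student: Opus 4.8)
The plan is to reduce the statement to the classical identity $W^s_2(\Rl^d) = B^s_{2,2}(\Rl^d)$ by transporting everything through the Weyl transform; since $p=q=2$, all the norms in sight are computed by Plancherel, so this costs nothing. First I would record the two structural facts about $\wl$ that do the work. By the very definition of Fourier multipliers on $\Rl^d_\theta$, the isomorphism $\wl:\Sc'(\Rl^d)\to\Sc'(\Rl^d_\theta)$ intertwines $m(D)$ with its classical counterpart for every admissible symbol $m$; in particular $J^s\wl(f)=\wl(\langle D\rangle^s f)$ and $\Delta_j\wl(f)=\wl(\Psi_j(D)f)$ for $j\geq 1$, with $\Delta_0\wl(f)=\wl((\Phi+\Psi)(D)f)$. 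By \eqref{unitary_weyl}, $\wl$ restricts to a \emph{unitary} isomorphism $L_2(\Rl^d)\to L_2(\Rl^d_\theta)$; combining surjectivity of this restriction with injectivity of $\wl$ on all of $\Sc'(\Rl^d_\theta)$, one obtains that for $g\in\Sc'(\Rl^d)$ the element $\wl(g)$ lies in $L_2(\Rl^d_\theta)$ if and only if $g\in L_2(\Rl^d)$, and in that case $\|\wl(g)\|_{L_2(\Rl^d_\theta)}=\|g\|_{L_2(\Rl^d)}$.

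Next, let $u\in\Sc'(\Rl^d_\theta)$ and put $f:=\wl^{-1}(u)\in\Sc'(\Rl^d)$. Applying the last observation to $g=\langle D\rangle^s f$ gives $\|u\|_{W^s_2(\Rl^d_\theta)}=\|J^su\|_{L_2(\Rl^d_\theta)}=\|\langle D\rangle^s f\|_{L_2(\Rl^d)}=\|f\|_{W^s_2(\Rl^d)}$, and applying it to $g=\Psi_j(D)f$ (resp.\ $(\Phi+\Psi)(D)f$) for each $j$ gives $\|\Delta_ju\|_{L_2(\Rl^d_\theta)}=\|\Delta_jf\|_{L_2(\Rl^d)}$, whence $\|u\|_{B^s_{2,2}(\Rl^d_\theta)}=\|f\|_{B^s_{2,2}(\Rl^d)}$; in each identity one side is finite exactly when the other is. The asserted two-sided bound, and the coincidence of the two subspaces of $\Sc'(\Rl^d_\theta)$, is then just the classical fact $\|f\|_{W^s_2(\Rl^d)}\cong_{s,d}\|f\|_{B^s_{2,2}(\Rl^d)}$ (see e.g.\ \cite[Chapter 6]{Bergh-Lofstrom-1976}) read off for $f$.

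Because this argument only uses Plancherel there is essentially no obstacle; the substance sits in the classical identity, which can itself be reproved inside the present framework if one prefers a self-contained treatment. For that I would apply Proposition \ref{approximation_facts}.\eqref{equivalence_of_B_22} to $J^su\in L_2(\Rl^d_\theta)$ to get $\|u\|_{W^s_2}^2=\|J^su\|_2^2\cong\sum_{j\geq 0}\|J^s\Delta_ju\|_2^2$ (using that $J^s$ and $\Delta_j$ are commuting Fourier multipliers), and then establish the dyadic estimate $\|J^s\Delta_ju\|_2\cong 2^{js}\|\Delta_ju\|_2$ with constants uniform in $j$. For $j\geq 1$ this follows from Corollary \ref{nc_mikhlin} applied to $m(\xi)=|\xi|^{\pm s}$ together with the comparison $|\xi|^s\cong_s\langle\xi\rangle^s$ on the annulus $2^{j-1}\leq|\xi|\leq 2^{j+1}$; for $j=0$ one instead observes that $J^s\Delta_0$, and the multiplier inverting it on the range of $\Delta_0$, have Schwartz-class convolution kernels and so are bounded on $L_2(\Rl^d_\theta)$ by Theorem \ref{weak_young_inequality}. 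The one point needing a touch of care is the uniformity in $j$ of these multiplier norms --- precisely where the absence of a dilation symmetry on $\Rl^d_\theta$ might be feared to intervene --- but it does not, since $\|\check m\|_{L_1(\Rl^d)}$ is unchanged under dyadic rescaling of $m$ and the rescaled symbols converge in every $C^N$ norm to $|\xi|^{\pm s}$.
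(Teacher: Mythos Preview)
Your argument is correct. The paper does not actually supply a proof of this theorem, presenting it only as a generalization of Proposition \ref{approximation_facts}.\eqref{equivalence_of_B_22} (the $s=0$ case); your second, self-contained argument is exactly the generalization the paper has in mind, while your first argument via the unitarity of the Weyl transform on $L_2$ is an equally valid and arguably cleaner route that reduces everything to the classical identity.
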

    A Littlewood-Paley characterization of $L_p$-Sobolev spaces for $1<p<\infty$ was obtained in \cite{MSX3}, but it is not relevant here.
    
    The mapping properties of the heat and Schr\"odinger semigroups on Sobolev and Besov spaces are deduced easily from Theorem \ref{heat_mapping}. Analogous
    results in the classical case are e.g. \cite[Theorem 5.29]{Sawano2018}
    \begin{proposition}\label{heat_mapping_besov}
        For $r,s \in \Rl$, $t > 0$ and $1\leq p \leq \infty$, we have
        \begin{equation*}
            \|e^{t\Delta}\|_{W^s_p(\Rl^d_\theta)\to W^r_p(\Rl^d_\theta)} \lesssim 1+t^{\frac{s-r}{2}}
        \end{equation*}
        with $\|e^{t\Delta}\|_{W^s_p\to W^s_p} \leq 1$.
        We also have 
        \begin{equation*}
            \|e^{\ri t\Delta}\|_{W^s_2(\Rl^d_\theta)\to W^s_2(\Rl^d_\theta)} \lesssim 1.
        \end{equation*}
        For $p,q \in [1,\infty]$, we have
        \begin{equation*}
            \|e^{t\Delta}\|_{B^s_{p,q}(\Rl^d_\theta)\to B^r_{p,q}(\Rl^d_\theta)} \lesssim 1+t^{\frac{s-r}{2}}.
        \end{equation*}
    \end{proposition}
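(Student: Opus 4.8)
The plan is to reduce every assertion to a Fourier multiplier estimate on $L_p(\Rl^d_\theta)$ and then to a classical kernel estimate on $\Rl^d$. The mechanism is Theorem \ref{weak_young_inequality}: if $m$ has $\check m \in L_1(\Rl^d)$ then $m(D)$ is bounded on $L_p(\Rl^d_\theta)$ for every $p$ by (a dimensional constant times) $\|\check m\|_{L_1(\Rl^d)}$, and $\check m$ is, up to that constant, exactly the convolution kernel of the classical operator $m(D)$ on $\Rl^d$. All of $e^{t\Delta}$, $e^{\ri t\Delta}$, $J^s=(1-\Delta)^{s/2}$ and the Littlewood--Paley pieces $\Delta_j$ are such multipliers, and multipliers commute. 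The Schr\"odinger estimate is then immediate: $e^{\ri t\Delta}$ commutes with $J^s$, so $\|e^{\ri t\Delta}u\|_{W^s_2}=\|e^{\ri t\Delta}J^su\|_2=\|J^su\|_2=\|u\|_{W^s_2}$ by the unitarity of $e^{\ri t\Delta}$ on $L_2(\Rl^d_\theta)$ (Theorem \ref{schr_mapping}).

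For the heat semigroup on Sobolev spaces: since $e^{t\Delta}$ commutes with $J^r$ and $J^{-s}$, we have $\|e^{t\Delta}\|_{W^s_p\to W^r_p}=\|J^{r-s}e^{t\Delta}\|_{L_p(\Rl^d_\theta)\to L_p(\Rl^d_\theta)}$, which is the multiplier $m_t(D)$ with $m_t(\xi)=(1+|\xi|^2)^{(r-s)/2}e^{-t|\xi|^2}$. When $r\leq s$ the kernel is harmless: $J^{r-s}$ is convolution against the Bessel potential kernel, a probability density when $r-s\leq 0$, so $\|J^{r-s}\|_{L_p\to L_p}\leq 1$; combined with $\|e^{t\Delta}\|_{L_p\to L_p}\leq 1$ (Theorem \ref{heat_mapping}) this gives $\|e^{t\Delta}\|_{W^s_p\to W^r_p}\leq 1$, and the case $r=s$ is the asserted contraction. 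When $r>s$ the bound $\|\check m_t\|_{L_1(\Rl^d)}\lesssim 1+t^{(s-r)/2}$ is the classical heat-kernel/Bessel-potential estimate (cf.\ \cite[Theorem 5.29]{Sawano2018}), which by Theorem \ref{weak_young_inequality} transfers verbatim to $\Rl^d_\theta$.

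For the heat semigroup on Besov spaces I would use the Littlewood--Paley definition together with the exponential frequency localization
\[
c_j(t):=\|\widetilde{\Psi}_j(D)e^{t\Delta}\|_{L_p(\Rl^d_\theta)\to L_p(\Rl^d_\theta)}\lesssim e^{-ct4^j}\quad(j\geq 1),\qquad c_0(t)\lesssim 1,
\]
for a fixed $c>0$, where $\widetilde{\Psi}_j=\widetilde{\Psi}(2^{-j}\cdot)$ for a fixed smooth $\widetilde{\Psi}$ supported in an annulus $\{c_1\leq|\zeta|\leq c_2\}$ and equal to $1$ on $\supp\Psi$ (with the obvious modification at $j=0$, where $\widetilde{\Psi}^{(0)}$ is compactly supported and equal to $1$ near the origin). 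This follows from Theorem \ref{weak_young_inequality} after a dilation: the kernel $L_1(\Rl^d)$-norm of $\widetilde{\Psi}(2^{-j}\xi)e^{-t|\xi|^2}$ equals $\|\check h\|_{L_1(\Rl^d)}$ with $h(\zeta)=\widetilde{\Psi}(\zeta)e^{-t4^j|\zeta|^2}$, which, since $\widetilde{\Psi}$ lives where $|\zeta|\geq c_1$, is $\lesssim(1+t4^j)^{K}e^{-t4^jc_1^2}\lesssim e^{-ct4^j}$ via the elementary bound $\|\check g\|_{L_1}\lesssim\sum_{|\alpha|\leq K}\|\partial^\alpha g\|_{L_2}$ with $K=K(d)$. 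Since $\Delta_j=\widetilde{\Psi}_j(D)\Delta_j$ we get $\Delta_je^{t\Delta}u=\widetilde{\Psi}_j(D)e^{t\Delta}\Delta_ju$, hence for $q<\infty$
\[
\|e^{t\Delta}u\|_{B^r_{p,q}}^q=\sum_{j\geq 0}2^{jrq}\|\Delta_je^{t\Delta}u\|_p^q\leq\sum_{j\geq 0}\bigl(2^{j(r-s)}c_j(t)\bigr)^q\,2^{jsq}\|\Delta_ju\|_p^q\leq\Bigl(\sup_{j\geq 0}2^{j(r-s)}c_j(t)\Bigr)^q\|u\|_{B^s_{p,q}}^q,
\]
and likewise with suprema in place of sums when $q=\infty$. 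Finally $\sup_{j\geq 0}2^{j(r-s)}c_j(t)\lesssim 1+\sup_{x\geq 1}x^{(r-s)/2}e^{-ctx}\lesssim 1+t^{(s-r)/2}$ by one line of calculus (the supremum over $x$ is $\leq 1$ when $r\leq s$, and is attained near $x=(r-s)/(2ct)$ otherwise).

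The only step that is more than bookkeeping is the sharp classical estimate $\|\check m_t\|_{L_1(\Rl^d)}\lesssim 1+t^{(s-r)/2}$ for $r>s$: writing $r-s=2N-\delta$ with $\delta\in[0,2)$ and peeling off $(1-\Delta)^N$ only yields the weaker exponent $t^{-N}$, so one genuinely needs a scaling argument together with the classical fact that $|\zeta|^\gamma e^{-|\zeta|^2}$ has integrable inverse Fourier transform for every $\gamma>0$ (equivalently, Riesz/Bessel potential estimates). This subtlety does not arise on the Besov side, where the Littlewood--Paley pieces are exponentially localized in frequency and the correct power $t^{(s-r)/2}$ falls out of the supremum computation above.
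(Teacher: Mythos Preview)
Your proposal is correct and matches the paper's intent: the paper gives no proof beyond the sentence ``deduced easily from Theorem \ref{heat_mapping}'' and a pointer to the classical analogue in \cite{Sawano2018}, and your argument is precisely the natural way to fill in those details --- reduce everything to Fourier multipliers via Theorem \ref{weak_young_inequality}, then invoke classical $L_1$-kernel bounds. Your Littlewood--Paley treatment of the Besov case, with the exponential localisation $c_j(t)\lesssim e^{-ct4^j}$ and the supremum computation, is clean and avoids the Bessel-potential subtlety you correctly flag on the Sobolev side; this is exactly the advantage of the Besov framework and is in line with how the paper uses Corollary \ref{nc_mikhlin} elsewhere.
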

    
    \begin{corollary}
        If $p \in [1,\infty]$ and $q \in [1,\infty)$, then $\Delta$ generates a contractive $C_0$-semigroup on $B^s_{p,q}(\Rl^d_\theta)$ for every $s \in \Rl$.
        The same is true with $q = \infty$ under the condition that $s > 0$.
    \end{corollary}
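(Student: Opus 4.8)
The plan is to verify the three requirements for $\Delta$ to generate a contraction $C_0$-semigroup on $B^s_{p,q}(\Rl^d_\theta)$ — the semigroup identity, the contraction bound, and strong continuity — only the last of which is not immediate and only the last of which uses the hypotheses on $q$ and $s$.

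The semigroup identity is formal: $e^{t\Delta}$ is by construction the Fourier multiplier with symbol $e^{-t|\xi|^2}$, equivalently the image under the Weyl transform $\wl$ of the classical heat semigroup, so $e^{0\Delta}=\id$ and $e^{(t_1+t_2)\Delta}=e^{t_1\Delta}e^{t_2\Delta}$ on $\Sc'(\Rl^d_\theta)$. Contractivity is just as quick once one observes that $e^{t\Delta}$ commutes with every Littlewood--Paley projection $\Delta_j$; using this together with $\|e^{t\Delta}\|_{L_p(\Rl^d_\theta)\to L_p(\Rl^d_\theta)}\le 1$ from Theorem~\ref{heat_mapping} and applying it term by term in Definition~\ref{besov_spaces_definition} gives
\[
\|e^{t\Delta}u\|_{B^s_{p,q}}=\Big(\sum_{j\ge 0}2^{jsq}\|e^{t\Delta}\Delta_j u\|_p^q\Big)^{1/q}\le\Big(\sum_{j\ge 0}2^{jsq}\|\Delta_j u\|_p^q\Big)^{1/q}=\|u\|_{B^s_{p,q}}
\]
(with the supremum in place of the sum when $q=\infty$), valid for every $s\in\Rl$ and $p,q\in[1,\infty]$; so $e^{t\Delta}$ is a contraction on $B^s_{p,q}(\Rl^d_\theta)$ with no restriction so far.

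For strong continuity I would first reduce to right-continuity at $0$: factoring out $e^{\min(t_0,t_0+h)\Delta}$ and using contractivity gives $\|e^{(t_0+h)\Delta}u-e^{t_0\Delta}u\|_{B^s_{p,q}}\le\|e^{|h|\Delta}u-u\|_{B^s_{p,q}}$ for $t_0>0$ and small $h$, so it suffices to show $\|e^{t\Delta}u-u\|_{B^s_{p,q}}\to 0$ as $t\to0^+$ for each fixed $u$. For $q<\infty$ this is a three-$\varepsilon$ argument against the finite-frequency elements: given $\varepsilon>0$, the tail bound $\|u-S_N u\|_{B^s_{p,q}}\lesssim_{s,q}\big(\sum_{j\ge N-2}2^{jsq}\|\Delta_j u\|_p^q\big)^{1/q}$ (which vanishes as $N\to\infty$ precisely because $q<\infty$) lets me fix $N$ with $\|u-S_N u\|_{B^s_{p,q}}<\varepsilon$, and then contractivity gives $\|e^{t\Delta}u-u\|_{B^s_{p,q}}\le 2\varepsilon+\|e^{t\Delta}S_N u-S_N u\|_{B^s_{p,q}}$. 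Since $S_N u$ has Weyl--Fourier support in the fixed ball $B(0,2^{N+1})$, for a cutoff $\chi\in C_c^\infty(\Rl^d)$ equal to $1$ there one has $e^{t\Delta}S_N u-S_N u=n_t(D)S_N u$ with $n_t(\xi)=(e^{-t|\xi|^2}-1)\chi(\xi)$; as $n_t(D)$ again commutes with all $\Delta_j$, the term-by-term estimate together with Theorem~\ref{weak_young_inequality} bounds this by $\|\check n_t\|_{L_1(\Rl^d)}\|S_N u\|_{B^s_{p,q}}$, and $\|\check n_t\|_{L_1}\lesssim_{N,d}\max_{|\alpha|\le d+1}\|\partial^\alpha n_t\|_\infty\to 0$ because $n_t\to0$ in $C_c^\infty$ of a fixed compact set with all seminorms of size $O(t)$. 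Letting $t\to0^+$ and then $\varepsilon\to0$ closes the case $q<\infty$.

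The endpoint $q=\infty$, $s>0$ is the step I expect to be the main obstacle, since there $S_N u$ need not converge to $u$ in $B^s_{p,\infty}(\Rl^d_\theta)$ and the dense-subspace argument does not transfer. The approach I would take is a quantitative version of the same splitting: from $e^{t\Delta}-\id=\int_0^t\Delta e^{r\Delta}\,dr$ and Proposition~\ref{homogeneous_bound} one obtains the smoothing bound $\|e^{t\Delta}\Delta_j u-\Delta_j u\|_p\lesssim\min(1,t2^{2j})\|\Delta_j u\|_p$, and one would estimate $\sup_{j\ge0}2^{js}\|\Delta_j(e^{t\Delta}u-u)\|_p$ by cutting the supremum at the threshold $2^{2j}\sim t^{-1}$, using the gain $\min(1,t2^{2j})$ and the exponent $s>0$ on the low frequencies and controlling the high frequencies by the tail of the $B^s_{p,\infty}$-norm. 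Extracting from this balance a bound that genuinely tends to $0$ — and not merely one that stays bounded — is the delicate part, and is where the main effort should go.
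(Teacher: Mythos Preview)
For $q<\infty$ your argument is correct, though more involved than necessary. The paper's proof simply asserts that $\|e^{t\Delta}u-u\|_{B^s_{p,q}}\to 0$ is equivalent to $\|e^{t\Delta}\Delta_j u-\Delta_j u\|_{L_p}\to 0$ for each fixed $j\geq 0$, and invokes Theorem~\ref{heat_mapping} for the latter. The unstated justification is dominated convergence in $\ell_q$: the summand $2^{jsq}\|e^{t\Delta}\Delta_j u-\Delta_j u\|_p^q$ is bounded by $2^{q}\cdot 2^{jsq}\|\Delta_j u\|_p^q$, which is summable because $u\in B^s_{p,q}$, so termwise convergence suffices. Your approximation-by-$S_N u$ route reaches the same end by a slightly longer path.

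Your caution about $q=\infty$ is entirely warranted---so much so that the step cannot be completed. The estimate you derive, $2^{js}\|\Delta_j(e^{t\Delta}u-u)\|_p\lesssim\min(1,t\,2^{2j})\cdot 2^{js}\|\Delta_j u\|_p$, is sharp, and it does \emph{not} force the supremum to tend to $0$: already for $\theta=0$, $p=\infty$, the element $u=\sum_{j\ge 0}2^{-js}e^{i(\xi_j,\cdot)}$ with $|\xi_j|$ in the support of $\Psi_j$ lies in $B^s_{\infty,\infty}$, yet for every $t>0$ one has $\sup_j 2^{js}\|\Delta_j(e^{t\Delta}u-u)\|_\infty\gtrsim 1-e^{-1}$. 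This is the classical fact that the heat semigroup is analytic but not strongly continuous on the H\"older--Zygmund spaces $B^s_{\infty,\infty}(\Rl^d)$. The paper's own proof shares the gap: the claimed reduction to pointwise-in-$j$ convergence tacitly uses dominated convergence, which is unavailable for the $\ell_\infty$ norm, so neither argument establishes the $q=\infty$ case as stated. The assertion only survives on the closure of $\Sc(\Rl^d_\theta)$ in $B^s_{p,\infty}$ (the ``little'' Besov space), where your density argument from the $q<\infty$ case applies verbatim.
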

    \begin{proof}
        We only need to check that $\lim_{t\to 0} e^{t\Delta}u = u$ for all $u\in B^s_{p,q}(\Rl^d_\theta)$. This is equivalent to
        $$
            \lim_{t\to 0} \|e^{t\Delta}\Delta_ju-\Delta_ju\|_{L_p(\Rl^d_\theta)} = 0,\quad j\geq 0.
        $$
        which follows from Theorem \ref{heat_mapping}.
    \end{proof}
    
%

\section{Elementary pseudodifferential operators}\label{psdo_section}
    There is a well-developed theory of pseudodifferential operators for $\Rl^d_\theta$ \cite{GJP2017,GJM}. However, the existing calculus does not
    contain the operators we need for the study of nonlinear equations. For this reason, we introduce a new class of pseudodifferential operators for $\Rl^d_\theta$.
    Recall that a function $\sigma \in C^\infty(\Rl^d\times \Rl^d)$ is said to belong to the symbol class $S^0_{1,1}(\Rl^d\times \Rl^d)$ if for all $\alpha,\beta \in \Ntrl^d$ we have
    $$
        |\partial^{\alpha}_x\partial^{\beta}_\xi\sigma(x,\xi)| \lesssim_{\alpha,\beta} (1+|\xi|)^{|\alpha|-|\beta|}.
    $$    
    Borrowing the terminology of \cite[Chapter 13, Section 9]{Taylor-pde-3-2011}, a symbol $\sigma \in C^\infty(\Rl^d\times \Rl^d)$ is called elementary if there exists an expansion
    $$
        \sigma(x,\xi) = \sum_{j=0}^\infty a_j(x)\Psi_j(\xi)
    $$
    for some family of bounded functions $a_j$. The condition that $\sigma$ belongs to the symbol class $S^0_{1,1}(\Rl^d\times \Rl^d)$ is equivalent to
    $$
        \|\partial^{\alpha}a_j\|_{\infty} \lesssim_{\alpha} 2^{j|\alpha|}
    $$
    for all $\alpha\in \Ntrl^d$.
    We will not attempt to develop a complete theory of pseudodifferential operators on $\Rl^d_\theta$. Instead, we will only consider a noncommutative analogy of elementary operators.
    \begin{definition}\label{elementary_definition}
        Let $a = \{a_j\}_{j=0}^\infty$ and $b = \{b_j\}_{j=0}^\infty$ be sequences of elements of $L_\infty(\Rl^d_\theta)$ such that for all $\alpha \in \Ntrl^d$
        we have
        \begin{equation*}
            \|\partial^{\alpha}a_j\|_\infty \lesssim_{\alpha} 2^{j|\alpha|},\quad \|\partial^{\alpha}b_j\|_{\infty} \lesssim_{\alpha} 2^{j|\alpha|}.
        \end{equation*}
        An \emph{elementary pseudodifferential operator} is a linear operator $T_{a,b}:\lt(C^\infty_c(\Rl^d)) \to \Sc'(\Rl^d_\theta)$ given by the formula
        \begin{equation*}
            T_{a,b}(u) = \sum_{j=0}^\infty a_j\Delta_j(u)b_j.
        \end{equation*}
    \end{definition}
    The difference between these operators $T_{a,b}$ and the operators in \cite{GJP2017,GJM} is that we consider symbols that act on both the left and on the right.    
    If we have $b_j = 1$ for all $j\geq 0$, then the operator $T_{a,b}$ is a pseudodifferential operator with symbol in the class $S^{0}_{1,1}$ in the setting of \cite{GJP2017}. 
    Note that we have only defined $T_{a,b}(u)$ for $u = \lt(f)$ for some compactly supported smooth $f$. Once the continuity of $T_{a,b}$ on Besov spaces
    is established, the operator may be extended by continuity.
    
%
%
%
    To estimate the norms of the operators $T_{a,b}$ on Besov spaces, it is convenient to have the following notation:
    \begin{definition}
        Let $a = \{a_j\}_{j=0}^\infty$ and $b = \{b_j\}_{j=0}^\infty$ be sequences of elements of $L_\infty(\Rl^d_\theta)$ as in Definition \ref{elementary_definition}
        For $k\geq 0$, denote by $M_k(a)$ and $M_k(b)$ the following quantities:
        \begin{equation*}
            M_{k}(a) := \sup_{j\geq 0, |\alpha|\leq k} 2^{-|\alpha|j}\|D^{\alpha}a_j\|_{\infty} < \infty,\quad M_{k}(b) := \sup_{j\geq 0, |\alpha|\leq k} 2^{-|\alpha|j}\|D^{\alpha}a_j\|_{\infty} < \infty
        \end{equation*}
    \end{definition}
    
    The main result of this section is that elementary pseudodifferential operators are bounded on Besov spaces of positive regularity.
    \begin{theorem}\label{besov_mapping_theorem}
        For all $p,q\in [1,\infty]$ and $s> 0$, we have that $T_{a,b}:\lambda_\theta(C^\infty_c(\Rl^d_\theta))\to \Sc'(\Rl^d_\theta)$ extends to a continuous linear map
        $$
            T_{a,b}:B^s_{p,q}(\Rl^d_\theta)\to B^s_{p,q}(\Rl^d_\theta)
        $$
        with norm
        $$
            \|T_{a,b}\|_{B^{s}_{p,q}\to B^s_{p,q}} \lesssim_{s,p,q,d} M_{s+2}(a)M_{s+2}(b).
        $$
    \end{theorem}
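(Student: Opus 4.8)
The plan is to transpose the classical proof that elementary symbols in the ``forbidden'' class $S^0_{1,1}$ act boundedly on Besov spaces of positive smoothness (cf.\ \cite[Theorem 5.15]{Sawano2018}, \cite[Chapter 13, Section 9]{Taylor-pde-3-2011}). The decisive simplification in the noncommutative setting is that on Besov spaces one never needs a pointwise operator domination of $a_j\Delta_j(u)b_j$ --- only the two-sided H\"older bound $\|a_j\Delta_j(u)b_j\|_p \leq \|a_j\|_\infty \|\Delta_j u\|_p \|b_j\|_\infty$ coming from the H\"older inequality of Section \ref{preliminaries_section}, together with $\|\Delta_k\|_{p\to p}\leq 1$. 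I will also use the elementary spectral calculus underlying the Littlewood--Paley theory of Section \ref{lp_section}: because $\lt(t)\lt(s)$ is a scalar multiple of $\lt(t+s)$, a product $xy$ has Weyl--Fourier support contained in the Minkowski sum of the supports of $x$ and $y$; in particular $\dot\Delta_l a_j$ is supported in the annulus $\{|\xi|\sim 2^l\}$, $S_m a_j$ in the ball $B(0,2^{m+1})$, $\Delta_j u$ in $B(0,2^{j+1})$, and $\Delta_k$ annihilates any element whose support misses $\{|\xi|\sim 2^k\}$. As a first preliminary (Step 1), I would record that for every integer $N\geq 0$ one has $\|\dot\Delta_l a_j\|_\infty \lesssim_N 2^{-lN}\sup_{|\alpha|=N}\|D^\alpha a_j\|_\infty \leq 2^{(j-l)N}M_N(a)$ --- obtained by factoring $\Psi_l(\xi) = 2^{-lN}\sum_{|\alpha|=N}\tilde m_\alpha(2^{-l}\xi)\,\xi^\alpha$ and invoking the $L_\infty$-multiplier bound of Theorem \ref{weak_young_inequality} --- so that summing over $l>m$ gives $\|a_j-S_m a_j\|_\infty \lesssim_N 2^{(j-m)N}M_N(a)$, and likewise for $b$.

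The heart of the argument (Step 2) is a localized estimate. Fix $k\geq 0$ and write $\Delta_k T_{a,b}(u) = \sum_{j\geq 0}\Delta_k(a_j\Delta_j(u)b_j)$, splitting the sum according to $j\geq k-5$ or $j<k-5$. For $j\geq k-5$ one simply uses $\|\Delta_k\|_{p\to p}\leq 1$ and H\"older to get $\|\Delta_k(a_j\Delta_j(u)b_j)\|_p \leq M_0(a)M_0(b)\|\Delta_j u\|_p$. For $j<k-5$, write $a_j = S_{k-5}a_j + A_j$ and $b_j = S_{k-5}b_j + B_j$ and expand the product into four pieces; the ``main'' piece $S_{k-5}(a_j)\,\Delta_j(u)\,S_{k-5}(b_j)$ has Weyl--Fourier support inside $B(0,2^{k-2})$ --- this is exactly where $j<k-5$ is used --- hence is killed by $\Delta_k$, while each of the three remaining pieces carries a factor $A_j$ or $B_j$ which, by Step 1, satisfies $\|A_j\|_\infty + \|B_j\|_\infty \lesssim_N 2^{(j-k)N}\big(M_N(a)+M_N(b)\big)$. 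H\"older then yields $\|\Delta_k(a_j\Delta_j(u)b_j)\|_p \lesssim_N 2^{(j-k)N}M_N(a)M_N(b)\|\Delta_j u\|_p$ for $j<k-5$.

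Finally (Step 3) I would assemble the Besov norm. Setting $c_j := 2^{js}\|\Delta_j u\|_p$ and $c_j := 0$ for $j<0$, Step 2 gives, for a fixed integer $N$,
$$
2^{ks}\|\Delta_k T_{a,b}(u)\|_p \lesssim_N M_N(a)M_N(b)\sum_{i\in\Itgr}\kappa(i)\,c_{k-i},
$$
where $\kappa(i) = 2^{is}$ for $i\leq 5$ and $\kappa(i) = 2^{i(s-N)}$ for $i\geq 6$. The sequence $\kappa$ lies in $\ell^1(\Itgr)$ precisely when $s>0$ (tail $i\to-\infty$) and $N>s$ (tail $i\to+\infty$); the first condition is exactly the positivity hypothesis on $s$ and is the manifestation of the ``forbidden'' nature of the symbol class. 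Taking $N$ to be the least integer exceeding $s$ (so $N\leq s+2$) and applying the discrete Young inequality $\ell^1\ast\ell^q\to\ell^q$ gives $\|T_{a,b}(u)\|_{B^s_{p,q}} \lesssim_{s,p,q,d} M_{s+2}(a)M_{s+2}(b)\|u\|_{B^s_{p,q}}$ for every $u\in\lt(C^\infty_c(\Rl^d))$ --- note that for such $u$ the defining series is genuinely a finite sum, since $\Delta_j\lt(f)=\lt(\Psi_j f)=0$ once $2^{j-1}$ exceeds the radius of $\supp f$. For $p,q<\infty$ the extension to $B^s_{p,q}(\Rl^d_\theta)$ is then by density of $\Sc(\Rl^d_\theta)$; for $q=\infty$ or $p=\infty$, the same convolution bound applied to tails shows the partial sums of $\sum_j a_j\Delta_j(u)b_j$ converge in $B^s_{p,q}(\Rl^d_\theta)$, which defines the bounded extension directly.

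The step I expect to require the most care is Step 2: verifying rigorously that the product of the three low-frequency factors has Weyl--Fourier support in $B(0,2^{k-2})$ and is therefore annihilated by $\Delta_k$, while \emph{simultaneously} controlling the high-frequency remainders $A_j$ and $B_j$ uniformly in $j$ and $k$. This is where the genuinely noncommutative spectral bookkeeping resides; everything else is soft once the two-sided H\"older inequality and Theorem \ref{weak_young_inequality} are in hand, and the structure of the estimate is identical to the commutative one.
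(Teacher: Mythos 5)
Your proof is correct, but it takes a genuinely different route from the paper's. The paper establishes a single-block kernel estimate (Lemma \ref{key_lemma_for_mapping}) by multiplying by the partition of identity $1 = \sum_{|\gamma|\leq l} P_{\gamma,l}D^{\gamma}$ and combining $\|\Delta_j P_{\gamma,l}\|_{p\to p} \lesssim 2^{-j|\gamma|}$ (output localisation) with the Leibniz-rule bound $\|D^{\gamma}T_{a,b}\Delta_k u\|_p \lesssim 2^{k|\gamma|}M(a)M(b)\|\Delta_k u\|_p$ (Lemma \ref{dyadic_block_lemma}). The resulting decay $2^{(s-l)(j-k)}$ for $j>k$ is then \emph{discarded}: the paper records only the weaker consequence $B^s_{p,1}\to B^s_{p,\infty}$ (Proposition \ref{exotic_boundedness}) and recovers the full range $B^s_{p,q}\to B^s_{p,q}$ by real interpolation (Theorem \ref{besov_interpolation}) between two such endpoint bounds. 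You instead keep the off-diagonal decay alive: you decompose $a_j = S_{k-5}a_j + A_j$, $b_j = S_{k-5}b_j + B_j$, use the Weyl--Fourier support arithmetic to show that the low--low--low piece is annihilated by $\Delta_k$, and use a Bernstein-type tail estimate $\|A_j\|_\infty + \|B_j\|_\infty \lesssim_N 2^{(j-k)N}(M_N(a)+M_N(b))$ to control the three remaining pieces. The result is a clean kernel bound $2^{ks}\|\Delta_k T_{a,b}u\|_p \lesssim \sum_i \kappa(i) c_{k-i}$ with $\kappa\in\ell^1(\Itgr)$ once $0<s<N$, and discrete Young's inequality finishes directly. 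What each approach buys: your argument is self-contained, avoids interpolation entirely, and works uniformly in $q$ (including $q=\infty$, where the paper's density-then-extend step in Proposition \ref{exotic_boundedness} is a little delicate since $\Sc(\Rl^d_\theta)$ is not dense in $B^s_{\infty,1}$); the paper's argument isolates the ``pseudodifferential kernel'' bound as a reusable lemma and makes the dependence of the constant on the symbol seminorms somewhat more transparent. Your support bookkeeping is sound (both the factorisation $\Psi_l(\xi) = 2^{-lN}\sum_{|\alpha|=N}\tilde m_\alpha(2^{-l}\xi)\xi^\alpha$ and the Minkowski-sum estimate $2^{k-4}+2^{k-5}+2^{k-4} < 2^{k-2}$ for $j\leq k-6$ check out), and your choice $N = \lceil s\rceil+1$ indeed gives a constant no worse than $M_{s+2}(a)M_{s+2}(b)$, matching the statement.
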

    The proof of Theorem \ref{besov_mapping_theorem} follows the same lines as similar results in other settings. In particular, the proof of \cite[Lemma 5.1]{Xia-Xiong-mapping-properties}
    is quite similar.
    
    \begin{lemma}\label{dyadic_block_lemma}
        Let $T_{a,b}$ be an elementary pseudodifferential operator. For all multi-indices $\alpha\in \Ntrl^d$ and $p\in [1,\infty]$, we have
        $$
            \|D^{\alpha}T_{a,b}\Delta_ju\|_{p} \lesssim 2^{j|\alpha|}M_{|\alpha|}(a)M_{|\alpha|}(b), \quad j\geq 0.
        $$
    \end{lemma}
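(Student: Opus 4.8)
The plan is to expand $T_{a,b}\Delta_j u = \sum_{k\ge 0} a_k(\Delta_k\Delta_j u)b_k$ and to exploit the almost-orthogonality of the Littlewood--Paley blocks. Since $\Delta_k$ and $\Delta_j$ are Fourier multipliers whose symbols ($\Psi_k$, and $\Phi+\Psi_0$ for the zeroth block) are supported in dyadic annuli, the composition $\Delta_k\Delta_j$ vanishes for all but $O(1)$ values of $k$, uniformly in $j$, and for those values $2^k\approx 2^j$ with constants depending only on $d$. Hence the series defining $T_{a,b}\Delta_j u$ has only boundedly many nonzero terms (and since $u\in\lt(C^\infty_c(\Rl^d))$ everything stays inside $\Sc(\Rl^d_\theta)$, so there is nothing to check about convergence), and it suffices to bound $\|D^\alpha(a_k(\Delta_k\Delta_j u)b_k)\|_p$ for each surviving $k$.

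Because each $\partial_i$, and hence each $D_i$, is a derivation on the smooth subalgebra, $D^\alpha$ satisfies the multinomial Leibniz rule, so $D^\alpha(a_k(\Delta_k\Delta_j u)b_k)$ is a finite combination, over multi-indices with $\beta+\gamma+\delta=\alpha$, of terms $(D^\beta a_k)(D^\gamma\Delta_k\Delta_j u)(D^\delta b_k)$. I would bound each such term in $L_p$ using the one-sided module inequality $\|xwz\|_p\le\|x\|_\infty\|w\|_p\|z\|_\infty$ for $x,z\in L_\infty(\Rl^d_\theta)$, $w\in L_p(\Rl^d_\theta)$, which follows from the H\"older inequality (equivalently, from boundedness of left and right multiplication on $L_p(\Rl^d_\theta)$ by the operator norm). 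This is exactly where the left--right asymmetry emphasised in the introduction is respected: $\Delta_k\Delta_j u$ is only ever multiplied on the left by derivatives of $a_k$ and on the right by derivatives of $b_k$, so no ``forbidden'' bound is required.

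For the three factors I would use: $\|D^\beta a_k\|_\infty\le 2^{k|\beta|}M_{|\alpha|}(a)$ and $\|D^\delta b_k\|_\infty\le 2^{k|\delta|}M_{|\alpha|}(b)$, directly from the definition of $M_{|\alpha|}$ since $|\beta|,|\delta|\le|\alpha|$; and $\|D^\gamma\Delta_k\Delta_j u\|_p\lesssim 2^{k|\gamma|}\|\Delta_k\Delta_j u\|_p\le 2^{k|\gamma|}\|\Delta_j u\|_p$, using Proposition \ref{homogeneous_bound}(ii) applied to the block $\Delta_k$ together with Proposition \ref{approximation_facts}(i). Multiplying these and using $2^{k|\beta|}2^{k|\gamma|}2^{k|\delta|}=2^{k|\alpha|}$, each term is $\lesssim 2^{k|\alpha|}M_{|\alpha|}(a)M_{|\alpha|}(b)\|\Delta_j u\|_p$; summing over the finitely many multi-index decompositions and the finitely many surviving $k$, and invoking $2^{k|\alpha|}\lesssim 2^{j|\alpha|}$, yields $\|D^\alpha T_{a,b}\Delta_j u\|_p\lesssim 2^{j|\alpha|}M_{|\alpha|}(a)M_{|\alpha|}(b)\|\Delta_j u\|_p$, with implicit constant depending on $|\alpha|$ and $d$. (The factor $\|\Delta_j u\|_p$ is the natural right-hand side; since $\|\Delta_j u\|_p\le\|u\|_p$ one may equally state it with $\|u\|_p$.)

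I do not expect a genuine obstacle, as this is essentially a bookkeeping argument. The two points that require care are (i) checking that the Littlewood--Paley support condition really does reduce the $k$-sum to $O(1)$ terms with $2^k\approx 2^j$, including the low-frequency blocks where the ``indices differ by one'' picture is slightly imprecise, and (ii) keeping the placement of the $a_k$'s (on the left) and the $b_k$'s (on the right) rigid throughout, so that only one-sided $L_p$-module estimates are ever invoked.
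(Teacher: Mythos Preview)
Your proposal is correct and follows essentially the same approach as the paper: reduce the sum to the $O(1)$ terms with $|k-j|\le 1$ via the support condition $\Delta_k\Delta_j=0$ for $|j-k|\ge 2$, then apply the Leibniz rule together with Proposition \ref{homogeneous_bound} and the definition of $M_{|\alpha|}$. Your write-up is in fact more explicit than the paper's (which compresses the Leibniz and H\"older steps into one sentence), and you correctly supply the factor $\|\Delta_j u\|_p$ that the stated inequality evidently intends.
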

    \begin{proof}
        We have that $\Delta_j\Delta_k = 0$ if $|j-k|\geq 2$. Therefore,
        \begin{equation*}
            D^{\alpha}T_{a,b}\Delta_ju = \sum_{k=j-1}^{j+1} D^{\alpha}(a_j(\Delta_k\Delta_j u)b_j).
        \end{equation*}
        Using the Leibniz rule and Proposition \ref{homogeneous_bound}, the result follows.        
    \end{proof}
    
    \begin{lemma}\label{key_lemma_for_mapping}
        Let $s \geq 0$. Then for all $j,k\geq 0$ and $p \in [1,\infty]$, we have
        $$
            2^{sj}\|\Delta_jT_{a,b}\Delta_ku\|_p\lesssim 2^{sk}\|\Delta_k u\|_pM_{s+1}(a)M_{s+1}(b),\quad j\geq 0.
        $$
    \end{lemma}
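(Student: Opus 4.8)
The plan is to reduce the estimate to a single Littlewood--Paley block of the input and then to separate two frequency regimes, according to whether $j$ is comparable to $k$ or substantially larger. First I would exploit the almost-orthogonality $\Delta_m\Delta_k=0$ for $|m-k|\ge 2$: by Definition \ref{elementary_definition},
\[
    \Delta_j T_{a,b}\Delta_k u \;=\; \sum_{\substack{m\ge 0,\ |m-k|\le 1}}\Delta_j\bigl(a_m\,(\Delta_m\Delta_k u)\,b_m\bigr),
\]
a sum of at most three terms. Writing $v_m:=\Delta_m\Delta_k u$, Proposition \ref{approximation_facts} gives $\|v_m\|_p\le\|\Delta_k u\|_p$, and Proposition \ref{homogeneous_bound} gives $\|D^\gamma v_m\|_p\lesssim 2^{m|\gamma|}\|\Delta_k u\|_p\lesssim 2^{k|\gamma|}\|\Delta_k u\|_p$ for every multi-index $\gamma$ (using $2^m\lesssim 2^k$). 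It therefore suffices to bound $2^{sj}\|\Delta_j(a_m v_m b_m)\|_p$ for each admissible $m$.

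In the low-frequency regime $j\le k+C_0$, where $C_0$ is a fixed absolute constant to be chosen below, I would estimate crudely: by Proposition \ref{approximation_facts} and the noncommutative H\"older inequality,
\[
    \|\Delta_j(a_m v_m b_m)\|_p\le\|a_m\|_\infty\|v_m\|_p\|b_m\|_\infty\le M_0(a)M_0(b)\|\Delta_k u\|_p,
\]
and since $2^{sj}\le 2^{sC_0}2^{sk}$ and $M_0\le M_{s+1}$, this regime contributes a term of the required form.

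The substantive case is $j>k+C_0$. Choosing $C_0$ large enough that the frequency support of $v_m$ (a ball of radius $\lesssim 2^k$) is disjoint from the annular support of $\Psi_j$, we have $\Delta_j v_m=0$, so $\Delta_j(a_m v_m b_m)$ is ``all commutator'' and must decay in $2^{j-k}$; the point is to extract $N$ powers of this decay, where $N:=\lfloor s\rfloor+1$ is the integer with $s<N\le s+1$. To do so I would factor the block $\Delta_j$ by writing $1=\bigl(|\xi|^{-2}\sum_\ell\xi_\ell^2\bigr)^N$ and expanding by the multinomial theorem:
\[
    \Psi_j(\xi)=\sum_{|\alpha|=N}\binom{N}{\alpha}\xi^\alpha\,G_{j,\alpha}(\xi),\qquad G_{j,\alpha}(\xi):=\frac{\xi^\alpha}{|\xi|^{2N}}\Psi_j(\xi),
\]
so that $\Delta_j=\sum_{|\alpha|=N}\binom{N}{\alpha}G_{j,\alpha}(D)\,D^\alpha$ (legitimate since $j>C_0\ge 1$, where $\Delta_j=\Psi_j(D)$). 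Because $\Psi_j$ is supported away from the origin, the symbol $\xi\mapsto\xi^\alpha|\xi|^{-2N}$ is smooth on $\Rl^d\setminus\{0\}$ and homogeneous of degree $-N$, so Lemma \ref{mikhlin_theorem} (applied with exponent $-N$) yields $\|\check G_{j,\alpha}\|_{L_1(\Rl^d)}\lesssim 2^{-Nj}$ uniformly in $j$, whence $\|G_{j,\alpha}(D)\|_{L_p(\Rl^d_\theta)\to L_p(\Rl^d_\theta)}\lesssim 2^{-Nj}$ by Theorem \ref{weak_young_inequality}. Expanding $D^\alpha(a_m v_m b_m)$ by the Leibniz rule for the derivations $\partial_\ell$ (valid because the translations $T_s$ act by $*$-automorphisms) into a finite sum of terms $(D^\beta a_m)(D^\gamma v_m)(D^\delta b_m)$ with $\beta+\gamma+\delta=\alpha$, estimating each by H\"older in the pattern $L_\infty\cdot L_p\cdot L_\infty$, and inserting $\|D^\beta a_m\|_\infty\lesssim 2^{m|\beta|}M_{|\beta|}(a)$, $\|D^\delta b_m\|_\infty\lesssim 2^{m|\delta|}M_{|\delta|}(b)$, the bound on $\|D^\gamma v_m\|_p$ above, and $2^m\lesssim 2^k$, one obtains $\|D^\alpha(a_m v_m b_m)\|_p\lesssim 2^{kN}M_N(a)M_N(b)\|\Delta_k u\|_p$. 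Altogether
\[
    \|\Delta_j(a_m v_m b_m)\|_p\lesssim 2^{-Nj}\,2^{kN}\,M_N(a)M_N(b)\|\Delta_k u\|_p = 2^{N(k-j)}M_N(a)M_N(b)\|\Delta_k u\|_p,
\]
and multiplying by $2^{sj}$ gives $2^{sj}2^{N(k-j)}=2^{sk}\,2^{(N-s)(k-j)}\le 2^{sk}$ since $N>s$ and $k<j$. Summing over the at most three admissible $m$ and using $M_N\le M_{s+1}$ finishes the proof.

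I expect the main obstacle to be the factorisation step: verifying that the symbols $G_{j,\alpha}$ meet the hypotheses of Lemma \ref{mikhlin_theorem} with the sharp scaling $2^{-Nj}$ --- which depends essentially on the annular (rather than merely bounded) support of $\Psi$, and on fixing the integer $N$ once and for all as a function of $s$ --- together with the bookkeeping ensuring that the derivative bounds on $a_m$ and $b_m$ enter at the scale $2^m\simeq 2^k$ and not at $2^j$. That scale separation is exactly what produces the high-frequency gain $2^{N(k-j)}$, and hence (after summation in Theorem \ref{besov_mapping_theorem}) the restriction $s>0$.
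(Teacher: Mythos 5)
Your proof is correct and follows essentially the same route as the paper: reduce to a single Littlewood--Paley block of the input, treat the range $j\lesssim k$ by a crude product bound, and in the range $j\gg k$ factor $\Delta_j$ as $\sum_{|\alpha|=N}G_{j,\alpha}(D)D^\alpha$ with $G_{j,\alpha}$ homogeneous of degree $-N$ on the support of $\Psi_j$, then combine the Mikhlin-type gain $2^{-Nj}$ from Lemma \ref{mikhlin_theorem} with the Leibniz expansion of $D^\alpha(a_mv_mb_m)$ at scale $2^k$. The paper packages the same computation via a single global identity $1=\sum_{|\gamma|\le l}a_{\gamma,l}(\xi)\xi^\gamma$ built from a smooth cutoff $\psi$ (with the choice of $l$ depending on whether $j\le k$ or $j>k$), and your explicit case split just makes visible the fact, used implicitly there, that the lower-order pieces of that partition are supported near the origin and hence annihilated by $\Delta_j$ for $j\ge 1$.
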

    \begin{proof}
        
        Let $\psi$ be a smooth function supported in a ball of radius $1/2$ centred at zero, and equal to $1$ near zero. Define
        \begin{equation*}
            a_{0,0}(\xi) = \psi(\xi), a_{0,j}(\xi) = (1-\psi(\xi))\frac{\xi_j}{|\xi|^2},\quad j=1,\ldots,d,\,\xi \in \Rl^d.
        \end{equation*}
        Then we have
        \begin{equation*}
            1 = \sum_{j = 0}^d a_{0,j}(\xi)\xi_j.
        \end{equation*}
        If we raise this identity to the power $l\geq 1$, we construct smooth bounded functions $a_{\gamma,l}$ for $\gamma \in \Ntrl^d$ such that
        \begin{equation*}
            1 = \sum_{|\gamma|\leq l} a_{\gamma,l}(\xi)\xi^{\gamma},\quad \xi \in \Rl^d.
        \end{equation*}
        By design, the functions $a_{\gamma,l}$ are homogeneous of order $-|\gamma|$ outside the ball of radius $1/2$.
        Specifically, we have
        \begin{equation*}
            a_{\gamma,l}(\xi) = \psi(\xi)^{l-|\gamma|}(1-\psi(\xi))^{|\gamma|}\frac{\xi^\gamma}{|\xi|^{2|\gamma|}}.
        \end{equation*}
        Let $P_{\gamma,l}$ be the operator
        \begin{equation*}
            P_{\gamma,l} = a_{\gamma,l}(D).
        \end{equation*}
        Therefore,
        \begin{equation}\label{partition_of_one}
            1 = \sum_{|\gamma|\leq l} P_{\gamma,l}D^\gamma.
        \end{equation}
        Now we write
        \begin{align*}
            2^{sj}\|\Delta_j T_{a,b}\Delta_ku\|_p &\leq 2^{sj}\sum_{|\gamma|\leq l} \|\Delta_j P_{\gamma,l}D^{\gamma}T_{a,b}\Delta_k u\|_p\\
                                                  &\leq 2^{sj} \sum_{|\gamma|\leq l}\|\Delta_jP_{\gamma,l}\|_{L_p(\Rl^d_\theta)\to L_p(\Rl^d_\theta)}\|D^{\gamma}T_{a,b}\Delta_ku\|_p.
        \end{align*}
        Using Lemma \ref{dyadic_block_lemma} and Proposition \ref{homogeneous_bound}, we have
        \begin{align*}
            2^{sj}\|\Delta_j T_{a,b}\Delta_ku\|_p &\lesssim 2^{sj} \cdot 2^{-jl}\cdot 2^{kl}\sup_{|\gamma|,|\delta|\leq l}M_{\gamma}(a)M_{\delta}(b)\|\Delta_k u\|_p\\
                                                  &= 2^{ks} 2^{(s-l)(j-k)} \sup_{|\gamma|,|\delta|\leq l} M_{\gamma}(a)M_{\delta}(b)\|\Delta_k u\|_p.
        \end{align*}
        If $j \leq k$, then we can take $l=0$ and we are done. Otherwise, choose $l > s$. Then we have $(s-l)(j-k) < 0$, and
        \begin{equation*}
            2^{sj}\|\Delta_j T_{a,b}\Delta_ku\|_p \lesssim 2^{sk}\sup_{|\gamma|,|\delta|\leq s+1} M_{\gamma}(a)M_\delta(b)\|\Delta_k u\|_p.
        \end{equation*}
    \end{proof}
    
    \begin{proposition}\label{exotic_boundedness}
        Let $T_{a,b}$ be as in the statement of Theorem \ref{besov_mapping_theorem}, and let
        $1\leq p \leq \infty$. The operator $T_{a,b}$ has the following mapping properties:
        \begin{enumerate}[{\rm (i)}]
            \item{}\label{zero_smoothness} $T:B^0_{p,1}(\Rl^d_\theta)\to L_p(\Rl^d_\theta)$ continuously, with norm at most a constant multiple of $M_{0}(a)M_0(b)$.
            \item{}\label{positive_smoothness} If $s > 0$ then $T:B^s_{p,1}(\Rl^d_\theta) \to B^s_{p,\infty}(\Rl^d_\theta)$ continuously, with norm at most a constant multiple of $M_{s+1}(a)M_{s+1}(b)$.
        \end{enumerate}
    \end{proposition}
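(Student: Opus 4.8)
The plan is to read both claims off the two lemmas already established, so that the argument is essentially a matter of careful bookkeeping. I would prove \ref{zero_smoothness} first, since it does not use Lemma \ref{key_lemma_for_mapping} at all. For any $x\in L_p(\Rl^d_\theta)$ and any $j\geq 0$, H\"older's inequality for the noncommutative $L_p$-spaces gives $\|a_jxb_j\|_p\leq \|a_j\|_\infty\|b_j\|_\infty\|x\|_p\leq M_0(a)M_0(b)\|x\|_p$; applying this with $x=\Delta_j u$ and summing in $j$,
\[
\sum_{j=0}^\infty \|a_j\Delta_j(u)b_j\|_p \;\leq\; M_0(a)M_0(b)\sum_{j=0}^\infty \|\Delta_j u\|_p \;=\; M_0(a)M_0(b)\|u\|_{B^0_{p,1}}.
\]
Hence for every $u\in B^0_{p,1}(\Rl^d_\theta)$ the defining series for $T_{a,b}u$ converges absolutely in $L_p(\Rl^d_\theta)$, it agrees with the formula of Definition \ref{elementary_definition} on $\lt(C^\infty_c(\Rl^d))$, and $\|T_{a,b}u\|_p\leq M_0(a)M_0(b)\|u\|_{B^0_{p,1}}$. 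Defining $T_{a,b}$ by this absolutely convergent series sidesteps any need for density of $\Sc(\Rl^d_\theta)$, which matters for the endpoint $p=\infty$.

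For \ref{positive_smoothness} I would first regroup the series dyadically. Using $u=\sum_{k\geq 0}\Delta_k u$ (absolutely convergent in $L_p$, as $\sum_k\|\Delta_k u\|_p\leq\|u\|_{B^s_{p,1}}$ since $s\geq 0$) together with $\Delta_j\Delta_k=0$ for $|j-k|\geq 2$, one has $a_j\Delta_j(u)b_j=\sum_{|k-j|\leq 1}a_j\Delta_j\Delta_k(u)b_j$, so after reindexing
\[
T_{a,b}u=\sum_{k=0}^\infty T_{a,b}\Delta_k u,\qquad \text{hence}\qquad \Delta_j T_{a,b}u=\sum_{k=0}^\infty \Delta_j T_{a,b}\Delta_k u .
\]
Now apply Lemma \ref{key_lemma_for_mapping} with smoothness parameter $s>0$ to each summand: for all $j,k\geq 0$,
\[
2^{sj}\|\Delta_j T_{a,b}\Delta_k u\|_p\;\lesssim_{s,d}\;2^{sk}\|\Delta_k u\|_p\,M_{s+1}(a)M_{s+1}(b),
\]
with an implied constant independent of $j$ and $k$. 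Summing over $k$ gives a bound by $M_{s+1}(a)M_{s+1}(b)\|u\|_{B^s_{p,1}}$ that does not depend on $j$, and taking the supremum over $j$ yields
\[
\|T_{a,b}u\|_{B^s_{p,\infty}}=\sup_{j\geq 0}2^{sj}\|\Delta_j T_{a,b}u\|_p\;\lesssim_{s,d}\;M_{s+1}(a)M_{s+1}(b)\|u\|_{B^s_{p,1}} .
\]
The same absolute-convergence bookkeeping as in \ref{zero_smoothness}---now with the extra decay factor $2^{-sj}$ supplied by Lemma \ref{key_lemma_for_mapping}---shows that all the series above converge in $L_p(\Rl^d_\theta)$, so again no density hypothesis is needed and $T_{a,b}$ extends continuously to all of $B^s_{p,1}(\Rl^d_\theta)$.

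The substantive content of the proposition is thus entirely packaged inside Lemma \ref{key_lemma_for_mapping}, and the one point that needs attention when assembling the proof is that the constant in that lemma is uniform in both dyadic indices $j$ and $k$; this in turn rests on the Mikhlin-type multiplier bounds of Corollary \ref{nc_mikhlin} and Proposition \ref{homogeneous_bound} being uniform in the Littlewood--Paley index, which is where the high--low versus low--high frequency split enters. I do not anticipate a serious obstacle beyond this. It is worth stressing that the loss in \ref{positive_smoothness} from the summable norm $B^s_{p,1}$ to the merely bounded norm $B^s_{p,\infty}$ is genuine and is precisely the forbidden-symbol $S^0_{1,1}$ phenomenon; the diagonal statement $T_{a,b}\colon B^s_{p,q}\to B^s_{p,q}$ of Theorem \ref{besov_mapping_theorem} is then recovered from \ref{zero_smoothness}--\ref{positive_smoothness}, applied for a range of smoothness exponents, by real interpolation via Theorem \ref{besov_interpolation}.
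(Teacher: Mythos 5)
Your proposal is correct and follows essentially the same route as the paper: part \eqref{zero_smoothness} by triangle inequality plus H\"older, part \eqref{positive_smoothness} by summing Lemma~\ref{key_lemma_for_mapping} over $k$ and taking the supremum over $j$. There are two small but worthwhile differences. In \eqref{zero_smoothness} you bound $\sum_j \|a_j\Delta_j(u)b_j\|_p$ directly, whereas the paper first writes $\|T_{a,b}u\|_p\leq\sum_j\|T_{a,b}(\Delta_j u)\|_p$ and then re-expands using $\Delta_j\Delta_k=0$ for $|j-k|\geq 2$; your version is the cleaner one, since no second decomposition is needed at this smoothness-zero estimate. More substantively, for \eqref{positive_smoothness} the paper establishes the inequality for $u\in\Sc(\Rl^d_\theta)$ and then invokes density of $\Sc(\Rl^d_\theta)$ in $B^s_{p,1}(\Rl^d_\theta)$; but the density lemma in the paper only covers $p<\infty$, so that argument quietly breaks at $p=\infty$. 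Your observation that the defining series for $T_{a,b}u$ converges absolutely in $L_p(\Rl^d_\theta)$ whenever $u\in B^s_{p,1}(\Rl^d_\theta)$ (with $s\geq 0$), so that no density hypothesis is needed at all, is the right way to close that gap and gives the statement uniformly in $p\in[1,\infty]$.
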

    \begin{proof}
        To prove \eqref{zero_smoothness}, let $u \in B^0_{p,1}(\Rl^d_\theta)$. Using the $L_p$-triangle inequality and the Littlewood-Paley decomposition, we have
        $$
            \|T_{a,b}u\|_{p} \leq \sum_{j=0}^\infty \|T_{a,b}(\Delta_{j} u)\|_p
        $$
        Since $\Delta_j\Delta_k = 0$ unless $|j-k|\leq2$, this sum further decomposes as
        \begin{align*}
            \|T_{a,b}u\|_p &\leq\sum_{j=0}^\infty \sum_{|j-k|\leq 2} \|a_{k}(\Delta_j(\Delta_k u))b_k\|_p\\
                           &\leq \sum_{j=0}^\infty \sum_{|j-k|\leq 2}\|a_k\|_\infty\|b_k\|_\infty \|\Delta_j(u)u\|_p\\
                           &\lesssim M_0(a)M_0(b)\|u\|_{B^0_{p,1}}.
        \end{align*} 

        Now we prove \eqref{positive_smoothness}. Initially suppose that $u \in \Sc(\Rl^d_\theta)$. Let $j\geq 0$. Using Lemma \ref{key_lemma_for_mapping}, we have
        \begin{align*}
            2^{sj}\|\Delta_j T_{a,b}u\|_p &\leq 2^{sj} \sum_{k=0}^\infty 2^{sj}\|\Delta_j T_{a,b}\Delta_k u\|_p\\
                                          &\lesssim \sum_{k=0}^\infty 2^{sk} \|\Delta_k u\|_p \sup_{|\alpha|,|\beta|\leq s+1} M_{\alpha}(b)M_{|\beta|}(b)\\
                                          &\lesssim \|u\|_{B^{s}_{p,1}}\sup_{|\alpha|,|\beta|\leq s+1}M_{|\alpha|}(a)M_{|\beta|}(b).
        \end{align*}
        Taking the supremum over $j\geq 0$ yields
        $$
            \|T_{a,b}(u)\|_{B^s_{p,\infty}} \lesssim M_{s+1}(a)M_{s+1}(b)\|u\|_{B^s_{p,1}}.
        $$
        Since $\Sc(\Rl^d_\theta)$ is dense in $B^s_{p,1}(\Rl^d_\theta)$, the result follows by continuity.
    \end{proof}
    
    Using interpolation, we can complete the proof of the Besov mapping properties of elementary pseudodifferential operators.
    \begin{proof}[Proof of Theorem \ref{besov_mapping_theorem}]
        Using Proposition \ref{exotic_boundedness}, we have
        \begin{align*}
            T_{a,b}&:B^0_{p,1}(\Rl^d_\theta)\to L_p(\Rl^d_\theta) \subset B^0_{p,\infty}(\Rl^d_\theta),\\
            T_{a,b}&:B^{s+1}_{p,1}(\Rl^d_\theta)\to B^{s+1}_{p,\infty}(\Rl^d_\theta)
        \end{align*}
        with norms at most a constant multiple of $M_{s+2}(a)M_{s+2}(b)$. Applying Theorem \ref{besov_interpolation}, it follows that for every $1\leq q\leq \infty$ we have
        $$
            T_{a,b}:B^{s}_{p,q}(\Rl^d_\theta)\to B^{s}_{p,q}(\Rl^d_\theta)
        $$
        with norm at most a constant multiple of $M_{s+2}(a)M_{s+2}(b)$. 
    \end{proof}
    
    \begin{remark}
        The bound $M_{s+2}(a)M_{s+2}(b)$ in the proof of Theorem \ref{besov_mapping_theorem} above is not optimal. The same proof yields an upper bound of $M_{s+1+\varepsilon}M_{s+1+\varepsilon}(b)$ for every $\varepsilon> 0$.
    \end{remark}

\section{Multiplication on Besov spaces}\label{multiplication_section}
    We now study the product $(u,v)\mapsto uv$ on Besov spaces $B^s_{p,q}(\Rl^d_\theta)$. In the classical theory, 
    one approach to this problem is the so-called Bony decomposition.

    Given $f,g \in  \Sc(\Rl^d)$, there exists $f\at g \in \Sc(\Rl^d)$ such that
    \begin{equation*}
        \lt(f)\lt(g) = \lt(f\at g).
    \end{equation*}
    An important feature of the $\theta$-convolution $\at$ is that we have
    $$
        \mathrm{supp}(f\at g)\subseteq \mathrm{supp}(f)+\mathrm{\supp(g)}.
    $$
    It follows that if $j,k\in \Itgr$, then $(\Delta_jx)(\Delta_k y)$ has is the image under $\lt$ of a function supported 
    in the ball of radius $2^{j+1}+2^{k+1} \leq 2^{\max\{j,k\}+2}.$ If 
    $|j-k|> 2$, then $(\Delta_jx)(\Delta_ky)$ has is the image under $\lt$ of a function with support contained in the annulus
    \begin{equation*}
        \{t\in \Rl^d\;:\;2^{|j-k|-2} \leq |t| \leq 2^{\max\{j,k\}+2}\}.
    \end{equation*}
    We can summarise these observations in the following lemma.
    \begin{lemma}
        Let $x,y \in \Sc(\Rl^d_\theta)$ and let $j,k\in \geq 0$. If $l \geq \max\{j,k\}+3$, then
        \begin{equation*}
            \Delta_{l}(\Delta_j(x)\Delta_k(y)) = 0.
        \end{equation*}
        Moreover, if $|j-k|>2$, then the same holds for all $l \leq |j-k|-3$.
    \end{lemma}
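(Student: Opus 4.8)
I would transport the statement to a Fourier-support fact on $\Rl^d$ through the map $\lt$, which is exactly what the informal discussion preceding the lemma already does. Since $\Sc(\Rl^d_\theta) = \lt(\Sc(\Rl^d))$ and $\lt$ is injective, write $x = \lt(f)$ and $y = \lt(g)$ with $f,g \in \Sc(\Rl^d)$. From the construction of the Littlewood--Paley decomposition one has $\Delta_m\lt(h) = \lt(\varphi_m h)$ for $h \in \Sc(\Rl^d)$, where $\varphi_m = \Psi_m$ for $m \geq 1$ and $\varphi_0 = \Phi + \Psi$; in particular $\supp\varphi_m \subseteq \{\,|\xi| \leq 2^{m+1}\,\}$ for all $m \geq 0$, and $\supp\varphi_m \subseteq \{\,|\xi| \geq 2^{m-1}\,\}$ for $m \geq 1$. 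Since $\varphi_m$ is smooth and compactly supported, each of $\varphi_j f$, $(\varphi_j f)\at(\varphi_k g)$, and $\varphi_l\cdot[(\varphi_j f)\at(\varphi_k g)]$ is again in $\Sc(\Rl^d)$. Combining this with the identity $\lt(h_1)\lt(h_2) = \lt(h_1 \at h_2)$ and the support bound $\supp(h_1 \at h_2) \subseteq \supp h_1 + \supp h_2$, one gets
\[
    \Delta_l\bigl(\Delta_j(x)\Delta_k(y)\bigr) = \lt\bigl(\varphi_l\cdot[(\varphi_j f)\at(\varphi_k g)]\bigr),
\]
whose defining function is supported in $\supp\varphi_l \cap (\supp\varphi_j + \supp\varphi_k)$. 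So it suffices to check that this intersection is empty under each of the two hypotheses.

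For the first assertion, every $\zeta \in \supp\varphi_j + \supp\varphi_k$ satisfies $|\zeta| \leq 2^{j+1} + 2^{k+1} \leq 2^{\max\{j,k\}+2}$, while if $l \geq \max\{j,k\}+3$ then $l \geq 1$ and $\supp\varphi_l \subseteq \{\,|\zeta| \geq 2^{l-1}\,\}$ with $2^{l-1} \geq 2^{\max\{j,k\}+2}$; the two sets are disjoint. For the second assertion the support bound is symmetric in $j$ and $k$, so I may assume $j \geq k+3$; then $j \geq 1$, and since $k+1 \leq j-2$ every $\zeta \in \supp\varphi_j + \supp\varphi_k$ satisfies $|\zeta| \geq 2^{j-1} - 2^{k+1} \geq 2^{j-1} - 2^{j-2} = 2^{j-2} \geq 2^{|j-k|-2}$, whereas for $l \leq |j-k|-3$ one has $\supp\varphi_l \subseteq \{\,|\zeta| \leq 2^{l+1}\,\} \subseteq \{\,|\zeta| \leq 2^{|j-k|-2}\,\}$; again the sets are disjoint, and the claim follows.

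The argument is conceptually routine: it is the classical Fourier-support computation carried through the $\lt$--dictionary, with the only real work being the arithmetic of dyadic exponents so that the constant comes out as $3$ rather than $4$. The one point that genuinely needs attention is that in the two borderline cases ($j = k$ in the first assertion, and the extremal choices of $l$ with $k$ small in the second) the sets $\supp\varphi_l$ and $\supp\varphi_j + \supp\varphi_k$ meet a priori only along a single sphere; to upgrade this to honest disjointness one uses, as is standard, that $\Psi$ and $\Phi$ are supported \emph{compactly} inside their open dyadic annulus and ball, so that every inclusion above in fact holds with a fixed positive margin. I expect this bookkeeping to be the main (and only) obstacle.
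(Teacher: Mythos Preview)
Your proposal is correct and follows exactly the approach of the paper: the lemma is stated there as a summary of the preceding paragraph, which carries out precisely the Fourier-support computation via $\lt$ and the inclusion $\supp(f\at g)\subseteq\supp f+\supp g$ that you spell out. Your treatment is in fact more careful than the paper's, since you explicitly address the borderline spherical overlaps and note that the strict containment of $\supp\Psi$ in its dyadic annulus resolves them.
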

    
    Let $u,v \in \Sc(\Rl^d_\theta)$. At least formally, we have the Bony decomposition \cite[Section 2.8.1]{Bahouri-Chemin-Danchin-2011}.
    $$
        uv = \sum_{j,k\geq 0} \Delta_j(u)\Delta_k(v) = \sum_{j\leq k-3} \Delta_j(u)\Delta_k(v)+ \sum_{|j-k|\leq 2} \Delta_j(u)\Delta_k(v) + \sum_{j\geq k+3} \Delta_{j}(u)\Delta_k(v).
    $$
    We denote this as
    \begin{equation*}
        uv = \pilo(u,v) + \res(u,v) + \pihi(u,v)
    \end{equation*} 
    where the low and high frequency ``paraproducts" $\pilo(u,v), \pihi(u,v)$ and the ``resonating term" $\res(u,v)$ are defined as
    \begin{align*}
           \pilo(u,v) &:= \sum_{j\geq 3} S_{j-3}(u)\Delta_j(v),\\
           \pihi(u,v) &:= \sum_{j\geq 3} \Delta_j(u)S_{j-3}(v),\\
            \res(u,v) &:= \sum_{j,k\geq 0, |j-k|\leq 2} \Delta_j(u)\Delta_k(v).
    \end{align*}    
    By design, the paraproducts $\pilo(u,\cdot)$ and $\pihi(\cdot,v)$ are elementary pseudodifferential operators. Using Theorem \ref{besov_mapping_theorem}, we deduce the following.
    \begin{theorem}
        Let $u,v \in \Sc(\Rl^d_\theta)$. If $s > 0$ and $p,q \in [1,\infty]$, we have the following:
        \begin{enumerate}[{\rm (i)}]
            \item{} $\|\pilo(u,v)\|_{B^s_{p,q}} \lesssim \|u\|_\infty\|v\|_{B^s_{p,q}},$
            \item{} $\|\pihi(u,v)\|_{B^s_{p,q}} \lesssim \|u\|_{B^s_{p,q}}\|v\|_\infty.$
        \end{enumerate}
    \end{theorem}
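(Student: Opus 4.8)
The plan is to recognise the low- and high-frequency paraproducts as \emph{elementary pseudodifferential operators} in the sense of Definition \ref{elementary_definition}, and then to apply Theorem \ref{besov_mapping_theorem}.

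For (i), fix $u$ and regard $\pilo(u,\cdot)$ as a linear operator in its second argument. Put $a_j := S_{j-3}(u)$ for $j\geq 3$, $a_j := 0$ for $j\in\{0,1,2\}$, and $b_j := 1$ for all $j\geq 0$. Then
$$
    \pilo(u,v) = \sum_{j\geq 0} a_j\,\Delta_j(v)\,b_j = T_{a,b}(v),
$$
which is exactly of the form prescribed in Definition \ref{elementary_definition}. The only thing to verify is the symbol bound $\|\partial^\alpha a_j\|_\infty \lesssim_\alpha 2^{j|\alpha|}$, and for this I would use the estimate $\|D^\alpha S_j w\|_\infty \lesssim_\alpha 2^{j|\alpha|}\|w\|_\infty$ from Proposition \ref{homogeneous_bound} (applied with $j$ replaced by $j-3$), together with the contractivity $\|S_{j-3}(u)\|_\infty\leq\|u\|_\infty$ from Proposition \ref{approximation_facts}. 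These give $M_k(a)\lesssim_k\|u\|_\infty$ for every $k$, while $M_k(b)=1$ because each $b_j$ is the unit. Theorem \ref{besov_mapping_theorem} then yields, for $s>0$ and $p,q\in[1,\infty]$,
$$
    \|\pilo(u,v)\|_{B^s_{p,q}} = \|T_{a,b}(v)\|_{B^s_{p,q}} \lesssim_{s,p,q,d} M_{s+2}(a)M_{s+2}(b)\,\|v\|_{B^s_{p,q}} \lesssim_{s,p,q,d} \|u\|_\infty\|v\|_{B^s_{p,q}}.
$$

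For (ii), I would run the mirror-image argument, this time fixing $v$ and regarding $\pihi(\cdot,v)$ as an operator in its first argument: set $a_j := 1$ for all $j$ and $b_j := S_{j-3}(v)$ for $j\geq 3$, $b_j := 0$ for $j\in\{0,1,2\}$, so that $T_{a,b}(u) = \sum_{j\geq 3}\Delta_j(u)S_{j-3}(v) = \pihi(u,v)$. As before $M_k(a)=1$ and $M_k(b)\lesssim_k\|v\|_\infty$, and Theorem \ref{besov_mapping_theorem} gives $\|\pihi(u,v)\|_{B^s_{p,q}}\lesssim_{s,p,q,d}\|u\|_{B^s_{p,q}}\|v\|_\infty$.

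I do not expect a genuine obstacle here; essentially all the analytic work was already carried out in proving Theorem \ref{besov_mapping_theorem}, in particular in Lemma \ref{key_lemma_for_mapping}. The only slightly delicate point is a matter of bookkeeping: Theorem \ref{besov_mapping_theorem} produces $T_{a,b}$ as the continuous extension of an operator initially defined on $\lt(C^\infty_c(\Rl^d))$, so one should check that this extension agrees with the formal series defining the paraproduct. For $u,v\in\Sc(\Rl^d_\theta)$ this is harmless: since $S_{j-3}(u)\in\Sc(\Rl^d_\theta)$ has Schwartz seminorms that grow only polynomially in $2^j$ while $\Delta_j(v)$ has Schwartz seminorms decaying faster than any power of $2^j$, the series $\sum_{j\geq 3}S_{j-3}(u)\Delta_j(v)$ converges absolutely in $\Sc(\Rl^d_\theta)$, hence in $B^s_{p,q}(\Rl^d_\theta)$; on the dense subspace $\lt(C^\infty_c(\Rl^d))$ it is a finite sum equal to $T_{a,b}(v)$, so the two agree by continuity. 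The same remark applies to $\pihi$ with the roles of $u$ and $v$ exchanged.
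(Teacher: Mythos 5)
Your proposal is correct and follows essentially the same route as the paper: both recognize $\pilo(u,\cdot)$ and $\pihi(\cdot,v)$ as elementary pseudodifferential operators via $a_j = S_{j-3}(u)$ (resp.\ $b_j = S_{j-3}(v)$) and invoke Theorem \ref{besov_mapping_theorem}, with the symbol bounds supplied by the estimate $\|D^\alpha S_j w\|_\infty \lesssim 2^{j|\alpha|}\|w\|_\infty$. Your attribution of that estimate to Proposition \ref{homogeneous_bound}\eqref{fourier_truncation_bound} is in fact more precise than the paper's citation of Proposition \ref{approximation_facts}, and your extra paragraph on the agreement of $T_{a,b}$ with the formal paraproduct series is a harmless but welcome addition of rigor.
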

    \begin{proof}
        By Proposition \ref{approximation_facts}, we have
        $$
            \sup_{j\geq 0,\,\alpha\in \Ntrl^d} 2^{-j|\alpha|}\|D^{\alpha}S_{j-3}(u)\|_\infty \lesssim \|u\|_\infty
        $$
        and
        $$
            \sup_{k\geq 0,\,\alpha\in \Ntrl^d} 2^{-k|\alpha|}\|D^{\alpha}S_{k-3}(v)\|_\infty \lesssim \|v\|_\infty.
        $$
        Theorem \ref{besov_mapping_theorem} immediately yields the boundedness of $v\mapsto \pilo(u,v)$ and $u\mapsto \pihi(u,v)$ on every Besov
        class $B^s_{p,q}(\Rl^d_\theta)$ with $s > 0$, with norm bounds $\|u\|_\infty$ and $\|v\|_{\infty}$ respectively.
    \end{proof}
    
    The resonating term $\res(u,v)$ can also be considered as an elementary pseudodifferential operator if we fix one argument.
    That is, the mapping $v\mapsto \res(u,v)$ is elementary pseudodifferential.
    With some more effort it is possible to show that the resonating term $\res(u,v)$ has better regularity than $u$ and $v$ individually
    \begin{theorem}
        If $s_0,s_1 \in \Rl$ satisfy $s_0+s_1> 0$, then
        $$\|\res(u,v)\|_{B^{s_0+s_1}_{p,q}} \leq \|u\|_{B^{s_0}_{p_0,q_0}}\|v\|_{B^{s_0}_{p_0,q_0}}$$
        where $p^{-1} = p_0^{-1}+p_1^{-1}$ and $q^{-1} = q_0^{-1}+q_1^{-1}$.
    \end{theorem}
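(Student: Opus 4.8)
The plan is to follow the classical treatment of the resonance term (as for the Bony decomposition \cite[Section 2.8.1]{Bahouri-Chemin-Danchin-2011}) and to check that noncommutativity does no harm. The only product inequality used is the H\"older inequality $\|xy\|_r\le\|x\|_{p_0}\|y\|_{p_1}$ for the noncommutative $L_p$-spaces, which is symmetric in $x$ and $y$, so the fixed order of the factors in $\Delta_j(u)\Delta_k(v)$ is irrelevant; the essential frequency-localisation input is the preceding lemma on the support of $\at$. Throughout I take $u,v\in\Sc(\Rl^d_\theta)$, so that all series converge absolutely, and write $s:=s_0+s_1$.

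First I would regroup the double sum. Put $R_j:=\sum_{|k-j|\le 2}\Delta_j(u)\Delta_k(v)$, so that $\res(u,v)=\sum_{j\ge 0}R_j$. Every summand of $R_j$ has $\max\{j,k\}\le j+2$, hence by the preceding lemma $\Delta_l R_j=0$ whenever $l\ge j+5$; therefore $\Delta_l\res(u,v)=\sum_{j\ge l-4}\Delta_l R_j$ and, by Proposition \ref{approximation_facts},
\[
    \|\Delta_l\res(u,v)\|_p\le\sum_{j\ge l-4}\|R_j\|_p.
\]
Applying the noncommutative H\"older inequality to each term gives $\|R_j\|_p\le\sum_{|k-j|\le 2}\|\Delta_j u\|_{p_0}\|\Delta_k v\|_{p_1}$, and since $|k-j|\le 2$ forces $2^{js_1}\le 2^{2|s_1|}2^{ks_1}$, we obtain
\[
    2^{js}\|R_j\|_p\lesssim_{s_1} A_j\sum_{|k-j|\le 2}B_k,\qquad A_j:=2^{js_0}\|\Delta_j u\|_{p_0},\quad B_k:=2^{ks_1}\|\Delta_k v\|_{p_1}.
\]

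Next I would pass to sequence spaces. Set $\sigma_j:=2^{js}\|R_j\|_p$, with $\sigma_j:=0$ for $j<0$. The first display gives $2^{ls}\|\Delta_l\res(u,v)\|_p\le\sum_{m\le 4}2^{ms}\sigma_{l-m}$, so the sequence $l\mapsto 2^{ls}\|\Delta_l\res(u,v)\|_p$ is dominated by the convolution of $(\sigma_j)_j$ with the kernel equal to $2^{ms}$ for $m\le 4$ and $0$ otherwise. This is precisely where the hypothesis $s>0$ enters: that kernel lies in $\ell_1$, with norm $2^{4s}/(1-2^{-s})<\infty$, so Young's convolution inequality for sequences yields $\|\res(u,v)\|_{B^s_{p,q}}\lesssim_s\big(\sum_{j\ge 0}\sigma_j^q\big)^{1/q}$ (with the supremum replacing the $\ell_q$-sum when $q=\infty$). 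Finally, by H\"older's inequality for sequences with $q^{-1}=q_0^{-1}+q_1^{-1}$, applied to the bound on $\sigma_j$ above,
\[
    \Big(\sum_j\sigma_j^q\Big)^{1/q}\lesssim\|(A_j)_j\|_{\ell_{q_0}}\Big\|\Big(\sum_{|k-j|\le 2}B_k\Big)_j\Big\|_{\ell_{q_1}}\lesssim\|(A_j)_j\|_{\ell_{q_0}}\|(B_k)_k\|_{\ell_{q_1}}=\|u\|_{B^{s_0}_{p_0,q_0}}\|v\|_{B^{s_1}_{p_1,q_1}},
\]
the shifted sum being controlled by the triangle inequality in $\ell_{q_1}$. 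Chaining the displays gives the asserted inequality up to a constant depending only on $s_0,s_1,d$; when $q_0,q_1<\infty$ the general case follows by density.

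I do not anticipate a real obstacle: each step has a classical counterpart and the sole noncommutative ingredient, H\"older for $\Delta_j(u)\Delta_k(v)$, does not see the order of the factors. The one point needing care, and where $s_0+s_1>0$ is indispensable, is the summation over $j\ge l-4$: if $s\le 0$ the convolution kernel above is not summable and the estimate fails, which is the usual reason the resonance term requires positive total regularity. The endpoint cases in which any of $p_0,p_1,q_0,q_1,q$ equals $\infty$ are handled by the evident modifications.
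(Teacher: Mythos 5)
Your proposal is correct and follows essentially the same route as the paper: both use the frequency localization of the resonance blocks to reduce to a sum over $j\gtrsim l$, apply noncommutative H\"older blockwise, and then invoke Young's convolution inequality for sequences (where $s=s_0+s_1>0$ makes the one-sided geometric kernel $\ell_1$-summable) followed by H\"older in $\ell_q$. Your regrouping of the double sum into the blocks $R_j$ and the paper's $\max_{|\nu|\le 2}$ formulation are cosmetically different bookkeeping for the same estimate.
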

    \begin{proof}
        If $|n-m|\leq 2$, then we have $\max\{n,m\}\leq n+2$ and thus
        \begin{equation*}
            \Delta_{k}(\Delta_n(u)\Delta_m(v)) = 0\text{ if }k \geq n+3.
        \end{equation*}
        Hence,
        \begin{equation*}
            \Delta_n(\res(u,v)) = \sum_{|j-k|\leq 2, j,k\geq n-3} \Delta_n(\Delta_{j}(u)\Delta_k(v)).
        \end{equation*} 
        It follows that if $s:=s_0+s_1 > 0$ then
        \begin{align*}
            \|\res(u,v)\|_{B^s_{p,q}} &\lesssim \max_{|\nu|\leq 2}\left(\sum_{n=0}^\infty 2^{nsq}\|\Delta_n(\res(u,v))\|_p^q\right)^{1/q}\\
                                      &\leq \max_{|\nu|\leq 2}\left(\sum_{n=0}^\infty 2^{nsq}\left\|\sum_{k\geq n-3} \Delta_n(\Delta_k u \cdot \Delta_{k+\nu}(v))\right\|_p^q\right)^{1/q}\\
                                      &\lesssim \max_{|\nu|\leq 2}\left(\sum_{n=0}^\infty \left(\sum_{k\geq n-3} 2^{(n-k)s}2^{ks}\|\Delta_{k}(u)\Delta_{k+\nu}(v)\|_p\right)^{q}\right)^{1/q}\\
                                      &\lesssim \max_{|\nu|\leq 2}\left(\sum_{k\geq \max\{\nu,0\}} 2^{ksq}\|\Delta_k(u)\Delta_{k-\nu}(v)\|_{p}^q\right)^{1/q}
        \end{align*}
        where the second-to-final line is an application of Young's convolution inequality, using the assumption that $s>0.$ Hence,
        \[
            \|\res(u,v)\|_{B^s_{p,q}} \lesssim \max_{|\nu|\leq 2}\left(\sum_{k\geq \max\{\nu,0\}} 2^{ksq}\|\Delta_k(u)\Delta_{k-\nu}(v)\|_{p}^q\right)^{1/q}.
        \]
        Applying the H\"older inequality immediately yields the result.
    \end{proof}
    
    Writing $uv = \pilo(u,v)+\res(u,v)+\pihi(u,v)$ yields the following product estimate:
    \begin{corollary}\label{product_estimate}
        If $s > 0$ and $p,q\in [1,\infty]$, then for all $u,v \in B^s_{p,q}(\Rl^d_\theta)\cap L_\infty(\Rl^d_\theta)$, we have $uv \in B^s_{p,q}(\Rl^d_\theta)\cap L_\infty(\Rl^d_\theta)$
        with the norm bound
        \begin{equation*}
            \|uv\|_{B^{s}_{p,q}} \lesssim_{s,p,q} \|u\|_{B^{s}_{p,q}}\|v\|_\infty+\|v\|_\infty\|y\|_{B^s_{p,q}}.
        \end{equation*}
    \end{corollary}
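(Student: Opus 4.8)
The plan is to combine the Bony decomposition $uv = \pilo(u,v) + \res(u,v) + \pihi(u,v)$ with the two immediately preceding theorems. First I would dispose of the $L_\infty$ part: since $L_\infty(\Rl^d_\theta)$ is a von Neumann algebra, $uv \in L_\infty(\Rl^d_\theta)$ with $\|uv\|_\infty \le \|u\|_\infty\|v\|_\infty$, so the only content is the Besov estimate. The low- and high-frequency paraproducts are then handled directly by the paraproduct theorem, which gives $\|\pilo(u,v)\|_{B^s_{p,q}} \lesssim \|u\|_\infty\|v\|_{B^s_{p,q}}$ and $\|\pihi(u,v)\|_{B^s_{p,q}} \lesssim \|u\|_{B^s_{p,q}}\|v\|_\infty$, valid for all $s>0$ and $p,q\in[1,\infty]$.

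For the resonating term I would invoke the resonating-term theorem with the choice $(s_0,p_0,q_0) = (s,p,q)$ and $(s_1,p_1,q_1) = (0,\infty,\infty)$: this is admissible because $s_0+s_1 = s > 0$, $p^{-1} = p_0^{-1}+p_1^{-1}$ and $q^{-1} = q_0^{-1}+q_1^{-1}$, and it yields $\|\res(u,v)\|_{B^s_{p,q}} \lesssim \|u\|_{B^s_{p,q}}\|v\|_{B^0_{\infty,\infty}}$. The elementary embedding $L_\infty(\Rl^d_\theta) \subseteq B^0_{\infty,\infty}(\Rl^d_\theta)$ then upgrades this to $\lesssim \|u\|_{B^s_{p,q}}\|v\|_\infty$, and the symmetric choice $(s_0,\dots) = (0,\infty,\infty)$, $(s_1,\dots)=(s,p,q)$ gives $\lesssim \|u\|_\infty\|v\|_{B^s_{p,q}}$; either bound suffices. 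Adding the three contributions produces the asserted inequality $\|uv\|_{B^s_{p,q}} \lesssim \|u\|_{B^s_{p,q}}\|v\|_\infty + \|u\|_\infty\|v\|_{B^s_{p,q}}$.

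The one step that needs care is the justification of the decomposition $uv = \pilo(u,v) + \res(u,v) + \pihi(u,v)$ for arbitrary $u,v \in B^s_{p,q}(\Rl^d_\theta)\cap L_\infty(\Rl^d_\theta)$, since the Bony identity was only recorded for Schwartz elements. When $p,q < \infty$ this follows by density of $\Sc(\Rl^d_\theta)$ in $B^s_{p,q}(\Rl^d_\theta)$ together with the continuity of the three bilinear maps just established and of the operator product. For $p=\infty$ or $q=\infty$ I would instead argue directly: the partial sums $S_N(u)S_N(v)$ converge to $uv$ in the weak$^*$ topology of $L_\infty(\Rl^d_\theta)$, hence in $\Sc'(\Rl^d_\theta)$, while the rearranged double sum $\sum_{j,k}\Delta_j(u)\Delta_k(v)$ converges to $\pilo(u,v)+\res(u,v)+\pihi(u,v)$ in $B^s_{p,q}(\Rl^d_\theta)$ by the estimates above; the two limits must agree in $\Sc'(\Rl^d_\theta)$. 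This convergence bookkeeping is the only genuinely delicate point, and I expect it to be the main (minor) obstacle; everything else is a routine application of the two preceding theorems and the $L_\infty \hookrightarrow B^0_{\infty,\infty}$ embedding.
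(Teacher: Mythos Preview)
Your proposal is correct and follows exactly the paper's approach: the paper's proof is simply the one-line remark ``Writing $uv = \pilo(u,v)+\res(u,v)+\pihi(u,v)$ yields the following product estimate,'' invoking the two preceding theorems just as you do. Your treatment of the resonating term via $(s_0,p_0,q_0)=(s,p,q)$, $(s_1,p_1,q_1)=(0,\infty,\infty)$ together with $L_\infty \subseteq B^0_{\infty,\infty}$ is the intended application, and your attention to extending the Bony identity beyond Schwartz elements is in fact more careful than the paper, which leaves that point implicit.
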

    Classical analogies of this result are well-known. See \cite[Chapter 4]{Runst-Sickel-1996}, \cite[Theorem 4.36]{Sawano2018}, \cite[Corollary 2.54]{Bahouri-Chemin-Danchin-2011}.

    While irrelevant for the applications in this paper, for the sake of completeness we also include results for the product $uv$
    when one of $u$ or $v$ has negative regularity. 
    
    The following is an immediate consequence of the definition.
    \begin{lemma}
        Let $p,q \in [1,\infty]$. If $s < 0$, then
        \begin{equation*}
            \|S_jx\|_p \lesssim 2^{-s j}\|x\|_{B^s_{p,q}},\quad j\geq 0.
        \end{equation*}
    \end{lemma}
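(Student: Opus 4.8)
The plan is a direct computation resting on the triangle inequality in $L_p(\Rl^d_\theta)$ and the summation of a geometric series, the key point being that the sign condition $s<0$ makes the top term dominate.

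First I would write $S_j=\sum_{k=0}^j\Delta_k$, so that, since $p\geq 1$, the $L_p(\Rl^d_\theta)$-norm obeys the triangle inequality and
\[
    \|S_jx\|_p\leq\sum_{k=0}^j\|\Delta_kx\|_p=\sum_{k=0}^j 2^{-ks}\bigl(2^{ks}\|\Delta_kx\|_p\bigr).
\]
Next I would apply H\"older's inequality on this finite sum with exponents $q$ and $q'=q/(q-1)$ to decouple the geometric weight from the Besov sequence:
\[
    \|S_jx\|_p\leq\left(\sum_{k=0}^j 2^{-ksq'}\right)^{1/q'}\left(\sum_{k=0}^j 2^{ksq}\|\Delta_kx\|_p^q\right)^{1/q}\leq\left(\sum_{k=0}^j 2^{-ksq'}\right)^{1/q'}\|x\|_{B^s_{p,q}},
\]
with the obvious modifications when $q\in\{1,\infty\}$: for $q=1$ one uses $\sup_{0\leq k\leq j}2^{-ks}=2^{-sj}$, and for $q=\infty$ the first factor is simply $\sum_{k=0}^j 2^{-ks}$.

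Finally, since $s<0$ the ratio $2^{-sq'}$ exceeds $1$, so $\sum_{k=0}^j 2^{-ksq'}$ is a geometric series comparable to its largest term $2^{-sjq'}$, with implied constant $(1-2^{sq'})^{-1}$ depending only on $s$ and $q$; taking $q'$-th roots gives $\bigl(\sum_{k=0}^j 2^{-ksq'}\bigr)^{1/q'}\lesssim_{s,q}2^{-sj}$, and combined with the previous display this yields $\|S_jx\|_p\lesssim_{s,q}2^{-sj}\|x\|_{B^s_{p,q}}$. There is no genuine obstacle here; the only thing to be careful about is that the geometric sum is controlled by its top term precisely because $s$ is negative, which is exactly why the hypothesis $s<0$ appears. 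Alternatively one can bypass H\"older entirely using the embedding $B^s_{p,q}(\Rl^d_\theta)\subseteq B^s_{p,\infty}(\Rl^d_\theta)$, estimating $\|\Delta_kx\|_p\leq 2^{-ks}\|x\|_{B^s_{p,\infty}}$ for each $k$ and then summing $\sum_{k=0}^j 2^{-ks}\lesssim 2^{-sj}$.
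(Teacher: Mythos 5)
Your argument is correct, and since the paper simply declares this lemma ``an immediate consequence of the definition'' without giving a proof, your calculation is precisely the natural one being alluded to: expand $S_j=\sum_{k=0}^j\Delta_k$, bound each block by $2^{-ks}\|x\|_{B^s_{p,\infty}}$, and observe that for $s<0$ the resulting geometric sum is comparable to its top term $2^{-sj}$. Both of your variants (H\"older with exponents $q,q'$, or the shortcut through $B^s_{p,q}\hookrightarrow B^s_{p,\infty}$) are sound; nothing further is needed.
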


    \begin{corollary}
        Let $s_0 < 0 < s_1 \in \Rl$ be such that $s_0+s_1 > 0$, and suppose that $p_0,q_0,p_1,q_1 \in [0,\infty]$ are chosen such that
        $$
            \frac{1}{p} = \frac{1}{p_0}+\frac{1}{p_1},\quad \frac{1}{q} = \frac{1}{q_0}+\frac{1}{q_1}.
        $$
        Then
        \begin{enumerate}[{\rm (i)}]
            \item{} $\|\pilo(u,v)\|_{B^{s_1}_{p,q}} \lesssim \|u\|_{B^{s_0}_{p_0,q_0}}\|v\|_{B^{s_1}_{p_1,q_1}},$ and
            \item{} $\|\pihi(u,v)\|_{B^{s_0}_{p,q}} \lesssim \|u\|_{B^{s_0}_{p_0,q_0}}\|v\|_{B^{s_1}_{p_1,q_1}}.$
        \end{enumerate}
    \end{corollary}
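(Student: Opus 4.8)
The plan is to run the standard dyadic argument for paraproducts. Using that the $\theta$-product $\at$ enlarges Fourier supports only additively (the lemma preceding the Bony decomposition), $S_{j-3}(u)\Delta_j(v)$ and $\Delta_j(u)S_{j-3}(v)$ are each $\lt$ of a function supported in a fixed dyadic shell at scale $2^j$; hence $\Delta_n$ annihilates each of these unless $|n-j|\le 3$, so $\Delta_n(\pilo(u,v))=\sum_{|j-n|\le 3}\Delta_n(S_{j-3}(u)\Delta_j(v))$ and $\Delta_n(\pihi(u,v))=\sum_{|j-n|\le 3}\Delta_n(\Delta_j(u)S_{j-3}(v))$, each a sum of at most seven terms. (One first verifies these identities for $u,v\in\Sc(\Rl^d_\theta)$ and then passes to the limit.)

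Next I would estimate the dyadic blocks. From $\|\Delta_n x\|_p\le\|x\|_p$ (Proposition \ref{approximation_facts}) and Hölder's inequality in $L_p(\Rl^d_\theta)$ with $1/p=1/p_0+1/p_1$,
$$
\|\Delta_n\pilo(u,v)\|_p \lesssim \sum_{|j-n|\le 3}\|S_{j-3}(u)\|_{p_0}\|\Delta_j(v)\|_{p_1},\qquad \|\Delta_n\pihi(u,v)\|_p \lesssim \sum_{|j-n|\le 3}\|\Delta_j(u)\|_{p_0}\|S_{j-3}(v)\|_{p_1}.
$$
For (i), the frequency-truncated factor $S_{j-3}(u)$ is controlled in $L_{p_0}$ by $\|u\|_{B^{s_0}_{p_0,q_0}}$ — this is where the hypothesis $s_0<0$ enters — and $\|\Delta_j(v)\|_{p_1}=2^{-s_1 j}b_j$ with $(b_j)_j\in\ell^{q_1}$ of norm $\|v\|_{B^{s_1}_{p_1,q_1}}$. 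For (ii) the situation is dual: since $s_1>0$, the truncations $S_{j-3}(v)$ converge in $L_{p_1}$ and are bounded there by $\|v\|_{B^{s_1}_{p_1,q_1}}$ uniformly in $j$, while $\|\Delta_j(u)\|_{p_0}=2^{-s_0 j}a_j$ with $(a_j)_j\in\ell^{q_0}$ of norm $\|u\|_{B^{s_0}_{p_0,q_0}}$. In each case one is left with a bounded-shift convolution of two dyadic sequences; multiplying $\|\Delta_n\pilo(u,v)\|_p$ by $2^{s_1 n}$ (resp.\ $\|\Delta_n\pihi(u,v)\|_p$ by $2^{s_0 n}$), using that the shifted sum has a uniformly bounded number of terms, and taking $\ell^q$-norms in $n$, Hölder's inequality for sequences with $1/q=1/q_0+1/q_1$ gives
$$
\|\pilo(u,v)\|_{B^{s_1}_{p,q}}\lesssim\|u\|_{B^{s_0}_{p_0,q_0}}\|v\|_{B^{s_1}_{p_1,q_1}},\qquad \|\pihi(u,v)\|_{B^{s_0}_{p,q}}\lesssim\|u\|_{B^{s_0}_{p_0,q_0}}\|v\|_{B^{s_1}_{p_1,q_1}},
$$
as claimed.

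The main obstacle, as with all mixed-regularity paraproduct estimates, is the bookkeeping in the final step. Unlike the $L_\infty$-based paraproduct bounds used earlier in the paper, here the frequency-truncated factor carries its own Besov regularity and summability into the estimate, so one must juggle the three exponents $q_0,q_1,q$ at once and distribute the dyadic weights between the two factors in exactly the right way before summing; the assumption $s_0+s_1>0$ is precisely what makes the geometric series produced by this weight redistribution converge.
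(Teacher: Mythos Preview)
Your overall dyadic strategy (Fourier support localisation $\Rightarrow$ only a bounded number of $j$'s contribute to each $\Delta_n$, then H\"older in $p$, then sum in $n$) is the right one and matches the standard argument the paper has in mind. But there are two concrete problems.

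\textbf{The uniform $L_{p_0}$ bound on $S_{j-3}u$ is false.} You write that for (i) ``the frequency-truncated factor $S_{j-3}(u)$ is controlled in $L_{p_0}$ by $\|u\|_{B^{s_0}_{p_0,q_0}}$''. The preceding lemma says exactly the opposite: for $s_0<0$ one has $\|S_j u\|_{p_0}\lesssim 2^{-s_0 j}\|u\|_{B^{s_0}_{p_0,q_0}}$, with a factor that \emph{grows} in $j$. If you insert this correct bound into your computation you find that the dyadic weight $2^{s_1 n}$ is not enough; what survives is $2^{(s_0+s_1)n}$. In other words, your argument produces
\[
\|\pilo(u,v)\|_{B^{s_0+s_1}_{p,q}}\lesssim \|u\|_{B^{s_0}_{p_0,q_0}}\|v\|_{B^{s_1}_{p_1,q_1}},
\]
not the $B^{s_1}_{p,q}$ target printed in the statement. (This is the classical paraproduct estimate; the index $s_1$ in the paper's (i) appears to be a misprint for $s_0+s_1$.) To make the $q$-H\"older step work you should also keep the $q_0$-summability of $u$: observe that $2^{s_0 j}\|S_{j-3}u\|_{p_0}\leq\sum_{k\leq j-3}2^{s_0(j-k)}\bigl(2^{s_0 k}\|\Delta_k u\|_{p_0}\bigr)$ is a one-sided convolution with the summable kernel $2^{s_0 m}$ (here $s_0<0$ is what matters), so the sequence $(2^{s_0 j}\|S_{j-3}u\|_{p_0})_j$ lies in $\ell^{q_0}$; then H\"older in $j$ with $(2^{s_1 j}\|\Delta_j v\|_{p_1})_j\in\ell^{q_1}$ gives the $\ell^q$ bound you want.

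\textbf{The role of $s_0+s_1>0$.} Your closing remark that ``$s_0+s_1>0$ is precisely what makes the geometric series converge'' does not apply here. For $\pilo$ and $\pihi$ each $\Delta_n$ meets only a bounded number of terms $|j-n|\le 3$, so there is no infinite geometric series to sum; the hypothesis $s_0+s_1>0$ plays no role in either paraproduct estimate. That hypothesis is needed only for the resonant term $\res(u,v)$, which was handled in the theorem immediately preceding this corollary. For $\pilo$ the relevant sign condition is $s_0<0$ (for the convolution kernel above), and for $\pihi$ it is $s_1>0$ (so that $B^{s_1}_{p_1,q_1}\subset L_{p_1}$ and $\|S_{j-3}v\|_{p_1}$ is uniformly bounded), exactly as you identified.
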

    It follows that if one of $s_0,s_1$ is negative and $s_0+s_1 > 0$, then
    $$
        \|uv\|_{B^{\min\{s_0,s_1\}}_{p,q}} \lesssim \|u\|_{B^{s_0}_{p_0,q_0}}\|v\|_{B^{s_1}_{p_1,q_1}}
    $$
    for all $u,v \in \Sc(\Rl^d_\theta)$.

\section{Nemytskij operators on $\Rl^d_\theta$}\label{nemytskij_section}
    Recall that in the classical setting, if $F$ is a smooth function on $\Rl$ and $u$
    belongs to some function space on $\Rl^d$, the nonlinear operation
    \begin{equation*}
        u\mapsto F(u)
    \end{equation*}
    is sometimes called a Nemytskij operator \cite{Runst-Sickel-1996}. We now proceed
    to study Nemytskij operators on $\Rl^d_\theta$ in terms of the strategy outlined in the introduction, combining the Meyer decomposition \eqref{meyer_decomposition_intro}
    and the L\"owner decomposition \eqref{lowner_formula_intro}.

\subsection{Birman-Solomyak classes}
    We say that a function $\phi$ of two variables belongs to the Birman-Solomyak class if $\phi$ has a suitable factorisation as an integral of products
    of functions of one variable.
    \begin{definition}
        Let $\phi:\Rl^2\to \Cplx$ be a Borel function on $\Rl^2$. Say that $\phi$ belongs to the Birman-Solomyak class $\BS$ if there exists a measure space $(\Omega,\mu)$
        with finite total variation
        and measurable functions $\alpha,\beta:\Rl\times \Omega\to \Cplx$ such that
        \begin{equation}\label{BS_decomp_def}
            \phi(t,s) = \int_{\Omega} \alpha(t,\omega)\beta(s,\omega)\,d\mu(\omega),\quad t,s \in \Rl
        \end{equation}
        and such that
        \begin{equation*}
            \int_{\Omega} \sup_{t \in \Rl}|\alpha(t,\omega)|\sup_{s\in \Rl}|\beta(s,\omega)|\,d|\mu(\omega)| < \infty.
        \end{equation*}
        Say that $\phi$ belongs to $\BS^\infty$ if the functions $\alpha$ and $\beta$ can be chosen to be smooth in the sense that all of the derivatives
        \begin{equation*}
            \frac{\partial^k}{\partial t^k}\alpha(t,\omega),\quad \frac{\partial^k}{\partial s^k}\beta(s,\omega)
        \end{equation*}
        exist, and for every $k,l\geq 0$ we have
        \begin{equation*}
            \int_{\Omega} \sup_{t\in \Rl}\left|\frac{\partial^k}{\partial t^k}\alpha(t,\omega)\right|\sup_{s \in \Rl} \left|\frac{\partial^l}{\partial s^l}\beta(s,\omega)\right|\,d|\mu|(s) < \infty.
        \end{equation*}
    \end{definition}
    
    The reason to consider this class is the following result:
    \begin{theorem}\cite{Peller-1985}
        Let $(\Mv,\tau)$ be a von Neumann algebra, and $X,Y\in \Mv.$ Assume that the difference $X-Y$ extends to an element of $L_p(\Mv,\tau)$ where $1\leq p \leq \infty$.
        Let $F$ be a Lipschitz function on $\Rl$. If the divided difference function
        \begin{equation*}
            F^{[1]}(t,s) := \frac{F(t)-F(s)}{t-s},\quad t\neq s\in \Rl.
        \end{equation*}
        belongs to $\BS$, then we have an estimate
        \begin{equation*}
            \|F(X)-F(Y)\|_{L_p(\Mv,\tau)} \leq C_F\|X-Y\|_{L_p(\Mv,\tau)}.
        \end{equation*}
        Here, the constant $C_F$ depends on $F$ but not on $X$ and $Y$.
                
    \end{theorem}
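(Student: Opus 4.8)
The plan is to realise the passage $X-Y\mapsto F(X)-F(Y)$ as the image of $X-Y$ under a single bounded linear map --- a double operator integral $\Ti^{X,Y}_{F^{[1]}}$ --- built directly from a Birman--Solomyak factorisation of $F^{[1]}$, and then to read off the Lipschitz bound from the convergence of that integral; this carries out the strategy behind \eqref{lowner_formula_intro}. Throughout one assumes $X,Y$ self-adjoint, since otherwise $F(X),F(Y)$ are undefined, and since $X,Y\in\Mv$ are bounded only the values of $F$ (and of $F^{[1]}$) on the compact sets $\sigma(X),\sigma(Y)$ are relevant.

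First I would fix a factorisation $F^{[1]}(t,s)=\int_\Omega\alpha(t,\omega)\beta(s,\omega)\,d\mu(\omega)$ witnessing $F^{[1]}\in\BS$, with
\[
    C:=\int_\Omega\Big(\sup_{t}|\alpha(t,\omega)|\Big)\Big(\sup_{s}|\beta(s,\omega)|\Big)\,d|\mu|(\omega)<\infty,
\]
and, for $Z\in L_p(\Mv,\tau)$ (reading $L_\infty=\Mv$ when $p=\infty$), define
\[
    \Ti^{X,Y}_{F^{[1]}}(Z):=\int_\Omega\alpha(X,\omega)\,Z\,\beta(Y,\omega)\,d\mu(\omega),
\]
where $\alpha(X,\omega),\beta(Y,\omega)$ are given by the Borel functional calculus. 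The first task is to check that this integral makes sense and is bounded: the integrand is weak$^*$-measurable (Bochner measurable for $p<\infty$) by measurability of $\alpha,\beta$ together with separability of the underlying Hilbert space, and the noncommutative H\"older inequality gives
\[
    \|\alpha(X,\omega)\,Z\,\beta(Y,\omega)\|_p\le\|\alpha(\cdot,\omega)\|_\infty\,\|Z\|_p\,\|\beta(\cdot,\omega)\|_\infty,
\]
so the integral converges (a Bochner integral for $p<\infty$, a weak$^*$-integral for $p=\infty$) and $\|\Ti^{X,Y}_{F^{[1]}}(Z)\|_p\le C\|Z\|_p$. The same formula defines $\Ti^{X,Y}_{F^{[1]}}$ as a map on $\Mv$, agreeing with the previous one on $\Mv\cap L_p$.

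The crux is the L\"owner identity $\Ti^{X,Y}_{F^{[1]}}(X-Y)=F(X)-F(Y)$. I would deduce it from the standard double-operator-integral calculus: (a) $\Ti^{X,Y}$ extends to a decomposition-independent bounded linear map from the Birman--Solomyak symbol space over $\sigma(X)\times\sigma(Y)$ into the bounded operators on $L_p(\Mv,\tau)$; (b) on a product symbol $\psi(t,s)=\psi_1(t)\psi_2(s)$ one has $\Ti^{X,Y}_\psi(Z)=\psi_1(X)\,Z\,\psi_2(Y)$; and (c) multiplication of a symbol $\phi$ by $t-s$ corresponds to $\Ti^{X,Y}_{(t-s)\phi}(Z)=\Ti^{X,Y}_\phi(XZ-ZY)$ --- which, unwound through the factorisation, is just the elementary identity $\alpha(X,\omega)(XZ-ZY)\beta(Y,\omega)=g_\omega(X)Z\beta(Y,\omega)-\alpha(X,\omega)Zh_\omega(Y)$ with $g_\omega(t)=t\alpha(t,\omega)$, $h_\omega(s)=s\beta(s,\omega)$, integrated over $\Omega$. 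Granting (a)--(c), each checked first for product symbols and then in general via the Birman--Solomyak decomposition, the pointwise identity $(t-s)F^{[1]}(t,s)=F(t)-F(s)=F(t)\cdot 1-1\cdot F(s)$ together with (b) and (c) yields, for any $Z\in\Mv$,
\[
    \Ti^{X,Y}_{F^{[1]}}(XZ-ZY)=\Ti^{X,Y}_{(t-s)F^{[1]}}(Z)=F(X)\,Z-Z\,F(Y).
\]
Taking $Z=\mathrm{id}\in\Mv$ gives the L\"owner identity, and since $X-Y$ lies in $\Mv$ and, by hypothesis, in $L_p$, it holds in $L_p$ as well.

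Combining the two steps,
\[
    \|F(X)-F(Y)\|_{L_p(\Mv,\tau)}=\|\Ti^{X,Y}_{F^{[1]}}(X-Y)\|_{L_p(\Mv,\tau)}\le C\,\|X-Y\|_{L_p(\Mv,\tau)},
\]
and taking the infimum of $C$ over all Birman--Solomyak factorisations produces the constant $C_F=\|F^{[1]}\|_{\BS}$, which depends only on $F$. I expect the only genuinely technical point to be the double-operator-integral calculus invoked in the L\"owner step --- above all the decomposition-independence in (a) --- which is the classical machinery of Birman and Solomyak; it cannot be replaced by a naive ``pair against vectors and apply Fubini'' argument, because $\langle\xi,E_X(\cdot)E_Y(\cdot)\eta\rangle$ is only a bimeasure rather than a measure of finite total variation. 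Everything else --- the module inequality, convergence of the integrals, measurability, and the reduction to self-adjoint $X,Y$ --- is routine. As a sanity check, for $F(t)=e^{\ri r t}$ one has $F^{[1]}(t,s)=\ri r\int_0^1 e^{\ri r(\sigma t+(1-\sigma)s)}\,d\sigma\in\BS$ with $C=|r|$, and the L\"owner identity degenerates to the Duhamel formula $e^{\ri r X}-e^{\ri r Y}=\ri r\int_0^1 e^{\ri r\sigma X}(X-Y)e^{\ri r(1-\sigma)Y}\,d\sigma$.
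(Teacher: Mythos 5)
Your argument is correct and actually supplies more detail than the paper, which simply cites Peller for this statement and gives no proof of its own; the closest thing in the text is the sketch proof of Lemma~\ref{lowner_formula} just below, which reaches the same L\"owner identity by formally manipulating iterated spectral integrals $\iint(\cdot)\,dE_X(t)\,dE_Y(s)$ and exchanging orders of integration, with the rigorous justification deferred to \cite{DDSZ-2020-II}. Where you differ is in how that identity is established: you build $\Ti^{X,Y}_{F^{[1]}}$ directly from a chosen BS factorisation, read off the $L_p$-bound at once from the bimodule H\"older inequality $\|aZb\|_p\le\|a\|_\infty\|Z\|_p\|b\|_\infty$, and then obtain the L\"owner identity algebraically from the symbol-calculus properties (a)--(c) applied to the pointwise identity $(t-s)F^{[1]}(t,s)=F(t)\cdot 1-1\cdot F(s)$, with the commutator substitution $\Ti^{X,Y}_{(t-s)\phi}(Z)=\Ti^{X,Y}_\phi(XZ-ZY)$ reducing to an elementary rearrangement of the factorisation. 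Both routes rest on the same nontrivial input --- decomposition-independence of the Birman--Solomyak integral, which is exactly what makes the paper's formal Fubini exchange legitimate --- and you are right to flag this as the one genuine technical point and to explain why a na\"ive pair-against-vectors-and-Fubini argument fails, since $E_X\otimes E_Y$ is only a bimeasure. Your version has the merit of isolating precisely where the deep machinery enters, making the remaining steps (measurability of $\omega\mapsto\alpha(X,\omega)Z\beta(Y,\omega)$, convergence of the Bochner or weak$^*$ integral, localisation to $\sigma(X)\times\sigma(Y)$ so that $g_\omega(t)=t\alpha(t,\omega)$ stays bounded, and taking $Z=\mathrm{id}$) visibly routine, and showing directly that the resulting constant $C_F=\|F^{[1]}\|_{\BS}$ is independent of $X,Y$. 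The Duhamel sanity check for $F(t)=e^{\ri rt}$ is a nice confirmation.
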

    This theorem has a partial converse: if $F$ obeys a Lipschitz estimate $\|f(A)-f(B)\|_{\infty} \lesssim \|A-B\|_{\infty}$ for all pairs
    $A$ and $B$ of operators whose spectral measures are absolutely continuous with respect to some reference Borel measure $\nu$ on $\Rl,$ then outside some set of $\nu$ measure zero, the divided difference $F^{[1]}$ admits a representation of the form \eqref{BS_decomp_def}. This converse result is due to Peller \cite{Peller-1985}. 
    
    The preceding theorem is based on the following computation, which will also be useful here:
    \begin{lemma}\label{lowner_formula}
        Let $\Mv$ be a von Neumann algebra represented on a separable Hilbert space. Let $X,Y \in \Mv$ be self-adjoint, and let $F:\Rl\to \Rl$ be a function such that the divided difference $F^{[1]}$ belongs to $\BS$, with the decomposition
        \begin{equation*}
            \frac{F(t)-F(s)}{t-s} = \int_{\Omega} \alpha(t,\omega)\beta(s,\omega)\,d\mu(\omega),\quad t\neq s\in \Rl.
        \end{equation*}
        Then
        \begin{equation*}
            F(X)-F(Y) = \int_{\Omega} \alpha(X,\omega)(X-Y)\beta(Y,\omega)\,d\mu(\omega).
        \end{equation*}
        where the integral converges in the weak$^*$-sense. For all $1\leq p \leq \infty$, we have the estimate
        \begin{equation*}
            \|F(X)-F(Y)\|_{L_p(\Mv,\tau)} \leq \|F^{[1]}\|_{\BS}\|F(X)-F(Y)\|_{L_p(\Mv,\tau)}.
        \end{equation*}
    \end{lemma}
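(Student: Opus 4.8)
The plan is to reduce the statement to the single ``rank-one'' symbol $\alpha(\cdot,\omega)\otimes\beta(\cdot,\omega)$, to perform a Daletskii--Krein/L\"owner computation in the spectral picture for each fixed $\omega$, and then to integrate over $\Omega$, the interchange being justified by the integrability hypothesis built into the definition of $\BS$.

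\textbf{Step 1 (the identity for fixed $\omega$).} Write the spectral resolutions $X=\int t\,dE_X(t)$, $Y=\int s\,dE_Y(s)$ over the compact spectra of $X$ and $Y$. For vectors $\xi,\eta$ in the ambient separable Hilbert space, and a fixed $\omega$, the Borel functional calculus together with the relations $dE_X(t)\,X=t\,dE_X(t)$ and $Y\,dE_Y(s)=s\,dE_Y(s)$ (separating variables inside the spectral bimeasure $(A,B)\mapsto\langle\eta,E_X(A)E_Y(B)\xi\rangle$) gives
\[
  \langle\eta,\alpha(X,\omega)(X-Y)\beta(Y,\omega)\xi\rangle=\int\!\!\int (t-s)\,\alpha(t,\omega)\beta(s,\omega)\,d\langle\eta,E_X(t)E_Y(s)\xi\rangle .
\]
Indeed the two halves of the right-hand side are $\langle\eta,\alpha(X,\omega)X\beta(Y,\omega)\xi\rangle$ and $\langle\eta,\alpha(X,\omega)Y\beta(Y,\omega)\xi\rangle$.

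\textbf{Step 2 (integrate over $\Omega$).} The condition $\int_\Omega\sup_t|\alpha(t,\omega)|\sup_s|\beta(s,\omega)|\,d|\mu|(\omega)<\infty$ bounds the $L_\infty$-norm of $\omega\mapsto\alpha(X,\omega)(X-Y)\beta(Y,\omega)$ by the integrable function $\omega\mapsto\sup_t|\alpha(t,\omega)|\,\|X-Y\|_\infty\,\sup_s|\beta(s,\omega)|$, so the weak$^*$ (Gel'fand) integral in the statement exists by the discussion recalled in Section~\ref{preliminaries_section}. The same bound, combined with the bounded total variation of the spectral bimeasure, licenses Fubini; using
\[
  \int_\Omega (t-s)\,\alpha(t,\omega)\beta(s,\omega)\,d\mu(\omega)=F(t)-F(s),\qquad t,s\in\Rl,
\]
which follows from the $\BS$-decomposition of $F^{[1]}$ multiplied by $t-s$ when $t\neq s$ and holds trivially when $t=s$ (both sides vanish, so no issue arises on the diagonal), one gets
\[
  \left\langle\eta,\left(\int_\Omega\alpha(X,\omega)(X-Y)\beta(Y,\omega)\,d\mu(\omega)\right)\xi\right\rangle=\int\!\!\int \big(F(t)-F(s)\big)\,d\langle\eta,E_X(t)E_Y(s)\xi\rangle=\langle\eta,(F(X)-F(Y))\xi\rangle,
\]
the last equality because integrating out $s$ (resp.\ $t$) replaces $E_Y(\sigma(Y))$ (resp.\ $E_X(\sigma(X))$) by the identity. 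As $\xi,\eta$ are arbitrary this is the asserted operator identity.

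\textbf{Step 3 (the $L_p$ bound).} For $1\leq p\leq\infty$, the triangle inequality for the weak$^*$ integral, the standard bimodule estimate $\|aZb\|_{L_p(\Mv,\tau)}\leq\|a\|_\infty\|Z\|_{L_p(\Mv,\tau)}\|b\|_\infty$, and $\|\alpha(X,\omega)\|_\infty\leq\sup_t|\alpha(t,\omega)|$, $\|\beta(Y,\omega)\|_\infty\leq\sup_s|\beta(s,\omega)|$ yield
\[
  \|F(X)-F(Y)\|_{L_p(\Mv,\tau)}\leq\|X-Y\|_{L_p(\Mv,\tau)}\int_\Omega\sup_t|\alpha(t,\omega)|\sup_s|\beta(s,\omega)|\,d|\mu|(\omega),
\]
and taking the infimum over all $\BS$-representations of $F^{[1]}$ produces the factor $\|F^{[1]}\|_{\BS}$. (The displayed inequality in the statement evidently should read $\|X-Y\|_{L_p}$ on the right; for $p<\infty$ it is non-vacuous precisely when $X-Y\in L_p(\Mv,\tau)$, in which case the weak$^*$-integral also converges in $L_p$.)

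\textbf{Main obstacle.} The only non-formal point is the Fubini step of Step~2, which rests on the standard but genuinely non-trivial Birman--Solomyak fact that $(A,B)\mapsto\langle\eta,E_X(A)E_Y(B)\xi\rangle$ extends to a countably additive complex measure of bounded variation on the product $\sigma$-algebra. Granting this (or, equivalently, keeping the $\omega$-integrand in the explicit separated form $\alpha(X,\omega)X\beta(Y,\omega)-\alpha(X,\omega)Y\beta(Y,\omega)$ and invoking an ordinary Fubini theorem over $\Omega$), the remainder of the argument is routine bookkeeping.
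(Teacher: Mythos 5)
Your argument follows essentially the same route as the paper's own proof, which is explicitly labeled a ``(Sketch of proof)'': both insert the spectral resolutions of $X$ and $Y$, rewrite $F(X)-F(Y)$ as a double operator integral of $F^{[1]}(t,s)(t-s)$, substitute the $\BS$-factorisation, and interchange the $\Omega$-integral with the spectral integrals to obtain the L\"owner identity; the $L_p$ bound is then the triangle inequality plus the bimodule estimate $\|aZb\|_p\leq\|a\|_\infty\|Z\|_p\|b\|_\infty$. The paper frankly defers the analytic justification to \cite[Section~4]{DDSZ-2020-II}, whereas you try to pin it down, which is laudable -- but the specific justification you offer for the Fubini step is wrong.

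The assertion that ``$(A,B)\mapsto\langle\eta,E_X(A)E_Y(B)\xi\rangle$ extends to a countably additive complex measure of bounded variation on the product $\sigma$-algebra'' is not a Birman--Solomyak fact; it is false in general. Indeed, if $\{P_i\}$, $\{Q_j\}$ are finite orthogonal decompositions of the identity, Cauchy--Schwarz gives $\sum_{i,j}|\langle\eta,P_iQ_j\xi\rangle|\leq\bigl(\sum_i\|P_i\eta\|\bigr)\bigl(\sum_j\|Q_j\xi\|\bigr)$, and the factors can grow like $\sqrt{n}\,\|\eta\|$ and $\sqrt{n}\,\|\xi\|$; the total variation is therefore unbounded already in the finite-dimensional setting as the partitions refine. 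The whole point of the Birman--Solomyak/Pavlov theory is that the spectral bimeasure has only \emph{bounded semivariation}, which is insufficient for a classical Fubini but \emph{is} sufficient to integrate functions of separated (Schur-multiplier) form $\alpha(t)\beta(s)$. Fortunately, your parenthetical fallback -- keep $\alpha(X,\omega)(X-Y)\beta(Y,\omega)$ in separated form for each $\omega$, observe that it is a strongly (hence weak$^*$-) measurable function of $\omega$ dominated in $\Mv$-norm by the $|\mu|$-integrable function $\sup_t|\alpha(t,\omega)|\,\|X-Y\|_\infty\,\sup_s|\beta(s,\omega)|$, and pair the $\Omega$-integral against a fixed element of $L_1(\Mv,\tau)$ to identify the trace -- is exactly right and avoids the bimeasure issue altogether. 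If you replace the incorrect bounded-variation claim with that pairing argument, your Steps 1--3 constitute a correct and more explicit version of the paper's sketch. You are also right that the final display in the statement has a typo and should read $\|X-Y\|_{L_p(\Mv,\tau)}$ on the right-hand side.
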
 
    \begin{proof}[(Sketch of proof)]
        A full proof of this identity requires some care due to certain technicalities regarding the convergence of the integral. The required technical details may be found in \cite[Section 4]{DDSZ-2020-II}.
        In place of a complete argument we provide a sketch proof which illustrates the basic idea.
        Let $X$ and $Y$ have spectral decompositions
        \begin{equation*}
            X = \int_{\Rl} t\,dE_X(t),\quad Y = \int_{\Rl} s\,dE_Y(s).
        \end{equation*}
        where $E_X$ and $E_Y$ are the spectral measures for $X$ and $Y$ respectively.
        Then at least formally we have
        \begin{align*}
            F(X)-F(Y) &= \int_{\Rl} F(t)\,dE_X(t)-\int_{\Rl} F(s)\,dE_Y(s)\\
                      &= \int_{\Rl}\int_{\Rl} F(t)-F(s) dE_X(t)dE_Y(s)\\
                      &= \int_{\Rl}\int_{\Rl} F^{[1]}(t,s)(t-s)dE_X(t)dE_Y(s)\\
                      &= \int_{\Rl}\int_{\Rl}\int_{\Omega} \alpha(t,\omega)\beta(s,\omega)\,d\mu(\omega) (t-s)dE_X(t)dE_Y(s)\\
                      &= \int_{\Rl}\int_{\Rl} \int_{\Omega} \alpha(t,\omega)\beta(s,\omega)\,d\mu(\omega)dE_X(t)(X-Y)dE_Y(s)\\
                      &= \int_{\Omega} \int_{\Rl} \alpha(t,\omega)dE_X(t)(X-Y)\int_{\Rl} \beta(s,\omega)\,dE_X(s)\,d\mu(\omega)\\
                      &= \int_{\Omega} \alpha(X,\omega)(X-Y)\beta(Y,\omega)\,d\mu(\omega).
        \end{align*}
        The norm bound on $F(X)-F(Y)$ can be seen heuristically from the triangle inequality applied to the above integral representation, although a careful argument based on interpolation
        may be found in \cite[Section 4]{DDSZ-2020-II}.
    \end{proof}
    
    There is no known analytic condition on a Lipschitz function $F$ which is both necessary and sufficient for $F^{[1]}\in \BS$. However, a result due to Peller states that
    $F \in \dot{B}^1_{\infty,1}(\Rl)$ (the homogeneous Besov space on $\Rl$) is sufficient \cite{Peller-1985}. 
    
    The following sufficient condition is far from being necessary, however since we restrict attention to smooth functions it will suffice.
    \begin{proposition}\label{Cc_is_BS}
        Let $F$ be a smooth compactly supported function on $\Rl$. Then $F^{[1]}\in \BS^\infty$.
    \end{proposition}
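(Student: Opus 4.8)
The plan is to write $F^{[1]}$ explicitly as a superposition of separated exponentials, using the standard integral representation for the divided difference together with Fourier inversion; this is the commutative fact that makes the class $\BS^\infty$ rich enough to contain all smooth compactly supported symbols.

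First I would record that for $t,s\in\Rl$,
\[
    F^{[1]}(t,s) = \int_0^1 F'\big(\sigma t + (1-\sigma)s\big)\,d\sigma,
\]
which follows from the fundamental theorem of calculus applied to $\sigma\mapsto F(\sigma t+(1-\sigma)s)$, and which simultaneously shows that $F^{[1]}$ extends to a smooth function on all of $\Rl^2$. Since $F\in C^\infty_c(\Rl)\subseteq\Sc(\Rl)$ we have $F'\in\Sc(\Rl)$, hence $\widehat{F'}\in\Sc(\Rl)$ and in particular $\xi\mapsto |\xi|^N\widehat{F'}(\xi)$ is integrable for every $N\geq 0$. Applying Fourier inversion to $F'$ and substituting $x=\sigma t+(1-\sigma)s$ (the interchange of integrals being justified by absolute convergence) gives
\[
    F^{[1]}(t,s) = (2\pi)^{-1/2}\int_0^1\!\!\int_{\Rl} \widehat{F'}(\xi)\,e^{\ri\sigma t\xi}\,e^{\ri(1-\sigma)s\xi}\,d\xi\,d\sigma.
\]
This is a representation of the form \eqref{BS_decomp_def} with $\Omega=[0,1]\times\Rl$, the complex measure $d\mu(\sigma,\xi)=(2\pi)^{-1/2}\widehat{F'}(\xi)\,d\sigma\,d\xi$ (of finite total variation $(2\pi)^{-1/2}\|\widehat{F'}\|_{L_1(\Rl)}$), and $\alpha(t,(\sigma,\xi))=e^{\ri\sigma t\xi}$, $\beta(s,(\sigma,\xi))=e^{\ri(1-\sigma)s\xi}$.

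Then I would verify the $\BS^\infty$ derivative estimates. For $k,l\geq 0$ one has $\partial_t^k\alpha(t,(\sigma,\xi))=(\ri\sigma\xi)^k e^{\ri\sigma t\xi}$ and $\partial_s^l\beta(s,(\sigma,\xi))=(\ri(1-\sigma)\xi)^l e^{\ri(1-\sigma)s\xi}$, so that $\sup_{t\in\Rl}|\partial_t^k\alpha(t,(\sigma,\xi))|\leq |\xi|^k$ and $\sup_{s\in\Rl}|\partial_s^l\beta(s,(\sigma,\xi))|\leq |\xi|^l$ uniformly for $\sigma\in[0,1]$, whence
\[
    \int_\Omega \sup_{t\in\Rl}|\partial_t^k\alpha|\,\sup_{s\in\Rl}|\partial_s^l\beta|\,d|\mu| \;\leq\; (2\pi)^{-1/2}\int_{\Rl} |\xi|^{k+l}\,|\widehat{F'}(\xi)|\,d\xi \;<\;\infty,
\]
the finiteness being exactly the Schwartz decay of $\widehat{F'}$; the case $k=l=0$ also gives $F^{[1]}\in\BS$. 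I do not anticipate any genuine obstacle here: the only steps requiring a line of justification are the smoothness of $F^{[1]}$ (immediate from the integral representation) and the Schwartz membership of $\widehat{F'}$ (immediate from $F'\in C^\infty_c(\Rl)$), after which everything reduces to bookkeeping the two uniform bounds on the exponential factors.
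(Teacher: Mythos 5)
Your proof is correct and takes essentially the same route as the paper: the same integral representation for the divided difference, followed by Fourier inversion of $F'$, yielding the same Birman--Solomyak decomposition over $\Omega=[0,1]\times\Rl$ with separated exponentials as the factors (the roles of $\sigma$ and $1-\sigma$ are swapped relative to the paper's $\eta$ and $1-\eta$, which is immaterial). You supply slightly more detail than the paper in verifying the $\BS^\infty$ estimates, which the paper dismisses as "easily verified," and you implicitly correct a minor typo in the paper's normalisation constant.
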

    \begin{proof}
        For $t\neq s\in \Rl$, we have the formula
        \begin{equation*}
            \frac{F(t)-F(s)}{t-s} = \int_0^1 F'((1-\eta)t+\eta s)\,d\eta.
        \end{equation*}        
        Let $g\in \Sc(\Rl)$ be the Fourier transform of $F'$. Then by the Fourier inversion formula we have
        \begin{equation*}
            F^{[1]}(t,s) = (2\pi)^{-\frac{d}{2}}\int_{0}^1\int_{-\infty}^\infty e^{\ri\xi(1-\eta)t}e^{\ri\xi\eta s}g(\xi)\,d\xi d\eta,\quad t\neq s \in \Rl.
        \end{equation*}
        This is a Birman-Solomyak decomposition, with $\Omega = [0,1]\times \Rl$ and
        $$
            d\mu(\eta,\xi) = g(\xi)d\eta\,d\xi,\quad \alpha(t,(\eta,\xi)) = e^{\ri\xi(1-\eta)t}\text{ and }\beta(s,(\eta,\xi)) = e^{\ri \xi\eta s}.
        $$
        Since $g$ is Schwartz class, it is easily verified that $F^{[1]} \in \BS^\infty$.
    \end{proof}


%
%

    \begin{corollary}\label{nonlinear_continuity}
        Let $F \in C^\infty(\Rl)$, and let $u = u^* \in L_p(\Rl^d_\theta)\cap L_\infty(\Rl^d_\theta)$ for some $p < \infty$. Then
        \begin{equation*}
            \lim_{j\to\infty} \|F(S_ju)-F(u)\|_p = 0.
        \end{equation*}
        The same holds for $p = \infty$ provided that $u \in C_0(\Rl^d_\theta)$ or $B^{s}_{\infty,\infty}(\Rl^d_\theta)$ for some $s > 0$.
    \end{corollary}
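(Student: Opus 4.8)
The plan is to reduce to the case of a compactly supported, real-valued $F$ and then invoke the L\"owner formula of Lemma \ref{lowner_formula}. First I would record two facts about the partial sums $S_ju$. By Proposition \ref{approximation_facts} we have $\|S_ju\|_\infty\leq\|u\|_\infty=:R$ for all $j$, so the spectra of $u$ and of every $S_ju$ lie in $[-R,R]$. Secondly, $S_j=m_j(D)$ where $m_j=\Phi+\sum_{k=0}^j\Psi_k$ is a real-valued even function whose inverse Fourier transform $\check m_j$ is Schwartz class; since $\check m_j$ is real and each $T_{-t}$ is a $\ast$-automorphism of $L_\infty(\Rl^d_\theta)$, the formula $S_ju=(2\pi)^{-d}\int_{\Rl^d}\check m_j(t)T_{-t}(u)\,dt$ gives $(S_ju)^\ast=S_j(u^\ast)=S_ju$, so each $S_ju$ is self-adjoint.

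Replacing $F$ by its real and imaginary parts we may assume $F$ is real-valued, and by the spectral remark above we may pick $\widetilde F\in C^\infty_c(\Rl)$ agreeing with $F$ on a neighbourhood of $[-R,R]$; functional calculus then gives $F(u)=\widetilde F(u)$ and $F(S_ju)=\widetilde F(S_ju)$, so it suffices to treat $\widetilde F$. By Proposition \ref{Cc_is_BS} the divided difference $\widetilde F^{[1]}$ belongs to $\BS^\infty\subseteq\BS$. Since $u$ and $S_ju$ lie in $L_p(\Rl^d_\theta)$ (with $\|S_ju\|_p\leq\|u\|_p$ by Proposition \ref{approximation_facts}), so does $S_ju-u$, and Lemma \ref{lowner_formula} applied with $X=S_ju$, $Y=u$ yields a constant $C_{\widetilde F}$ with
\begin{equation*}
    \|F(S_ju)-F(u)\|_p=\|\widetilde F(S_ju)-\widetilde F(u)\|_p\leq C_{\widetilde F}\,\|S_ju-u\|_p.
\end{equation*}

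It remains to see that $\|S_ju-u\|_p\to 0$ as $j\to\infty$. For $p<\infty$, and for $p=\infty$ with $u\in C_0(\Rl^d_\theta)$, this is Proposition \ref{approximation_facts}. In the case $p=\infty$ with $u\in B^s_{\infty,\infty}(\Rl^d_\theta)$, $s>0$, we instead use $u-S_ju=\sum_{k>j}\Delta_ku$ together with $\|\Delta_ku\|_\infty\leq 2^{-sk}\|u\|_{B^s_{\infty,\infty}}$ to get $\|u-S_ju\|_\infty\leq\sum_{k>j}2^{-sk}\|u\|_{B^s_{\infty,\infty}}\lesssim_s 2^{-sj}\|u\|_{B^s_{\infty,\infty}}\to 0$; the series converges absolutely in $L_\infty(\Rl^d_\theta)$ and, as $S_ju\to u$ in $\Sc'(\Rl^d_\theta)$, its sum is $u$. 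Feeding this into the displayed inequality finishes the proof.

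I do not anticipate any real difficulty: the argument is a short assembly of results already established. The only points meriting care are the legitimacy of passing to the compactly supported $\widetilde F$ — which genuinely needs the uniform bound $\|S_ju\|_\infty\leq\|u\|_\infty$ and the self-adjointness of $S_ju$, so that the Borel functional calculus of bounded self-adjoint operators applies — and the weak$^\ast$-convergence technicalities in Lemma \ref{lowner_formula}, which are handled in the references cited in its proof.
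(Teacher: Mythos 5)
Your proof is correct and follows essentially the same route as the paper: reduce to a compactly supported smooth $F$ using the uniform bound $\sup_j\|S_ju\|_\infty\leq\|u\|_\infty$, invoke Proposition~\ref{Cc_is_BS} and Lemma~\ref{lowner_formula} to get the operator-Lipschitz estimate $\|F(S_ju)-F(u)\|_p\leq C_F\|S_ju-u\|_p$, and then let $j\to\infty$. You are actually more careful than the paper on two points: you justify that each $S_ju$ is self-adjoint (which the paper uses implicitly when localizing $F$ to the spectrum of $u$ and $S_ju$), and you supply the convergence $\|u-S_ju\|_\infty\to 0$ for $u\in B^s_{\infty,\infty}(\Rl^d_\theta)$, $s>0$, via the dyadic tail estimate $\|u-S_ju\|_\infty\lesssim_s 2^{-sj}\|u\|_{B^s_{\infty,\infty}}$ — a case that Proposition~\ref{approximation_facts} as stated does not cover and which the paper's one-line reference to that proposition glosses over.
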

    \begin{proof}
        Using Propositions \ref{homogeneous_bound} and  \ref{approximation_facts}, we have
        $$
            \sup_{j\geq 0} \|S_ju\|_\infty \leq \|u\|_\infty.
        $$
        Thus by modifying $F$ outside the interval $[-\|u\|_\infty,\|u\|_\infty]$ if necessary, 
        we may assume without loss of generality that $F$ is compactly supported. Lemma \ref{Cc_is_BS} implies that
        $$
            \|F(S_ju)-F(u)\|_p \leq C_F\|S_ju-u\|_p,\quad j\geq 0.
        $$
        This vanishes as $j\to\infty$, due to Proposition \ref{approximation_facts}.
    \end{proof}
    
\subsection{Noncommutative Meyer decomposition}
    Let $F \in C^\infty(\Rl)$, and let $u \in \Sc(\Rl^d_\theta)$ be self-adjoint.
    
    We want to study the Nemytskij operator $u\mapsto F(u)$. Initially assume that $u = S_Nu$ for some $N\geq 0$, so that there can be no doubt that we have
    $$
        F(u) = \lim_{j\to \infty} F(S_ju).
    $$
    
    It follows that we have the series representation
    \begin{equation*}
        F(u) = F(S_0u)+\sum_{j=1}^\infty F(S_ju)-F(S_{j-1}u).
    \end{equation*}
    Indeed, since we assume that $u$ is the image under $\lt$ of a compactly supported function, this series actually terminates.
    Assume that $F^{[1]} \in \BS^\infty$ has the decomposition
    \begin{equation*}
        F^{[1]}(t,s) = \int_{\Omega} \alpha(t,\omega)\beta(s,\omega)\,d\mu(\omega),\quad t,s\in \Rl.
    \end{equation*} 
    Then using Lemma \ref{lowner_formula} to represent each $F(S_ju)-F(S_{j-1}u)$, we have
    \begin{align*}
        F(u) &= F(S_0u) + \sum_{j=1}^\infty \int_{\Omega} \alpha(S_ju,\omega)(S_ju-S_{j-1}u)\beta(S_{j-1}u,\omega)\,d\mu(\omega)\\
             &= F(S_0u) + \sum_{j=1}^\infty \int_{\Omega} \alpha(S_{j}u,\omega)(\Delta_ju)\beta(S_{j-1}u,\omega)\,d\mu(\omega)\\
             &= F(S_0u) + \int_{\Omega} \sum_{j=1}^\infty \alpha(S_ju,\omega)(\Delta_j u)\beta(S_{j-1}u,\omega)\,d\mu(\omega).
    \end{align*}
    Since $F$ is smooth, there exists a smooth function $G$ such that
    $$
        F(t) = F(0)+tG(t),\quad t \in \Rl.
    $$
    Replacing $F(S_0u)$ with $F(0)+G(S_0u)S_0u$, it follows that
    \begin{align}   
        F(u) &= F(0)+G(S_0u)S_0u+\int_{\Omega} \sum_{j=1}^\infty \alpha(S_ju,\omega)(\Delta_j u)\beta(S_{j-1}u,\omega)\,d\mu(\omega)\nonumber\\
             &= F(0)+G(S_0u)\Delta_0u+ \int_{\Omega} T_{\omega}(u)\,d\mu(\omega).\label{meyer_decomposition}
    \end{align}
    Here, $T_{\omega}$ is the linear operator on $\lt(C^\infty_c(\Rl^d))$ given by
    \begin{equation*}
        T_\omega x = \sum_{j=1}^\infty \alpha(S_ju,\omega)(\Delta_j x)\beta(S_{j-1}u,\omega),\quad \omega \in \Omega,\, x \in \lt(C^\infty_c(\Rl^d)).
    \end{equation*}
    Therefore the mapping properties of $u\mapsto F(u)$ are essentially reduced to studying the (linear) operator $T_\omega$
    and the remainder term $x\mapsto G(S_0u)\Delta_0(x)$. In order to apply Theorem \ref{besov_mapping_theorem}, we will prove that $T_\omega$ is an elementary pseudodifferential operator.

    In order to prove that the operator $T_\omega$ is elementary pseudodifferential, we will need the following general assertion:
    \begin{theorem}\label{smooth_coefficient}
        Let $G:\Rl\to \Cplx$ be a smooth function, and let $u^* = u \in L_\infty(\Rl^d_\theta)$. Then for all $\alpha\in \Ntrl^d$,
        \begin{equation*}
            \|D^{\alpha}G(S_ju)\|_\infty \lesssim_{\alpha,G,\|u\|_\infty} 2^{|\alpha|j},\quad j\geq 0
        \end{equation*}
        where the implied constant is independent of $j$. In particular, $G(S_ju)$ is smooth for all $j\geq 0$.
    \end{theorem}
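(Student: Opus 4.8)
The plan is to reduce the bound on $D^{\alpha}G(S_j u)$ to the elementary estimate $\|D^{\alpha}S_j u\|_\infty\lesssim_{\alpha}2^{|\alpha|j}\|u\|_\infty$ supplied by Proposition~\ref{homogeneous_bound}.\eqref{fourier_truncation_bound}, by expressing $G$ through its functional calculus on the self-adjoint element $v:=S_j u$ and controlling the derivatives of the resulting exponentials with a Duhamel-type formula. Throughout I will write $\partial^{\alpha}$ for $D^{\alpha}$; these differ by a unimodular scalar, so the norms agree.

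First I would normalise $G$. Since $v=v^{*}$ and $\|v\|_\infty\leq\|u\|_\infty$ by Proposition~\ref{approximation_facts}, the element $G(v)$ depends only on $G|_{[-\|u\|_\infty,\|u\|_\infty]}$, so after multiplying $G$ by a smooth compactly supported cutoff that is $1$ on that interval we may assume $G$ is smooth and compactly supported. Then $g:=\widehat{G'}\in\Sc(\Rl)$, and Fourier inversion together with the fundamental theorem of calculus give the scalar identity
\[
    G(t)=G(0)+(2\pi)^{-\frac12}\int_{\Rl}\int_{0}^{1} g(\xi)\,t\,e^{\ri\xi\sigma t}\,d\sigma\,d\xi,\qquad t\in\Rl.
\]
Applying the bounded Borel functional calculus of $v$ yields
\[
    G(v)=G(0)\cdot 1+(2\pi)^{-\frac12}\int_{\Rl}\int_{0}^{1} g(\xi)\,v\,e^{\ri\xi\sigma v}\,d\sigma\,d\xi,
\]
the integral converging in $L_\infty(\Rl^d_\theta)$ because $\|v\,e^{\ri\xi\sigma v}\|_\infty\leq\|u\|_\infty$ and $g\in L_1(\Rl)$.

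The core of the argument is then to differentiate this representation. For $c\in\Rl$ one has the Duhamel identity $\partial_{k}(e^{\ri cv})=\ri c\int_{0}^{1} e^{\ri c\rho v}(\partial_{k}v)e^{\ri c(1-\rho)v}\,d\rho$, a consequence of the chain rule and the standard first-order perturbation formula, using that $s\mapsto T_{s}v$ is smooth (as $v=S_j u$ is the convolution of $u$ with a Schwartz function). Iterating this with the Leibniz rule, $\partial^{\gamma}(e^{\ri cv})$ is a finite sum — with combinatorial coefficients depending only on $|\gamma|$ — of terms of the shape $(\ri c)^{r}\int_{\Sigma}e^{\ri c\rho_{0}v}(\partial^{\gamma^{(1)}}v)e^{\ri c\rho_{1}v}\cdots(\partial^{\gamma^{(r)}}v)e^{\ri c\rho_{r}v}\,d\rho$, with $1\leq r\leq|\gamma|$, each $\gamma^{(i)}\neq 0$, and $\gamma^{(1)}+\cdots+\gamma^{(r)}=\gamma$. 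Since the exponentials are unitaries and $\|\partial^{\gamma^{(i)}}v\|_\infty\lesssim_{\gamma^{(i)}}2^{|\gamma^{(i)}|j}\|u\|_\infty$ by Proposition~\ref{homogeneous_bound}.\eqref{fourier_truncation_bound}, each such term is bounded in $L_\infty$ by a constant times $(1+|c|)^{|\gamma|}(1+\|u\|_\infty)^{|\gamma|}2^{|\gamma|j}$, hence
\[
    \|\partial^{\gamma}(e^{\ri cv})\|_\infty\lesssim_{|\gamma|,\|u\|_\infty}(1+|c|)^{|\gamma|}2^{|\gamma|j}.
\]
Taking $c=\xi\sigma$ (so $|c|\leq|\xi|$) and applying the Leibniz rule to $v\,e^{\ri\xi\sigma v}$ together with $\|\partial^{\beta}v\|_\infty\lesssim 2^{|\beta|j}\|u\|_\infty$ gives $\|\partial^{\alpha}(v\,e^{\ri\xi\sigma v})\|_\infty\lesssim_{|\alpha|,\|u\|_\infty}(1+|\xi|)^{|\alpha|}2^{|\alpha|j}$, uniformly in $\sigma\in[0,1]$ and $j\geq 0$. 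This bound, and the analogous bound for the difference quotients, is integrable against $|g(\xi)|\,d\sigma\,d\xi$ because $g\in\Sc(\Rl)$; this licenses differentiation under the integral sign and yields
\[
    \|D^{\alpha}G(v)\|_\infty=\|\partial^{\alpha}G(v)\|_\infty\lesssim_{|\alpha|,\|u\|_\infty}2^{|\alpha|j}\int_{\Rl}(1+|\xi|)^{|\alpha|}|g(\xi)|\,d\xi\lesssim_{\alpha,G,\|u\|_\infty}2^{|\alpha|j},
\]
which is the assertion. Since every $\partial^{\alpha}G(v)$ thereby exists (as an $L_\infty$-limit of difference quotients) and lies in $L_\infty(\Rl^d_\theta)$, the element $G(S_j u)$ is smooth.

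I expect the main obstacle to be the combinatorial bookkeeping of the iterated Duhamel/Leibniz expansion of $\partial^{\gamma}(e^{\ri cv})$ and the domination argument needed to move $\partial^{\alpha}$ inside the Bochner integral; everything else is routine once the integral representation of $G(v)$ is in hand. A minor point requiring care is the linear-in-$\xi$ growth of the frequencies $c=\xi\sigma$ appearing in the exponentials, which is harmless precisely because the Schwartz factor $g$ absorbs any polynomial weight $(1+|\xi|)^{|\alpha|}$.
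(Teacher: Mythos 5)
Your proposal is correct and follows essentially the same strategy as the paper: reduce to a compactly supported $G$, express $G(S_j u)$ through a Fourier-type integral of the exponentials $e^{\ri c S_j u}$, control $D^{\alpha}e^{\ri c S_j u}$ by iterating Duhamel's formula together with $\|D^{\beta}S_j u\|_\infty\lesssim 2^{j|\beta|}\|u\|_\infty$, and integrate the resulting polynomial-in-$c$ bound against a Schwartz density. The only cosmetic difference is that you Fourier-invert $G'$ via the fundamental theorem of calculus rather than Fourier-inverting $G$ directly, which changes nothing substantive.
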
    
    
    In the commutative case, this is a consequence of the inequality
    (from Proposition \ref{homogeneous_bound}.\eqref{fourier_truncation_bound}) 
    \begin{equation*}
        \|D^\alpha S_ju\|_\infty \lesssim 2^{j|\alpha|}
    \end{equation*}
    In the noncommutative case this approach does not work since the partial derivatives of $S_ju$ do not necessarily commute with each other.
    Instead, we first consider the case that $G(t) = e^{\ri\xi t}$, and then use the Fourier transform.
    \begin{lemma}
        Let $u = u^*\in L_\infty(\Rl^d_\theta)$. Then for all $\xi\in \Rl$ and all $j\geq 0$ the element $e^{\ri\xi S_j u}$ is smooth, and for all $\alpha\in \Ntrl^d$ we have
        \begin{equation*}
            \|D^\alpha e^{\ri\xi S_ju}\|_\infty \lesssim_{\alpha,\|u\|_\infty} 2^{|\alpha|j}(1+|\xi|)^{|\alpha|}
        \end{equation*}
        where the implied constant does not depend on $j$ or $\xi$.
    \end{lemma}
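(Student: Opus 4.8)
The plan is to reduce the lemma to two facts. The first is that $v := S_j u$ is a smooth element of $L_\infty(\Rl^d_\theta)$ with
\begin{equation*}
    \|D^\beta v\|_\infty \lesssim_\beta 2^{j|\beta|}\|u\|_\infty,\qquad \beta \in \Ntrl^d,
\end{equation*}
which is precisely Proposition \ref{homogeneous_bound}.\eqref{fourier_truncation_bound} (smoothness and norm-differentiability of $s\mapsto T_s(v)$ being clear because $v$ is a finite sum of convolutions $\check\Psi_k\ast u$ with Schwartz kernels). The second is the Duhamel identity for operator exponentials: for bounded self-adjoint $b,b'$,
\begin{equation*}
    e^{\ri\xi b} - e^{\ri\xi b'} = \ri\xi\int_0^1 e^{\ri\xi\eta b'}(b-b')\,e^{\ri\xi(1-\eta)b}\,d\eta.
\end{equation*}
The reason I would not try to differentiate the power series $e^{\ri\xi v} = \sum_k (\ri\xi)^k v^k/k!$ directly is that a term-by-term bound produces a factor $e^{|\xi|\|v\|_\infty}$, exponential in $|\xi|$; the Duhamel representation avoids this because the factors $e^{\ri\xi\eta v}$ appearing in it are \emph{unitaries}, so only powers $|\xi|^r$ with $r\le|\alpha|$ ever occur.

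First I would note that since $T_s$ is a normal $*$-automorphism and $v$ is self-adjoint, $T_s(e^{\ri\xi v}) = e^{\ri\xi T_s(v)}$. Feeding $b = T_s(v)$, $b' = v$ into the Duhamel identity, dividing by $s_k$ and letting $s\to 0$ (the integrand converges in $L_\infty$-norm uniformly in $\eta\in[0,1]$, so the limit may be taken under the integral sign) yields
\begin{equation*}
    D_k\, e^{\ri\xi v} = \ri\xi\int_0^1 e^{\ri\xi\eta v}(D_k v)\,e^{\ri\xi(1-\eta)v}\,d\eta,\qquad k = 1,\dots,d,
\end{equation*}
as an $L_\infty(\Rl^d_\theta)$-valued Bochner integral. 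Using $\|e^{\ri\xi\eta v}\|_\infty = \|e^{\ri\xi(1-\eta)v}\|_\infty = 1$ this already gives $\|D_k e^{\ri\xi v}\|_\infty \le |\xi|\,\|D_k v\|_\infty \lesssim 2^j(1+|\xi|)\|u\|_\infty$, settling the case $|\alpha| = 1$.

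Next I would iterate. Applying $D^\alpha$ and repeatedly substituting the displayed formula for each exponential factor, with the Leibniz rule governing the $D^\gamma v$ factors, one finds that $D^\alpha e^{\ri\xi v}$ is a finite sum --- with combinatorial coefficients depending only on $\alpha$ --- of terms of the form
\begin{equation*}
    (\ri\xi)^r\int_{\Sigma} e^{\ri\xi\eta_0 v}(D^{\gamma_1}v)\,e^{\ri\xi\eta_1 v}(D^{\gamma_2}v)\cdots (D^{\gamma_r}v)\,e^{\ri\xi\eta_r v}\,d\eta,
\end{equation*}
where $1\le r\le |\alpha|$, each $|\gamma_i|\ge 1$, $|\gamma_1|+\cdots+|\gamma_r| = |\alpha|$, and $\Sigma\subseteq[0,1]^{r+1}$ has volume at most $1$. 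The bookkeeping is that each further differentiation either lands on an exponential --- producing, via the Duhamel formula, one new factor $\ri\xi$ and one extra $D_m v$ inserted between two exponentials --- or lands on an existing $D^{\gamma_i}v$, raising $|\gamma_i|$ by one; in either case $\sum_i|\gamma_i|$ equals the number of derivatives applied and $r$ never exceeds it. Bounding each such term using unitarity and the first fact above gives
\begin{equation*}
    \Bigl\|(\ri\xi)^r\int_\Sigma \cdots\Bigr\|_\infty \le |\xi|^r\prod_{i=1}^r \|D^{\gamma_i}v\|_\infty \lesssim_\alpha |\xi|^r\,2^{j|\alpha|}\|u\|_\infty^{r} \le 2^{j|\alpha|}(1+|\xi|)^{|\alpha|}\max\{1,\|u\|_\infty^{|\alpha|}\},
\end{equation*}
since $r\le|\alpha|$; summing the finitely many terms yields the asserted estimate with constant depending only on $\alpha$ and $\|u\|_\infty$, uniformly in $j$ and $\xi$. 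Finiteness of all of these also shows that $e^{\ri\xi S_j u}$ is smooth.

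The main obstacle I anticipate is not any single estimate but making the iterated Duhamel manipulation fully rigorous in the noncommutative differential calculus: one must verify at each stage that $D_k$ commutes with the Bochner integral over $\eta$, keep track of the finite but growing family of terms produced, and check Bochner/weak$^*$-measurability of all integrands. None of this is deep; once it is in place, the estimates follow mechanically from $\|D^\beta S_j u\|_\infty \lesssim 2^{j|\beta|}\|u\|_\infty$ and $\|e^{\ri\xi\eta v}\|_\infty = 1$.
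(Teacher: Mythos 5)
Your proof is correct and follows essentially the same route as the paper: both derive the case $|\alpha|=1$ from Duhamel's formula and the bound $\|D^\beta S_j u\|_\infty \lesssim 2^{j|\beta|}\|u\|_\infty$, and both handle $|\alpha|>1$ by iterating Duhamel with the Leibniz rule to expand $D^\alpha e^{\ri\xi S_j u}$ into a finite sum of integrals of products of unitaries $e^{\ri\xi\theta_i S_j u}$ interlaced with factors $D^{\gamma_i}S_j u$, then estimating by unitarity. The extra details you supply (why term-by-term differentiation of the power series is inadequate, and the derivation of the $D_k$ formula via the $T_s$-difference quotient) are sound but do not change the substance of the argument.
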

    \begin{proof}
        Since $S_ju$ is self-adjoint, $e^{\ri\xi S_ju}$ is unitary and hence the case $|\alpha|=0$ is trivial. We will therefore concentrate on $|\alpha|>0.$
        Consider the case where $|\alpha|=1$. Then $D^\alpha = D_k$ for some $k=1,\ldots, d$.          
        By Duhamel's formula,
        \begin{equation}\label{base_case}
            D_ke^{\ri\xi S_ju} = i\xi\int_0^1 e^{\ri\xi(1-\theta)S_ju}(D_k S_ju)e^{\ri\xi\theta S_ju}\,d\theta.
        \end{equation}
        The integral should be understood as a weak integral in $L_{\infty}(\Rl^d_\theta).$
        Using the triangle inequality, it follows that
        \begin{equation*}
            \|D_ke^{\ri\xi S_ju}\|_\infty \leq |\xi|\|D_kS_ju\|_\infty.
        \end{equation*}
        Proposition \ref{homogeneous_bound}.\eqref{fourier_truncation_bound} implies that $\|D_k S_ju\|_{\infty} \lesssim 2^{j}\|u\|_{\infty}$
        
        The cases $|\alpha|> 1$ follow from repeated application of Leibniz's rule and Duhamel's formula. To see $|\alpha|=2$, let $l=1,\ldots,d$, and apply the Leibniz rule to \eqref{base_case}
        and then Duhamel's formula in the following way
        \begin{align*}
            D_lD_ke^{\ri\xi S_ju} &= -\xi^2\iint_{[0,1]^2} e^{\ri\xi(1-\theta)(1-\eta)S_ju}(D_lS_ju)e^{\ri\xi(1-\theta)\eta S_ju}(D_kS_ju)e^{\ri\xi\theta S_ju}\,d\eta d\theta\\
                                &\quad + i\xi\int_0^1 e^{\ri\xi(1-\theta)S_ju} (D_lD_kS_ju) e^{\ri\xi\theta S_ju}\,d\theta\\
                                &\quad - \xi^2\iint_{[0,1]^2} e^{\ri\xi(1-\theta)S_ju}(D_kS_ju)e^{\ri\xi\theta(1-\eta)S_ju}(D_lS_ju)e^{\ri\xi\theta\eta S_ju}\,d\theta d\eta.
        \end{align*}
        In the general case, $D^{\alpha}e^{\ri \xi S_ju}$ is a linear combination of of integrals of expressions of the form
        \[
            \xi^{k}e^{\ri \xi\theta_0 S_ju}(D^{\alpha_0}S_ju)e^{\ri \xi \theta_1S_ju}(D^{\alpha_1}S_ju)\cdots (D^{\alpha_{n-1}}S_ju) e^{\ri \xi \theta_nS_ju}
        \]
        where $|\alpha_0|+\cdots+|\alpha_{n-1}| = |\alpha|$ and $0\leq k\leq |\alpha|.$ Applying Proposition \ref{homogeneous_bound}.\eqref{fourier_truncation_bound} to each integrand yields
        the desired bound for $\|D^{\alpha}e^{\ri \xi S_ju}\|_{\infty}.$
    \end{proof}

    \begin{proof}[Proof of Theorem \ref{smooth_coefficient}]
        Note that $\sup_{j\geq 0}\|S_ju\|_\infty < \infty$, and hence without loss of generality we may assume that $G$ is compactly supported.
        Since $G$ is a compactly supported smooth function, there exists the Fourier representation
        \begin{equation}\label{fourier_description}
            G(S_ju) = \int_{-\infty}^\infty \widehat{G}(\xi)\exp(2\pi \ri\xi S_ju)\,d\xi.
        \end{equation}
        The Fourier transform $\widehat{G}$ is in the Schwartz class $\Sc(\Rl)$. 
        Since $D^{\alpha}e^{2\pi\ri\xi S_ju} \in L_\infty(\Rl^d_\theta)$ and has norm with at most polynomial growth in $\xi$, it follows that the integral \eqref{fourier_description}
        converges. Thus,
        $$
            \|D^{\alpha}G(S_ju)\|_\infty \lesssim_{\|u\|_\infty} 2^{j|\alpha|}\int_{\Rl} \widehat{G}(\xi)(1+|\xi|)^{|\alpha|}\,d\xi < \infty.
        $$
        
    \end{proof}
    We now arrive at the main result concerning the stability of $B^s_{p,q}(\Rl^d_\theta)$ under Nemytskij operators. 
    This theorem is a noncommutative analogy of \cite[Theorem 2.87]{Bahouri-Chemin-Danchin-2011}.
    \begin{theorem}\label{nemytskij_mapping}
        Let $F \in C^\infty(\Rl)$, let $s > 0$ and $p,q \in [1,\infty]$. Then
        \begin{equation*}
            u = u^* \in B^s_{p,q}(\Rl^d_\theta)\cap L_\infty(\Rl^d_\theta)\Longrightarrow F(u)-F(0) \in B^s_{p,q}(\Rl^d_\theta)\cap L_\infty(\Rl^d_\theta).
        \end{equation*}
    \end{theorem}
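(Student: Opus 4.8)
The plan is to combine the noncommutative Meyer decomposition \eqref{meyer_decomposition} with the Besov boundedness of elementary pseudodifferential operators (Theorem~\ref{besov_mapping_theorem}). Membership in $L_\infty(\Rl^d_\theta)$ is immediate from the continuous functional calculus: $u$ is self-adjoint with $\mathrm{spec}(u)\subseteq[-\|u\|_\infty,\|u\|_\infty]$, so $\|F(u)-F(0)\|_\infty\leq\sup_{|t|\leq\|u\|_\infty}|F(t)-F(0)|<\infty$. Only the Besov estimate requires work, and the strategy is first to prove a bound of the form $\|F(v)-F(0)\|_{B^s_{p,q}}\lesssim_{F,\|v\|_\infty,s,p,q,d}\|v\|_{B^s_{p,q}}$ for frequency-localized self-adjoint $v$ (those with $S_jv=v$ for all large $j$), and then to recover the general case by taking $v=S_Nu$ and letting $N\to\infty$.

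As a preliminary reduction, since $\sup_{j\geq0}\|S_ju\|_\infty\leq\|u\|_\infty$ by Proposition~\ref{approximation_facts}, multiplying $F$ by a cutoff equal to $1$ on $[-2\|u\|_\infty,2\|u\|_\infty]$ changes none of $F(u)$, $F(S_ju)$, $G(S_ju)$ or the values $\alpha(S_ju,\cdot)$, $\beta(S_{j-1}u,\cdot)$ appearing below; hence we may assume $F\in C^\infty_c(\Rl)$ and invoke Proposition~\ref{Cc_is_BS} to fix a $\BS^\infty$-decomposition $F^{[1]}(t,s)=\int_\Omega\alpha(t,\omega)\beta(s,\omega)\,d\mu(\omega)$ with $\Omega=[0,1]\times\Rl$, $d\mu(\eta,\xi)=g(\xi)\,d\eta\,d\xi$, $\alpha(t,(\eta,\xi))=e^{\ri\xi(1-\eta)t}$, $\beta(s,(\eta,\xi))=e^{\ri\xi\eta s}$ and $g=\widehat{F'}\in\Sc(\Rl)$. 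The derivation of \eqref{meyer_decomposition} only uses that the argument is self-adjoint, lies in $L_\infty(\Rl^d_\theta)$ and is frequency-localized, so it applies verbatim to $v_N:=S_Nu$ (which is self-adjoint, being $m_N(D)$ of a self-adjoint element with real even symbol, and satisfies $S_jv_N=v_N$ for $j\geq N+1$), giving $F(v_N)-F(0)=G(S_0u)\Delta_0u+\int_\Omega T_\omega(v_N)\,d\mu(\omega)$ with $T_\omega x=\sum_{j\geq1}\alpha(S_jv_N,\omega)(\Delta_jx)\beta(S_{j-1}v_N,\omega)$ and $F(t)=F(0)+tG(t)$. The first summand is handled directly: $x\mapsto G(S_0u)\Delta_0x$ is the elementary pseudodifferential operator $T_{a,b}$ with $a_0=G(S_0u)$, $b_0=1$ and $a_j=b_j=0$ for $j\geq1$, and Theorem~\ref{smooth_coefficient} gives $\|D^\gamma G(S_0u)\|_\infty\lesssim_{F,\|u\|_\infty}1$, so $M_{s+2}(a),M_{s+2}(b)\lesssim1$ and Theorem~\ref{besov_mapping_theorem} bounds it on $B^s_{p,q}(\Rl^d_\theta)$.

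The substantive point is the integral term. For fixed $\omega=(\eta,\xi)$ the operator $T_\omega$ is the elementary pseudodifferential operator $T_{a(\omega),b(\omega)}$ (Definition~\ref{elementary_definition}) with $a_j(\omega)=e^{\ri\xi(1-\eta)S_jv_N}$, $b_j(\omega)=e^{\ri\xi\eta S_{j-1}v_N}$ for $j\geq1$ and $a_0(\omega)=b_0(\omega)=0$. The lemma used in the proof of Theorem~\ref{smooth_coefficient}, namely $\|D^\gamma e^{\ri\zeta S_jw}\|_\infty\lesssim_{\gamma,\|w\|_\infty}2^{|\gamma|j}(1+|\zeta|)^{|\gamma|}$ for self-adjoint $w$, applied with $w=v_N$ and $\zeta=\xi(1-\eta)$ respectively $\zeta=\xi\eta$ (and $|\eta|,|1-\eta|\leq1$), yields $\|D^\gamma a_j(\omega)\|_\infty,\|D^\gamma b_j(\omega)\|_\infty\lesssim_{\gamma,\|u\|_\infty}2^{|\gamma|j}(1+|\xi|)^{|\gamma|}$ uniformly in $j$, hence $M_{s+2}(a(\omega)),M_{s+2}(b(\omega))\lesssim_{s,\|u\|_\infty}(1+|\xi|)^{s+2}$. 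Theorem~\ref{besov_mapping_theorem} then gives $\|T_\omega v_N\|_{B^s_{p,q}}\lesssim_{s,p,q,d}(1+|\xi|)^{2s+4}\|v_N\|_{B^s_{p,q}}$, and since $\omega\mapsto T_\omega v_N$ is a finite sum (as $v_N$ is frequency-localized) depending continuously on $\omega$, it is measurable into $B^s_{p,q}(\Rl^d_\theta)$ with
\begin{equation*}
    \int_\Omega\|T_\omega v_N\|_{B^s_{p,q}}\,d|\mu|(\omega)\lesssim_{s,p,q,d}\|v_N\|_{B^s_{p,q}}\int_\Rl(1+|\xi|)^{2s+4}|g(\xi)|\,d\xi<\infty,
\end{equation*}
the last integral being finite because $g\in\Sc(\Rl)$. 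Thus the Bochner integral $\int_\Omega T_\omega v_N\,d\mu(\omega)$ exists in $B^s_{p,q}(\Rl^d_\theta)$; pairing against $\Sc(\Rl^d_\theta)$ identifies it with the weak${}^*$-integral appearing in \eqref{meyer_decomposition}. Combining the two summands, $\|F(v_N)-F(0)\|_{B^s_{p,q}}\lesssim_{F,\|u\|_\infty,s,p,q,d}\|v_N\|_{B^s_{p,q}}\leq\|u\|_{B^s_{p,q}}$, uniformly in $N$ (using $\|v_N\|_\infty\leq\|u\|_\infty$ and $\|v_N\|_{B^s_{p,q}}\leq\|u\|_{B^s_{p,q}}$).

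Finally, Corollary~\ref{nonlinear_continuity} gives $F(v_N)\to F(u)$ in $L_p(\Rl^d_\theta)$ when $p<\infty$, and in $L_\infty(\Rl^d_\theta)$ when $p=\infty$ (using $u\in B^s_{\infty,q}(\Rl^d_\theta)\subseteq B^s_{\infty,\infty}(\Rl^d_\theta)$ and $s>0$); hence $\Delta_j(F(v_N)-F(0))\to\Delta_j(F(u)-F(0))$ in $L_p(\Rl^d_\theta)$ for each fixed $j$, since $\Delta_j$ is a contraction on $L_p(\Rl^d_\theta)$. Applying Fatou's lemma to the series (or supremum) defining $\|\cdot\|_{B^s_{p,q}}$ gives $\|F(u)-F(0)\|_{B^s_{p,q}}\leq\liminf_{N\to\infty}\|F(v_N)-F(0)\|_{B^s_{p,q}}\lesssim_{F,\|u\|_\infty,s,p,q,d}\|u\|_{B^s_{p,q}}$, which finishes the proof. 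The main obstacle is the penultimate paragraph: one must check that $T_\omega$ genuinely falls in the class of Definition~\ref{elementary_definition} with the coefficient seminorms $M_{s+2}(a(\omega))$, $M_{s+2}(b(\omega))$ growing only polynomially in $\xi$, so that the $\mu$-integral of the Besov norms converges against the Schwartz decay of $g$; everything else is either a standard limiting argument or a direct appeal to Theorems~\ref{smooth_coefficient} and~\ref{besov_mapping_theorem}.
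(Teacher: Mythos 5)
Your proof is correct and follows the same overall strategy as the paper's: the noncommutative Meyer decomposition \eqref{meyer_decomposition}, together with Theorem~\ref{besov_mapping_theorem}, Theorem~\ref{smooth_coefficient}, and the $\BS^\infty$-decomposition of $F^{[1]}$ from Proposition~\ref{Cc_is_BS}. Where you differ is in the passage to the limit, and your version is a bit more robust. The paper estimates $\left\|\int_\Omega T_\omega(u-S_ju)\,d\mu(\omega)\right\|_{B^s_{p,q}}$ by a constant times $\|u-S_ju\|_{B^s_{p,q}}$ and asserts convergence of $\int_\Omega T_\omega(S_ju)\,d\mu(\omega)$ in $B^s_{p,q}$, which requires $\|u-S_ju\|_{B^s_{p,q}}\to 0$; this holds for $q<\infty$, but for $q=\infty$ (where $\Sc(\Rl^d_\theta)$ is not dense in $B^s_{p,\infty}$) it can fail, and the conclusion must then be extracted from the uniform bound together with weaker-topology convergence. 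Your argument does exactly that from the start: you prove the uniform estimate $\|F(S_Nu)-F(0)\|_{B^s_{p,q}}\lesssim_{F,\|u\|_\infty}\|u\|_{B^s_{p,q}}$, observe that $\Delta_k F(S_Nu)\to\Delta_k F(u)$ in $L_p$ for each fixed $k$ via Corollary~\ref{nonlinear_continuity}, and finish with Fatou on the Besov quasi-norm; this is indifferent to density and covers $q=\infty$ (and $p=\infty$) with no extra hypotheses. You also apply the Meyer decomposition directly to $v_N=S_Nu$ rather than plugging $S_ju$ into the operator $T_\omega$ whose coefficients come from $u$; this sidesteps a small bookkeeping mismatch at the top frequency that the paper's formulation silently absorbs, and is the cleaner choice.
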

    \begin{proof}
        Let $u=u^* \in B^s_{p,q}(\Rl^d_\theta)\cap L_\infty(\Rl^d_\theta)$ for some $s > 0$ and $p,q\in [1,\infty]$. 
        Using the ``Meyer" representation \eqref{meyer_decomposition}, for all $j\geq 0$ we have
        $$
            F(S_ju) = F(0)+G(S_0u)\Delta_0(u) + \int_{\Omega} T_{\omega}(S_ju)\,d\mu(\omega).
        $$
        As $j\to \infty$, the left hand side converges to $F(u)$ in the $L_p$ norm due to Corollary \ref{nonlinear_continuity}. As for the right hand side, we use Theorem \ref{besov_mapping_theorem},
        \begin{align*}
            \left\|\int_{\Omega} T_{\omega}(u-S_ju)\,d\mu(\omega)\right\|_{B^s_{p,q}} &\leq \int_{\Omega} \|T_{\omega}\|_{B^{s}_{p,q}\to B^s_{p,q}}\,d|\mu(\omega)|\|u-S_ju\|_{B^{s}_{p,q}}\\
                                                                                    &\leq \int_{\Omega} M_{s+2}(\alpha(\cdot,\omega))M_{s+2}(\beta(\cdot,\omega))\,d|\mu(\omega)|.
        \end{align*}
        By Theorem \ref{smooth_coefficient} and the definition of $\BS^\infty$, we have
        $$
            \int_{\Omega} M_{s+2}(\alpha(\cdot,\omega))M_{s+2}(\beta(\cdot,\omega))\,d|\mu(\omega)| < \infty.
        $$
        Therefore,
        \begin{equation*}
            \lim_{j\to\infty} \int_{\Omega} T_{\omega}(S_ju)\,d\mu(\omega) =  \int_{\Omega} T_{\omega}(u)\,d\mu(\omega)
        \end{equation*}
        in the $B^{s}_{p,q}$-topology. In particular, in the $L_p$-topology. 
        
        Similarly, the operator 
        \[
            v\mapsto G(S_0u)\Delta_0(v)
        \]
        is an elementary pseudodifferential operator, and hence $G(S_0u)\Delta_0u \in B^s_{p,q}(\Rl^d_\theta).$
        
        We arrive at the following:
        $$
            F(u) = F(0)+G(S_0u)\Delta_0u + \int_{\Omega} T_{\omega}(u)\,d\mu(\omega) \in F(0)+B^s_{p,q}(\Rl^d_\theta).
        $$
        Hence, $F(u)-F(0) \in B^s_{p,q}(\Rl^d_\theta)$, and since $u\in L_\infty(\Rl^d_\theta)$, Corollary \ref{nonlinear_continuity} implies that $F(u)-F(0) \in B^s_{p,q}(\Rl^d_\theta)\cap L_{\infty}(\Rl^d_\theta)$.
    \end{proof}
\subsection{Differences of Nemytskij operators}
    Let $u,v\in  L_\infty(\Rl^d_\theta)$ be self-adjoint. Let $F \in C^\infty(\Rl).$ Modifying $F$ outside a compact set
    if necessary, we assume that $F^{[1]} \in \BS^{\infty}$ and that
    \[
        \frac{F(t)-F(s)}{t-s} = \int_{\Omega} \alpha(t,\omega)\beta(s,\omega)\,d\mu(\omega).
    \]    
    By the L\"owner formula (Lemma \ref{lowner_formula}), we have
    \[
        F(u)-F(v) = \int_{\Omega} \alpha(u,\omega)(u-v)\beta(v,\omega)\,d\mu(\omega).
    \]
    We will prove that $F$ is locally Lipschitz on $B^s_{p,q}(\Rl^d_\theta)\cap L_{\infty}(\Rl^d_\theta)$ by an application of Corollary \ref{product_estimate} to the integrand. We have
    \[
        \|\alpha(u,\omega)(u-v)\beta(v,\omega)\|_{B^s_{p,q}\cap L_{\infty}} \leq \|\alpha(u,\omega)\|_{B^s_{p,q}\cap L_{\infty}}\|u-v\|_{B^s_{p,q}\cap L_{\infty}}\|\beta(v,\omega)\|_{B^s_{p,q}\cap L_{\infty}}.
    \]
    Formally, it follows that
    \[
        \|F(u)-F(v)\|_{B^s_{p,q}\cap L_{\infty}} \leq \|u-v\|_{B^s_{p,q}\cap L_{\infty}}\int_{\Omega} \|\alpha(u,\omega)\|_{B^s_{p,q}\cap L_{\infty}}\|\beta(v,\omega)\|_{B^s_{p,q}\cap L_{\infty}}\,d|\mu(\omega)|.
    \]  
    We will take some care to justify this inequality.

%
    The results of this section are summarised in the following theorem. (Compare \cite[Chapter 2, Section 7]{Taylor-tools-for-pde-2000}.)
    \begin{theorem}\label{smooth_is_locally_lipschitz}
        Let $s > 0$ and $p,q\in [1,\infty]$, and let $F\in C^\infty(\Rl)$. If $u,v \in L_\infty(\Rl^d)\cap B^s_{p,q}(\Rl^d_\theta)$
        are self-adjoint, then $F(u)-F(v) \in B^s_{p,q}(\Rl^d_\theta)$ with
        $$
            \|F(u)-F(v)\|_{B^s_{p,q}\cap L_\infty} \lesssim_{F,\|u\|_{B^s_{p,q}\cap L_{\infty}},\|v\|_{B^s_{p,q}\cap L_{\infty}}} \|u-v\|_{B^s_{p,q}\cap L_{\infty}}.
        $$
    \end{theorem}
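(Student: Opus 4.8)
The plan is to run the argument sketched in the text preceding the statement --- the L\"owner identity (Lemma~\ref{lowner_formula}) followed by the product estimate (Corollary~\ref{product_estimate}) --- but with two refinements. First, the operators $\alpha(u,\omega)$ and $\beta(v,\omega)$ must be split into a scalar part and a part lying in $B^s_{p,q}(\Rl^d_\theta)$, since for $p<\infty$ they do not themselves belong to $B^s_{p,q}(\Rl^d_\theta)$ (they are unitaries, hence differ from a constant by an element of $B^s_{p,q}(\Rl^d_\theta)$). Second, one needs a \emph{quantitative} form of Theorem~\ref{nemytskij_mapping}.

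I begin by reducing to $F\in C^\infty_c(\Rl)$: the spectra of the self-adjoint elements $u,v$ lie in $[-R,R]$ with $R:=\max\{\|u\|_\infty,\|v\|_\infty\}$, so multiplying $F$ by a cutoff equal to $1$ on $[-R,R]$ changes neither $F(u)$ nor $F(v)$. By Proposition~\ref{Cc_is_BS}, $F^{[1]}\in\BS^\infty$, with a decomposition $F^{[1]}(t,s)=\int_\Omega\alpha(t,\omega)\beta(s,\omega)\,d\mu(\omega)$, and Lemma~\ref{lowner_formula} gives
\[
    F(u)-F(v)=\int_\Omega\alpha(u,\omega)(u-v)\beta(v,\omega)\,d\mu(\omega),
\]
the integral being a weak$^*$ integral in $L_\infty(\Rl^d_\theta)$. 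Because the Littlewood--Paley projections $\Delta_j=\Psi_j(D)$ are bounded on $L_1(\Rl^d_\theta)$ (Theorem~\ref{weak_young_inequality}), they commute with this integral; combining this with the $L_p$-triangle inequality and Minkowski's integral inequality in $\ell_q$ yields
\[
    \|F(u)-F(v)\|_{B^s_{p,q}}\le\int_\Omega\bigl\|\alpha(u,\omega)(u-v)\beta(v,\omega)\bigr\|_{B^s_{p,q}}\,d|\mu|(\omega),
\]
and the corresponding trivial bound for the $L_\infty$-norm follows from $\|\alpha(u,\omega)\|_\infty\|\beta(v,\omega)\|_\infty\le\sup_t|\alpha(t,\omega)|\,\sup_s|\beta(s,\omega)|$.

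Next I estimate the integrand for fixed $\omega$. Write $\alpha(u,\omega)=\alpha(0,\omega)+A_\omega$ and $\beta(v,\omega)=\beta(0,\omega)+B_\omega$ with $\alpha(0,\omega),\beta(0,\omega)\in\Cplx$ and $A_\omega,B_\omega\in B^s_{p,q}(\Rl^d_\theta)\cap L_\infty(\Rl^d_\theta)$; the membership of $A_\omega,B_\omega$ is Theorem~\ref{nemytskij_mapping} applied to the functions $t\mapsto\alpha(t,\omega)$ and $s\mapsto\beta(s,\omega)$, localised by a cutoff depending only on $\|u\|_\infty,\|v\|_\infty$. Expanding the triple product into four terms and applying Corollary~\ref{product_estimate} --- twice to the genuinely trilinear term $A_\omega(u-v)B_\omega$ --- together with $\|A_\omega\|_\infty\le2\sup_t|\alpha(t,\omega)|$ and $\|B_\omega\|_\infty\le2\sup_s|\beta(s,\omega)|$, gives
\[
    \bigl\|\alpha(u,\omega)(u-v)\beta(v,\omega)\bigr\|_{B^s_{p,q}\cap L_\infty}\lesssim_{s,p,q,d}\bigl(\mathfrak a_\omega+\|A_\omega\|_{B^s_{p,q}}\bigr)\bigl(\mathfrak b_\omega+\|B_\omega\|_{B^s_{p,q}}\bigr)\,\|u-v\|_{B^s_{p,q}\cap L_\infty},
\]
where $\mathfrak a_\omega:=\sup_t|\alpha(t,\omega)|$ and $\mathfrak b_\omega:=\sup_s|\beta(s,\omega)|$. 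To control $\|A_\omega\|_{B^s_{p,q}}$ and $\|B_\omega\|_{B^s_{p,q}}$ I use a quantitative reading of the proof of Theorem~\ref{nemytskij_mapping}: tracking constants through Theorem~\ref{smooth_coefficient}, Theorem~\ref{besov_mapping_theorem} and the Meyer decomposition \eqref{meyer_decomposition} shows that for $G\in C^\infty_c(\Rl)$ and self-adjoint $w\in B^s_{p,q}(\Rl^d_\theta)\cap L_\infty(\Rl^d_\theta)$ one has $\|G(w)-G(0)\|_{B^s_{p,q}}\lesssim_{s,p,q,d,\|w\|_\infty,\,\supp G}\bigl(\sum_{m\le N}\|G^{(m)}\|_\infty\bigr)\|w\|_{B^s_{p,q}}$ for a suitable $N=N(s)$. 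Applying this to the localised $\alpha(\cdot,\omega),\beta(\cdot,\omega)$ gives $\mathfrak a_\omega+\|A_\omega\|_{B^s_{p,q}}\lesssim\sum_{m\le N}\sup_t|\partial_t^m\alpha(t,\omega)|$ and $\mathfrak b_\omega+\|B_\omega\|_{B^s_{p,q}}\lesssim\sum_{m\le N}\sup_s|\partial_s^m\beta(s,\omega)|$, with implied constants depending on $s,p,q,d$ and on $\|u\|_\infty,\|u\|_{B^s_{p,q}}$ (resp. $\|v\|_\infty,\|v\|_{B^s_{p,q}}$).

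Inserting these bounds and integrating over $\Omega$ leaves a finite sum of terms $\int_\Omega\sup_t|\partial_t^k\alpha(t,\omega)|\,\sup_s|\partial_s^l\beta(s,\omega)|\,d|\mu|(\omega)$ with $k,l\le N$, each finite by the very definition of $\BS^\infty$; this gives the claimed estimate, with a constant depending on $F$ (through the cutoffs and the $\BS^\infty$ data of $F^{[1]}$), on $\|u\|_\infty,\|v\|_\infty$ and on $\|u\|_{B^s_{p,q}},\|v\|_{B^s_{p,q}}$. I expect the main obstacle to be precisely the quantitative version of Theorem~\ref{nemytskij_mapping}: one must verify that for $G(t)=e^{\ri\lambda t}$ (suitably localised) the relevant smoothness norm grows only polynomially in $|\lambda|$, since a Gr\"onwall argument in $\lambda$ based on Duhamel's formula and the Bony decomposition would only give exponential growth, which the Schwartz decay of the density in the $\BS^\infty$ decomposition of $F^{[1]}$ cannot absorb. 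A secondary, purely technical point is the measurability of $\omega\mapsto\alpha(u,\omega)(u-v)\beta(v,\omega)$ and the commutation of $\Delta_j$ with the weak$^*$ integral, which can be handled as in \cite[Section~4]{DDSZ-2020-II}.
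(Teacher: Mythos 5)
You take exactly the route the paper sketches (L\"owner identity from Lemma~\ref{lowner_formula}, then the product estimate Corollary~\ref{product_estimate} applied to the integrand, then integration over $\Omega$), and you correctly identify a genuine gap in that sketch: for $p<\infty$ the operators $\alpha(u,\omega)$ and $\beta(v,\omega)$ are typically unitaries which do not belong to $B^s_{p,q}(\Rl^d_\theta)$, so the displayed trilinear inequality that the paper writes down before the theorem is vacuous (its right-hand side is infinite), and in any case is not what Corollary~\ref{product_estimate} gives, since the latter's bound has the form $\|x\|_{B^s_{p,q}}\|y\|_\infty+\|x\|_\infty\|y\|_{B^s_{p,q}}$ rather than a product of intersection norms. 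Your splitting $\alpha(u,\omega)=\alpha(0,\omega)+A_\omega$, $\beta(v,\omega)=\beta(0,\omega)+B_\omega$, with $A_\omega,B_\omega\in B^s_{p,q}\cap L_\infty$ supplied by Theorem~\ref{nemytskij_mapping}, together with expanding the trilinear product into four tractable terms, is precisely the ``care'' the paper alludes to but does not supply; and your observation that one needs a quantitative form of Theorem~\ref{nemytskij_mapping} (with constants controlled by a finite $C^N$-norm of $G$) in order to integrate over $\Omega$ is on target.

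Your one stated worry --- that for $G(t)=e^{\ri\lambda t}$ a Duhamel/Gr\"onwall argument might give only exponential growth in $|\lambda|$ --- is unfounded in this setting. The lemma inside the proof of Theorem~\ref{smooth_coefficient} does not iterate Duhamel in a way that produces Gr\"onwall-type exponential factors; it uses a finite, $|\alpha|$-fold Duhamel expansion producing an explicit factor $\xi^k$ with $k\le|\alpha|$, yielding the polynomial bound $\|D^\alpha e^{\ri\xi S_ju}\|_\infty\lesssim 2^{j|\alpha|}(1+|\xi|)^{|\alpha|}$, which the Schwartz decay of the density $g$ in the $\BS^\infty$ decomposition of $F^{[1]}$ (Proposition~\ref{Cc_is_BS}) readily absorbs. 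Tracking this through the Meyer decomposition gives $M_{s+2}(\alpha(\cdot,\omega))\lesssim(1+|\xi|)^{s+2}$ uniformly in $\eta\in[0,1]$, and hence the $\Omega$-integral converges. Modulo the measurability and the commutation of $\Delta_j$ with the weak$^*$-integral that you rightly defer to \cite[Section~4]{DDSZ-2020-II}, your argument is complete; it is, in fact, more complete than the sketch in the text, which leaves this theorem without a proof environment.
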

    In particular, if $s > \frac{d}{p}$ then $F$ is locally Lipschitz on $B^s_{p,q}(\Rl^d_\theta).$ When $\det(\theta)\neq 0,$ then no restrictions
    on $s,p$ and $q$ are necessary.

\subsection{Functions of several variables}
    The classical theory of Nemytskij operators includes the study of functions of several variables,
    $$
        (u_1,\ldots,u_n) \mapsto f(u_1,\ldots,u_n),\quad f \in C^\infty(\Rl^n).
    $$
    See, for example, \cite[Section 5.5.1]{Runst-Sickel-1996}. It is not obvious how this can be generalised to the noncommutative setting.
    Therefore we will only deal with {polynomial} functions. A noncommutative polynomial in $n$ variables $(X_1,\ldots,X_n)$ is a formal expression
    $$
        f(X_1,X_2,\ldots,X_n) = \sum_{1\leq j_1,j_2,\ldots,j_l\leq n} a_{j_1,j_2,\ldots,j_l} X_{j_1}X_{j_2}\cdots X_{j_l}
    $$
    where each coefficient $a_{j_1,\ldots,j_l}$ is scalar, and the number of terms is finite. Given such a polynomial and $u_1,\ldots,u_n \in L_\infty(\Rl^d_\theta)$, we will denote
    $$
        f(u_1,u_2,\ldots,u_n) := \sum_{1\leq j_1,j_2,\ldots,j_l\leq n} a_{j_1,j_2,\ldots,j_j} u_{j_1}u_{j_2}\cdots u_{j_l}.
    $$
    To be precise, $f$ is an element of the free associative algebra in $n$ variables $X_1,\ldots,X_n$, and the evaluation $f \mapsto f(u_1,\ldots,u_n)$ is the homomorphism to $L_\infty(\Rl^d_\theta)$ determined 
    by mapping $X_j$ to $u_j$. The notation $f(u_1,\ldots,u_n)$ is meant to suggest functional calculus, but here $f$ is a polynomial in $n$ noncommuting variables and not a function on $\Rl^n.$
    
    Local Lipschitz estimates for polynomial functions on $B^s_{p,q}(\Rl^d_\theta)\cap L_\infty(\Rl^d_\theta)$ follow from Corollary \ref{product_estimate}, we give the details below.
    \begin{lemma}\label{polynomials_are_good}
%
        Let $f(X_1,X_2,\ldots,X_n)$ be a noncommutative polynomial in $n$ variables, let $p,q\in [1,\infty]$ and $s > 0$. The assignment
        $$
            (u_1,\ldots,u_n) \mapsto f(u_1,\ldots,u_n)
        $$
        is locally Lipschitz on $B^s_{p,q}(\Rl^d_\theta)\cap L_{\infty}(\Rl^d_\theta)$ in the sense that there is a norm bound
        $$
            \|f(u_1,\ldots,u_n)-f(v_1,\ldots,v_n)\|_{B^s_{p,q}(\Rl^d_\theta)\cap L_{\infty}(\Rl^d_\theta)} \leq C\max_{1\leq j\leq n} \|u_j-v_j\|_{B^s_{p,q}(\Rl^d_\theta)\cap L_\infty(\Rl^d_\theta)}.
        $$
        where the constant $C$ depends on $f$, and all of the norms
        $$
            \|u_j\|_{L_\infty(\Rl^d_\theta)\cap B^s_{p,q}(\Rl^d_\theta)},\|v_j\|_{L_{\infty}(\Rl^d_\theta)\cap B^s_{p,q}(\Rl^d_\theta)},\quad 1\leq j\leq n.
        $$
     \end{lemma}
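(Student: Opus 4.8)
The plan is to reduce everything to the observation that the intersection space carries a Banach-algebra structure, after which the local Lipschitz bound follows from a standard telescoping argument.

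First I would record that
\[
    \Ac := B^s_{p,q}(\Rl^d_\theta)\cap L_\infty(\Rl^d_\theta),\qquad \|x\|_\Ac := \|x\|_{B^s_{p,q}}+\|x\|_\infty,
\]
is a Banach algebra for every $s>0$ and $p,q\in[1,\infty]$. This is immediate from Corollary \ref{product_estimate}, which gives $\|uv\|_{B^s_{p,q}}\lesssim_{s,p,q}\|u\|_{B^s_{p,q}}\|v\|_\infty+\|u\|_\infty\|v\|_{B^s_{p,q}}$, together with the submultiplicativity $\|uv\|_\infty\leq\|u\|_\infty\|v\|_\infty$ of the operator norm on the von Neumann algebra $L_\infty(\Rl^d_\theta)$: adding the two estimates yields $\|uv\|_\Ac\lesssim_{s,p,q}\|u\|_\Ac\|v\|_\Ac$. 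An easy induction then shows that a product of $l$ factors satisfies
\[
    \|x_1 x_2\cdots x_l\|_\Ac\leq C_{l,s,p,q}\,\|x_1\|_\Ac\|x_2\|_\Ac\cdots\|x_l\|_\Ac.
\]

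Next, since $f$ is a finite linear combination of monomials $X_{j_1}X_{j_2}\cdots X_{j_l}$ and the maps $(u_\bullet,v_\bullet)\mapsto f(u_\bullet)-f(v_\bullet)$ and $\|\cdot\|_\Ac$ are respectively linear and subadditive, it suffices to prove the estimate for a single monomial $f(X_1,\ldots,X_n)=X_{j_1}\cdots X_{j_l}$. For this I would invoke the telescoping identity
\[
    u_{j_1}\cdots u_{j_l}-v_{j_1}\cdots v_{j_l}=\sum_{i=1}^{l} v_{j_1}\cdots v_{j_{i-1}}\,(u_{j_i}-v_{j_i})\,u_{j_{i+1}}\cdots u_{j_l},
\]
which holds in any associative algebra, apply the $l$-fold product estimate above to each summand, and then bound the $l-1$ factor norms $\|v_{j_k}\|_\Ac$ ($k<i$) and $\|u_{j_k}\|_\Ac$ ($k>i$) by $R^{l-1}$, where $R$ is the largest of the finitely many numbers $\|u_j\|_\Ac,\|v_j\|_\Ac$. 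This bounds the $i$-th summand by $C_{l,s,p,q}\,R^{l-1}\,\max_{1\leq j\leq n}\|u_j-v_j\|_\Ac$; summing over $i=1,\ldots,l$ and then over the monomials appearing in $f$ yields the claimed inequality, with a constant $C$ that depends only on $f$ and on the norms $\|u_j\|_\Ac$, $\|v_j\|_\Ac$.

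I do not anticipate a genuine obstacle: the argument is a routine consequence of Corollary \ref{product_estimate}. The two points that require a little attention are that $B^s_{p,q}(\Rl^d_\theta)$ alone is not multiplicatively closed, so one must retain the $L_\infty$-norm throughout --- precisely why the statement is formulated on the intersection $\Ac$ --- and that the constant in the $l$-fold product estimate depends on the number of factors, which is harmless since each monomial of $f$ has a fixed length. No self-adjointness of the $u_j$ or $v_j$ is needed, and $f(u_1,\ldots,u_n)$ is in general not self-adjoint.
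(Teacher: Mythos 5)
Your proof is correct and follows essentially the same line as the paper's: both reduce to monomials by linearity and then control the difference of products using Corollary \ref{product_estimate}. The only difference is cosmetic — you write out the telescoping sum $u_{j_1}\cdots u_{j_l}-v_{j_1}\cdots v_{j_l}=\sum_i v_{j_1}\cdots v_{j_{i-1}}(u_{j_i}-v_{j_i})u_{j_{i+1}}\cdots u_{j_l}$ explicitly, whereas the paper obtains the same decomposition recursively by induction on the degree; packaging Corollary \ref{product_estimate} as a Banach-algebra statement on $B^s_{p,q}\cap L_\infty$ is a clean way of organising the estimate but not a genuinely different idea.
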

     \begin{proof}
        Observe that due to linearity, it suffices to prove the assertion for monomials of the form
        $$
            f(X_1,\ldots,X_n) = X_{i_1}X_{i_2}\cdots X_{i_k}
        $$
        where $k\geq 1$ and the indices $1\leq i_1,\ldots,i_k \leq n$ are not necessarily distinct. We can refer to $f$ in this form as monomial of degree $k$. We prove the assertion for all monomials of degree $k$ by induction on $k$, with the $k=1$ case being trivial.
        Let $k\geq 1$, then we have,
        \begin{align*}
            f(u_1,\ldots,u_n)-f(v_1,\ldots,v_n) &= (u_{i_1}u_{i_2}\cdots u_{i_k}) - (v_{i_1}v_{i_2}\cdots v_{i_k})\\    
                                                &= (u_{i_1}-v_{i_1})(u_{i_2}\cdots u_{i_k}) + v_{i_1}((u_{i_2}\cdots u_{i_k})-(v_{i_2}\cdots v_{i_k})).
        \end{align*}    
        That is,
        \begin{equation*}
            f(u_1,\ldots,u_n)-f(v_1,\ldots,v_n) = (u_{i_1}-v_{i_1})g(u_1,\ldots,u_{n}) + v_{i_1}(g(u_1,\ldots,u_n)-g(v_1,\ldots,v_n))
        \end{equation*}
        where $g(X_1,\ldots,X_n) = X_{i_2}X_{i_3}\cdots X_{i_k}$ is a monomial of degree $k-1$. From Theorem \ref{product_estimate}, we have
        \begin{align*}
            \|f(u_1,\ldots,u_n)-f(v_1,\ldots,v_n)\|_{B^s_{p,q}(\Rl^d_\theta)} &\leq \|u_{i_1}-v_{i_1}\|_{B^s_{p,q}(\Rl^d_\theta)}\|g(u_1,\ldots,u_n)\|_{\infty}\\
                                                                              &\quad + \|u_{i_1}-v_{i_1}\|_{\infty}\|g(u_1,\ldots,u_n)\|_{B^s_{p,q}(\Rl^d_\theta)}\\
                                                                              &\quad + \|v_{i_1}\|_\infty\|g(u_1,\ldots,u_n)-g(v_1,\ldots,v_n)\|_{B^s_{p,q}(\Rl^d_\theta)}\\
                                                                              &\quad + \|v_{i_1}\|_{B^s_{p,q}(\Rl^d_\theta)}\|g(u_1,\ldots,u_n)-g(v_1,\ldots,v_n)\|_{\infty}.
        \end{align*}
        By the inductive hypothesis, the result follows.
     \end{proof}
    
\section{Nonlinear partial differential equations on $\Rl^d_\theta$}\label{pde_section}
    We will focus on the following three model classes non-linear evolution equations:
    \begin{enumerate}[{\rm (i)}]
        \item{} The nonlinear heat equation, also called a reaction-diffusion or Allen-Cahn equation
        \begin{equation}\label{NLHE}
            \partial_t u = \Delta u + F(u)
        \end{equation}
        where $F \in C^\infty(\Rl)$ is real-valued. We also consider the case where $F(u)$ is a noncommutative polynomial in $\{u,\partial_1u,\ldots,\partial_d u\}$
        \item{} The nonlinear Schr\"odinger equation
        \begin{equation}\label{NLS}
            \partial_t u = \ri\Delta u + g(u)
        \end{equation}
        where $g(u)$ is a polynomial in the variables $\{u,\partial_1 u,\partial_2 u,\ldots,\partial_d u\}$ and their adjoints. This is essentially a noncommutative version of the class of nonlinear Schr\"odinger equations studied in, for example \cite{GinibreVelo1979}, \cite{Cazenave-semilinear-2003} or \cite[Chapter 3]{Tao-nonlinear-dispersive-2006}.
        \item{} Finally, we will discuss ``fluid" equations. Specifically, the system of equations for $(u_1,u_2,\ldots,u_d)$ and $p$ given by
        \begin{equation*}
            \partial_t u_j + \frac{1}{2}\sum_{k=1}^d u_k(\partial_k u_j)+(\partial_k u_j)u_k = \Delta u_j + \partial_j p,\quad \sum_{j=1}^d \partial_j u_j = 0,\quad j=1,\ldots,d.
        \end{equation*}
    \end{enumerate}
    We will adopt the following notation for referring to time-dependent objects: given $u \in C([0,T],X)$, we write $u(t)$ for the value
    of $u$ at time $t$, and $\partial_t u$ denotes the derivative of $u$ with respect to $t$ in the sense that
    $$
        \lim_{h\to 0}\left\|\frac{u(t+h)-u(t)}{h}-\partial_tu(t)\right\|_X = 0.
    $$
    
    Our results are based on the following existence theorem, which is well known.
    Let $X$ be a Banach space, and let $Y$ be a Banach subspace of $X$. Without loss of generality, we may assume that the inclusion $Y\subseteq X$ is contractive. That is,
    $$
        \|x\|_{X} \leq \|x\|_{Y},\quad x \in X.
    $$
    A function $F:Y\to X$ is said to be locally Lipschitz if for all $R > 0$ and $x_1,x_2 \in Y$ with $\|x_1\|_Y, \|x_2\|_Y \leq R$ there is a constant $C_{F,R}$ such that
    $$
        \|F(x_1)-F(x_2)\|_{X} \leq C_{F,R}\|x_1-x_2\|_Y.
    $$
    For a Banach space $X$, we denote by $C([0,T],X)$ the Banach space of continuous $X$-valued functions on the interval $[0,T]$
    with norm $\|f\|_{C([0,T],X)} = \sup_{0\leq s \leq T} \|f(s)\|_X$.
    
    The following theorem is \cite[Chapter 15, Proposition 1.1]{Taylor-pde-3-2011}.
    \begin{theorem}\label{abstract_existence_theorem}
        Let $X$ and $Y$ be as above. Assume that $L$ is a closed densely defined operator on $X$, let $F:Y\to X$ be a function. Assume that the following three conditions hold
        \begin{enumerate}[{\rm (i)}]
            \item{} $L$ generates a contractive $C_0$-semigroup $e^{tL}$ on $Y$,
            \item{} For all $t > 0$, we have
            $$
                e^{tL}:X\to Y
            $$
            continuously,
            \item{} $F$ is locally Lipschitz from $Y$ to $X$,
            \item{} The norm function $t\mapsto \|e^{tL}\|_{X\to Y}$ obeys
            $$
                \|e^{tL}\|_{X\to Y} \leq Ct^{-\gamma}
            $$
            where $\gamma<1,$ for sufficiently small $t,$ and for some constant $C$ independent of $t.$
        \end{enumerate}
        Then for all $x_0 \in Y$ there exists a maximal $T_{x_0} > 0$ and a unique
        $$
            x \in C([0,T_{x_0}),Y)
        $$
        such that
        $$
            x(t) = e^{tL}x_0 + \int_0^t e^{(t-s)L}F(x(s))\,ds,\quad 0\leq t < T_{u_0}
        $$
        where the integral converges in the $Y$-valued Bochner sense.
    \end{theorem}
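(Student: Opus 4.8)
The plan is to realise the mild solution as the unique fixed point of the Duhamel map
\[
    \Phi(x)(t) = e^{tL}x_0 + \int_0^t e^{(t-s)L}F(x(s))\,ds
\]
on a short time interval, via the Banach fixed point theorem, and then to patch together such local solutions to obtain a maximal one. First I would fix $x_0 \in Y$, set $R = 2\|x_0\|_Y$, and let $C_{F,R}$ be the Lipschitz constant of $F$ on the $Y$-ball of radius $R$; since $0$ lies in that ball, $\|F(y)\|_X \le \|F(0)\|_X + C_{F,R}\|y\|_Y =: M$ whenever $\|y\|_Y \le R$. For $T>0$ to be chosen, let $E_T$ be the complete metric space of those $x \in C([0,T],Y)$ with $\sup_{0\le t\le T}\|x(t)\|_Y \le R$, equipped with the sup-norm metric. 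Using hypothesis (iv) and $\gamma < 1$,
\[
    \Big\| \int_0^t e^{(t-s)L}F(x(s))\,ds \Big\|_Y \le \int_0^t C(t-s)^{-\gamma}\|F(x(s))\|_X\,ds \le \frac{CM}{1-\gamma}\,t^{1-\gamma},
\]
and since $e^{tL}$ is contractive on $Y$ by (i), we get $\|\Phi(x)(t)\|_Y \le \|x_0\|_Y + \frac{CM}{1-\gamma}T^{1-\gamma}$; choosing $T$ so small that the last term is at most $\|x_0\|_Y = R/2$ forces $\Phi(E_T)\subseteq E_T$. Applying the same estimate to $F(x_1(s))-F(x_2(s))$ together with the Lipschitz bound gives
\[
    \|\Phi(x_1)(t)-\Phi(x_2)(t)\|_Y \le \frac{C\,C_{F,R}}{1-\gamma}\,t^{1-\gamma}\sup_{0\le s\le t}\|x_1(s)-x_2(s)\|_Y,
\]
so shrinking $T$ further makes $\Phi$ a strict contraction.

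Before invoking the fixed point theorem I would check that $\Phi(x)\in C([0,T],Y)$. The term $e^{tL}x_0$ is $Y$-continuous by strong continuity of the semigroup on $Y$ from hypothesis (i). For the Duhamel term, for $s<t$ the map $s\mapsto e^{(t-s)L}F(x(s))$ takes values in $Y$ and is $Y$-continuous there --- writing $e^{(t-s)L} = e^{\varepsilon L}e^{(t-s-\varepsilon)L}$ to land in $Y$ by (ii) and using strong continuity on $Y$ --- while it is dominated by the integrable function $s\mapsto C(t-s)^{-\gamma}M$, so the integrand is Bochner integrable as a $Y$-valued function and a standard dominated-convergence estimate (splitting the interval near the endpoint $s=t$) yields continuity in $t$. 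Banach's theorem then produces the unique fixed point $x\in E_T$, a mild solution on $[0,T]$.

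Next I would upgrade uniqueness from ``within $E_T$'' to ``within all of $C([0,T'],Y)$''. Given two solutions $x_1,x_2$ on $[0,T']$, both are bounded in $Y$, hence lie in a common ball; the contraction estimate (with $R$ enlarged accordingly) forces $x_1\equiv x_2$ on $[0,\delta]$ for small $\delta$. Setting $t^* = \sup\{t : x_1\equiv x_2 \text{ on }[0,t]\}$ and supposing $t^*<T'$, I would restart the argument at time $t^*$ with common data $x_1(t^*)=x_2(t^*)$, using the semigroup identity $e^{(t-s)L}=e^{(t-t^*)L}e^{(t^*-s)L}$ to verify that the restriction of a mild solution to $[t^*,T']$ is again a mild solution with data at $t^*$; this yields agreement slightly past $t^*$, contradicting maximality of $t^*$. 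Finally, $T_{x_0}$ is defined as the supremum of all $T$ for which a solution on $[0,T)$ exists, and by uniqueness these solutions are mutually consistent and glue to a single $x\in C([0,T_{x_0}),Y)$.

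The main obstacle is the continuity (and Bochner integrability) in the $Y$-norm of the Duhamel term in the presence of the singular kernel $(t-s)^{-\gamma}$ at the endpoint; everything else is a routine contraction-mapping and continuation argument. Handling this cleanly requires knowing that $s\mapsto e^{(t-s)L}F(x(s))$ lands in $Y$ and is continuous on $[0,t)$, which comes from the semigroup property combined with the smoothing property (ii) and strong continuity (i), after which the bound in (iv) makes the dominated convergence argument go through because $\gamma<1$.
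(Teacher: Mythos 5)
The paper does not prove this theorem; it is stated as a citation of \cite[Chapter 15, Proposition 1.1]{Taylor-pde-3-2011}, so there is no in-text proof to compare against. Your argument is the standard Picard/contraction-mapping proof that one finds in Taylor and in the other references listed in the appendix, and it is essentially correct. You have correctly identified the technically delicate point, namely the Bochner integrability and $Y$-continuity of the Duhamel term in the presence of the singular kernel $(t-s)^{-\gamma}$, and your device of factoring $e^{(t-s)L} = e^{\varepsilon L}e^{(t-s-\varepsilon)L}$ so as to transfer strong continuity from $Y$ to the smoothed element $e^{\varepsilon L}F(x(s))\in Y$ is the right way to handle it: the hypotheses do not give strong continuity of the semigroup on $X$ itself, so one cannot argue directly with increments $e^{hL}\xi-\xi$ for $\xi\in X$.

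Two small points are worth tightening. First, the choice $R = 2\|x_0\|_Y$ degenerates when $x_0 = 0$: then $E_T$ collapses to the zero function and the invariance $\Phi(E_T)\subseteq E_T$ fails unless $F(0)=0$. Replace $R$ by $2\|x_0\|_Y+1$ (or any positive quantity) and the argument runs unchanged. Second, hypothesis (iv) only asserts $\|e^{tL}\|_{X\to Y}\le Ct^{-\gamma}$ for sufficiently small $t$. This is enough for the local step because you choose $T$ small, but in the continuation and uniqueness-patching arguments you implicitly apply the same kind of estimate over intervals of arbitrary length; you should note that the bound propagates to all $t>0$ via the factorisation $e^{tL}=e^{(t-t_0)L}e^{t_0 L}$ together with contractivity of the semigroup on $Y$, so that $\|e^{tL}\|_{X\to Y}$ is in fact bounded on $[t_0,\infty)$ and the singular behaviour is confined to $t\to 0^+$. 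With these repairs the proof is complete and is the same route as the cited reference.
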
  
    
    The following simple estimate is also useful.
    \begin{lemma}\label{convolution_lemma}
        Let $L$ be as in Theorem \ref{abstract_existence_theorem}. Let $T > 0$, and assume that
        $$
            f \in C([0,T],X).
        $$
        Then for all $t > 0$ the integral
        $$
            \Psi(t) = \int_{0}^t e^{(t-s)L}f(s)\,ds
        $$
        converges in the $Y$-valued Bochner sense, and with $\Psi(0) = 0$ defines an element
        $$
            \Psi \in C([0,T],Y).
        $$
        with
        $$
            \|\Psi\|_{C([0,T],Y)} \leq \int_0^{T} \|e^{(t-s)L}\|_{X\to Y}\,ds\|f\|_{C([0,T],X)}.
        $$
    \end{lemma}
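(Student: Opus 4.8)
The plan is to prove three things in turn: (i) for each fixed $t>0$ the $Y$-valued integrand $s\mapsto e^{(t-s)L}f(s)$ is Bochner integrable on $[0,t)$, so that $\Psi(t)$ is a well-defined element of $Y$; (ii) the resulting map $t\mapsto \Psi(t)$ is continuous from $[0,T]$ into $Y$, with $\Psi(0)=0$ fitting in continuously; (iii) the stated norm bound, which is immediate once (i) is established. Everything hinges on the observation that the scalar function $\sigma\mapsto \|e^{\sigma L}\|_{X\to Y}$ belongs to $L_1([0,T])$. Indeed, hypothesis (iv) of Theorem \ref{abstract_existence_theorem} gives $\|e^{\sigma L}\|_{X\to Y}\leq C\sigma^{-\gamma}$ with $\gamma<1$ for small $\sigma$, which is integrable near $0$; and for $\sigma$ bounded away from $0$ one writes $e^{\sigma L}=e^{(\sigma-\sigma_0)L}e^{\sigma_0 L}$ and uses that $e^{(\sigma-\sigma_0)L}$ is contractive on $Y$ (hypothesis (i)) to get $\|e^{\sigma L}\|_{X\to Y}\leq \|e^{\sigma_0 L}\|_{X\to Y}$, a fixed constant. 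This also shows $\int_0^T\|e^{\sigma L}\|_{X\to Y}\,d\sigma<\infty$, the quantity appearing in the lemma.

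For (i), the integrability of the $Y$-norm is then clear from $\|e^{(t-s)L}f(s)\|_Y\leq \|e^{(t-s)L}\|_{X\to Y}\|f\|_{C([0,T],X)}$. For Bochner measurability I would check that $s\mapsto e^{(t-s)L}f(s)$ is in fact continuous on $[0,t)$: fixing $s_0<t$ and choosing $a>0$ with $s_0<t-a$, on a neighbourhood of $s_0$ (with $s<t-a$) one factors $e^{(t-s)L}=e^{(t-s-a)L}\circ e^{aL}$, so that $e^{(t-s)L}f(s)=e^{(t-s-a)L}\bigl(e^{aL}f(s)\bigr)$; here $s\mapsto e^{aL}f(s)$ is continuous into $Y$ because $f$ is continuous into $X$ and $e^{aL}\colon X\to Y$ is bounded (hypothesis (ii)), while $(\tau,y)\mapsto e^{\tau L}y$ is jointly continuous on $[0,\varepsilon]\times Y$ since $e^{\tau L}$ is a $C_0$-semigroup on $Y$. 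Composing, the integrand is continuous, hence Bochner measurable, and the Bochner integral $\Psi(t)\in Y$ exists. Note this argument never needs $e^{tL}$ to be a $C_0$-semigroup on $X$, only on $Y$.

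For (ii), pass to the variable $\sigma=t-s$, so that $\Psi(t)=\int_0^t e^{\sigma L}f(t-\sigma)\,d\sigma$, and for $0\leq t_1<t_2\leq T$ write
\[
\Psi(t_2)-\Psi(t_1)=\int_0^{t_1}e^{\sigma L}\bigl(f(t_2-\sigma)-f(t_1-\sigma)\bigr)\,d\sigma+\int_{t_1}^{t_2}e^{\sigma L}f(t_2-\sigma)\,d\sigma.
\]
The $Y$-norm of the first term is at most $\bigl(\int_0^T\|e^{\sigma L}\|_{X\to Y}\,d\sigma\bigr)$ times the modulus of continuity of $f$ at scale $t_2-t_1$, which tends to $0$ by uniform continuity of $f$ on the compact interval $[0,T]$; the $Y$-norm of the second term is at most $\|f\|_{C([0,T],X)}\int_{t_1}^{t_2}\|e^{\sigma L}\|_{X\to Y}\,d\sigma$, which tends to $0$ by absolute continuity of the integral of the $L_1$-function $\sigma\mapsto\|e^{\sigma L}\|_{X\to Y}$. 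The case $t_1=0$ is included, giving $\|\Psi(h)\|_Y\leq\|f\|_{C([0,T],X)}\int_0^h\|e^{\sigma L}\|_{X\to Y}\,d\sigma\to 0$, so $\Psi(0)=0$ is the continuous value at the left endpoint. Thus $\Psi\in C([0,T],Y)$. Finally (iii) follows at once from the triangle inequality for Bochner integrals: $\|\Psi(t)\|_Y\leq\int_0^t\|e^{(t-s)L}\|_{X\to Y}\,ds\,\|f\|_{C([0,T],X)}$, and taking the supremum over $t\in[0,T]$ yields the claim.

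The only real subtlety — the ``main obstacle'' — is the integrable singularity of $\|e^{(t-s)L}\|_{X\to Y}$ as $s\to t$: one must use $\gamma<1$ and the contractivity/semigroup trick to see that this is an $L_1$ weight, and one must take a little care that the integrand is a genuine strongly measurable $Y$-valued function even though $e^{(t-s)L}$ degenerates at $s=t$, which the factorisation through a fixed $e^{aL}\colon X\to Y$ resolves. Once that is in place, continuity of $\Psi$ and the norm estimate are soft consequences of $L_1$-integrability and the uniform continuity of $f$ on $[0,T]$.
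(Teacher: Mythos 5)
Your proof is correct and complete. The paper itself does not supply a proof of this lemma (it is introduced only with the remark that it is a ``simple estimate''), so there is no argument to compare against; yours fills the gap cleanly. The three points you make are exactly the ones that matter: (a) $\sigma\mapsto\|e^{\sigma L}\|_{X\to Y}\in L_1([0,T])$, obtained by combining hypothesis (iv) near $\sigma=0$ with the factorisation $e^{\sigma L}=e^{(\sigma-\sigma_0)L}e^{\sigma_0 L}$ and $Y$-contractivity away from $0$; (b) strong measurability of $s\mapsto e^{(t-s)L}f(s)$ via the factorisation through a fixed $e^{aL}\colon X\to Y$ followed by joint continuity of $(\tau,y)\mapsto e^{\tau L}y$ on $Y$; and (c) continuity of $\Psi$ from the change of variables $\sigma=t-s$, uniform continuity of $f$ on $[0,T]$, and absolute continuity of the $L_1$ weight. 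One small point worth flagging explicitly, since the hypotheses of Theorem \ref{abstract_existence_theorem} only assert that $e^{tL}$ is a $C_0$-semigroup on $Y$ and smooths $X\to Y$: the factorisations $e^{\sigma L}=e^{(\sigma-\sigma_0)L}e^{\sigma_0 L}$ and $e^{(t-s)L}=e^{(t-s-a)L}e^{aL}$, viewed as $X\to Y\to Y$, tacitly use consistency of the two realisations of the semigroup across $X$ and $Y$; this is clearly the intended reading, but it is worth saying.
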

       
    We provide the following global existence criterion, which is not difficult to prove directly but for which we have not been able to find a precise reference. A proof is given in the appendix.
    \begin{theorem}\label{abstract_persistence_theorem}
        Adopt the notation of Theorem \ref{abstract_existence_theorem}. If $\{x(t)\}_{0 \leq t < T_{x_0}}$
        is the fixed point, and
        $$
            \sup_{0 \leq t < T_{x_0}} \frac{\|F(x(s))\|_X}{\|x(s)\|_Y} < \infty
        $$
        then $T_{x_0} = \infty$.
    \end{theorem}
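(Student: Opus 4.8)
The plan is to argue by contradiction, so suppose $T_{x_0} < \infty$. The strategy has two parts: first, convert the hypothesis into an \emph{a priori} bound on $t \mapsto \|x(t)\|_Y$ over $[0, T_{x_0})$; second, exploit the fact that the local existence time in Theorem \ref{abstract_existence_theorem} depends only on a bound for the $Y$-norm of the initial data in order to continue the fixed point strictly past $T_{x_0}$, contradicting maximality of $T_{x_0}$.

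For the a priori bound, put $M := \sup_{0 \le t < T_{x_0}} \|F(x(t))\|_X/\|x(t)\|_Y$, which is finite by hypothesis, so that $\|F(x(t))\|_X \le M\|x(t)\|_Y$ for $0 \le t < T_{x_0}$. Applying the $Y$-norm to the mild identity and using that $e^{tL}$ is contractive on $Y$,
\[
    \|x(t)\|_Y \le \|x_0\|_Y + \int_0^t \|e^{(t-s)L}\|_{X\to Y}\,\|F(x(s))\|_X\,ds, \qquad 0 \le t < T_{x_0}.
\]
The bound $\|e^{rL}\|_{X\to Y} \lesssim r^{-\gamma}$ is assumed only for small $r$, but writing $e^{rL} = e^{(r-r_0)L}e^{r_0 L}$ for $r$ bounded below and using contractivity on $Y$ promotes it to $\|e^{rL}\|_{X\to Y} \le C' r^{-\gamma}$ for all $r \in (0, T_{x_0}]$, with $C'$ allowed to depend on $T_{x_0}$. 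Hence
\[
    \|x(t)\|_Y \le \|x_0\|_Y + C'M\int_0^t (t-s)^{-\gamma}\|x(s)\|_Y\,ds, \qquad 0 \le t < T_{x_0}.
\]
Since $\gamma < 1$, the singular Gronwall inequality — in its iterated-kernel (Mittag-Leffler type) form, whose defining series converges precisely because $\gamma < 1$ — yields $\sup_{0 \le t < T_{x_0}}\|x(t)\|_Y \le R$ for some $R$ depending only on $\|x_0\|_Y$, $M$, $C'$, $\gamma$, and $T_{x_0}$.

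It remains to derive the contradiction. The contraction-mapping argument proving Theorem \ref{abstract_existence_theorem} produces a mild solution on a time interval $[0, \tau]$ whose length $\tau = \tau(R) > 0$ depends only on $R$ and the structural constants whenever the initial datum has $Y$-norm at most $R$: the contraction is run on a ball of radius comparable to $R$ in $C([0,\tau], Y)$, and the smallness required of $\tau$ involves only the Lipschitz constant of $F$ on that ball together with $\int_0^t (t-s)^{-\gamma}\,ds \le \tau^{1-\gamma}/(1-\gamma)$, all controlled by $R$; moreover $\tau(R)$ may be taken nonincreasing in $R$. Now pick $t_0 \in [0, T_{x_0})$ with $T_{x_0} - t_0 < \tau(R)$. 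Since $\|x(t_0)\|_Y \le R$ and the equation is autonomous, there is $\tilde x \in C([t_0, t_0 + \tau(R)], Y)$ with $\tilde x(t_0) = x(t_0)$ solving $\tilde x(t) = e^{(t - t_0)L}x(t_0) + \int_{t_0}^t e^{(t-s)L}F(\tilde x(s))\,ds$. Gluing $x|_{[0,t_0]}$ to $\tilde x$ and using the semigroup property (together with the mild identity for $x$ at time $t_0$) shows the resulting function is a mild solution on the whole of $[0, t_0 + \tau(R)]$, and by uniqueness it extends $x$. Since $t_0 + \tau(R) > T_{x_0}$, this contradicts maximality, so $T_{x_0} = \infty$.

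I expect the main obstacle to be the a priori estimate: converting the integral inequality with the weakly singular kernel $(t-s)^{-\gamma}$ into a genuine pointwise bound on $\|x(t)\|_Y$, which is exactly where the hypothesis $\gamma < 1$ is used. The continuation step is routine but requires one to reread the proof of Theorem \ref{abstract_existence_theorem} to confirm that the existence time there depends only on a bound for $\|x_0\|_Y$; I would, if needed, record that uniformity as a separate remark so that it can be quoted here without reproof.
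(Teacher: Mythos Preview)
Your proof is correct and follows a legitimate route, but it differs from the paper's argument in the a priori step. You invoke the singular (Henry-type) Gronwall inequality to turn the weakly singular integral inequality into a global bound on $\|x(t)\|_Y$ over $[0,T_{x_0})$. The paper avoids Gronwall entirely: after first observing that $T_{x_0}<\infty$ forces $\|x(s)\|_Y\to\infty$ as $s\uparrow T_{x_0}$, it restarts the mild formulation from an arbitrary time $r$ and estimates
\[
    \sup_{r\le s\le t}\|x(s)\|_Y \le \frac{\|x(r)\|_Y}{1-C_{u_0}\int_0^{t-r}\|e^{sL}\|_{X\to Y}\,ds}
\]
whenever $t-r$ is small enough that the denominator is positive. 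Since the required smallness of $t-r$ depends only on $C_{u_0}$ and the semigroup, not on $r$, this bounds $\|x(\cdot)\|_Y$ on a fixed-length window approaching $T_{x_0}$, contradicting blow-up. The paper's argument is more self-contained (no external Gronwall lemma needed) and in fact slightly stronger, since it uses only local integrability of $r\mapsto\|e^{rL}\|_{X\to Y}$ near zero rather than the specific power bound $r^{-\gamma}$; your approach, on the other hand, packages the estimate into a single well-known inequality and would generalise more readily if one wanted explicit growth bounds on $\|x(t)\|_Y$.
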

    
    In order to upgrade mild solutions to classical solutions, we use \cite[Proposition 4.1.6]{Cazenave-Haraux-semilinear-evolution-equations-1998},
    which implies that it suffices that $F(u(t)) \in L_1((0,T),\mathrm{dom}(L)).$

\subsection{Allen-Cahn equations}
    We study now the nonlinear heat equation \eqref{NLHE}. 
    
    For generic smooth nonlinearities $F$, we can prove local well-posedness for initial data in the Besov class $B^s_{\infty,\infty}(\Rl^d_\theta)$. This
    is the noncommutative analogy of \cite[Section 7.3.1]{Lunardi-parabolic-1995}.

    \begin{theorem}\label{nlhe_lwp}
        Let $F \in C^\infty(\Rl)$ be real-valued, and let $r>0.$ For all $u_0 = u_0^* \in B^r_{\infty,\infty}(\Rl^d_\theta),$ there exists $T_{u_0}>0$ and a unique
        $$
            u = (t\mapsto u(t)) \in C([0,T_{u_0}),B^r_{\infty,\infty}(\Rl^d_\theta))\cap \bigcap_{\alpha>0} C^1((0,T_{u_0}),B^\alpha_{\infty,\infty}(\Rl^d_\theta))
        $$
        such that $u(0) = u_0$, and
        $$
            \partial_tu(t) = \Delta u(t) + F(u(t))
        $$
        for all $0 < t < T_{u_0}.$ 
    \end{theorem}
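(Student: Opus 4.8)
The plan is to realise \eqref{NLHE} as an abstract semilinear evolution equation on a single Banach space, apply Theorem \ref{abstract_existence_theorem} to obtain a local mild solution, and then recover the higher regularity by a parabolic bootstrap. Concretely, I would take $X=Y$ to be the closed real subspace of self-adjoint elements of $B^r_{\infty,\infty}(\Rl^d_\theta)$, $L=\Delta$, and $\gamma=0$. Condition (i) of Theorem \ref{abstract_existence_theorem} holds because $r>0$, so by the corollary following Proposition \ref{heat_mapping_besov} the operator $\Delta$ generates a contractive $C_0$-semigroup on $B^r_{\infty,\infty}(\Rl^d_\theta)$; conditions (ii) and (iv) are immediate from $\|e^{t\Delta}\|_{B^r_{\infty,\infty}\to B^r_{\infty,\infty}}\le 1$ (Proposition \ref{heat_mapping_besov}) with $\gamma=0$; and condition (iii) is Theorem \ref{smooth_is_locally_lipschitz} with $s=r$, $p=q=\infty$. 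For this last point one uses that $r>0$ gives a continuous embedding $B^r_{\infty,\infty}(\Rl^d_\theta)\subseteq L_\infty(\Rl^d_\theta)$, so that $B^r_{\infty,\infty}(\Rl^d_\theta)\cap L_\infty(\Rl^d_\theta)=B^r_{\infty,\infty}(\Rl^d_\theta)$, together with the fact that the constant $F(0)$ (as a multiple of the unit) lies in $B^r_{\infty,\infty}(\Rl^d_\theta)$ since $\Delta_j$ annihilates constants for $j\ge 1$; consequently $F$ maps the self-adjoint part of $B^r_{\infty,\infty}(\Rl^d_\theta)$ into itself, locally Lipschitzly. Since $e^{t\Delta}$ is a real Fourier multiplier and $F$ is real-valued, both $e^{t\Delta}$ and $F$ preserve self-adjointness, so the Picard iteration of Theorem \ref{abstract_existence_theorem} stays in $Y$. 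This yields a maximal $T_{u_0}>0$ and a unique $u\in C([0,T_{u_0}),B^r_{\infty,\infty}(\Rl^d_\theta))$ with $u(0)=u_0$ and
\[
    u(t) = e^{t\Delta}u_0 + \int_0^t e^{(t-s)\Delta}F(u(s))\,ds,\quad 0\le t<T_{u_0}.
\]

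Next I would establish parabolic smoothing, namely $u\in C((0,T_{u_0}),B^\alpha_{\infty,\infty}(\Rl^d_\theta))$ for every $\alpha>0$. Fix $0<a<b<T_{u_0}$ and $\sigma>0$, and assume inductively that $u\in C([a,b],B^\sigma_{\infty,\infty}(\Rl^d_\theta))$; the base case $\sigma=r$ is already known on every such $[a,b]$. Restarting the mild formula at time $a$ (semigroup property) gives
\[
    u(t) = e^{(t-a)\Delta}u(a) + \int_a^t e^{(t-s)\Delta}F(u(s))\,ds,\quad a\le t\le b.
\]
By Theorem \ref{smooth_is_locally_lipschitz}, $s\mapsto F(u(s))$ is continuous into $B^\sigma_{\infty,\infty}(\Rl^d_\theta)$; by Proposition \ref{heat_mapping_besov}, $\|e^{t\Delta}\|_{B^\sigma_{\infty,\infty}\to B^{\sigma+1}_{\infty,\infty}}\lesssim 1+t^{-1/2}$, which is integrable near $0$. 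Hence Lemma \ref{convolution_lemma} (with $X=B^\sigma_{\infty,\infty}(\Rl^d_\theta)$ and $Y=B^{\sigma+1}_{\infty,\infty}(\Rl^d_\theta)$) shows the Duhamel term lies in $C([a,b],B^{\sigma+1}_{\infty,\infty}(\Rl^d_\theta))$, while $e^{(t-a)\Delta}u(a)\in C((a,b],B^{\sigma+1}_{\infty,\infty}(\Rl^d_\theta))$; therefore $u\in C((a,b],B^{\sigma+1}_{\infty,\infty}(\Rl^d_\theta))$. Iterating this finitely many times and shrinking the left endpoint to $0$ yields $u\in C((0,T_{u_0}),B^\alpha_{\infty,\infty}(\Rl^d_\theta))$ for every $\alpha>0$.

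Finally I would upgrade the mild solution to a classical one. Fix $\sigma>0$ and $0<a<T'<T_{u_0}$. The bootstrap gives $u(a)\in B^{\sigma+2}_{\infty,\infty}(\Rl^d_\theta)=\dom(\Delta)$, where $\Delta$ is viewed as the generator of $e^{t\Delta}$ on $B^\sigma_{\infty,\infty}(\Rl^d_\theta)$; the (continuous, for the graph norm) inclusion $B^{\sigma+2}_{\infty,\infty}(\Rl^d_\theta)\subseteq\dom(\Delta)$ follows from $\frac1t(e^{t\Delta}v-v)=\frac1t\int_0^t e^{s\Delta}\Delta v\,ds\to\Delta v$ in $B^\sigma_{\infty,\infty}(\Rl^d_\theta)$ for $v\in B^{\sigma+2}_{\infty,\infty}(\Rl^d_\theta)$, using the $C_0$-semigroup property. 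Moreover $u\in C([a,T'],B^{\sigma+2}_{\infty,\infty}(\Rl^d_\theta))$, so by Theorem \ref{nemytskij_mapping} and Theorem \ref{smooth_is_locally_lipschitz} the map $t\mapsto F(u(t))$ is continuous into $B^{\sigma+2}_{\infty,\infty}(\Rl^d_\theta)$, hence belongs to $L_1((a,T'),\dom(\Delta))$. By \cite[Proposition 4.1.6]{Cazenave-Haraux-semilinear-evolution-equations-1998} the mild solution is then a classical solution on $(a,T')$, so $u\in C^1((a,T'),B^\sigma_{\infty,\infty}(\Rl^d_\theta))$ with $\partial_t u=\Delta u+F(u)$ holding pointwise there. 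As $a$ and $\sigma$ are arbitrary, $u\in\bigcap_{\alpha>0}C^1((0,T_{u_0}),B^\alpha_{\infty,\infty}(\Rl^d_\theta))$. Uniqueness in the asserted regularity class is immediate, since any such solution is in particular a mild solution in $C([0,T_{u_0}),B^r_{\infty,\infty}(\Rl^d_\theta))$, and these are unique by Theorem \ref{abstract_existence_theorem}. The main obstacle I anticipate is the smoothing step and the bookkeeping around the restart and the behaviour as $t\to 0^+$: this is routine in the classical parabolic setting, but here it rests entirely on the Besov mapping estimates of Proposition \ref{heat_mapping_besov}, Theorem \ref{smooth_is_locally_lipschitz} and Lemma \ref{convolution_lemma} assembled above, and one must take care that the local Lipschitz constants, which depend on the norms of the solution, stay bounded on compact subintervals of $(0,T_{u_0})$.
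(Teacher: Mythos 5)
Your proof is correct and follows essentially the same route as the paper: apply Theorem \ref{abstract_existence_theorem} with $X=Y$, use Theorem \ref{smooth_is_locally_lipschitz} for the local Lipschitz condition, run a parabolic bootstrap via Proposition \ref{heat_mapping_besov} and Lemma \ref{convolution_lemma}, and pass from mild to classical solution via \cite[Proposition 4.1.6]{Cazenave-Haraux-semilinear-evolution-equations-1998}. If anything you are more careful than the paper at two points: you explicitly take $Y$ to be the self-adjoint real subspace of $B^r_{\infty,\infty}(\Rl^d_\theta)$ and verify that the Picard iteration preserves it (which is needed, since $F(u)$ is only defined by functional calculus for self-adjoint $u$, something the paper's choice $X=Y=B^r_{\infty,\infty}(\Rl^d_\theta)$ leaves implicit), and you restart the Duhamel formula from a time $a>0$ in the bootstrap, which cleanly avoids the fact that the paper's inductive claim ``$u(s)\in B^{\alpha+\frac12}_{\infty,\infty}(\Rl^d_\theta)$ for all $0\le s\le t$'' cannot hold at $s=0$ once $\alpha+\frac12>r$.
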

    \begin{proof}
        We appeal to the conditions of Theorem \ref{abstract_existence_theorem} to prove the existence of $T_{u_0} > 0$ and a unique $u \in C([0,T_{u_0}),B^r_{\infty,\infty}(\Rl^d_\theta))$
        such that
        \begin{equation}\label{nlhe_mild}
            u(t) = e^{t\Delta}u_0 + \int_{0}^t e^{(t-s)\Delta}F(u(s))\,ds,\quad 0\leq t < T_{u_0}.
        \end{equation}
        Indeed, we take $X=Y=B^r_{\infty,\infty}(\Rl^d_\theta)$ and apply Theorem \ref{smooth_is_locally_lipschitz} with $p=q=\infty$ to ensure the local Lipschitz condition
        on $F$.
        
        Now we prove that $u(t) \in \bigcap_{u> 0}B^u_{\infty,\infty}(\Rl^d_\theta)$ for all $t > 0$. We can do this by induction, proving that if $\alpha > 0$ is such that $u(s) \in B^\alpha_{\infty,\infty}(\Rl^d_\theta)$
        for all $s \leq t$ then $u(s) \in B^{\alpha+\frac{1}{2}}_{\infty,\infty}(\Rl^d_\theta)$ for all $0\leq s\leq t.$ 
        Suppose that $t > 0$ is such that $u(s)\in B^\alpha_{\infty,\infty}(\Rl^d_\theta)$ for all $0\leq s \leq t$. 
        Theorem \ref{smooth_is_locally_lipschitz} implies that the function $t\mapsto F(u(t))$ belongs to $C([0,t],B^\alpha_{\infty,\infty}(\Rl^d_\theta))$. 
        
        From Proposition \ref{heat_mapping_besov}, we have
        $$
            \|e^{t\Delta}\|_{B^\alpha_{\infty,\infty}(\Rl^d_\theta)\to B^{\alpha+\frac{1}{2}}(\Rl^d_\theta)} \lesssim_r 1+t^{-\frac{1}{2}}.
        $$
        Hence $t\mapsto \|e^{t\Delta}\|_{B^\alpha_{\infty,\infty}\to B^{\alpha+\frac{1}{2}}_{\infty,\infty}}$ is integrable near zero.
        This completes the verification of the conditions of Lemma \ref{convolution_lemma} with $X = B^{\alpha}_{\infty,\infty}(\Rl^d_\theta)$, $Y = B^{\alpha+\frac12}_{\infty,\infty}(\Rl^d_\theta)$, $L=\Delta$
        and $f(s) = F(u(s))$, and it follows from \eqref{nlhe_mild} that $u(t) \in B^{\alpha+\frac{1}{2}}_{\infty,\infty}(\Rl^d_\theta)$.
        
        Hence, $(t\mapsto u(t))\in \bigcap_{\alpha>0} C([0,T_{u_0}),B^{\alpha}_{\infty,\infty}(\Rl^d_\theta)).$
        The same is true for $t\mapsto F(u(t)),$ by Theorem \ref{nemytskij_mapping}.        
        It follows that $u'(t) = \Delta u(t)+F(u(t)),$ by the abstract result \cite[Proposition 4.1.6]{Cazenave-Haraux-semilinear-evolution-equations-1998}.
        
    \end{proof}
    
    We can give a similar theorem where the nonlinearity is a polynomial in $u$ and its derivatives.
    \begin{theorem}\label{parabolic_polynomial}
        Let $p(u,\partial_1 u,\ldots,\partial_d u)$ be a noncommutative polynomial in the variables $\{u,\partial_1 u,\ldots,\partial_d u\}$, and let $r > 1$.
        For all $u_0 \in B^r_{\infty,\infty}(\Rl^d_\theta)$ there exists a maximal $T_{u_0} > 0$ and a unique 
        $$
            u = (t\mapsto u(t)) \in C([0,T_{u_0}),B^r_{\infty,\infty}(\Rl^d_\theta))\cap \bigcap_{\alpha>0} C^1((0,T_{u_0}), B^\alpha_{\infty,\infty}(\Rl^d_\theta))
        $$
        such that $u(t) = u_0$ and
        $$
            \partial_t u(t) = \Delta u(t) + p(u(t),\partial_1 u(t),\ldots,\partial_d u(t)),\quad 0 < t < T_{u_0}.
        $$
    \end{theorem}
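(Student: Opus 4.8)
The plan is to mimic the proof of Theorem~\ref{nlhe_lwp}, again invoking Theorem~\ref{abstract_existence_theorem}, but now accounting for the single derivative lost through the presence of $\partial_1 u,\ldots,\partial_d u$ in the nonlinearity. Consequently I will not take $X=Y$. Instead set
\[
    Y = B^r_{\infty,\infty}(\Rl^d_\theta),\qquad X = B^{r-1}_{\infty,\infty}(\Rl^d_\theta),\qquad L = \Delta,\qquad F(u) = p(u,\partial_1 u,\ldots,\partial_d u).
\]
Since $r>1$ we have $r-1>0$, so $B^r_{\infty,\infty}(\Rl^d_\theta)\subseteq B^{r-1}_{\infty,\infty}(\Rl^d_\theta)$ (Besov embedding) and $B^{r-1}_{\infty,\infty}(\Rl^d_\theta)\subseteq L_\infty(\Rl^d_\theta)$; in particular $B^{r-1}_{\infty,\infty}(\Rl^d_\theta)\cap L_\infty(\Rl^d_\theta) = B^{r-1}_{\infty,\infty}(\Rl^d_\theta)$ up to equivalence of norms, and after renormalising $X$ the inclusion $Y\subseteq X$ is contractive.

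First I would verify the four hypotheses of Theorem~\ref{abstract_existence_theorem}. Hypothesis~(i) — that $\Delta$ generates a contractive $C_0$-semigroup on $Y$ — is the corollary following Proposition~\ref{heat_mapping_besov}, applicable since $r>0$. Hypotheses~(ii) and~(iv) follow from Proposition~\ref{heat_mapping_besov}, which gives $\|e^{t\Delta}\|_{B^{r-1}_{\infty,\infty}\to B^r_{\infty,\infty}}\lesssim 1+t^{-1/2}$, so $e^{t\Delta}\colon X\to Y$ for $t>0$ and the exponent is $\gamma=\tfrac12<1$. For hypothesis~(iii) I note that $u\mapsto (u,\partial_1 u,\ldots,\partial_d u)$ is bounded from $B^r_{\infty,\infty}(\Rl^d_\theta)$ into $(B^{r-1}_{\infty,\infty}(\Rl^d_\theta))^{d+1}$, using the embedding $B^r_{\infty,\infty}\subseteq B^{r-1}_{\infty,\infty}$ and the bound $\|\partial_j u\|_{B^{r-1}_{\infty,\infty}}\lesssim \|u\|_{B^r_{\infty,\infty}}$, which follows from Proposition~\ref{homogeneous_bound} since each $\partial_j$ commutes with $\Delta_k$. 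As $r-1>0$, every factor lies in $B^{r-1}_{\infty,\infty}(\Rl^d_\theta)\cap L_\infty(\Rl^d_\theta)$, and Lemma~\ref{polynomials_are_good} with $s=r-1$, $p=q=\infty$ shows that polynomial evaluation is locally Lipschitz from $(B^{r-1}_{\infty,\infty}(\Rl^d_\theta)\cap L_\infty(\Rl^d_\theta))^{d+1}$ into $B^{r-1}_{\infty,\infty}(\Rl^d_\theta)=X$, with constant controlled by the $B^r_{\infty,\infty}$ norms of the arguments. Composing the two maps gives that $F\colon Y\to X$ is locally Lipschitz. Theorem~\ref{abstract_existence_theorem} then produces a maximal $T_{u_0}>0$ and a unique $u\in C([0,T_{u_0}),B^r_{\infty,\infty}(\Rl^d_\theta))$ satisfying the mild equation
\[
    u(t) = e^{t\Delta}u_0 + \int_0^t e^{(t-s)\Delta}\,p(u(s),\partial_1 u(s),\ldots,\partial_d u(s))\,ds,\qquad 0\leq t<T_{u_0}.
\]

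Next I would bootstrap the regularity. The inductive claim is: if $\alpha>1$ and $u\in C((0,T_{u_0}),B^\alpha_{\infty,\infty}(\Rl^d_\theta))$, then $u\in C((0,T_{u_0}),B^{\alpha+1/2}_{\infty,\infty}(\Rl^d_\theta))$. Indeed, the hypothesis forces $\partial_j u\in C((0,T_{u_0}),B^{\alpha-1}_{\infty,\infty}(\Rl^d_\theta))$ with $L_\infty$ norms locally bounded in $t$ (here $\alpha-1>0$), so Lemma~\ref{polynomials_are_good} gives $s\mapsto F(u(s))\in C((0,T_{u_0}),B^{\alpha-1}_{\infty,\infty}(\Rl^d_\theta))$. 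For $0<t_0<T_{u_0}$ and $t\in[t_0,T_{u_0})$ write
\[
    u(t) = e^{(t-t_0)\Delta}u(t_0) + \int_{t_0}^t e^{(t-s)\Delta}F(u(s))\,ds.
\]
The first term lies in $B^{\alpha+1/2}_{\infty,\infty}(\Rl^d_\theta)$ by the smoothing estimate of Proposition~\ref{heat_mapping_besov}; for the second, $\|e^{(t-s)\Delta}\|_{B^{\alpha-1}_{\infty,\infty}\to B^{\alpha+1/2}_{\infty,\infty}}\lesssim 1+(t-s)^{-3/4}$ is integrable near $s=t$, so Lemma~\ref{convolution_lemma} with $X=B^{\alpha-1}_{\infty,\infty}(\Rl^d_\theta)$ and $Y=B^{\alpha+1/2}_{\infty,\infty}(\Rl^d_\theta)$ puts it in $C([t_0,T_{u_0}),B^{\alpha+1/2}_{\infty,\infty}(\Rl^d_\theta))$. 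Since $t_0>0$ is arbitrary, the claim follows; starting at $\alpha=r>1$ and iterating yields $u\in\bigcap_{\alpha>0}C((0,T_{u_0}),B^\alpha_{\infty,\infty}(\Rl^d_\theta))$, hence also $t\mapsto F(u(t))\in\bigcap_{\alpha>0}C((0,T_{u_0}),B^\alpha_{\infty,\infty}(\Rl^d_\theta))$. In particular $t\mapsto F(u(t))$ is locally integrable with values in $\dom(\Delta)$, so \cite[Proposition~4.1.6]{Cazenave-Haraux-semilinear-evolution-equations-1998} upgrades the mild solution to a classical one with $\partial_t u(t) = \Delta u(t) + F(u(t))$ for $0<t<T_{u_0}$; since both $\Delta u$ and $F(u)$ are continuous into every $B^\alpha_{\infty,\infty}(\Rl^d_\theta)$, we conclude $u\in C^1((0,T_{u_0}),B^\alpha_{\infty,\infty}(\Rl^d_\theta))$ for all $\alpha>0$.

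The only genuinely new ingredient compared with Theorem~\ref{nlhe_lwp} is the derivative loss: it is what forces the asymmetric choice $Y=B^r_{\infty,\infty}$, $X=B^{r-1}_{\infty,\infty}$ and the hypothesis $r>1$, so that the derivatives $\partial_j u$ still have strictly positive smoothness and the multiplicative estimates of Lemma~\ref{polynomials_are_good} apply. The point that makes the scheme close is that the heat semigroup nevertheless recovers a full derivative at the integrable rate $t^{-1/2}$, which is exactly hypothesis~(iv) of Theorem~\ref{abstract_existence_theorem}. I expect the main obstacle to be purely bookkeeping — tracking the $L_\infty$ bounds and the continuity of $s\mapsto F(u(s))$ at each regularity level in the bootstrap — rather than anything conceptually new, given Lemmas~\ref{polynomials_are_good} and~\ref{convolution_lemma}.
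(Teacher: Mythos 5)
Your proof is correct and follows the same route as the paper's: an application of Theorem~\ref{abstract_existence_theorem} with Lemma~\ref{polynomials_are_good} replacing Theorem~\ref{smooth_is_locally_lipschitz}, followed by the same bootstrap via Lemma~\ref{convolution_lemma} and \cite[Proposition~4.1.6]{Cazenave-Haraux-semilinear-evolution-equations-1998}. The paper's proof is a one-line reference to Theorem~\ref{nlhe_lwp}, but that proof used $X=Y=B^r_{\infty,\infty}(\Rl^d_\theta)$, which cannot work verbatim here since the nonlinearity loses a derivative; your choice $Y=B^r_{\infty,\infty}(\Rl^d_\theta)$, $X=B^{r-1}_{\infty,\infty}(\Rl^d_\theta)$, with the smoothing exponent $\gamma=\tfrac12$ furnished by Proposition~\ref{heat_mapping_besov}, is precisely the adjustment implicit in the hypothesis $r>1$, and you have correctly made it explicit.
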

    \begin{proof}
        This proof is essentially identical to that of Theorem \ref{nlhe_lwp}. The only difference is that we use Theorem \ref{polynomials_are_good} in place of Theorem \ref{smooth_is_locally_lipschitz}.
    \end{proof}

\subsection{Nonlinear Schr\"odinger equations}
    We now study the nonlinear Schr\"odinger equation \eqref{NLS},
    $$
        \ri\frac{\partial u}{\partial t} = \Delta u + g(u,u^*)
    $$
    where $g(u,u^*)$ is a noncommutative polynomial in $u$ and $u^*$. Formally, solutions can be written in terms of the Schr\"odinger semigroup
    as
    \begin{equation}\label{mild_nlse}
        u(t) = e^{-\ri t\Delta}u(0) -\ri \int_0^t e^{-\ri (t-s)\Delta}g(u(s),u(s)^*)\,ds.
    \end{equation}
    
    Unlike the heat semigroup, the Schr\"odinger semigroup does not introduce any smoothing in the scale of Sobolev spaces. Local smoothing
    for the Schr\"odinger semigroup on $\Rl^d$ are known, but we postpone the task of proving analogies for the Schr\"odinger semigroup on $\Rl^d_\theta$ for future work.
    
    \begin{theorem}\label{schr_local}
        Assume that $\det(\theta)\neq 0.$
        Let $g(u,u^*)$ be a noncommutative polynomial in the variables $u$ and $u^*$. 
        Let $r\geq 0.$ For all $u_0 \in W^r_{2}(\Rl^d_\theta)$ there exists
        a maximal $T_{u_0} > 0$ and a unique
        $$
            u \in C([0,T_{u_0}),W^r_2(\Rl^d_\theta))
        $$
        such that $u$ obeys \eqref{mild_nlse} $\lim_{t\to 0}u(t)=  u_0$ in the $W^r_2(\Rl^d_\theta)$-sense.
    \end{theorem}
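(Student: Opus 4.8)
The plan is to obtain Theorem~\ref{schr_local} as a direct application of the abstract existence theorem, Theorem~\ref{abstract_existence_theorem}, with the choice $X = Y = W^r_2(\Rl^d_\theta)$, $L = -\ri\Delta$, and $F(u) = -\ri g(u,u^*)$, so that the fixed-point equation in Theorem~\ref{abstract_existence_theorem} becomes exactly the mild formulation \eqref{mild_nlse}. Since here $X=Y$, conditions (ii) and (iv) of Theorem~\ref{abstract_existence_theorem} are automatic, with $\|e^{tL}\|_{X\to Y}=1$ and $\gamma=0$. For condition (i) I would note that $J^r = (1-\Delta)^{r/2}$ is a function of $\Delta$, hence commutes with $e^{-\ri t\Delta}$; since $e^{-\ri t\Delta}$ is a strongly continuous unitary group on $L_2(\Rl^d_\theta)$ by Theorem~\ref{schr_mapping}, it follows that
$$
    \|e^{-\ri t\Delta}u\|_{W^r_2} = \|e^{-\ri t\Delta}J^ru\|_2 = \|J^ru\|_2 = \|u\|_{W^r_2},
$$
and $\|e^{-\ri t\Delta}u-u\|_{W^r_2} = \|e^{-\ri t\Delta}(J^ru)-J^ru\|_2\to 0$ as $t\to 0$, so $-\ri\Delta$ generates an isometric (in particular contractive) $C_0$-group on $W^r_2(\Rl^d_\theta)$. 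Thus everything reduces to condition (iii): that $u\mapsto g(u,u^*)$ is locally Lipschitz from $W^r_2(\Rl^d_\theta)$ to itself.

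This is the only place where $\det(\theta)\neq 0$ is used, and it is where the argument becomes easy. When $\det(\theta)\neq 0$ one has $L_p(\Rl^d_\theta)\subseteq L_q(\Rl^d_\theta)$ for $p\leq q$, and in particular the estimate \eqref{automatic_boundedness} gives $\|x\|_\infty \leq \det(2\pi\theta)^{-1/4}\|x\|_2$. Since $J^{-r}$ is a contraction on $L_2(\Rl^d_\theta)$ for $r\geq 0$, we have $W^r_2(\Rl^d_\theta)\subseteq L_2(\Rl^d_\theta)$ with $\|x\|_2\leq \|x\|_{W^r_2}$, so combining the two yields a continuous embedding $W^r_2(\Rl^d_\theta)\subseteq L_\infty(\Rl^d_\theta)$ with $\|x\|_\infty\lesssim_\theta \|x\|_{W^r_2}$; equivalently $W^r_2(\Rl^d_\theta) = W^r_2(\Rl^d_\theta)\cap L_\infty(\Rl^d_\theta)$ with equivalent norms. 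For $r>0$ I would then invoke the identification $B^r_{2,2}(\Rl^d_\theta) = W^r_2(\Rl^d_\theta)$ to realise $W^r_2(\Rl^d_\theta)$ as $B^r_{2,2}(\Rl^d_\theta)\cap L_\infty(\Rl^d_\theta)$. Observing that $x\mapsto x^*$ is isometric on both $W^r_2(\Rl^d_\theta)$ and $L_\infty(\Rl^d_\theta)$ (the derivations $\partial_j$, hence $\Delta$ and $J^r$, commute with the adjoint, and the trace norms are $*$-invariant), the map $u\mapsto (u,u^*)$ is a bounded linear map of $B^r_{2,2}(\Rl^d_\theta)\cap L_\infty(\Rl^d_\theta)$ into its square. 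Viewing $g$ as a noncommutative polynomial in two variables and applying Lemma~\ref{polynomials_are_good} with $p=q=2$ and $s=r$ then gives the required local Lipschitz property of $u\mapsto g(u,u^*)$ on $B^r_{2,2}(\Rl^d_\theta)\cap L_\infty(\Rl^d_\theta) = W^r_2(\Rl^d_\theta)$, which is condition (iii).

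The borderline case $r=0$ has to be handled separately, since $W^0_2(\Rl^d_\theta) = L_2(\Rl^d_\theta)$ and both Lemma~\ref{polynomials_are_good} and Corollary~\ref{product_estimate} require $s>0$. Here I would argue directly: by the H\"older inequality $\|xy\|_2\leq \|x\|_\infty\|y\|_2$ together with \eqref{automatic_boundedness}, the multiplication map is bounded and bilinear on $L_2(\Rl^d_\theta)$, and the same telescoping of a monomial as in the proof of Lemma~\ref{polynomials_are_good} (now using this bound in place of the Besov product estimate, together with $\|x\|_\infty\lesssim_\theta\|x\|_2$) shows that $u\mapsto g(u,u^*)$ is locally Lipschitz on $L_2(\Rl^d_\theta)$.

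With condition (iii) established in all cases, Theorem~\ref{abstract_existence_theorem} yields a maximal $T_{u_0}>0$ and a unique $u\in C([0,T_{u_0}),W^r_2(\Rl^d_\theta))$ solving \eqref{mild_nlse}, and the assertion $\lim_{t\to 0}u(t)=u_0$ in $W^r_2(\Rl^d_\theta)$ is just the continuity of $u$ at $t=0$. (One tacitly assumes $g$ has no constant term, so that $g(u,u^*)\in L_2(\Rl^d_\theta)$ for $u\in L_2(\Rl^d_\theta)$ and the integral in \eqref{mild_nlse} is meaningful when $\det(\theta)\neq 0$.) I do not anticipate a serious obstacle: the whole content of the theorem is that \eqref{automatic_boundedness} supplies, for free, the $L_\infty$-control over the nonlinearity that the classical theory obtains only from a Sobolev embedding $W^r_2\hookrightarrow L_\infty$ (valid only for $r>d/2$) or from Strichartz-type estimates, which we have not developed for $\Rl^d_\theta$. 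The only steps needing genuine, if minor, care are the isolated treatment of $r=0$ and the verification that the adjoint and $J^r$ interact as claimed.
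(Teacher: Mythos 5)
Your argument follows the paper's proof in its essentials: apply Theorem~\ref{abstract_existence_theorem} with $X = Y = W^r_2(\Rl^d_\theta)$ and $L = -\ri\Delta$, use $\det(\theta)\neq 0$ to embed $W^r_2(\Rl^d_\theta) = B^r_{2,2}(\Rl^d_\theta)\subset L_2(\Rl^d_\theta)\subset L_\infty(\Rl^d_\theta)$, and invoke Lemma~\ref{polynomials_are_good} for the local Lipschitz condition. You are, if anything, more careful than the paper: you isolate the borderline case $r=0$ (which Lemma~\ref{polynomials_are_good} does not cover, as it requires $s>0$) and handle it directly via H\"older and \eqref{automatic_boundedness}, you spell out the $*$-invariance needed to treat $g(u,u^*)$ via a two-variable polynomial, and you flag the tacit hypothesis that $g$ have no constant term; you also state the chain of inclusions in the correct order, whereas the paper's displayed chain is written in reverse.
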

    \begin{proof}
        By Theorem \ref{schr_mapping}, the semigroup $t\mapsto \exp(-\ri t\Delta)$ is strongly continuous on $W^s_{2}(\Rl^d_\theta).$
        
        Since $\det(\theta)\neq 0,$ we have
        \[
            L_{\infty}(\Rl^d_\theta)\subset L_2(\Rl^d_\theta) \subset B^{r}_{2,2}(\Rl^d_\theta) = W^r_{2}(\Rl^d_\theta).
        \]
        Hence, by Theorem \ref{polynomials_are_good}, $u\mapsto g(u,u^*)$ is locally Lipschitz on $W^r_2(\Rl^d_\theta).$
        The result now follows from Theorem \ref{abstract_existence_theorem}.
    \end{proof}

    
    We can begin to see the counterintuitive consequences of Theorem \ref{schr_local} if we assume that $g$ has some special structure so that \eqref{mild_nlse} preserves the $L_2$-norm.
    
    \begin{lemma}
        Let $\det(\theta)\neq 0,$ and let $g(u,u^*)$ be a noncommutative polynomial in the variables $u$ and $u^*.$ Let $r\geq 0$ and $u_0 \in W^r_2(\Rl^d_\theta)$, and
        let
        \[
            u \in C([0,T_{u_0}),W^{r}_2(\Rl^d_\theta))
        \]
        be the solution to \eqref{mild_nlse} extended to a maximal interval $[0,T_{u_0}).$ If
        \[
            \|u(t)\|_{L_2(\Rl^d_\theta)} \leq \|u_0\|_{L_2(\Rl^d_\theta)},\quad 0\leq t < T_{u_0}
        \]
        then $T_{u_0} = \infty.$
    \end{lemma}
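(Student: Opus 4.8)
The plan is to read this off from the abstract persistence criterion, Theorem \ref{abstract_persistence_theorem}, in exactly the functional framework used for Theorem \ref{schr_local}: one takes $X=Y=W^r_2(\Rl^d_\theta)$, $L=-\ri\Delta$ (which generates the unitary group $e^{-\ri t\Delta}$, so that $\|e^{tL}\|_{X\to Y}\lesssim 1$ and the hypotheses of Theorem \ref{abstract_existence_theorem} hold exactly as in the proof of Theorem \ref{schr_local}), and $F(u)=-\ri\,g(u,u^*)$, matching the mild formulation \eqref{mild_nlse}. Then $u$ is precisely the maximal fixed point of Theorem \ref{abstract_persistence_theorem}, so it suffices to verify
\[
    \sup_{0\le t<T_{u_0}}\frac{\|g(u(t),u(t)^*)\|_{W^r_2}}{\|u(t)\|_{W^r_2}}<\infty.
\]
In fact I would prove the slightly stronger statement that $\|g(v,v^*)\|_{W^r_2}\le M\,\|v\|_{W^r_2}$ for every $v\in W^r_2(\Rl^d_\theta)$ with $\|v\|_{L_\infty(\Rl^d_\theta)}\le C_0$, where $M$ depends only on $g$ and $C_0$, and then note that $\|u(t)\|_{L_\infty}$ stays bounded. (Note that since $\det(\theta)\ne0$ the unit $1$ does not belong to $L_2(\Rl^d_\theta)\supseteq W^r_2(\Rl^d_\theta)$, so for $F$ to map $W^r_2(\Rl^d_\theta)$ into itself the polynomial $g$ must have no constant term; this is used implicitly below.)

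The decisive point — and the only place where $\det(\theta)\ne0$ really enters — is that the conserved $L_2$-norm dominates the $L_\infty$-norm through \eqref{automatic_boundedness}. Combining that inequality with the hypothesis $\|u(t)\|_{L_2(\Rl^d_\theta)}\le\|u_0\|_{L_2(\Rl^d_\theta)}$ gives
\[
    \|u(t)\|_{L_\infty(\Rl^d_\theta)}\le\det(2\pi\theta)^{1/4}\|u(t)\|_{L_2(\Rl^d_\theta)}\le\det(2\pi\theta)^{1/4}\|u_0\|_{L_2(\Rl^d_\theta)}=:C_0,\quad 0\le t<T_{u_0}.
\]

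For the nonlinearity bound, by linearity it is enough to estimate a single monomial $w_1w_2\cdots w_k$ with $k\ge1$ and each $w_i\in\{v,v^*\}$. Here one uses that $W^r_2(\Rl^d_\theta)=B^r_{2,2}(\Rl^d_\theta)$, that the adjoint is an isometry of $W^r_2(\Rl^d_\theta)$ and of $L_\infty(\Rl^d_\theta)$, and that the operator norm is submultiplicative. When $r>0$, peeling off one factor at a time and applying the product estimate of Corollary \ref{product_estimate} at each step yields $\|w_1\cdots w_k\|_{W^r_2}\lesssim_k\|v\|_{W^r_2}\|v\|_{L_\infty}^{k-1}$; when $r=0$ the same bound is immediate from $\|xy\|_{L_2}\le\|x\|_{L_\infty}\|y\|_{L_2}$. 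Summing over the monomials of $g$ gives $\|g(v,v^*)\|_{W^r_2}\lesssim_g(1+\|v\|_{L_\infty})^{\deg g}\,\|v\|_{W^r_2}$. Taking $v=u(t)$ and inserting the uniform bound $\|u(t)\|_{L_\infty}\le C_0$ yields $\|g(u(t),u(t)^*)\|_{W^r_2}\lesssim_{g,C_0}\|u(t)\|_{W^r_2}$ with a constant independent of $t$, so the supremum above is finite and Theorem \ref{abstract_persistence_theorem} gives $T_{u_0}=\infty$.

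I do not expect any serious obstacle: once \eqref{automatic_boundedness} has converted the $L_2$-conservation hypothesis into a uniform-in-time $L_\infty$ bound, the polynomial nonlinearity is controlled by the multiplicative estimates already established. The only routine care needed is in treating $r=0$ separately (where Corollary \ref{product_estimate} does not apply and Hölder's inequality is used instead) and in the bookkeeping that each monomial of $g$ contributes exactly one power of $\|u(t)\|_{W^r_2}$ — which is precisely where the absence of a constant term in $g$ is used.
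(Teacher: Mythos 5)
Your argument is correct and matches the approach the paper clearly intends (the lemma is stated without an explicit proof, but the adjacent lemma on the power nonlinearity carries out precisely this reasoning: \eqref{automatic_boundedness} converts the conserved $L_2$-bound into a uniform $L_\infty$-bound, the polynomial nonlinearity is then estimated as one power of $\|u(t)\|_{W^r_2}$ times a constant depending on $\|u(t)\|_\infty$, and Theorem \ref{abstract_persistence_theorem} is invoked). Your side remarks — that $g$ must have vanishing constant term, that the $r=0$ case needs H\"older in place of Corollary \ref{product_estimate}, and that the adjoint is a $W^r_2$-isometry — are all accurate and worth noting.
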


    Now we can begin to see the counterintuitive results of concentrating on $\det(\theta)\neq 0$.
    \begin{lemma}
        Let $\det(\theta)\neq 0$, and let $p>1$ be an odd positive integer, and let $\mu \in \Rl$. Then the nonlinear Schr\"odinger equation
        \begin{equation}\label{NLSp}
            \ri\partial_t u + \Delta u = \mu u|u|^{p-1}
        \end{equation}
        has unique global mild solutions for any initial $u(0) \in W^2_2(\Rl^d_\theta)$, which depend continuously on the initial conditions in the sense that if $t$ is sufficiently small then the function $u(0)\mapsto u(t)$
        is locally Lipschitz on $L_2(\Rl^d_\theta)$.
     \end{lemma}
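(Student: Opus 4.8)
The plan is to combine the polynomial Lipschitz theory of Section \ref{nemytskij_section} with conservation of the $L_2$-norm under \eqref{NLSp}. Since $p>1$ is an odd integer, $m:=(p-1)/2$ is a positive integer and $u|u|^{p-1}=u(u^*u)^m$; thus the right-hand side of \eqref{NLSp} is a noncommutative polynomial in $u$ and $u^*$, so \eqref{NLSp} is of the form \eqref{NLS} and Theorem \ref{schr_local} (applied with $r=2$) produces, for each $u_0\in W^2_2(\Rl^d_\theta)$, a maximal $T_{u_0}>0$ and a unique mild solution $u\in C([0,T_{u_0}),W^2_2(\Rl^d_\theta))$. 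It remains to show $T_{u_0}=\infty$ and the claimed short-time Lipschitz dependence on $u_0$ in $L_2(\Rl^d_\theta)$.

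For globality I would first upgrade the mild solution to a classical one. As $\det(\theta)\neq 0$ we have $W^2_2(\Rl^d_\theta)\subseteq L_2(\Rl^d_\theta)\subseteq L_\infty(\Rl^d_\theta)$, so Theorem \ref{polynomials_are_good} (with $s=2$, $p=q=2$, together with $B^2_{2,2}(\Rl^d_\theta)=W^2_2(\Rl^d_\theta)$) shows that $w\mapsto w(w^*w)^m$ is locally Lipschitz from $W^2_2(\Rl^d_\theta)$ to itself. Since the generator $\ri\Delta$ of the Schr\"odinger group on $L_2(\Rl^d_\theta)$ has domain $W^2_2(\Rl^d_\theta)$, the map $t\mapsto u(t)|u(t)|^{p-1}$ lies in $C([0,T_{u_0}),\dom(\ri\Delta))$, and \cite[Proposition 4.1.6]{Cazenave-Haraux-semilinear-evolution-equations-1998} then yields $u\in C^1([0,T_{u_0}),L_2(\Rl^d_\theta))\cap C([0,T_{u_0}),W^2_2(\Rl^d_\theta))$ with $\partial_t u=\ri\Delta u-\ri\mu\,u|u|^{p-1}$ holding in $L_2(\Rl^d_\theta)$. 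Writing $\langle x,y\rangle=\tau(x^*y)$ I then compute
\begin{equation*}
    \frac{d}{dt}\|u(t)\|_2^2=2\re\langle u,\partial_t u\rangle=2\re\bigl(\ri\langle u,\Delta u\rangle-\ri\mu\langle u,u|u|^{p-1}\rangle\bigr).
\end{equation*}
Integration by parts with the $*$-derivations $\partial_j$ (using $\tau\circ\partial_j=0$ and $\partial_j(x^*)=(\partial_j x)^*$) gives $\langle u,\Delta u\rangle=-\sum_{j=1}^d\|\partial_j u\|_2^2\in\Rl$, and $\langle u,u|u|^{p-1}\rangle=\tau\bigl((u^*u)^{m+1}\bigr)\geq 0$ is real and finite since $u\in L_2(\Rl^d_\theta)\subseteq L_{p+1}(\Rl^d_\theta)$ (again using $\det(\theta)\neq 0$). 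As $\mu\in\Rl$ the bracket is real, so $\frac{d}{dt}\|u(t)\|_2^2=0$ and $\|u(t)\|_2=\|u_0\|_2$ on $[0,T_{u_0})$; the lemma immediately preceding this one then forces $T_{u_0}=\infty$.

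For the continuous dependence I would re-run the fixed-point construction behind Theorem \ref{abstract_existence_theorem} directly with $X=Y=L_2(\Rl^d_\theta)$. By Theorem \ref{schr_mapping} the Schr\"odinger group is strongly continuous and isometric on $L_2(\Rl^d_\theta)$, so conditions (i), (ii) and (iv) of Theorem \ref{abstract_existence_theorem} hold with $\gamma=0$; and $w\mapsto w(w^*w)^m$ is locally Lipschitz from $L_2(\Rl^d_\theta)$ to itself, which one sees by telescoping $w(w^*w)^m-z(z^*z)^m$ into $p$ terms of the form $a(w-z)b$ or $a(w^*-z^*)b$ with $a,b$ words in $w$ or in $z$, and estimating $\|a(w-z)b\|_2\leq\|a\|_\infty\|w-z\|_2\|b\|_\infty$ via H\"older's inequality together with $\|w\|_\infty\leq\det(2\pi\theta)^{-1/4}\|w\|_2$ from \eqref{automatic_boundedness}; this gives
\begin{equation*}
    \|w(w^*w)^m-z(z^*z)^m\|_2\lesssim_p\bigl(\|w\|_2+\|z\|_2\bigr)^{p-1}\|w-z\|_2.
\end{equation*}
The Banach fixed-point argument then shows that for data in a ball of radius $R$ in $L_2(\Rl^d_\theta)$ and for $t$ below a threshold $T(R)$ the data-to-solution map is a Lipschitz perturbation of the isometry $u_0\mapsto e^{\ri t\Delta}u_0$, so $u_0\mapsto u(t)$ is locally Lipschitz on $L_2(\Rl^d_\theta)$ for small $t$; by uniqueness in $C([0,T],L_2(\Rl^d_\theta))$ the solution so obtained coincides with the $W^2_2$-solution of the first paragraph whenever $u_0\in W^2_2(\Rl^d_\theta)$. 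The step I expect to demand the most care is the bootstrap in the second paragraph --- identifying $\dom(\ri\Delta)$ with $W^2_2(\Rl^d_\theta)$ and checking that $w\mapsto w(w^*w)^m$ is a locally Lipschitz self-map of $W^2_2(\Rl^d_\theta)$, which is what makes the conservation-of-$L_2$-norm computation rigorous; everything else is routine given Theorems \ref{schr_local}, \ref{polynomials_are_good}, \ref{schr_mapping}, \ref{abstract_existence_theorem} and \eqref{automatic_boundedness}.
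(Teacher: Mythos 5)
Your proposal follows essentially the same route as the paper's: local existence via Theorem \ref{schr_local} (the nonlinearity $u|u|^{p-1}=u(u^*u)^{(p-1)/2}$ being a noncommutative polynomial), upgrading the mild solution to a classical one in $W^2_2(\Rl^d_\theta)$ through \cite[Proposition 4.1.6]{Cazenave-Haraux-semilinear-evolution-equations-1998}, the $L_2$-norm conservation computation (both terms $\re\langle\ri\Delta u,u\rangle$ and $\re(\ri\mu\tau(|u|^{p+1}))$ vanish), and then a persistence criterion to push $T_{u_0}=\infty$. The one place you add content the paper leaves implicit is the short-time $L_2$-Lipschitz dependence on data: you re-run the Banach fixed-point argument at the $X=Y=L_2$ level, using the telescoping estimate and $\|w\|_\infty\lesssim_\theta\|w\|_2$ from \eqref{automatic_boundedness} to get $\|w(w^*w)^m-z(z^*z)^m\|_2\lesssim(\|w\|_2+\|z\|_2)^{p-1}\|w-z\|_2$, whereas the paper simply asserts the result is ``essentially an application of'' the standard machinery without spelling out the data-to-solution Lipschitz map. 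That extra step is exactly what the statement's ``locally Lipschitz on $L_2$'' clause asks for, so your version is, if anything, the more complete proof of the lemma as stated; otherwise the two arguments coincide.
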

     \begin{proof}
        This is essentially an application of \cite[Theorem 3.3.1]{Cazenave-semilinear-2003}.
        Since $p$ is an odd integer, $\mu u|u|^{p-1}$ is a noncommutative polynomial in $u$ and $u^*.$ Hence, Theorem \ref{schr_local} implies that there is a time interval $[0,T_{u_0})$
        such that \eqref{NLSp} has a solution in the mild sense in $L_2(\Rl^d_\theta)$ for $0\leq t < T_{u_0}.$ This mild solution obeys \eqref{NLSp} since $t\mapsto \mu u(t)|u(t)|^{p-1}$ is a continuous $W^{2}_2(\Rl^d_\theta)$-valued function, so that the conditions of \cite[Proposition 4.1.6]{Cazenave-Haraux-semilinear-evolution-equations-1998} are satisfied.
        The $L_2$ norm of $u(t)$ varies with $t$ as,
        \begin{align*}
              \frac{d}{dt}\|u(t)\|_{2}^2 &= \frac{d}{dt}\langle u(t),u(t)\rangle\\
                                         &= 2\rl\langle \partial_t u(t),u(t)\rangle\\
                                         &= 2\rl\langle \ri\Delta u(t)-\ri\mu u(t)|u(t)|^{p-1},u(t)  \rangle\\
                                         &= 2\rl\langle \ri\Delta u(t),u(t)\rangle- 2\mu\rl(\ri\tau(|u(t)|^{p+1}))\\
                                         &= 0.
        \end{align*}
        Note that in the final step we have used the fact that $\tau(|u(t)|^{p+1}) < \infty$, which follows from $\|u(t)\|_2 < \infty$ since $\det(\theta)\neq 0$.
        Hence, the $L_2$ norm of $u$ is conserved. It follows from \eqref{polynomials_are_good} that
        \begin{equation*}
            \sup_{0\leq s\leq T} \|\mu u|u|^{p-1}\|_{2} \lesssim_{\theta} \sup_{0\leq s\leq T} |\mu|\|u(s)\|_{2}^p \leq |\mu|\|u(0)\|_{2}^{p-1}\sup_{0\leq s\leq T}\|u(s)\|_{2}.
        \end{equation*}
        Hence Lemma \ref{abstract_persistence_theorem} implies that the maximal time of existence $T_{u_0}$ is infinite.
     \end{proof}
     Via an identical argument, if $g(u,u^*)$ is a noncommutative polynomial in $\{u,u^*\}$ such that $\langle g(u,u^*),u\rangle \in \Rl$ for all $u \in L_2(\Rl^d_\theta)$, it
     follows that \eqref{NLSp} has global-in-time solutions. For example, we may take $g(u) = |u|^{p-1}u$.

\subsection{Navier-Stokes equations}\label{fluids_section}
    As illustration of the differences between the commutative and noncommutative cases, we now discuss well-posedness for ``fluid" equations. 
    In particular we study the equation for self-adjoint $u(t) \in L_2(\Rl^d_\theta)\otimes \Rl^d$ and $p(t) \in L_2(\Rl^d_\theta)$,
    $$
        \partial_t u(t) + X(u(t)) = \Delta u(t) +\nabla p(t),\quad \nabla \cdot u(t) = 0,
    $$
    where $\nabla\cdot u = \sum_{j=1}^d \partial_j u$ and $X(u) = (X(u)_1,\ldots,X(u)_d)$ is the nonlinear term
    $$
        X(u)_k = \sum_{j=1}^d \frac{1}{2}(u_j\partial_j (u_k)+\partial_j(u_k)u_j),\quad k=1,\ldots,d.
    $$
    In other words, we have replaced the pointwise product $u_j\partial_j u_k$ in \eqref{advective_term} with the Jordan product
    $$
        u_j\circ \partial_j u_k := \frac{1}{2}(u_j \partial_j u_k + \partial_j u_k  u_j).
    $$
    Of course this is not the only polynomial which reduces to $u_j \partial_j u_k$ in the commutative case.
    We have selected $X(u)$ since it is arguably the simplest noncommutative analogy of $u\cdot\nabla u$ with the property that $X(u)$ is self-adjoint whenever $u$ is self-adjoint.
    We make no attempt to motivate this choice of analogy for the Navier-Stokes equations from a physical perspective. Instead, the main interest is simply the formal
    resemblance to the classical viscous incompressible Navier-Stokes equations.
    
    For the sake of brevity, denote
    $$
        H^s := W^s_2(\Rl^d_\theta)\otimes \Cplx^d,\quad s \in \Rl.
    $$
    
    As in the classical case, we dispense with the $\nabla p$ term by applying the orthogonal projection onto the divergence-free vector fields. 
    \begin{definition}
        The Leray projection $\Pl$ is the linear operator on $H^0$ defined in terms of map $\lt$ as
        \begin{equation*}
            (\Pl \lt(f))_j = \sum_{k=1}^d \lt((\delta_{j,k}-\frac{t_jt_k}{|t|^2})f(t)),\quad f \in L_2(\Rl^d).
        \end{equation*}
        That is, $\Pl$ is the multiplier $m(D) = \{m_{j,k}(D)\}$ where
        $$
            m_{j,k}(\xi) = \delta_{j,k} - \frac{\xi_j\xi_k}{|\xi|^2},\quad \xi \in \Rl^d\setminus \{0\}.
        $$
    \end{definition}
    Equivalently, we could define $\Pl$ to be the image under the Weyl transform $\wl$ of the classical Leray projection. 
    
    The Leray projection is bounded from $L_2(\Rl^d_\theta)\otimes \Cplx^d$ to itself, and has the property that if $u \in H^s$ for $s \geq 1$, then $\nabla\cdot \Pl u =0$.
    Alternatively, $\Pl$ could be defined as the $L_2$-orthogonal projection onto the closed subspace of divergence free vector fields. In addition, $\Pl$
    is bounded from $L_p(\Rl^d_\theta)\otimes \Cplx^d$ to itself for all $1<p<\infty$ due to the noncommutative Mikhlin theorem \cite{MSX3}, but we will not use this fact.

    \begin{lemma}\label{polynomials_are_good_hilbert}
        Let $\det(\theta)\neq 0$ and $s \geq 0$. Then the function
        \begin{equation*}
            u\mapsto \Pl X(u),\quad u \in H^{s+1}
        \end{equation*}
        is locally Lipschitz from $H^{s+1}$ to $H^{s}$. We also have the norm bound
        \begin{equation*}
            \|\Pl X(u)\|_{H^s} \lesssim_{\theta} \|u\|_{H^{s+1}}^2.
        \end{equation*}
    \end{lemma}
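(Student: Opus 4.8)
The plan is to reduce the whole statement to a single structural fact about the non-degenerate case: when $\det(\theta)\neq 0$ the Sobolev space $W^s_2(\Rl^d_\theta)$ is a Banach $*$-algebra under the operator product, so that the bilinear expressions $u_j\circ\partial_j u_k$ making up $X(u)$ are controlled directly.

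First I would dispose of the Leray projection. The Weyl transform $\wl$ restricts, for every $r\in\Rl$, to an isometric isomorphism from the classical $L_2$-Sobolev space $H^r(\Rl^d)^d$ onto $H^r=W^r_2(\Rl^d_\theta)\otimes\Cplx^d$, intertwining $J^r$ component-wise (both sides are Weyl transform images of the same classical operators). By construction $\Pl=\wl\circ\Pl_{\mathrm{cl}}\circ\wl^{-1}$ with $\Pl_{\mathrm{cl}}$ the classical Leray projection, which is an $L_2$-orthogonal projection commuting with each $J^r$; conjugating by the unitary $\wl$ gives $\|\Pl\|_{H^s\to H^s}\le 1$ for all $s$. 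Hence it suffices to prove the norm bound $\|X(u)\|_{H^s}\lesssim_{s,\theta}\|u\|_{H^{s+1}}^2$ and a matching difference estimate for $X$ itself.

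Next I would establish the algebra property: for $s\ge 0$ and $\det(\theta)\neq 0$, $\|ab\|_{W^s_2}\lesssim_{s,\theta}\|a\|_{W^s_2}\|b\|_{W^s_2}$. For $s=0$ this is H\"older's inequality together with \eqref{automatic_boundedness}, $\|ab\|_2\le\|a\|_\infty\|b\|_2\le\det(2\pi\theta)^{-1/4}\|a\|_2\|b\|_2$. For $s>0$ it follows from Corollary \ref{product_estimate} with $p=q=2$ (using $B^s_{2,2}=W^s_2$) combined with the bound $\|c\|_\infty\le\det(2\pi\theta)^{-1/4}\|c\|_2\le\det(2\pi\theta)^{-1/4}\|c\|_{W^s_2}$, again from \eqref{automatic_boundedness} and $J^s\ge 1$. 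I would also record that $\partial_j\colon W^{s+1}_2\to W^s_2$ is bounded, which is immediate from the isometry $\wl\colon H^{s+1}(\Rl^d)\to W^{s+1}_2(\Rl^d_\theta)$ and the classical inequality $|\xi_j|\le\langle\xi\rangle$. Applying the algebra estimate and the $\partial_j$ bound termwise to $X(u)_k=\sum_{j=1}^d\tfrac12(u_j\partial_j u_k+\partial_j u_k\,u_j)$ then yields
\[
\|X(u)\|_{H^s}\lesssim_{s,\theta}\sum_{j,k=1}^d\|u_j\|_{W^s_2}\|\partial_j u_k\|_{W^s_2}\lesssim_{s,\theta}\sum_{j,k=1}^d\|u_j\|_{W^{s+1}_2}\|u_k\|_{W^{s+1}_2}\lesssim_d\|u\|_{H^{s+1}}^2.
\]

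For the local Lipschitz property I would expand each bilinear term as $u_j\circ\partial_j u_k-v_j\circ\partial_j v_k=(u_j-v_j)\circ\partial_j u_k+v_j\circ\partial_j(u_k-v_k)$, estimate each factor by the same algebra and $\partial_j$ bounds, and sum over $j,k$ to obtain $\|X(u)-X(v)\|_{H^s}\lesssim_{s,\theta}(\|u\|_{H^{s+1}}+\|v\|_{H^{s+1}})\|u-v\|_{H^{s+1}}$; together with $\|\Pl\|_{H^s\to H^s}\le 1$ this is precisely local Lipschitzness of $u\mapsto\Pl X(u)$ from $H^{s+1}$ to $H^s$. The only genuinely delicate point is the algebra property of $W^s_2$: this is where $\det(\theta)\neq 0$ enters, since the factor producing the $\theta$-dependence is the constant $\det(2\pi\theta)^{-1/4}$ from \eqref{automatic_boundedness}, and in the degenerate case $W^s_2$ need not be closed under products. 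A minor, but necessary, care is that the endpoint $s=0$ must be treated separately by H\"older, because Corollary \ref{product_estimate} requires $s>0$; everything else is bookkeeping.
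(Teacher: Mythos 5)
Your proof is correct and follows essentially the same route as the paper: boundedness of the Leray projection $\Pl$ on $H^s$, the paraproduct estimate of Corollary \ref{product_estimate} (which is also what drives Lemma \ref{polynomials_are_good}), and the embedding $L_2(\Rl^d_\theta)\subseteq L_\infty(\Rl^d_\theta)$ when $\det(\theta)\neq 0$. One point you handle more carefully than the paper's one-line proof: you treat the endpoint $s=0$ separately via H\"older's inequality, which is genuinely necessary since both Corollary \ref{product_estimate} and Lemma \ref{polynomials_are_good} are only stated for $s>0$, whereas the lemma as stated allows $s\ge 0$.
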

    \begin{proof}
        This follows from the boundedness of $\Pl$ on $H^s(\Rl^d_\theta),$ Theorem \ref{polynomials_are_good} and the fact that $L_2(\Rl^d_\theta)$ is contained in $L_{\infty}(\Rl^d_\theta)$
        for non-degenerate $\theta.$
    \end{proof}
%

     
     \begin{theorem}
        Assume that $\det(\theta)\neq 0$. Let $u_0 \in H^2$ have divergence zero and be self-adjoint. There exists a unique $u \in C^\infty((0,\infty),H^2)$ such that
        \begin{equation}\label{nc_leray_form}
            \partial_t u(t)+\Pl X(u(t)) = \Delta u(t),\quad \lim_{t\to 0} u(t) = u_0.
        \end{equation}
        Moreover, $u(t) = u(t)^*$ and $u(t) \in \bigcap_{s > 0} H^s$ for all $t > 0$.
     \end{theorem}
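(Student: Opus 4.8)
The plan is to produce a local mild solution from the abstract fixed-point theorem, check that it is self-adjoint and divergence-free, bootstrap it to a smooth solution, and then extend it globally using conservation of the $L_2$-norm together with the automatic boundedness \eqref{automatic_boundedness}. For local existence I would apply Theorem \ref{abstract_existence_theorem} with $L=\Delta$ acting componentwise, $F=-\Pl X$, $X=H^0=L_2(\Rl^d_\theta)\otimes\Cplx^d$ and $Y=H^1$: hypothesis (i) holds because $e^{t\Delta}$ is a Fourier multiplier commuting with $J=(1-\Delta)^{1/2}$, so $\|e^{t\Delta}\|_{H^1\to H^1}\le1$, and it is strongly continuous on $H^1$ by Theorem \ref{heat_mapping}; (ii) and (iv) follow from $\|e^{t\Delta}\|_{H^0\to H^1}\lesssim 1+t^{-1/2}$ (Proposition \ref{heat_mapping_besov}), so $\gamma=\tfrac12<1$; and (iii) is Lemma \ref{polynomials_are_good_hilbert} with $s=0$, which is exactly where $\det(\theta)\neq 0$ enters, through the inclusion $L_2(\Rl^d_\theta)\subseteq L_\infty(\Rl^d_\theta)$ that tames the derivative-losing nonlinearity. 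This produces a maximal $T_{u_0}>0$ and a unique $u\in C([0,T_{u_0}),H^1)$ with
\[
    u(t)=e^{t\Delta}u_0-\int_0^t e^{(t-s)\Delta}\Pl X(u(s))\,ds .
\]
Running the same argument with $X=H^1$, $Y=H^2$ (legitimate since $u_0\in H^2$) and invoking uniqueness shows $u\in C([0,\hat T),H^2)$ for some $\hat T>0$, so $u(t)\to u_0$ in $H^2$ as $t\to 0^+$.

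Next I would record the structural properties. Since $\Pl$ and $e^{t\Delta}$ are multipliers with real symbols they commute with the involution, and $(\partial_j x)^*=\partial_j(x^*)$ gives $X(u)^*=X(u^*)$; hence $t\mapsto u(t)^*$ solves the same mild equation with data $u_0^*=u_0$, and uniqueness forces $u(t)=u(t)^*$. Divergence-freeness is immediate: $e^{t\Delta}u_0$ is divergence-free because $\nabla$ commutes with $e^{t\Delta}$ and $\nabla\cdot u_0=0$, while each term $e^{(t-s)\Delta}\Pl X(u(s))$ lies in the range of $\Pl$. A parabolic bootstrap then yields $u(t)\in\bigcap_{\sigma>0}H^\sigma$ for $0<t<T_{u_0}$: from $u\in C([0,t_1],H^\alpha)$ one gets $\Pl X(u(\cdot))\in C([0,t_1],H^{\alpha-1})$ by Lemma \ref{polynomials_are_good_hilbert}, and since $\|e^{(t-s)\Delta}\|_{H^{\alpha-1}\to H^\beta}\lesssim 1+(t-s)^{(\alpha-1-\beta)/2}$ is integrable in $s$ for every $\beta<\alpha+1$, the Duhamel term gains almost one derivative, so iterating (as in the proof of Theorem \ref{nlhe_lwp}) reaches every $H^\sigma$. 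Then \cite[Proposition 4.1.6]{Cazenave-Haraux-semilinear-evolution-equations-1998} promotes the mild solution to a classical one, and repeated differentiation of $u'=\Delta u-\Pl X(u)$ gives $u\in C^\infty((0,T_{u_0}),H^\sigma)$ for all $\sigma$, in particular $u\in C^\infty((0,T_{u_0}),H^2)$.

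Finally I would globalise via the energy method. For $t\in(0,T_{u_0})$ the solution is smooth, self-adjoint and divergence-free, so $\Pl u(t)=u(t)$ and $t\mapsto\|u(t)\|_{H^0}^2$ is differentiable with $\tfrac{d}{dt}\|u(t)\|_{H^0}^2 = 2\,\re\langle\Delta u(t)-\Pl X(u(t)),u(t)\rangle$. Using $\partial_j u_k=(\partial_j u_k)^*$ and $\tau\circ\partial_j=0$ one finds $\re\langle\Delta u,u\rangle=-\sum_{j,k}\|\partial_j u_k\|_2^2\le 0$, while the crucial algebraic fact is that $\langle X(u),u\rangle=0$ whenever $u$ is self-adjoint and divergence-free: expanding $\langle X(u),u\rangle=\tfrac12\sum_{j,k}\bigl(\tau(u_j(\partial_j u_k)u_k)+\tau((\partial_j u_k)u_j u_k)\bigr)$, cyclicity of $\tau$ turns $\tau((\partial_j u_k)u_j u_k)$ into $\tau(u_j u_k(\partial_j u_k))$, and $\tau\circ\partial_j=0$ applied to $\partial_j(u_j u_k^2)$, combined with $\sum_j\partial_j u_j=0$, annihilates the remaining sum. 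Hence $\|u(t)\|_{H^0}$ is non-increasing, and by continuity at $t=0$ we get $\|u(t)\|_{L_2\otimes\Cplx^d}\le\|u_0\|_{L_2\otimes\Cplx^d}$ on $[0,T_{u_0})$.

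Here is the decisive second use of $\det(\theta)\neq 0$: a direct estimate using the operator inequality $a^*a\le\|a\|_\infty^2$, the trace property, and the boundedness of $\Pl$ on $H^0$ gives $\|\Pl X(u)\|_{H^0}\lesssim\|u\|_{L_\infty\otimes\Cplx^d}\,\|u\|_{H^1}$, which by \eqref{automatic_boundedness} improves to $\|\Pl X(u)\|_{H^0}\lesssim_{\theta}\|u\|_{L_2\otimes\Cplx^d}\,\|u\|_{H^1}$. Therefore
\[
    \sup_{0\le t<T_{u_0}}\frac{\|\Pl X(u(t))\|_{H^0}}{\|u(t)\|_{H^1}} \lesssim_{\theta} \sup_{0\le t<T_{u_0}}\|u(t)\|_{L_2\otimes\Cplx^d} \le \|u_0\|_{L_2\otimes\Cplx^d} < \infty ,
\]
so Theorem \ref{abstract_persistence_theorem} (applied in the setting $X=H^0$, $Y=H^1$) gives $T_{u_0}=\infty$; combined with the earlier remarks this yields the asserted $u\in C^\infty((0,\infty),H^2)$ with $u(t)=u(t)^*$, $u(t)\in\bigcap_{\sigma>0}H^\sigma$ and $u(t)\to u_0$, and uniqueness among such functions follows from the mild-formulation uniqueness in Theorem \ref{abstract_existence_theorem}. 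The step I expect to be the main obstacle is the rigorous treatment of the energy identity — establishing $\langle X(u),u\rangle=0$ through the trace calculus and guaranteeing enough regularity of $u$ to differentiate $\|u(t)\|_{H^0}^2$ — which is precisely what the parabolic bootstrap is included to provide.
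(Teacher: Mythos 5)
Your proposal is correct and follows essentially the same route as the paper: local existence from Theorem \ref{abstract_existence_theorem} via Lemma \ref{polynomials_are_good_hilbert}, parabolic bootstrap to $\bigcap_{\sigma>0}H^\sigma$ and upgrading to a classical solution via \cite[Proposition 4.1.6]{Cazenave-Haraux-semilinear-evolution-equations-1998}, self-adjointness by uniqueness applied to $t\mapsto u(t)^*$, the cancellation $\langle\Pl X(u),u\rangle=0$ via the trace calculus using self-adjointness and divergence-freeness, and global existence from Theorem \ref{abstract_persistence_theorem} together with the $L_2\hookrightarrow L_\infty$ embedding that is the decisive use of $\det(\theta)\neq 0$. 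The only (cosmetic) deviations are that you derive divergence-freeness directly from the mild formula rather than the paper's observation that $v=\nabla\cdot u$ solves the heat equation with zero data, and you streamline the energy identity by noting $X(u)_k$ is self-adjoint up front instead of tracking adjoints term by term.
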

     \begin{proof}
        Lemma \ref{polynomials_are_good_hilbert} implies that $\Pl X$ is locally Lipschitz from
        $H^1$ to $H^0$. Hence, Theorem \ref{abstract_existence_theorem} implies that there exists a maximal time $T_{u(0)} \leq \infty$ such that a unique solution $u(t)$ in $H^1$ exists in the mild sense for all $0\leq t < T_{u(0)}$ and $u(0)=u_0 \in H^1.$ A similar argument to Theorem \ref{nlhe_lwp} shows that $u(t) \in \bigcap_{r>0} H^r$ for all $t>0.$ It follows that the  mild solution is a classical solution to \eqref{nc_leray_form} due to \cite[Proposition 4.1.6]{Cazenave-Haraux-semilinear-evolution-equations-1998}.
        Let $v(t) = \sum_{k=1}^d \partial_ku_k(t)$ be the divergence of $u$ at time $0\leq t < T_{u(0)}$. By assumption, we have $v(0) = 0$. Since $\nabla\cdot \Pl X(u(t)) = 0$
        by the definition of the Leray projection, we have
        \begin{equation*}
            \partial_t v(t) = \Delta v(t),\quad v(0) = 0.
        \end{equation*}
        Uniqueness for the heat equation implies that $v(t) = 0$ for all $t\geq 0$, and hence $u(t)$ has divergence zero for all $0\leq t < T_{u(0)}$.
        
        Observe that $X(u^*) = X(u)^*$ for all $u \in H^1$, and hence if $u(0) = u(0)^*$ it follows that $t\mapsto u(t)^*$ is a solution to \eqref{nc_leray_form}
        for all $0\leq t < T_{u(0)}$, which by uniqueness must be identical to $u(t)$. Hence $u(t) = u(t)^*$ for all $0 < t < T_{u_0}$.
        
        The $H^0$ norm of $u$ varies with time as
        \begin{equation*}
            \partial_t \|u(t)\|_{H^0}^2 = 2\rl\langle \Delta u(t)+\Pl X(u(t)),u(t)\rangle \leq 2\rl\langle \Pl X(u(t)),u(t)\rangle.
        \end{equation*}
        Since $u(t)$ has divergence zero for all $0 \leq t < T_{u_0}$, we have
        \begin{align*}
            \langle \Pl X(u(t)),u(t)\rangle &= \frac{1}{2}\sum_{k=1}^d \sum_{j=1}^d \langle\partial_j(u_ju_k+u_ku_j),u_k\rangle\\
                                        &= \frac12\sum_{j,k=1}^d \langle u_j(\partial_ju_k)+(\partial_j u_k)u_j,u_k\rangle\\
                                        &= \frac12\sum_{j,k=1}^d \tau((\partial_j u_k)^*u_j^*u_k)+\tau(u_j^*(\partial_ju_k)^*u_k)\\
                                        &= \frac12\sum_{j,k=1}^d \tau(u_k(\partial_j u_k)^*u_j^*)+\tau(u_j^*(\partial_ju_k)^*u_k)\\
                                        &= \frac12\sum_{j,k=1}^d \tau(u_j^*(u_k(\partial_j u_k)^*+(\partial_j u_k)^*u_k))\\
                                        &= \frac12\sum_{j,k=1}^d \langle u_j,u_k\partial_j (u_k^*)+\partial_j (u_k^*)u_k\rangle.
        \end{align*}
        Since $u_k(t)^* = u_k(t)$ for all $0\leq t < T_{u(0)}$, it follows that
        \begin{equation*} 
            \langle \Pl X(u(t)),u(t) \rangle = \frac12\sum_{j,k=1}^d \langle u_j,\partial_j(u_k^2)\rangle = -\frac12\langle \nabla\cdot u,\sum_{k=1}^d u_k^2\rangle = 0.
        \end{equation*}
        Hence, 
        \begin{equation*}
            \partial_t \|u(t)\|_{2}^2 \leq 0
        \end{equation*}
        so that the $H^0$ norm of $u(t)$ remains bounded over $0 \leq t < T_{u(0)}$. Theorem \ref{abstract_existence_theorem} now suffices to conclude that $T_{u_0}=\infty.$
        Indeed, we have
        \begin{equation*}
            \|X(u(t))\|_{H^0} \lesssim_\theta \|u(t)\|_{H^0}\|u(t)\|_{H^1}\leq \|u(0)\|_{H^0}\|u(t)\|_{H^1},\quad 0 < t < T_{u_0},
        \end{equation*}
        so that the condition of Theorem \ref{abstract_persistence_theorem} is satisfied, and hence $T_{u_0} = \infty$.
     \end{proof}
     
     \begin{remark}
        As the preceding argument shows, the well-posedness of \eqref{nc_leray_form} for $u_0 \in H^1$ is rather trivial. A more difficult problem would be to replace the nonlinear term
        with an expression that is not self-adjoint. For example, instead of
        \[
            X(u)_k = \sum_{j=1}^d \frac{1}{2}(u_j\partial_j (u_k)+\partial_j(u_k)u_j),\quad k=1,\ldots,d.
        \]
        we could examine
        \[
            X(u)_k = \sum_{j=1}^d u_j\partial_j (u_k),\quad k=1,\ldots,d.
        \]
        This problem would be more difficult because $X(u)_k^* \neq X(u^*)_k$ in general, and therefore we should not expect that if the components of $u_0$ are self-adjoint then
        the same is true for $u(t)$ for $t>0.$ 
     \end{remark}   
%
%

\appendix

\section{Existence theorems for parabolic differential equations}
    In this paper we have used the theory of abstract Cauchy problems to deal with existence and uniqueness of partial differential equations
    on $\Rl^d_\theta$. The general theory of abstract Cauchy problems is covered in, e.g., \cite[Chapter 4]{Lunardi-parabolic-1995}, \cite[Section 1.2]{Amann-parabolic-1995}, \cite[Chapter 3]{ArendtBattyHieberNeubrander2011}, \cite[Chapter 5]{PrussSimonett2016}, \cite[Appendix G]{HvNVW2017}, \cite{Yagi2010}, \cite[Chapter 7]{EngelNagel2000}.
    
%
    We have made use of a ``blow-up" criterion (Theorem \ref{abstract_persistence_theorem}) which is certainly not novel but for which we have been unable to find a reference. 

    \begin{proof}[Proof of Theorem \ref{abstract_persistence_theorem}]
        Suppose that $T_{x_0} < \infty$. If this is the case, then we must have $\|x(s)\|_{Y} \to\infty$ as $s\to T_{x_0}$. Indeed, otherwise there
        would exist $R > 0$ and a sequence $\{\varepsilon_j\}_{j=0}^\infty$ with $\varepsilon_j\to 0$ such that $\|x_{T_{u_0}-\varepsilon_j}\|_Y \leq R$. 
        According to Theorem \ref{abstract_existence_theorem}, there exists $T_R$ such that the solution can then be extended to an interval $[0,T_{u_0}-\varepsilon_j+T_R)$.
        This contradicts the maximality of $T_{u_0}$ when $j$ is large enough, and hence we must have $\|x(s)\|_{Y}\to\infty$.
    
        Now assume that there exists  $0 < C_{u_0} < \infty$ such that
        $$
            \|F(x(s))\|_{X} \leq C_{u_0}\|x(s)\|_Y,\quad 0\leq s < T_{x_0}.
        $$
        Let $0 < r < t < T_{u_0}$. Then by definition, we have
        \begin{equation*}
            x(t) = e^{(t-r)L}x(r) + \int_r^t e^{(t-s)L}F(x(s))\,ds.
        \end{equation*}
        Using the contractivity of the semigroup $e^{tL}$ and Lemma \ref{convolution_lemma}, it follows that
        \begin{equation*}
            \|x(t)\|_Y \leq \|x(r)\|_Y+\int_0^{t-r} \|e^{sL}\|_{X\to Y} \,ds\cdot  C_{u_0} \sup_{r\leq s\leq t} \|x(s)\|_{Y}
        \end{equation*}
        Thus, provided $t-r$ is sufficiently small, we have
        \begin{equation*}
            \sup_{r\leq s\leq t}\|x(s)\|_Y \leq \frac{\|x(r)\|_Y}{1-C_{u_0}\int_{0}^{t-r} \|e^{sL}\|_{X\to Y}\,ds}.
        \end{equation*}
        This contradicts $T_{u_0} < \infty$, since otherwise we could take $\varepsilon < t-r$ and $\|x(s)\|_Y$
        would not blow up in the interval $(T_{u_0}-\varepsilon,T_{u_0})$.
    \end{proof}

\end{document}